\pdfoutput=1
\RequirePackage{ifpdf}
\ifpdf % We~are running pdfTeX in pdf mode
\documentclass[pdftex]{sigma}%,draft
\else
\documentclass{sigma}
\fi

\numberwithin{equation}{section}
\numberwithin{figure}{section}

\newcommand{\N}{{\mathbb N}}
\newcommand{\Z}{{\mathbb Z}}
\newcommand{\Q}{{\mathbb Q}}
\newcommand{\C}{{\mathbb C}}
\newcommand{\R}{{\mathbb R}}
\renewcommand{\P}{{\mathbb P}}
\renewcommand{\H}{{\mathbb H}}
\newcommand{\F}{{\mathbb F}}

\newcommand{\EE}{{\mathcal E}}

\newcommand{\MM}{{\mathcal M}}
\newcommand{\NN}{{\mathcal N}}

\newcommand{\OO}{{\mathcal O}}

\newcommand{\TT}{{\mathcal T}}
\newcommand{\UU}{{\mathcal U}}
\newcommand{\VV}{{\mathcal V}}

\newcommand{\ddd}{{\rm d}}
\newcommand{\www}{\widetilde}
\newcommand{\wwh}{\widehat}

\newcommand{\uuuu}{\underline}
\newcommand{\paa}{\partial}
\newcommand{\nnn}{\nabla}

\DeclareMathOperator{\Aut}{Aut}
\DeclareMathOperator{\Eig}{Eig}
\DeclareMathOperator{\id}{id}
\DeclareMathOperator{\Lie}{Lie}
\DeclareMathOperator{\pr}{pr}
\DeclareMathOperator{\rank}{rank}
\DeclareMathOperator{\rk}{rk}
\DeclareMathOperator{\Ree}{Re}
\DeclareMathOperator{\tr}{tr}

\newtheorem{Theorem}{Theorem}[section]
\newtheorem{Corollary}[Theorem]{Corollary}
\newtheorem{Lemma}[Theorem]{Lemma}
\newtheorem{claim}[Theorem]{Claim}
 { \theoremstyle{definition}
\newtheorem{Definition}[Theorem]{Definition}
\newtheorem{Remarks}[Theorem]{Remarks}
\newtheorem{Remark}[Theorem]{Remark}
\newtheorem{Notation}[Theorem]{Notation}
 }

\setcounter{secnumdepth}{2}

\begin{document}
\allowdisplaybreaks

\newcommand{\arXivNumber}{2009.14314}

\renewcommand{\thefootnote}{}

\renewcommand{\PaperNumber}{082}

\FirstPageHeading

\ShortArticleName{Rank 2 Bundles with Meromorphic Connections with Poles of Poincar\'e Rank 1}

\ArticleName{Rank 2 Bundles with Meromorphic Connections\\
with Poles of Poincar\'e Rank 1\footnote{This paper is a~contribution to the Special Issue on Primitive Forms and Related Topics in honor of~Kyoji Saito for his 77th birthday. The full collection is available at \href{https://www.emis.de/journals/SIGMA/Saito.html}{https://www.emis.de/journals/SIGMA/Saito.html}}}

\Author{Claus HERTLING}

\AuthorNameForHeading{C.~Hertling}

\Address{Lehrstuhl f\"ur algebraische Geometrie, Universit\"at Mannheim,\\ B6, 26, 68159 Mannheim, Germany}
\Email{\href{mailto:hertling@math.uni-mannheim.de}{hertling@math.uni-mannheim.de}}
\URLaddress{\url{https://www.wim.uni-mannheim.de/hertling/team/prof-dr-claus-hertling/}}

\ArticleDates{Received September 30, 2020, in final form August 20, 2021; Published online September 07, 2021}

\Abstract{Holomorphic vector bundles on $\C\times M$, $M$ a complex manifold, with meromorphic connections with poles of Poincar\'e rank~1 along $\{0\}\times M$ arise naturally in algebraic geometry. They are called $(TE)$-structures here. This paper takes an abstract point of~view. It~gives a complete classification of all $(TE)$-structures of rank~2 over germs $\big(M,t^0\big)$ of~mani\-folds. In~the case of $M$ a point, they separate into four types. Those of three types have universal unfoldings, those of the fourth type (the logarithmic type) not. The classification of unfoldings of $(TE)$-structures of the fourth type is rich and interesting. The paper finds and lists also all $(TE)$-structures which are basic in the following sense: Together they induce all rank $2$ $(TE)$-structures, and each of them is not induced by any other $(TE)$-structure in the list. Their base spaces $M$ turn out to be 2-dimensional $F$-manifolds with Euler fields. The paper gives also for each such $F$-manifold a classification of all rank~2 $(TE)$-structures over it. Also this classification is surprisingly rich. The backbone of the paper are normal forms. Though also the monodromy and the geometry of the induced Higgs fields and of the bases spaces are important and are considered.}

\Keywords{meromorphic connections; isomonodromic deformations; $(TE)$-structures}

\Classification{34M56; 34M35; 53C07}

\tableofcontents

\renewcommand{\thefootnote}{\arabic{footnote}}
\setcounter{footnote}{0}

\section{Introduction}%\label{c1}

A holomorphic vector bundle $H$ on $\C\times M$, $M$ a complex
manifold, with a meromorphic connection $\nnn$ with a
pole of Poincar\'e rank 1 along $\{0\}\times M$ and no pole
elsewhere, is called a~$(TE)$-structure.
The aim of this paper is the local classification of all rank $2$
$(TE)$-structures, over arbitrary germs $\big(M,t^0\big)$ of manifolds.

Before we talk about the results, we will put these structures
into a context, motivate their definition,
mention their occurence in algebraic geometry,
and formulate interesting problems.
The rank $2$ case is the first interesting case and already
very rich. In~many aspects it is probably typical for
arbitrary rank, in some not. And it is certainly the only
case where such a thorough classification is feasible.

The pole of Poincar\'e rank 1 along $\{0\}\times M$ of the
pair $(H,\nnn)$ means the following. Let $t=(t_1,\dots ,t_n)$
be holomorphic coordinates on $M$ with coordinate
vector fields $\paa_1,\dots ,\paa_n$, and~let~$z$ be the standard
coordinate on $\C$. Then $\nnn_{\paa_z}\sigma$ for a
holomorphic section $\sigma\in\OO(H)$ of~$H$
is in~$z^{-2}\OO(H)$, and $\nnn_{\paa_j}\sigma$ is in
$z^{-1}\OO(H)$. The pole of order two along $\paa_z$
is the first case beyond the easy and tame case of a
pole of order~1, i.e., a logarithmic pole.
The pole of order~1 along~$\paa_i$ gives a good variation
property, a generalization of Griffiths transversality
for variations of Hodge structures. It~is the most natural
constraint for an isomonodromic family of bundles on~$\C$
with poles of order~2 at~$0$.
So, a pole of Poincar\'e rank~1 is in some sense the first
case beyond the case of connections with logarithmic poles.
(A pole of Poincar\'e rank $r\in\N_0$
is defined for example in~\cite[Section~0.14]{Sa02}.)

In algebraic geometry, such connections arise naturally.
A distinguished case is the Fourier--Laplace transformation
(with respect to the coordinate $z$)
of the Gauss--Manin connection of a~family of
holomorphic functions with isolated singularities
(see \cite[Chapter~8]{He03} and~\cite[Chapter~VII]{Sa02}). The paper~\cite{He03}
defines $(TERP)$-structures, which are $(TE)$-structures
with additional real structure and pairing
and which generalize variations of Hodge structures.
Also the notion $(TEZP)$-structure makes sense, which is a~$(TE)$-structures with a flat $\Z$-lattice bundle
on $\C^*\times M$ and a~certain pairing.
A family of holomorphic functions with isolated singularities
(and some topological well-behavedness) gives rise to a~$(TEZP)$-structure over the base space of the family
(see \cite[Chapter~11.4]{He02} and~\cite[Chapter~8]{He03}).

In~\cite{He02} and other papers of the author,
a Torelli problem is considered.
We formulate it here as the following question:
Does the $(TEZP)$-structure of a holomorphic function germ with
an~isolated singularity determine the $(TEZP)$-structure
of the universal unfolding of the function germ?
The first one is a $(TE)$-structure over a point $t^0$.
The second one is a $(TE)$-structure over a germ
$\big(M,t^0\big)$ of a manifold $M$. It~it an {\it unfolding}
of the first $(TE)$-structure with a {\it primitive
Higgs field}. The base space $M$ is an
{\it $F$-manifold with Euler field}.

We explain these notions. A second $(TE)$-structure over
a manifold $M$ is an unfolding of a~first $(TE)$-structure
over a submanifold of $M$ if the restriction of the second
$(TE)$-structure to the submanifold
is isomorphic to the first $(TE)$-structure.
If $\varphi\colon M'\to M$ is a morphism
and if $(H,\nnn)$ is a $(TE)$-structure over $M$,
then the pull back $\varphi^*(H,\nnn)$ is a $(TE)$-structure
over~$M'$. An unfolding of a $(TE)$-structure
is {\it universal} if it {\it induces}
any unfolding via a unique map $\varphi$
(see Definition~\ref{t3.15}$(b){+}(c)$ for details).

If $(H\to\C\times M,\nnn)$ is a $(TE)$-structure, then
define the vector bundle $K:=H|_{\{0\}\times M}$ on $M$
and the Higgs field
$C:=[z\nnn]\in \Omega^1(M,{\rm End}(K))$ on $K$.
The endomorpisms $C_X=[z\nnn_X]\colon \OO(K)\to\OO(K)$ for
$X\in \TT_M$ commute with one another,
and they commute with the
endomorphism $\UU:=\big[z^2\nnn_{\paa_z}\big]\colon \OO(K)\to\OO(K)$
(see Definition~\ref{t3.8} and Lemma~\ref{t3.12}).
The Higgs field~$C$ is {\it primitive} if on each sufficiently
small subset $U\subset K$ a section $\zeta_U$ exists such
that the map $\TT_U\to \OO(K),$ $X\mapsto C_X\zeta_U$, is
an isomorphism (see Definition~\ref{t3.13}).

An {\it $F$-manifold} with {\it Euler field}
is a complex manifold $M$
together with a holomorphic commutative and associative
multiplication $\circ$ on $\TT_M$ which comes equipped
with the integrability condition~\eqref{2.1},
with a unit field $e\in\TT_M$ (with $\circ e=\id$)
and an Euler field $E\in\TT_M$
with $\text{Lie}_E(\circ)=\circ$
(see~\cite{HM99} or Definition~\ref{t2.1}).
A $(TE)$-structure over $M$ with
primitive Higgs field indu\-ces on the base manifold $M$ the
structure of an $F$-manifold with Euler field
(see Theorem~\ref{t3.14} for details).

A result of Malgrange~\cite{Ma86} (cited in
Theorem~\ref{t3.16}$(c)$) says that a $(TE)$-structure
over a~point~$t^0$ has a universal unfolding if the
endomorphism $\UU\colon K\to K$ (here $K$ is a vector space)
is regular, i.e., it has only one Jordan block for each
eigenvalue. Theorem~\ref{t3.16}$(b)$ gives a generalization
from~\cite{HM04}. A special case of this generalization
says that a $(TE)$-structure with primitive
Higgs field over a germ $\big(M,t^0\big)$ is its own universal
unfolding (see Theorem~\ref{t3.16}$(a)$).
A supplement from~\cite{DH17} says that then the base space
is a {\it regular} $F$-manifold (see Definition~\ref{t2.4}
and Theorem~\ref{t2.5}).

Malgrange's result gives a universal unfolding
if one starts with a $(TE)$-structure over a~point
whose endomorphism $\UU$ is regular.
However, if one starts with a~$(TE)$-structure over a~point such that $\UU$ is not regular, then in general it
has no universal unfolding, and the study of all its
unfoldings becomes very interesting. The second half of
this paper (Sections~\ref{c6}--\ref{c8}) studies
this situation in rank~2. The Torelli problem for a
holomorphic function germ with an isolated singularity
is similar: The endomorphism
$\UU$ of its $(TEZP)$-structure is never regular
(except if the function has an $A_1$-singularity),
but I hope that the $(TEZP)$-structure
determines nevertheless somehow the specific
unfolding with primitive Higgs field, which comes from the
universal unfolding of the original function germ.

Now sufficient background is given.
We describe the contents of this paper.

The short Section~\ref{c2} recalls the classification of the
2-dimensional germs of $F$-manifolds with Euler fields
(Theorem~\ref{t2.2} from~\cite{He02} and Theorem~\ref{t2.3}
from~\cite{DH20-3}). It~treats also regular $F$-manifolds
(Definition~\ref{t2.4} and Theorem~\ref{t2.5} from~\cite{DH17}).

Section~\ref{c3} recalls many general facts on
$(TE)$-structures: their definition, their presentation
by matrices, formal $(TE)$-structures, unfoldings and
universal unfoldings of $(TE)$-structures, Malgrange's
result and the generalization in~\cite{HM04},
$(TE)$-structures over $F$-manifolds, $(TE)$-structures
with primitive Higgs fields, regular singular $(TE)$-structures
and elementary sections, Birkhoff normal form for
$(TE)$-structures (not all have one, Theorem~\ref{t3.20} cites
existence results of~Ple\-mely and of Bolibroukh and Kostov).
Not written before, but elementary is a correspondence
between $(TE)$-structures with trace free endomorphism
$\UU$ and arbitrary $(TE)$-structures (Lem\-mata~\ref{t3.9},~\ref{t3.10} and~\ref{t3.11}).

New is the notion of a marked $(TE)$-structure. It~is needed for the construction of moduli spaces.
Theorem~\ref{t3.29} (which builds on results in
\cite{HS10}) constructs such moduli spaces, but
only in the case of regular singular $(TE)$-structures. It~starts with a {\it good family} of regular
singular $(TE)$-structures. There are two open problems. It~is not clear how to generalize this notion of a good
family beyond the case of regular singular $(TE)$-structures.
We hope, but did not prove for rank $\geq 3$, that
any regular singular $(TE)$-structure (over $M$ with
$\dim M\geq 1$) is a good family of regular singular
$(TE)$-structures. For rank $2$ this is true, it follows
from Theorem~\ref{t8.5}.

Section~\ref{c4} gives the classification of rank $2$
$(TE)$-structures over a point $t^0$. There are 4 types,
which we call (Sem), (Bra), (Reg) and (Log)
(for {\it semisimple, branched, regular singular} and
{\it logarithmic}). In~the type (Sem) $\UU$ has two
different eigenvalues, in the type (Log) $\UU\in\C\cdot\id$,
in~the types (Bra) and (Reg) $\UU$ has a $2\times 2$
Jordan block. In~the cases when $\UU$ is trace free,
a $(TE)$-structure of type (Log) has a logarithmic pole,
a $(TE)$-structure of type (Reg) has a~regular singular,
but not logarithmic pole, and the pull back of a~$(TE)$-structure of type (Bra) by a~branched cover of $\C$
of order 4 has a meromorphic connection with
semisimple pole of order~3 (see Lemma~\ref{t4.9}).
The semisimple case (Sem) is not central in this paper.
Therefore we do not discuss it in detail
and do not introduce Stokes structures. For the other types
(Bra), (Reg) and (Log), Section~\ref{c4} discusses normal forms
and their parameters. All $(TE)$-structures of type (Bra)
have nice Birkhoff normal forms (Theorem~\ref{t4.11}),
but not all of type (Reg) (Theorem~\ref{t4.17} and Remark~\ref{t4.19}) and type (Log) (Theorem~\ref{t4.20} and
Remark~\ref{t4.22}). The types (Reg) and (Log) become
transparent by the use of elementary sections.

A $(TE)$-structure of type (Sem) or (Bra) or (Reg) over
a point $t^0$ satisfies the hypothesis of Malgrange's result,
namely, the endomorphism $\UU\colon K\to K$ is regular. Therefore it
has a~uni\-ver\-sal unfolding, and any unfolding of it is
induced by this universal unfolding. %Here life is easy.
Section~\ref{c5} discusses this.
Also because of this fact, the semisimple case is not
central in this paper.

Sections~\ref{c6}--\ref{c8} are devoted to the study
of $(TE)$-structures over a germ $\big(M,t^0\big)$ such that
the restriction to $t^0$ is a $(TE)$-structure of type (Log).
Then the set of points over which the $(TE)$-structure
restricts to one of type (Log) is either a hypersurface or
the whole of $M$. In~the first case, it restricts to a fixed
{\it generic} type (Sem) or (Bra) or (Reg) over points
not in the hypersurface. In~the second case, the generic
type is (Log).

Section~\ref{c6} starts this study. It~considers the
cases with trace free $\UU$ and $\dim M=1$. It~has three parts. In~the first part, invariants of
such 1-parameter families are studied. In~a surprisingly
direct way, constraints on the difference of the
{\it leading exponents} (defined in Theorem~\ref{t4.20})
of the logarithmic $(TE)$-structure over $t^0$ are found,
and the monodromy in the generic cases (Sem) and (Bra) turns
out to be semisimple (Theorem~\ref{t6.2}).
By Plemely's result (and our direct calculations),
these cases come equipped with Birkhoff normal forms.
Theorem~\ref{t6.3} in the second part
classifies all $(TE)$-structures over $\big(M,t^0\big)$ with
trace free $\UU$, $\dim M=1$, logarithmic restriction to $t^0$
and Birkhoff normal form.
Theorem~\ref{t6.7} in the third part classifies all
generically regular singular $(TE)$-structures over $\big(M,t^0\big)$
with $\dim M=1$, logarithmic restriction to~$t^0$, and
whose monodromy has a $2\times 2$ Jordan block.
The majority of these cases has no Birkhoff normal
form. Theorems~\ref{t6.3} and~\ref{t6.7} overlap
in the cases which have Birkhoff normal forms.

Section~\ref{c7} makes the moduli spaces of marked regular
singular $(TE)$-structures from Theorem~\ref{t3.28}
explicit in the rank $2$ cases. It~builds on the classification
results for the types (Reg) and (Log) in Section~\ref{c4}.
The long Theorem~\ref{t7.4} describes the moduli spaces
and offers~5 figures in order to make this more transparent.
The moduli spaces have countably many topological components,
and each component consists of an infinite chain of
projective spaces which are either the projective line $\P^1$
or the Hirzebruch surface $\F_2$ or $\www\F_2$ (which is
obtained by blowing down in $\F_2$ the unique $(-2)$-curve).
These moduli spaces simplify in the generic case (Reg)
the main proof in Section~\ref{c8}, the proof of Theorem~\ref{t8.5}.

\looseness=1
Section~\ref{c8} gives complete classification results,
from different points of view. It~has three parts.
Theorem~\ref{t8.1} lists all rank~2 $(TE)$-structures
over a 2-dimensional germ $\big(M,t^0\big)$ such that the restriction
to $t^0$ has a logarithmic pole, such that the
Higgs field is generically primi\-tive,
and such that the induced structure of an $F$-manifold with
Euler field extends to all of~$M$. Theorem~\ref{t8.1}$(d)$
offers explicit normal forms.
Corollary~\ref{t8.3} starts with any logarithmic
rank~2 $(TE)$-structure over a point $t^0$ and lists
the $(TE)$-structures in Theorem~\ref{t8.1}$(d)$
which unfold~it.

Theorem~\ref{t8.5} is the most fundamental result of
Section~\ref{c8}. Table~\eqref{8.12} in it is a sublist
of the $(TE)$-structures in Theorem~\ref{t8.1}$(d)$. Theorem~\ref{t8.5} states that {\it any} unfolding of a rank~2
$(TE)$-structure of type (Log) over a point
is induced by one $(TE)$-structure in table~\eqref{8.12}. In~the generic cases (Reg) and (Log) these are precisely
those in Theorem~\ref{t8.1}$(d)$ with primitive Higgs field,
but in the generic cases (Sem) and (Bra) table~\eqref{8.12}
contains many $(TE)$-structures with only generically
primitive Higgs field. All the $(TE)$-structures in
table~\eqref{8.12} are universal unfoldings of themselves,
also those with only generically primitive Higgs field.
Almost all logarithmic $(TE)$-structures over a point
have several unfoldings which do not induce one another.
Only the logarithmic $(TE)$-structures over a point whose
monodromy has a $2\times 2$ Jordan block and whose
two leading exponents coincide have a universal unfolding.
This follows from Theorem~\ref{t8.5} and Corollary~\ref{t8.3}.

The second part of Section~\ref{c8} starts from the
2-dimensional $F$-manifolds with Euler fields and
discusses how many and which $(TE)$-structures exist over
each of them. It~turns out that the nilpotent $F$-manifold
$\NN_2$ with the Euler field
$E=t_1\paa_1+t_2^r\big(1+c_3t_2^{r-1}\big)\paa_2$ for $r\geq 2$
(case~\eqref{2.12} in~Theorem~\ref{t2.3}) does not have
any $(TE)$-structure over it if $c_3\neq 0$,
and it has no $(TE)$-structure with primitive Higgs field
over it if $c_3\neq 0$ or $r\geq 3$.
However, most 2-dimensional $F$-manifolds with Euler fields
have one or countably many families of $(TE)$-structures
with 1 or 2 parameters over them.

The third part of Section~\ref{c8} is the proof of Theorem~\ref{t8.5}.

In many aspects, the $(TE)$-structures of rank $2$ are probably
typical also for higher rank. But Section~\ref{c9} makes
one phenomenon explicit which arises only in rank $\geq 3$.
Section~\ref{c9} presents a~family of
rank 3 $(TE)$-structures with primitive Higgs fields
over a fixed 3-dimensional globally irreducible $F$-manifold
with nowhere regular Euler field, such that the family
has a {\it functional parameter}. The example is essentially
due to M.~Saito, it is a Fourier--Laplace transformation
of the main example in a preliminary version of~\cite{SaM17}
(though he considers only the bundle and connection over
a 2-dimensional submanifold of the $F$-manifold).

This paper has some overlap with~\cite{DH20-3}
and~\cite{DH20-2}. In~\cite{DH20-3} $(TE)$-structures over the 2-dimensional
$F$-manifold $\NN_2$ (with all possible Euler fields)
were studied. They are of generic types (Bra), (Reg) or (Log). In~\cite[Chapter~8]{DH20-2} $(TE)$-structures over the
2-dimensional $F$-manifolds $I_2(m)$ were studied.
They are of generic type (Sem).
However, in~\cite{DH20-3} and~\cite{DH20-2} the focus was on
$(TE)$-structures with primitive Higgs fields.
Those with generically primitive, but not primitive Higgs
fields were not considered. And the approach to the
classification was very different. It~relied on the formal
classification of rank $2$ $(T)$-structures in~\cite{DH20-1}.
The approach here is independent of these three papers.

\section[The two-dimensional $F$-manifolds and their Euler fields]{The two-dimensional $\boldsymbol{F}$-manifolds and their Euler fields}\label{c2}

$F$-manifolds were first defined in~\cite{HM99}.
Their basic properties were developed in~\cite{He02}.
An overview on them and on more recent results is given
in~\cite{DH20-2}.

\begin{Definition}\label{t2.1}\quad
\begin{enumerate}\itemsep=0pt
\item[$(a)$] An {\it $F$-manifold} $(M,\circ,e)$ (without Euler field)
is a complex manifold $M$ with a holomorphic
commutative and associative multiplication $\circ$
on the holomorphic tangent bundle $TM$, and with a
global holomorphic vector field $e\in\TT_M$ with
$e\circ=\id$ ($e$ is called a {\it unit field}),
which satisfies the following integrability condition:
\begin{gather}\label{2.1}
\Lie_{X\circ Y}(\circ)= X\circ\Lie_Y(\circ)+Y\circ\Lie_X(\circ)\qquad
\text{for}\quad X,Y\in\TT_M.
\end{gather}

\item[$(b)$] Given an $F$-manifold $(M,\circ,e)$, an {\it Euler field} on it is a global
vector field $E\in\TT_M$ with $\Lie_E(\circ)=\circ$.
\end{enumerate}
\end{Definition}

In this paper we are mainly interested in the 2-dimensional
$F$-manifolds and their Euler fields.
They were classified in~\cite{He02}.

\begin{Theorem}[{\cite[Theorem~4.7]{He02}}]\label{t2.2}
In dimension $2$, $($up to isomorphism$)$ the germs of
$F$-manifolds fall into three types:
\begin{enumerate}\itemsep=0pt
\item[$(a)$] The semisimple germ. It~is called $A_1^2$,
and it can be given as follows
\begin{gather*}
(M,0)=\big(\C^2,0\big)\qquad\text{with coordinates}\quad
u=(u_1,u_2)\quad\text{and}\quad e_k=\frac{\paa}{\paa u_k},
\\
e= e_1+e_2,\qquad e_j\circ e_k=\delta_{jk}\cdot e_j.
\end{gather*}
Any Euler field takes the shape
\begin{gather}\label{2.3}
E= (u_1+c_1)e_1+(u_2+c_2)e_2\qquad\text{for some}\quad
c_1,c_2\in\C.
\end{gather}

\item[$(b)$] Irreducible germs, which $($i.e., some holomorphic
representatives of them$)$ are at generic points semisimple.
They form a series $I_2(m)$, $m\in\Z_{\geq 3}$.
The germ of type $I_2(m)$ can be given as follows
\begin{gather*}
(M,0)=\big(\C^2,0\big)\qquad \text{with coordinates}\quad t=(t_1,t_2)\quad
\text{and}\quad\paa_k:=\frac{\paa}{\paa t_k},
\\
e=\paa_1,\qquad \paa_2\circ\paa_2 =t_2^{m-2}e.\label{2.4}
\end{gather*}
Any Euler field takes the shape
\begin{gather*}%\label{2.5}
E= (t_1+c_1)\paa_1 + \frac{2}{m}t_2\paa_2
\qquad\text{for some}\quad c_1\in\C.
\end{gather*}

\item[$(c)$] An irreducible germ, such that the multiplication is
everywhere irreducible. It~is called~$\NN_2$, and it
can be given as follows
\begin{gather*}
(M,0)=\big(\C^2,0\big)\qquad \text{with coordinates}\quad t=(t_1,t_2)
\quad \text{and}\quad
\paa_k:=\frac{\paa}{\paa t_k},\\
e=\paa_1,\quad \paa_2\circ\paa_2 =0.%\label{2.6}
\end{gather*}
Any Euler field takes the shape
\begin{gather}
E= (t_1+c_1)\paa_1 + g(t_2)\paa_2
\qquad\text{for some}\quad c_1\in\C\nonumber
\\
\text{and some function}\quad
g(t_2)\in\C\{t_2\}.\label{2.7}
\end{gather}
\end{enumerate}
\end{Theorem}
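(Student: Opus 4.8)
The plan is to classify the 2-dimensional $F$-manifold germs by analyzing the multiplication tensor $\circ$ together with the integrability condition \eqref{2.1}, and to exploit the unit field $e$ heavily. Since $e\circ=\id$, the whole multiplication on the rank-2 tangent bundle is determined by a single endomorphism: if I pick any vector field $X$ with $e,X$ a local frame of $\TT_M$, then $\circ$ is determined by $X\circ X$, because $e$ acts as identity. Writing $X\circ X = \alpha\, e + \beta\, X$ for holomorphic functions $\alpha,\beta$, I first normalize $X$ (replacing $X$ by $X-\tfrac{\beta}{2}e$, a change that preserves $e$ and the frame property) so that $X\circ X = \gamma\, e$ for a single function $\gamma$. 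Thus every 2-dimensional $F$-manifold germ is, locally and after this normalization, encoded by one function $\gamma$, and the classification reduces to understanding which $\gamma$ are possible and when two of them give isomorphic germs.

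Next I would split into cases according to the pointwise structure of $\circ$ at the base point $0$. The algebra $(\TT_{M,0}/\mathfrak{m}\TT_{M,0},\circ)$ is a 2-dimensional commutative associative $\C$-algebra with unit, so it is either semisimple ($\cong \C\times\C$, giving $\gamma(0)\neq 0$) or local with nilpotent part ($\cong \C[\epsilon]/(\epsilon^2)$, giving $\gamma(0)=0$). The semisimple case \textbf{(a)} is then handled by choosing the idempotent splitting $e=e_1+e_2$: the idempotents $e_1,e_2$ are holomorphic near $0$ by the holomorphic dependence of the spectral decomposition, they satisfy $e_j\circ e_k=\delta_{jk}e_j$, and they commute as vector fields because the integrability condition \eqref{2.1} forces the associated foliations to be jointly integrable; one then produces canonical coordinates $u_1,u_2$ with $e_k=\paa/\paa u_k$. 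This yields the normal form $A_1^2$. In the nilpotent case $\gamma(0)=0$ I would use the integrability condition to constrain $\gamma$ further: the key computation is to expand \eqref{2.1} with $X=Y$ equal to the normalized nilvector, which turns into a differential equation relating the derivatives of $\gamma$ along $e$ and along $X$. The outcome is that, after a further coordinate change, $\gamma$ is determined up to the vanishing order of its ``discriminant'' along the unit-field flow, separating the everywhere-irreducible case $\NN_2$ ($\gamma\equiv 0$ in suitable coordinates, so $\paa_2\circ\paa_2=0$) from the generically semisimple irreducible series $I_2(m)$ ($\gamma=t_2^{m-2}$ after normalization, the exponent $m-2$ being the order of vanishing).

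The main obstacle is the nilpotent case, specifically showing that the integrability condition \eqref{2.1} rigidifies $\gamma$ to exactly the normal forms $t_2^{m-2}$ or $0$ rather than allowing an arbitrary function. The essential point is that $e$ is a \emph{global} unit field with $e\circ=\id$, and $\Lie_e(\circ)$ is controlled: differentiating the unit condition and feeding it into \eqref{2.1} shows that the flow of $e$ acts on $\circ$ in a way that lets me straighten the coordinate $t_1$ so that $e=\paa_1$ and $\gamma$ is independent of $t_1$ (so $\gamma=\gamma(t_2)$), whence $\gamma(t_2)=t_2^{m-2}\cdot(\text{unit})$ and the unit is absorbed by a coordinate change in $t_2$. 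Once the three normal forms are established, the Euler field computations are routine: solving $\Lie_E(\circ)=\circ$ in each normal form is a linear first-order condition on the components of $E$, which in case (a) forces the shape \eqref{2.3}, in case (b) forces the second component to be $\tfrac{2}{m}t_2$, and in case (c) leaves the second component an arbitrary $g(t_2)\in\C\{t_2\}$ as in \eqref{2.7}.
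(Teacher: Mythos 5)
The paper itself gives no proof of Theorem \ref{t2.2}: it is quoted from \cite[Theorem~4.7]{He02}, so there is no in-paper argument to compare against line by line. Judged on its own, your route --- straighten $e$, encode $\circ$ in a single function $\gamma$ via a normalized frame vector $X$ with $X\circ X=\gamma e$, split according to whether $(T_0M,\circ)$ is $\C\times\C$ or $\C[\epsilon]/(\epsilon^2)$, then solve $\Lie_E(\circ)=\circ$ in each normal form --- is sound, and the Euler-field computations you call routine are indeed routine and give exactly \eqref{2.3}, $\frac{2}{m}t_2\paa_2$, and \eqref{2.7}.

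There are, however, two places where what you assert is precisely the mathematical content and still needs proof. First, your headline for the nilpotent case is misleading: condition \eqref{2.1} does not ``rigidify $\gamma$ to exactly $t_2^{m-2}$ or $0$''. Since the defect of \eqref{2.1} is $\OO_M$-bilinear in $X$ and $Y$, it suffices to test it on a frame, and its only consequence here is $\paa_1$-independence of the structure functions; once $e=\paa_1$ and $\gamma=\gamma(t_2)$, every remaining instance of \eqref{2.1} is identically satisfied. So \emph{every} $\gamma(t_2)\in\C\{t_2\}$ with $\gamma(0)=0$ defines an $F$-manifold germ, and the reduction to $t_2^{m-2}$ versus $0$ is entirely a statement about isomorphism, not integrability. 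Second, that isomorphism statement is exactly the step you wave at with ``the unit is absorbed by a coordinate change in $t_2$'': any isomorphism between germs in normalized form fixes $e$, hence has the shape $(t_1,t_2)\mapsto(t_1+c,\lambda(t_2))$, and under it $\gamma$ transforms as a quadratic differential, $\gamma\mapsto(\mu')^2\gamma(\mu(\cdot))$ with $\mu=\lambda^{-1}$. You must therefore solve $(\mu')^2\mu^{m-2}u(\mu)=s^{m-2}$ for a unit $u$; this works (square to remove the two-valued root and apply the implicit function theorem to $\mu^{m}\www{w}(\mu)=\mathrm{const}\cdot s^{m}$), but it is delicate precisely when $m$ is odd, and it is also what shows the vanishing order $m-2$ is an invariant, i.e., $I_2(m)\not\cong I_2(m')$ for $m\neq m'$ --- a part of ``fall into three types'' that your proposal never addresses. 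A smaller repair: in case $(a)$ you need the idempotents to commute as vector fields, $[e_1,e_2]=0$, which is what \eqref{2.1} yields by the standard Hertling--Manin computation; ``jointly integrable foliations'' is strictly weaker and would only give $e_k$ proportional to coordinate fields, not $e_k=\paa/\paa u_k$.
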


The family of Euler fields in~\eqref{2.7} on $\NN_2$
can be reduced by coordinate changes, which respect the
multiplication of $\NN_2$, to a family
with two continuous parameters and one discrete parameter.
This classification is proved in~\cite{DH20-3}. It~is recalled
in Theorem~\ref{t2.3}.
The group $\Aut(\NN_2)$ of automorphisms of the germ
$\NN_2$ of an $F$-manifold is the group of coordinate
changes of $\big(\C^2,0\big)$ which respect the multiplication
of $\NN_2$. It~is
\begin{gather*}%\label{2.8}
\Aut(\NN_2)=\{(t_1,t_2)\mapsto(t_1,\lambda(t_2))\,|\,
\lambda\in \C\{t_2\}\text{ with }\lambda'(0)\neq 0\text{ and }\lambda(0)=0\}.
\end{gather*}

\begin{Theorem}\label{t2.3}
Any Euler field on the germ $\NN_2$ of an $F$-manifold
can be brought by a coordinate change in $\Aut(\NN_2)$
to a unique one in the following family of Euler fields
\begin{gather}\label{2.9}
E = (t_{1} + c_1) \paa_{1} +\paa_{2},
\\
E = (t_{1}+c_1) \paa_{1},\label{2.10}
\\
E = (t_{1}+c_1) \paa_{1} +{c}_2 t_{2}\paa_{2},\label{2.11}
\\
E = (t_{1}+c_1) \paa_{1} + t_{2}^{r}\big(1 + c_3 t_{2}^{r-1}\big)\paa_{2},\label{2.12}
\end{gather}
where $c_1, c_3\in \C$, $c_2\in \C^*$ and $r\in
\Z_{\geq 2}.$
The group $\Aut(\NN_2,E)$ of coordinate changes of
$\big(\C^2,0\big)$ which respect the multiplication of $\NN_2$
and this Euler field is
\begin{gather}%\label{2.13}
\Aut(\NN_2,E)=\{(t_1,t_2)\mapsto (t_1,\gamma(t_2)t_2)\,|\,
\gamma\text{ as in~\eqref{2.14}}\},\nonumber
\\[1ex]
\def\arraystretch{1.3}
\begin{tabular}{c|c|c|c|c}
\hline
\text{Case} &~\eqref{2.9} &~\eqref{2.10} &~\eqref{2.11}&~\eqref{2.12}
\\ \hline
$\gamma\in$ & \{1\} & $\C\{t_2\}^*$ & $\C^*$
&$\big\{{\rm e}^{2\pi {\rm i} l/(r-1)}\,|\, l\in\Z \big\}$\label{2.14}
\\
\hline
\end{tabular}
\end{gather}
\end{Theorem}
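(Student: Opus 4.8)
The plan is to reduce the classification to the local analytic normal form of a single holomorphic vector field in one variable. By Theorem~\ref{t2.2}$(c)$ any Euler field on $\NN_2$ is $E=(t_1+c_1)\paa_1+g(t_2)\paa_2$ with $c_1\in\C$, $g\in\C\{t_2\}$, and any element of $\Aut(\NN_2)$ is $(t_1,t_2)\mapsto(t_1,\lambda(t_2))$ with $\lambda(0)=0$ and $\lambda'(0)\neq 0$. Such a change fixes $t_1$ and does not mix the two summands of $E$: the component $(t_1+c_1)\paa_1$ is preserved, so $c_1$ is an invariant occurring as a free parameter in each of~\eqref{2.9}--\eqref{2.12}, while the other component transforms by the vector-field rule $\www g(\lambda(t_2))=\lambda'(t_2)\,g(t_2)$. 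Everything thus reduces to classifying the germ $g(t_2)\paa_2$ at $0\in\C$ under biholomorphisms fixing $0$, organised by the discrete invariant $r_0:=\mathrm{ord}_0 g\in\{0,1,2,\dots\}\cup\{\infty\}$.

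I would treat the four cases in turn. If $g\equiv 0$ ($r_0=\infty$) the field is already of the form~\eqref{2.10}. If $r_0=0$, then $g^{-1}$ is holomorphic and $\lambda(t_2):=\int_0^{t_2}g^{-1}$ lies in $\Aut(\NN_2)$ and rectifies $g\,\paa_2$ to $\paa_2$, giving~\eqref{2.9}. If $r_0=1$, set $c_2:=g'(0)\in\C^*$; since $c_2/g-1/t_2$ is holomorphic, the separable equation $\lambda'/\lambda=c_2/g$ has a solution in $\Aut(\NN_2)$ linearising $g\,\paa_2$ to $c_2t_2\,\paa_2$, which is~\eqref{2.11}, and $c_2=g'(0)$ is visibly invariant.

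The essential case, and the main obstacle, is $r_0=r\geq 2$, where the linear part vanishes and one must use the analytic normal-form theory for a degenerate singularity of a one-dimensional holomorphic vector field. It is cleanest to pass to the dual meromorphic $1$-form $\omega:=\ddd t_2/g$, which has a pole of order exactly $r$. A scaling $t_2\mapsto\mu t_2$ multiplies the leading polar coefficient by $\mu^{r-1}$ and so normalises it to $1$, fixing $\mu$ only up to an $(r-1)$-th root of unity; this is the origin of the discrete group in~\eqref{2.14}. The intermediate polar coefficients are removed by further coordinate changes, whereas $\mathrm{Res}_0\,\omega$ is a biholomorphic invariant. The classical fact that $(r,\mathrm{Res}_0\,\omega)$ is a \emph{complete} analytic invariant here---the formal normalisation converging in this one-variable situation---then produces $g(t_2)=t_2^r\big(1+c_3t_2^{r-1}\big)$ with $c_3=-\mathrm{Res}_0\,\omega$, i.e.~\eqref{2.12}. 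Uniqueness across the list is then clear: the four families are separated by $r_0\in\{0,1,\geq 2,\infty\}$, and inside a family by the invariants $c_1$, $c_2=g'(0)$, and $(r,c_3)$; note that the root-of-unity scalings fix the normal form~\eqref{2.12}, so $c_3$ is well defined.

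For the symmetry groups I would solve the invariance equation $g(\lambda(t_2))=\lambda'(t_2)\,g(t_2)$ with $\lambda\in\Aut(\NN_2)$ in each normal form, an elementary separable ODE. For $g\equiv 1$ it forces $\lambda'=1$, hence $\lambda=t_2$ and $\gamma=1$; for $g\equiv 0$ it is vacuous, giving all of $\Aut(\NN_2)$, i.e.\ $\gamma\in\C\{t_2\}^*$; for $g=c_2t_2$ it gives $\lambda'/\lambda=1/t_2$, hence $\lambda=\gamma t_2$ with $\gamma\in\C^*$; and for the degenerate normal form the same equation is solved by the scalings $\gamma=\mathrm e^{2\pi\mathrm il/(r-1)}$, reflecting exactly the $(r-1)$-fold ambiguity left after fixing the leading coefficient. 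I expect the convergence of the normalisation in the degenerate case to be the one genuinely nontrivial input; the three other cases and all the symmetry computations are explicit integrations or coefficient comparisons.
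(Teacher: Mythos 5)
The paper itself contains no proof of Theorem~\ref{t2.3}; it is recalled from~\cite{DH20-3}, so your proposal can only be measured against the statement and the standard arguments. The classification half of your proposal is correct and is surely the intended route: the reduction to the one-variable equation $\www g(\lambda(t_2))=\lambda'(t_2)\,g(t_2)$, the invariance of $c_1$, and the case split by $r_0=\operatorname{ord}_0 g$ (flow-box rectification for $r_0=0$, elementary one-dimensional linearisation for $r_0=1$, and for $r_0=r\geq 2$ the classical fact that the pair consisting of $r$ and $\operatorname{Res}_0(\ddd t_2/g)$ is a complete analytic invariant) yields exactly~\eqref{2.9}--\eqref{2.12} with the stated uniqueness. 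Your stabilizer computations for~\eqref{2.9},~\eqref{2.10} and~\eqref{2.11} are also complete; for instance $\lambda=t_2\lambda'$ forces $\lambda=\gamma t_2$, $\gamma\in\C^*$, by comparing coefficients.

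The genuine gap is the stabilizer in case~\eqref{2.12}. There you only verify that the root-of-unity scalings solve the invariance equation; completeness is asserted via ``the $(r-1)$-fold ambiguity left after fixing the leading coefficient'', which is circular, because the normalising coordinate is unique precisely up to the stabilizer you are trying to compute. Moreover, the asserted equality genuinely fails: since $g=t_2^r\big(1+c_3t_2^{r-1}\big)$ vanishes at $0$ to order $\geq 2$, its time-$s$ flow maps $\Phi_s=\exp\big(s\,g\paa_2\big)$ are germs of biholomorphisms fixing $0$ and tangent to the identity, hence $(t_1,t_2)\mapsto(t_1,\Phi_s(t_2))$ lies in $\Aut(\NN_2)$; it preserves $g\paa_2$ (a flow always preserves its generator) and leaves $(t_1+c_1)\paa_1$ untouched, so it lies in $\Aut(\NN_2,E)$. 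Concretely, for $r=2$, $c_3=0$: with $\lambda_s(t_2)=t_2/(1-st_2)$ one has $g(\lambda_s(t_2))=t_2^2/(1-st_2)^2=g(t_2)\lambda_s'(t_2)$, so $(t_1,t_2)\mapsto\big(t_1,t_2/(1-st_2)\big)$ respects $\circ$ and $E=(t_1+c_1)\paa_1+t_2^2\paa_2$ for every $s\in\C$, whereas table~\eqref{2.14} allows only the identity in this case. Thus in case~\eqref{2.12} the group $\Aut(\NN_2,E)$ contains the one-parameter group $\{\Phi_s\}_{s\in\C}$ (in fact it is generated by this flow together with the rotations $t_2\mapsto {\rm e}^{2\pi {\rm i}l/(r-1)}t_2$), so the equality you set out to prove there is not provable as stated: a correct blind proof had to either compute this full centralizer of a degenerate one-dimensional vector field germ or flag the discrepancy with the table. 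None of this affects the normal forms or their uniqueness, which your argument does establish, since uniqueness concerns the orbits and not the isotropy groups.
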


A special class of $F$-manifolds, the regular $F$-manifolds,
is related to a result of Malgrange on universal unfoldings
of $(TE)$-structures, see Remarks~\ref{t3.17}.

\begin{Definition}[{\cite[Definition 1.2]{DH17}}]\label{t2.4}
A {\it regular $F$-manifold} is an $F$-manifold
$(M,\circ,e)$ with Euler field $E$ such that at each
$t\in M$ the endomorphism $E\circ|_t\colon T_tM\to T_tM$
is a regular endomorphism, i.e., it has for each eigenvalue
only one Jordan block.
\end{Definition}

\begin{Theorem}[{\cite[Theorem~1.3(ii)]{DH17}}]\label{t2.5}
For each regular endomorphism of a finite dimensional
$\C$-vector space, there is a unique (up to unique
isomorphism) germ $\big(M,t^0\big)$ of a regular $F$-manifold
such that $E\circ|_{t^0}$ is isomorphic to this endomorphism.
\end{Theorem}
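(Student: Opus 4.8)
The plan is to reconstruct the entire germ from the tangent algebra at the base point, exploiting that regularity of $E\circ$ makes this algebra cyclic. The first step is the decisive piece of linear algebra. Let $\big(A,\circ,e\big):=\big(T_{t^0}M,\circ|_{t^0},e\big)$ be the tangent algebra and $B:=E\circ|_{t^0}$ the given regular endomorphism. Associativity and commutativity of $\circ$ give $B^k(w)=E^{\circ k}\circ w$ for all $w\in A$, hence $q(B)(w)=q(E)\circ w$ for every polynomial $q$, where $q(E):=\sum_k q_k E^{\circ k}$. Therefore $q(E)=q(B)(e)=0$ forces $q(B)(w)=q(E)\circ w=0$ for all $w$, i.e.\ $q(B)=0$; so the map $\C[B]\to A$, $q(B)\mapsto q(E)$, is injective. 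Since $B$ is regular, its minimal and characteristic polynomials coincide, whence $\dim\C[B]=\deg p=\dim A$ for $p$ the characteristic polynomial, and the map is an algebra isomorphism $A\cong\C[x]/(p)$ with $x\mapsto E$. In particular $e,E,E^{\circ 2},\dots,E^{\circ(n-1)}$ is a basis of $A$ and $E^{\circ n}$ is the combination read off from $p$.

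For existence I would first reduce to a single eigenvalue. The spectral projectors of $B$ lie in $\C[B]=A$ by the first step, so they provide an idempotent decomposition of the tangent algebra at $t^0$; by the standard decomposition theorem for $F$-manifolds (see \cite{He02}) this yields a local product of germs of $F$-manifolds with Euler field, reducing to the case where $B$ has a single eigenvalue $\lambda$ with one Jordan block. Replacing $E$ by $E-\lambda e$, which is again an Euler field because the integrability condition \eqref{2.1} forces $\Lie_e(\circ)=0$, reduces further to the nilpotent case $B=N$ with $A\cong\C[x]/(x^n)$ (adding $\lambda e$ back afterwards recovers a general single eigenvalue). As a model I would take the germ of the Kostant slice of companion matrices: $M=\big(\C^n,0\big)$ with coordinates the coefficients $(a_1,\dots,a_n)$ of $\chi(x)=x^n+a_1x^{n-1}+\cdots+a_n$, identify $T_CM$ with the commutant $\C[C]\cong\C[x]/(\chi_C)$ via the canonical splitting of the slice, transport the ring structure to $\circ$, and let $e$, $E$ correspond to $1$, $x$, so that $E\circ|_C=C$. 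Then $E\circ|_t$ is always a companion matrix, hence regular everywhere, and $E\circ|_0=N=B$. It remains to verify \eqref{2.1} and $\Lie_E(\circ)=\circ$, which I would do by making explicit the change from the coordinate frame to the cyclic frame $e,E,\dots,E^{\circ(n-1)}$ and checking both identities directly, exactly as case \eqref{2.9} of Theorem~\ref{t2.3} realises the $n=2$ instance.

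For uniqueness, given any germ $\big(M,\circ,e,E,t^0\big)$ of a regular $F$-manifold with $E\circ|_{t^0}\cong B$, the first step equips $T_{t^0}M$ with the canonical cyclic frame and the relation coming from $p$, matching the model's initial data under a unique linear isomorphism. I would then show that the integrability condition \eqref{2.1}, the Euler condition $\Lie_E(\circ)=\circ$, and regularity together constitute a system that is uniquely solvable from this initial datum: written in the cyclic frame, the structure functions (the $a_i$ as functions on $M$, and the expression of the coordinate fields through the $E^{\circ k}$) are determined along the flows of $e$ and $E$ by a Cauchy--Kovalevskaya-type argument with the prescribed value at $t^0$. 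This forces any such germ to be isomorphic to the model. Finally, an automorphism fixing $e$, $E$ and $t^0$ fixes the whole frame $e,E,\dots,E^{\circ(n-1)}$ and the induced coordinates, hence is the identity (compare $\Aut(\NN_2,E)=\{1\}$ for \eqref{2.9}), giving uniqueness of the isomorphism.

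The main obstacle is this rigidity step. Everything else is linear algebra at a point together with the bookkeeping of passing between the coordinate frame and the $\circ$-power frame; the genuinely technical content is showing that the full multiplication on a neighbourhood is forced by its $1$-jet at $t^0$, i.e.\ unique solvability of the integrability system, and, on the existence side, verifying that the companion-matrix model actually satisfies \eqref{2.1}.
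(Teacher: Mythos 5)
Your Step 1 (the tangent algebra at $t^0$ is the cyclic algebra $\C[B]\cong\C[x]/(p)$ with $e\mapsto1$, $E\mapsto x$) is correct, and so is the reduction to one eigenvalue (idempotents of $\C[B]$ plus the decomposition theorem of~\cite{He02}, and the shift $E\mapsto E-\lambda e$, legitimate because \eqref{2.1} forces $\Lie_e(\circ)=0$). Note also that this paper contains no proof of Theorem~\ref{t2.5}: it is cited from~\cite{DH17}, and the only argument given here is the two-dimensional one of Remark~\ref{t2.6}$(ii)$, which reads the statement off from the classification in Theorems~\ref{t2.2} and~\ref{t2.3}.

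The genuine gap is your existence model. The ``canonical splitting'' you invoke, $M_{n\times n}(\C)=[M_{n\times n}(\C),C]\oplus\C[C]$, holds only for $C$ regular \emph{semisimple}: $[M_{n\times n}(\C),C]$ is the orthogonal complement of $\C[C]$ with respect to the trace form, and that form is degenerate on $\C[C]$ for non-semisimple regular $C$ (for the regular nilpotent one even has $C=[h,C]$ for a suitable diagonal $h$, so $C\in\C[C]\cap[M_{n\times n}(\C),C]$). Hence the identification $T_CM\cong\C[C]$ fails exactly at the origin, the point where you need the germ. Worse, no repair is possible: your slice model is generically semisimple (distinct roots give $\C[x]/(\chi_t)\cong\C^n$), whereas the true regular germ for a Jordan block of size $n\geq2$ has \emph{nowhere} semisimple multiplication. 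Your own reference point shows this: case~\eqref{2.9} is $\NN_2$ with $E=(t_1+c_1)\paa_1+\paa_2$, and on $\NN_2$ the algebra is $\C[x]/\big(x^2\big)$ at \emph{every} point ($\paa_2\circ\paa_2=0$), not a varying family $\C[x]/\big(x^2+a_1x+a_2\big)$; and the generically semisimple $2$-dimensional germs with irreducible algebra at the origin, the $I_2(m)$, admit only Euler fields with $E\circ|_0=c_1\id$ (Theorem~\ref{t2.2}$(b)$), never regular. One can also check directly for $n=2$ that the slice structure is not holomorphic at $0$: at semisimple points the natural identifications give $\paa_{a_2}\circ\paa_{a_2}=\big(a_1^2-4a_2\big)^{-1}(-2\paa_{a_1}-a_1\paa_{a_2})$, with a pole along the discriminant, and idempotent fields proportional to $\paa_{r_i}$ with nonconstant factors, which do not commute. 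The correct model, as in \cite[Theorem~1.3(i)]{DH17} (compare Remark~\ref{t2.6}$(i)$ and the rank-$3$ example in Section~\ref{c9}), keeps the multiplication \emph{constant}, $T_tM\cong\C[x]/(x^n)$ via $\paa_i\leftrightarrow x^{i-1}$, and puts all the variation into the Euler field, e.g.\ $E=(t_1+\lambda)\paa_1+\paa_2+\sum_{i\geq3}(2-i)t_i\paa_i$, for which one checks $[E,\paa_j]=(j-2)\paa_j$ and hence $\Lie_E(\circ)=\circ$. Finally, your uniqueness sketch is also insufficient as stated: the flows of $e$ and $E$ sweep out only a $2$-dimensional piece of $M$, so a Cauchy--Kovalevskaya argument along them cannot determine the structure functions of an $n$-dimensional germ for $n\geq3$; that rigidity is precisely the nontrivial content of~\cite{DH17}.
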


\begin{Remarks}\label{t2.6}\qquad
\begin{enumerate}\itemsep=0pt
\item[$(i)$] For a normal form of this germ of an $F$-manifold,
see~\cite[Theorem~1.3(i)]{DH17}.

\item[$(ii)$]
In dimension 2, this theorem is an easy consequence
of Theorems~\ref{t2.2} and~\ref{t2.3}. The germs of
regular 2-dimensional $F$-manifolds are as follows:
\begin{enumerate}\itemsep=0pt%\setlength{\leftskip}{0.25cm}
\item[$(a)$]
The germ $A_1^2$ in Theorem~\ref{t2.2}$(a)$ with any Euler field
$E=(u_1+c_1)e_1+(u_2+c_2)e_2$
as in~\eqref{2.3} with $c_1,c_2\in\C$, $c_1\neq c_2$.
\item[$(b)$]
The germ $\NN_2$ in Theorem~\ref{t2.2}$(c)$ with any Euler field
$E=(t_1+c_1)\paa_1+\paa_2$ as in~\eqref{2.9}
with $c_1\in\C$.
\end{enumerate}
\end{enumerate}
\end{Remarks}

\section[$(TE)$-structures in general]{$\boldsymbol{(TE)}$-structures in general}\label{c3}

\subsection{Definitions}%\label{c3.1}

A $(TE)$-structure is a holomorphic vector bundle
on $\C\times M$, $M$ a complex manifold, with a~meromorphic connection $\nnn$ with a pole of
Poincar\'e rank 1 along $\{0\}\times M$ and no pole elsewhere.
Here we consider them together with the weaker notion
of $(T)$-structure and the more rigid notions
of a $(TL)$-structure and a $(TLE)$-structure.
The structures had been considered before in~\cite{HM04},
and they are related to structures in~\cite[Chapter~VII]{Sa02}
and in~\cite{Sa05}.

\begin{Definition}\label{t3.1}\qquad
\begin{enumerate}\itemsep=-2pt
\item[$(a)$] Definition of a {\it $(T)$-structure}
$(H\to\C\times M,\nabla)$:
$H\to\C\times M$ is a holomorphic vector bundle.
$\nabla$ is a map\vspace{-.5ex}
\begin{gather}\label{3.1}
\nabla\colon\ \OO(H)\to z^{-1}\OO_{\C\times M}\cdot \Omega^1_M\otimes \OO(H),
\end{gather}
which satisfies the Leibniz rule,\vspace{-.5ex}
\begin{gather*}
\nnn_X(a\cdot s)= X(a)\cdot s+a\cdot \nnn_X s
\qquad\text{for}\quad X\in\TT_M,\quad a\in\OO_{\C\times M},\quad s\in \OO(H),
\end{gather*}
and which is flat (with respect to $X\in\TT_M$,
not with respect to $\paa_z$),\vspace{-.5ex}
\begin{gather*}
\nnn_X\nnn_Y-\nnn_Y\nnn_X=\nnn_{[X,Y]}
\qquad\text{for}\quad X,Y\in \TT_M.
\end{gather*}
Equivalent: For any $z\in\C^*$, the restriction of $\nabla$ to
$H|_{\{z\}\times M}$ is a flat holomorphic connection.

\item[$(b)$] Definition of a {\it $(TE)$-structure}
$(H\to\C\times M,\nabla)\colon H\to\C\times M$ is a holomorphic vector bundle.
$\nabla$ is a flat connection on $H|_{\C^*\times M}$ with a pole
of Poincar\'e rank 1 along $\{0\}\times M$, so it is a map\vspace{-.5ex}
\begin{gather*}%\label{3.2}
\nabla\colon\ \OO(H)\to
\bigl(z^{-1}\OO_{\C\times M}\cdot\Omega^1_M
+z^{-2}\OO_{\C\times M}\cdot{\rm d}z\bigr)\otimes\OO(H)
\end{gather*}
which satisfies the Leibniz rule and is flat.

\item[$(c)$] Definition of a {\it $(TL)$-structure} $\big(H\to\P^1\times M,\nabla\big)\colon
H\to\P^1\times M$ is a holomorphic vector bundle.
$\nabla$ is a map\vspace{-.5ex}
\begin{gather*}%\label{3.3}
\nabla\colon\ \OO(H)\to \big(z^{-1}\OO_{\P^1\times M}+\OO_{\P^1\times M}\big)
\cdot \Omega^1_M\otimes \OO(H),
\end{gather*}
such that for any $z\in\P^1\setminus\{0\}$, the restriction
of $\nabla$ to $H|_{\{z\}\times M}$ is a flat connection. It~is called {\it pure} if for any $t\in M$ the restriction
$H|_{\P^1\times\{t\}}$ is a trivial holomorphic bundle on~$\P^1$.

\item[$(d)$] Definition of a {\it $(TLE)$-structure}
$\big(H\to\P^1\times M,\nabla\big)$:
It is simultaneously a $(TE)$-structure and a
$(TL)$-structure, where the connection $\nabla$
has a logarithmic pole along $\{\infty\}\times M$.
The $(TLE)$-structure is called {\it pure} if the
$(TL)$-structure is pure.
\end{enumerate}
\end{Definition}

\begin{Remark}\label{t3.2}
Here we write the data in Definition~\ref{t3.1}$(a)$--$(b)$
and the compatibility conditions between them in terms
of matrices.
Consider a $(TE)$-structure $(H\to\C\times M,\nnn)$
of rank $\rk H=r\in\N$.
We will fix the notations for a trivialization
of the bundle $H|_{U\times M}$ for some small neighborhood
$U\subset \C$ of 0. Trivialization means the choice of a
basis $\uuuu{v}=(v_1,\dots ,v_r)$ of the bundle $H|_{U\times M}$.
Also, we choose local coordinates $t=(t_1,\dots ,t_n)$
with coordinate vector fields $\paa_i=\paa/\paa t_i$ on $M$.
We write\vspace{-1ex}
\begin{gather}
\nnn\uuuu{v}=\uuuu{v}\cdot\Omega\qquad\text{with}\quad
\Omega = \sum_{i=1}^r z^{-1}\cdot A_i(z,t)\ddd t_i +z^{-2}B(z,t)\ddd z,
\label{3.4}
\\
A_i(z,t)= \sum_{k\geq 0}A_i^{(k)}z^k\in M_{r\times r}(\OO_{U\times M}),
\label{3.5}
\\
B(z,t)=\sum_{k\geq 0} B^{(k)}z^k\in M_{r\times r}(\OO_{U\times M}),
\label{3.6}
\end{gather}
with $A_i^{(k)},B^{(k)}\in M_{r\times r}(\OO_M)$,
but this dependence on $t\in M$ is usually not written
explicity.
The flatness $0=\ddd \Omega+\Omega\land\Omega$ of the connection
$\nnn$ says for $i,j\in\{1,\dots ,n\}$ with $i\neq j${\samepage
\begin{gather}\label{3.7}
0= z\paa_iA_j-z\paa_jA_i+[A_i,A_j],\\
0= z\paa_i B-z^2\paa_z A_i + zA_i + [A_i,B].
\label{3.8}
\end{gather}}

\pagebreak

\noindent
These equations split into the parts for the different powers
$z^k$ for $k\geq 0$ as follows \big(with $A_i^{(-1)}=B^{(-1)}=0$\big),
\begin{gather}\label{3.9}
0= \paa_iA_j^{(k-1)}-\paa_jA_i^{(k-1)}+\sum_{l=0}^k\big[A_i^{(l)},A_j^{(k-l)}\big],
\\
0= \paa_i B^{(k-1)}-(k-2)A_i^{(k-1)}+\sum_{l=0}^k\big[A_i^{(l)},B^{(k-l)}\big].
\label{3.10}
\end{gather}

In the case of a $(T)$-structure, $B$ and all
equations except~\eqref{3.4} which contain $B$ are dropped.

Consider a second $(TE)$-structure
$\big(\www H\to\C\times M,\www\nnn\big)$ of rank $r$ over $M$,
where all data except~$M$ are written with a tilde.
Let $\uuuu{v}$ and $\uuuu{\www v}$ be trivializations.
A holomorphic isomorphism from the first to the second
$(TE)$-structure maps $\uuuu{v}\cdot T$ to $\uuuu{\www v}$,
where
$T=T(z,t) =\sum_{k\geq 0}T^{(k)}z^k\in M_{r\times r}(\OO_{(\C,0)\times M})$
with $T^{(k)}\in M_{r\times r}(\OO_{M})$ and $T^{(0)}$
invertible satisfies
\begin{gather}\label{3.11}
\uuuu{v}\cdot\Omega\cdot T+\uuuu{v}\cdot\ddd T=
\nnn(\uuuu{v}\cdot T)=\uuuu{v}\cdot T\cdot\www\Omega.
\end{gather}
Equation~\eqref{3.11} says more explicitly
\begin{gather}\label{3.12}
0= z\paa_i T+A_i\cdot T-T\cdot\www A_i,
\\
0= z^2\paa_z T+B\cdot T-T\cdot \www B.\label{3.13}
\end{gather}
These equations split into the parts for the different
powers $z^k$ for $k\geq 0$ as follows (with $T^{(-1)}:=0$):
\begin{gather*}%\label{3.14}
0= \paa_i T^{(k-1)}+\sum_{l=0}^k \big(A_i^{(l)}\cdot T^{(k-l)}
-T^{(k-l)}\cdot\www A_i^{(l)}\big),
\\
0= (k-1)T^{(k-1)}+\sum_{l=0}^k\big(B^{(l)}\cdot T^{(k-l)}
-T^{(k-l)}\cdot \www B^{(l)}\big).%\label{3.15}
\end{gather*}
The isomorphism here fixes the base manifold $M$.
Such isomorphisms are called {\it gauge iso\-mor\-phisms}.
A general isomorphism is a composition of a
gauge isomorphism and a coordinate change on $M$
(a coordinate change induces an isomorphism of
$(TE)$-structures, see Lemma~\ref{t3.6}).
\end{Remark}

\begin{Remark}%\label{t3.3}
In this paper we care mainly about $(TE)$-structures
over the 2-dimensional germs of $F$-manifolds with
Euler fields. For each of them except
$(\NN_2,E=(t_1+c_1)\paa_1)$, the group of coordinate
changes of $(M,0)=\big(\C^2,0\big)$ which respect the multiplication
and $E$ is quite small, see Theorem~\ref{t2.3}.
Therefore in this paper, we care mainly about {\it gauge}
isomorphisms of the $(TE)$-structures over these
$F$-manifolds with Euler fields.
\end{Remark}

\begin{Definition}%\label{t3.4}
Let $M$ be a complex manifold.
\begin{enumerate}\itemsep=0pt
\item[$(a)$] The sheaf $\OO_M[[z]]$ on $M$ is defined by
$\OO_M[[z]](U):=\OO_M(U)[[z]]$
for an open subset $U\subset M$
(with $\OO_M(U)$ and $\OO_M[[z]](U)$ the sections of
$\OO_M$ and $\OO_M[[z]]$ on $U$).
Observe that the germ $(\OO_M[[z]])_{t^0}$ for $t^0\in M$
consists of formal power series $\sum_{k\geq 0}f_kz^k$ whose
coefficients $f_k\in\OO_{M,t^0}$ have a common
convergence domain. In~the case of $\big(M,t^0\big)=\big(\C^n,0\big)$ we write
$\OO_{\C^n}[[z]]_0=:\C\{t,z]]$.

\item[$(b)$] A {\it formal $(T)$-structure} over $M$ is a free
$\OO_M[[z]]$-module $\OO(H)$ of some finite rank $r\in\N$
together with a map
$\nnn$ as in~\eqref{3.1}, where $\OO_{\C\times M}$ is replaced
by $\OO_M[[z]]$ which satisfies properties analogous to
$\nnn$ in Definition~\ref{t3.1}$(a)$, i.e., the
Leibniz rule for $X\in\TT_M$, \mbox{$a\in\OO_M[[z]]$}, $s\in\OO(H)$
and the flatness condition for $X,Y\in\TT_M$.

A {\it formal $(TE)$-structure} is defined analogously:
In Definition~\ref{t3.1}$(b)$ one has to replace
$\OO_{\C\times M}$ by $\OO_M[[z]]$.
\end{enumerate}
\end{Definition}

\begin{Remark}%\label{t3.5}
The formulas in Remark~\ref{t3.2} hold also
for formal $(T)$-structures and formal $(TE)$-structures
if one replaces $\OO_{\C\times M}$, $\OO_{U\times M}$
and $\OO_{(\C,0)\times M}$ by $\OO_M[[z]]$.
\end{Remark}

The following lemma is obvious.

\begin{Lemma}\label{t3.6}
Let $(H\to \C\times M,\nnn)$ be a $(TE)$-structure over $M$,
and let $\varphi\colon M'\to M$ be a~holo\-morphic map between
manifolds. One can pull back $H$ and $\nnn$ with
$\id\times\varphi\colon \C\times M'\to\C\times M$.
We~call the pull back $\varphi^*(H,\nnn)$. It~is a
$(TE)$-structure over $M'$. We say that the
pull back $\varphi^*(H,\nnn)$ is induced by the $(TE)$-structure
$(H,\nnn)$ via the map $\varphi$.
\end{Lemma}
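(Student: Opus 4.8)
The plan is to verify the three defining properties of a $(TE)$-structure for $\varphi^*(H,\nnn)$ directly, exploiting that $\Phi:=\id\times\varphi\colon\C\times M'\to\C\times M$ is holomorphic and, crucially, acts as the identity on the $\C$-factor. First I would record that $\Phi^*H$ is automatically a holomorphic vector bundle of rank $r$ on $\C\times M'$, with a local frame obtained by pulling back any local frame $\uuuu v$ of $H$. Next I would argue flatness abstractly: since $\Phi$ restricts to a holomorphic map $\C^*\times M'\to\C^*\times M$ and the pull back of a flat connection under a holomorphic map is again flat (because $\Phi^*$ commutes with $\ddd$ and with $\land$, so it carries the curvature form $\ddd\Omega+\Omega\land\Omega$ of $\nnn$ to that of $\varphi^*\nnn$), the connection $\varphi^*\nnn$ is flat on $H|_{\C^*\times M'}$. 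The Leibniz rule is inherited for the same formal reason.

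The only computation I expect to carry out is the one tracking the pole order. Choosing coordinates $t=(t_1,\dots,t_n)$ on $M$ and $s=(s_1,\dots,s_m)$ on $M'$, writing $\varphi$ as $t_i=\varphi_i(s)$, and trivializing $H|_{U\times M}$ by $\uuuu v$ with $\nnn\uuuu v=\uuuu v\cdot\Omega$ as in~\eqref{3.4}, I would use $\Phi^*z=z$, $\Phi^*\ddd z=\ddd z$ and $\Phi^*\ddd t_i=\ddd\varphi_i=\sum_k(\paa\varphi_i/\paa s_k)\,\ddd s_k$ to compute, in the frame $\Phi^*\uuuu v$,
\begin{gather*}
\varphi^*\Omega=\sum_{k}z^{-1}A'_k(z,s)\,\ddd s_k+z^{-2}B'(z,s)\,\ddd z,\\
A'_k(z,s):=\sum_i A_i(z,\varphi(s))\frac{\paa\varphi_i}{\paa s_k},\qquad
B'(z,s):=B(z,\varphi(s)).
\end{gather*}
Since $A_i,B\in M_{r\times r}(\OO_{U\times M})$ are holomorphic and are power series in $z$ starting at $z^0$ (as in~\eqref{3.5}--\eqref{3.6}), and since $\varphi$ and the derivatives $\paa\varphi_i/\paa s_k$ are holomorphic in $s$ and independent of $z$, the composites $A'_k$ and $B'$ lie in $M_{r\times r}(\OO_{U\times M'})$ and again start at $z^0$. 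This is precisely the normal form~\eqref{3.4}, so $\varphi^*\nnn$ has a pole of Poincar\'e rank $1$ along $\{0\}\times M'$ and no pole elsewhere.

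I expect the hard part to be essentially vacuous, and locating exactly why is the point: the only way the argument could fail would be if $\varphi$ introduced new poles in $z$ or raised the pole order along $\{0\}\times M'$. Because $\Phi$ is the identity on the $\C$-factor, it neither moves nor thickens the divisor $\{0\}\times M'$, and composing the $z$-power-series coefficients of $A_i$ and $B$ with the holomorphic map $\varphi$ preserves holomorphy; so the pole structure is transported unchanged. This decoupling of the $z$-direction (where the pole lives) from the $M$-direction (where $\varphi$ acts) is what makes the lemma obvious, and it is the one observation I would make explicit.
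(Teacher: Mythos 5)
Your proof is correct, and the paper offers no comparison point: it simply declares the lemma obvious and gives no proof at all. Your verification --- in particular the observation that $\id\times\varphi$ is the identity in the $z$-direction, so that $\Phi^*\Omega=\sum_k z^{-1}A'_k\,\ddd s_k+z^{-2}B'\,\ddd z$ with $A'_k$, $B'$ holomorphic and the Poincar\'e rank~1 pole structure transported unchanged --- is exactly the routine computation the author is suppressing, so your argument is the spelled-out version of the paper's implicit one.
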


\begin{Remarks}\qquad%\label{t3.7}
\begin{enumerate}\itemsep=0pt
\item[$(i)$] We will give in Theorem~\ref{t8.5}
and in Corollary~\ref{t5.1} and Lemma~\ref{t5.2}$(iv)$
a~classification of rank $2$ $(TE)$-structures
over germs $\big(M,t^0\big)=\big(\C^2,0\big)$ of 2-dimensional manifolds
such that any rank $2$ $(TE)$-structure over a germ
$(M',s^0)$ is obtained as the pull back $\varphi^*(H,\nnn)$
of a~rank~2 $(TE)$-structure in the classification
via a holomorphic map $\varphi\colon \big(M',s^0\big)\to \big(M,t^0\big)$.

\item[$(ii)$] Here the behaviour of the $(TE)$-structure $(H,\nnn)$
over $\big(M,t^0\big)=\big(\C^2,0\big)$ with coordinates $t=(t_1,t_2)$
along $t_1$ is quite trivial. It~is convenient to
split it off. The next subsection does this in greater
generality.
\end{enumerate}
\end{Remarks}

\subsection[$(TE)$-structures with trace free pole part]{$\boldsymbol{(TE)}$-structures with trace free pole part}
%\label{c3.2}

\begin{Definition}\label{t3.8}
Let $(H\to\C\times M,\nnn)$ be a $(TE)$-structure.
Define the vector bundle $K:=H|_{\{0\}\times M}$ over $M$.
The {\it pole part} of the $(TE)$-structure is the endomorphism
$\UU\colon K\to K$ which is defined by
\begin{eqnarray}\label{3.16}
\UU:=\big[z^2\nnn_{\paa_z}\big]\colon\ K\to K.
\end{eqnarray}
The pole part is {\it trace free} if $\tr\UU=0$ on $M$.
\end{Definition}

The following lemma gives formal invariants of a $(TE)$-structure.

\begin{Lemma}\label{t3.9}
Let $(H\to\C\times M,\nnn)$ be a $(TE)$-structure
of rank $r\in\N$ over a manifold $M$.
By~a~for\-mal invariant of the $(TE)$-structure,
we mean an invariant of its formal isomorphism class.

\begin{enumerate}\itemsep=0pt
\item[$(a)$] Its pole part $\UU$, that means the pair $(K,\UU)$
up to isomorphism, is a formal invariant of the
$(TE)$-structure. Especially, the holomorphic functions
$\delta^{(0)}:=\det\UU\in\OO_M$ and
$\rho^{(0)}:=\frac{1}{r}\tr\UU\allowbreak\in\OO_M$ are formal invariants.

\item[$(b)$] For any $t^0\in M$, fix an $\OO_{M,t^0}$-basis $\uuuu{v}$
of $\OO(H)_{(0,t^0)}$,
consider the matrices in~\eqref{3.4}--\eqref{3.6},
consider the function $\rho^{(1)}:=\frac{1}{r}\tr B^{(1)}\in\OO_{M,t^0}$,
and consider the functions
$\delta^{(k)}\in\OO_{M,t^0}$ for~$k\in\N_0$
which are defined by writing $\det B$ as a power series
\begin{gather*}%\label{3.17}
\det B=\sum_{k\geq 0}\delta^{(k)}z^k.
\end{gather*}
Then the functions $\delta^{(1)}$ and $\rho^{(1)}$ are
independent of the choice of the basis $\uuuu{v}$.
The locally for any~$t^0$ defined functions $\delta^{(1)}$ and
$\rho^{(1)}$ glue to global holomorphic functions
$\delta^{(1)}\in\OO_M$ and $\rho^{(1)}\in\OO_M$.
They are formal invariants.
Furthermore, the function $\rho^{(1)}$ is constant
on any component of $M$.
\end{enumerate}
\end{Lemma}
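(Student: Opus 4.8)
The plan is to read everything off the two matrix gauge equations \eqref{3.12}--\eqref{3.13} together with the flatness equations \eqref{3.10}, by comparing coefficients of powers of $z$. For part $(a)$, first observe that in any trivialization $\uuuu v$ the endomorphism $\UU=\big[z^2\nnn_{\paa_z}\big]$ is represented on $K=H|_{\{0\}\times M}$ by the matrix $B^{(0)}$, since $z^2\nnn_{\paa_z}\uuuu v=\uuuu v\cdot B$ and one restricts to $z=0$. Given a formal isomorphism with transition matrix $T$, I would extract the coefficient of $z^0$ from \eqref{3.13}: the term $z^2\paa_z T$ starts in degree $z^2$, so the $z^0$ part reads $0=B^{(0)}T^{(0)}-T^{(0)}\www B^{(0)}$, i.e., $\www B^{(0)}=\big(T^{(0)}\big)^{-1}B^{(0)}T^{(0)}$. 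Since $T^{(0)}$ is exactly the isomorphism induced on the fibres of $K$ at $z=0$, this says $(K,\UU)\cong\big(\www K,\www\UU\big)$, and taking $\det$ and $\tr$ gives the invariance of $\delta^{(0)}=\det\UU$ and $\rho^{(0)}=\frac1r\tr\UU$.

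For the invariance of $\delta^{(1)}$ in part $(b)$, the key manipulation is to rewrite \eqref{3.13} as $T\www B=BT+z^2\paa_z T=\big(B+z^2(\paa_z T)T^{-1}\big)T$, where $T$ is invertible as a formal power series because $T^{(0)}$ is. Taking determinants and cancelling the invertible factor $\det T$ yields
\begin{gather*}
\det\www B=\det\big(B+z^2(\paa_z T)T^{-1}\big).
\end{gather*}
Since the correction term $z^2(\paa_z T)T^{-1}$ vanishes to order $z^2$, the matrices $\www B$ and $B+z^2(\paa_z T)T^{-1}$ agree modulo $z^2$, hence so do their determinants; comparing coefficients of $z^0$ and $z^1$ gives $\www\delta^{(0)}=\delta^{(0)}$ and $\www\delta^{(1)}=\delta^{(1)}$. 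For $\rho^{(1)}$ I would instead take the coefficient of $z^1$ in \eqref{3.13}, namely $0=B^{(0)}T^{(1)}+B^{(1)}T^{(0)}-T^{(0)}\www B^{(1)}-T^{(1)}\www B^{(0)}$, solve for $\www B^{(1)}$, substitute $\www B^{(0)}=\big(T^{(0)}\big)^{-1}B^{(0)}T^{(0)}$ from part $(a)$, and take the trace. Using cyclicity of the trace, the contribution $\big(T^{(0)}\big)^{-1}B^{(1)}T^{(0)}$ gives $\tr B^{(1)}$, while the two terms involving $T^{(1)}$ turn out to have equal traces and cancel; this leaves $\tr\www B^{(1)}=\tr B^{(1)}$, so that $\rho^{(1)}=\frac1r\tr B^{(1)}$ is invariant.

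These last two computations are carried out for an arbitrary formal transition matrix $T$, so they directly establish that $\delta^{(1)}$ and $\rho^{(1)}$ are formal invariants; specializing to two trivializations of one and the same $(TE)$-structure shows in particular that they do not depend on the chosen basis $\uuuu v$. Consequently the locally defined functions agree on overlaps and glue to global holomorphic functions on $M$. Finally, for the constancy of $\rho^{(1)}$ on components I would take the trace of the flatness equation \eqref{3.10} in the case $k=2$, which for each $i$ reads $0=\paa_iB^{(1)}+\big[A_i^{(0)},B^{(2)}\big]+\big[A_i^{(1)},B^{(1)}\big]+\big[A_i^{(2)},B^{(0)}\big]$; every commutator is trace free, so $\paa_i\tr B^{(1)}=0$ for all $i$, forcing $\rho^{(1)}$ to be locally constant.

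The only real subtlety is the determinant identity: once one sees that the correction term in \eqref{3.13} is of order $z^2$ and therefore invisible to the first two coefficients of $\det B$, the rest is routine coefficient bookkeeping. The trace cancellation for $\rho^{(1)}$ is the other point requiring a little care, but both reduce to the cyclic invariance of the trace and the conjugation relation for $\www B^{(0)}$ already obtained in part $(a)$.
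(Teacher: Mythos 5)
Your proof is correct and follows essentially the same route as the paper's: the relation $\www B = T^{-1}BT + z^2\,T^{-1}\paa_z T$ from \eqref{3.13} gives the invariance of $\UU$, $\delta^{(0)}$, $\rho^{(0)}$ and $\delta^{(1)}$, the $z^1$-coefficient of \eqref{3.13} together with cyclicity of the trace gives $\rho^{(1)}$ (the paper packages your two cancelling terms as a commutator, whose trace vanishes), and the trace of \eqref{3.10} at $k=2$ gives the constancy of $\rho^{(1)}$. One phrasing slip: it is $B$ and $B+z^2(\paa_z T)T^{-1}$ (not $\www B$) that agree modulo $z^2$ --- $\www B$ is only conjugate to that matrix --- but since your displayed identity already equates $\det\www B$ with $\det\bigl(B+z^2(\paa_z T)T^{-1}\bigr)$ exactly, the conclusion $\det\www B\equiv\det B\pmod{z^2}$ stands.
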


\begin{proof} $\UU$, $\delta^{(0)}$, $\rho^{(0)}$ and $\delta^{(1)}$
are formal invariants because of~\eqref{3.13}:
$\www B = T^{-1}BT+z^2 \cdot T^{-1}\paa_z T.$
For $\rho^{(1)}$, observe additionally
\begin{gather*}
{\www B}^{(1)}= \big(T^{(0)}\big)^{-1}B^{(1)}T^{(0)}
+ \big[\big(T^{(0)}\big)^{-1}B^{(0)}T^{(0)}, \big(T^{(0)}\big)^{-1}T^{(1)}\big].
\end{gather*}
Recall also that the trace of a commutator of matrices is 0.
Therefore $\rho^{(1)}$ is a formal invariant.

Equation~\eqref{3.10} for $k=2$ implies
$\paa_i\tr \big(B^{(1)}\big)=0$, so the function $\rho^{(1)}$ is constant.
\end{proof}

The following lemma is obvious.

\begin{Lemma}\label{t3.10}
Let $(H\to\C\times M,\nnn)$ be a $(TE)$-structure
of rank $r\in\N$ over a manifold $M$.

\begin{enumerate}\itemsep=0pt
\item[$(a)$] Consider a holomorphic function $g\colon M\to\C$.
The trivial line bundle $H^{[1]}=\C\times\allowbreak(\C\times M)\to
\C\times M$ over $\C\times M$ with connection
$\nnn^{[1]}:={\rm d}+{\rm d}\big(\frac{g}{z}\big)$ defines a~$(TE)$-structure of rank~$1$ over~$M$, whose sheaf of sections
with connection is called $\EE^{g/z}$.

\item[$(b)$] $(\OO(H),\nnn)\otimes \EE^{g/z}$ for $g$ as in $(a)$
is a $(TE)$-structure.

\item[$(c)$] The $(TE)$-structure $\big(H^{[2]}\to\C\times M,\nnn^{[2]}\big)$ with
$\big(\OO\big(H^{[2]}\big),\nnn^{[2]}\big)=(\OO(H),\nnn)\otimes \EE^{\rho^{(0)}/z}$
has trace free pole part. And, of course,
$(\OO(H),\nnn)\cong \big(\OO\big(H^{[2]}\big),\nnn^{[2]}\big)\otimes\EE^{-\rho^{(0)}/z}$.
If $\uuuu{v}$ is a $\C\{t,z\}$-basis of $\OO(H)_0=\OO\big(H^{[2]}\big)_0$,
then the matrix valued connection $1$-forms $\Omega$ and
$\Omega^{[2]}$ of $\nnn$ and $\nnn^{[2]}$ with respect to this
basis satisfy
$\Omega=\Omega^{[2]}-{\rm d}\big(\frac{\rho^{(0)}}{z}\big)\cdot {\bf 1}_r$.

\item[$(d)$] $($Definition$)$ Consider a $(TE)$-structure
$\big(H^{[3]}\to \C\times M^{[3]},\nnn^{[3]}\big)$ with trace free
pole part. Consider the manifold $M^{[4]}:=\C\times M^{[3]}$
with $($local$)$ coordinates $t_1$ on $\C$ and $t'$ on $M^{[3]}$,
and the projection $\varphi^{[4]}\colon M^{[4]}\to M^{[3]}$,
$(t_1,t')\mapsto t'$.
Define the $(TE)$-structure $\big(H^{[4]}\to \C\times M^{[4]},
\nnn^{[4]}\big)$ with
$\big(\OO\big(H^{[4]}\big),\nnn^{[4]}\big)
=\big(\varphi^{[4]}\big)^*\big(\OO\big(H^{[3]}\big),\nnn^{[3]}\big)\otimes \EE^{t_1/z}$.

\item[$(e)$] If the $(TE)$-structure $\big(H^{[2]},\nnn^{[2]}\big)$ is induced
by the $(TE)$-structure $\big(H^{[3]},\nnn^{[3]}\big)$ via a map
$\varphi\colon M\to M^{[3]}$, then the $(TE)$-structure $(H,\nnn)$
is induced by the $(TE)$-structure $\big(H^{[4]},\nnn^{[4]}\big)$
via the map $(-\rho^{(0)},\varphi)\colon M\to M^{[4]}=\C\times M^{[3]}$.
\end{enumerate}
\end{Lemma}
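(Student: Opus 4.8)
The plan is to verify the five parts in turn; parts $(a)$--$(d)$ are direct computations with connection $1$-forms, and only part~$(e)$ carries real content, which will reduce to the functoriality of pull-back together with part~$(c)$. For part~$(a)$ I would write the connection $1$-form of $\nnn^{[1]}$ as $\Omega^{[1]}=\ddd\big(\frac{g}{z}\big)=z^{-1}\ddd g-gz^{-2}\ddd z$; its $\ddd t_i$-coefficients $z^{-1}\paa_i g$ lie in $z^{-1}\OO$ and its $\ddd z$-coefficient $-gz^{-2}$ lies in $z^{-2}\OO$, so the pole has Poincar\'e rank~$1$, and flatness is immediate since $\Omega^{[1]}$ is exact and scalar-valued, whence $\ddd\Omega^{[1]}+\Omega^{[1]}\land\Omega^{[1]}=0$. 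For part~$(b)$, the connection $1$-form of $(\OO(H),\nnn)\otimes\EE^{g/z}$ is $\Omega+\Omega^{[1]}\cdot{\bf 1}_r$; adding the scalar $\Omega^{[1]}$ preserves the pole orders just checked, and a tensor product of flat connections is flat, so the result is again a $(TE)$-structure.

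For part~$(c)$, specializing $g=\rho^{(0)}$ gives $\Omega^{[2]}=\Omega+\ddd\big(\frac{\rho^{(0)}}{z}\big)\cdot{\bf 1}_r$, which is exactly the claimed relation $\Omega=\Omega^{[2]}-\ddd\big(\frac{\rho^{(0)}}{z}\big)\cdot{\bf 1}_r$; the $\ddd z$-part of $\Omega^{[2]}$ becomes $z^{-2}\big(B-\rho^{(0)}{\bf 1}_r\big)\ddd z$, so $\UU^{[2]}=\UU-\rho^{(0)}\id$, and since $\tr\UU=r\rho^{(0)}$ this is trace free. The isomorphism $(\OO(H),\nnn)\cong\big(\OO\big(H^{[2]}\big),\nnn^{[2]}\big)\otimes\EE^{-\rho^{(0)}/z}$ then follows because $\EE^{\rho^{(0)}/z}\otimes\EE^{-\rho^{(0)}/z}$ is the trivial rank-$1$ $(TE)$-structure, as $\ddd\big(\frac{\rho^{(0)}}{z}\big)+\ddd\big(\frac{-\rho^{(0)}}{z}\big)=0$. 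Part~$(d)$ is a definition and requires no argument.

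Part~$(e)$ is the heart of the statement. I would first record two naturality facts: for a holomorphic map $\varphi\colon M'\to M$ and $g\in\OO_M$ one has $\varphi^*\EE^{g/z}=\EE^{(g\circ\varphi)/z}$ (since $\ddd(g/z)$ pulls back to $\ddd((g\circ\varphi)/z)$), and pull-back commutes with the tensor product. Setting $\psi:=(-\rho^{(0)},\varphi)\colon M\to\C\times M^{[3]}=M^{[4]}$, the key observations are that $\varphi^{[4]}\circ\psi=\varphi$, because $\varphi^{[4]}$ is the projection to $M^{[3]}$, and that $t_1\circ\psi=-\rho^{(0)}$. Then
\[
\psi^*\big(\OO\big(H^{[4]}\big),\nnn^{[4]}\big)
=\psi^*\big(\varphi^{[4]}\big)^*\big(\OO\big(H^{[3]}\big),\nnn^{[3]}\big)\otimes\psi^*\EE^{t_1/z}
=\varphi^*\big(\OO\big(H^{[3]}\big),\nnn^{[3]}\big)\otimes\EE^{-\rho^{(0)}/z},
\]
using $\big(\varphi^{[4]}\circ\psi\big)^*=\varphi^*$ and $\psi^*\EE^{t_1/z}=\EE^{(t_1\circ\psi)/z}=\EE^{-\rho^{(0)}/z}$. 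By hypothesis $\varphi^*\big(\OO\big(H^{[3]}\big),\nnn^{[3]}\big)\cong\big(\OO\big(H^{[2]}\big),\nnn^{[2]}\big)$, and part~$(c)$ identifies the resulting right-hand side with $(\OO(H),\nnn)$, so $(H,\nnn)$ is induced by $\big(H^{[4]},\nnn^{[4]}\big)$ via $\psi$. I expect no genuine obstacle here; the only care needed is bookkeeping the sign in $\psi=(-\rho^{(0)},\varphi)$ and the pull-back of the coordinate function $t_1$, which together produce exactly the factor $\EE^{-\rho^{(0)}/z}$ demanded by part~$(c)$.
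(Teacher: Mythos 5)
Your proposal is correct. The paper offers no proof at all — it introduces Lemma~\ref{t3.10} with ``The following lemma is obvious'' — and your computations (pole orders and flatness of $\ddd+\ddd(g/z)$, the shift $\UU^{[2]}=\UU-\rho^{(0)}\id$ giving trace freeness, and in part~$(e)$ the identities $\varphi^{[4]}\circ\psi=\varphi$ and $\psi^*\EE^{t_1/z}=\EE^{(t_1\circ\psi)/z}=\EE^{-\rho^{(0)}/z}$ combined with part~$(c)$) are exactly the routine verifications the paper leaves implicit, with the sign bookkeeping handled correctly.
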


Part $(c)$ allows to go from an arbitrary $(TE)$-structure to one
with trace free pole part, and to go back to the original one.
Part $(e)$ considers two $(TE)$-structures as in part $(c)$,
an original one and an associated one with trace free pole part.
If the associated one is induced by a third $(TE)$-structure,
then the original one is induced by a closely related
$(TE)$-structure with one parameter more.
Lemma~\ref{t3.11} continues Lemma~\ref{t3.10}.

\begin{Lemma}\label{t3.11}
Let $\big(H\to\C\times \big(M,t^0\big),\nnn\big)$ be a $(TE)$-structure
of rank $r\in\N$ over a germ $\big(M,t^0\big)$ of a manifold,
with coordinates $t=(t_1,\dots ,t_n)$ and $\paa_i:=\paa/\paa t_i$.
We suppose $t^0=0$ so that $\OO_{(\C\times M,(0,t^0))}=\C\{t,z\}$.
Recall the functions $\rho^{(0)}$ and $\rho^{(1)}$
of the $(TE)$-structure from Lemma~$\ref{t3.9}$.

Consider the $(TE)$-structure
$\big(H^{[2]},\nnn^{[2]}\big)$ from Lemma~$\ref{t3.10}$
with trace free pole part which is defined by
$\big(\OO\big(H^{[2]}\big),\nnn^{[2]}\big):= (\OO(H),\nnn)\otimes \EE^{\rho^{(0)}/z}$.
Here $H^{[2]}=H$, but
$\nnn^{[2]}=\nnn+{\rm d}\big(\frac{\rho^{(0)}}{z}\big)\cdot\id$.
The matrices $A_i$ and $B$ in~\eqref{3.4}--\eqref{3.6}
for the $(TE)$-structure $\big(H^{[2]},\nnn^{[2]}\big)$
of any $\C\{t,z\}$-basis $\uuuu{v}$ of $\OO\big(H^{[2]}\big)_0$
satisfy
\begin{gather}\label{3.18}
0=\tr A_i^{(0)}=\tr B^{(0)}=\tr \big(B^{(1)}-\rho^{(1)}{\bf 1}_r\big).
\end{gather}
The basis $\uuuu{v}$ can be chosen such that
the matrices satisfy
\begin{eqnarray}\label{3.19}
0=\tr A_i=\tr\big(B-z\rho^{(1)}{\bf 1}_r\big).
\end{eqnarray}
\end{Lemma}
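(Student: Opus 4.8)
The plan is to prove the two assertions of Lemma~\ref{t3.11} separately: first the trace identities~\eqref{3.18} that hold for \emph{any} basis, then the existence of a special basis realizing~\eqref{3.19}. For the first part, I would start from the defining relation $\nnn^{[2]}=\nnn+\ddd\big(\frac{\rho^{(0)}}{z}\big)\cdot\id$, which by Lemma~\ref{t3.10}$(c)$ means the connection forms satisfy $\Omega^{[2]}=\Omega-\ddd\big(\frac{\rho^{(0)}}{z}\big)\cdot{\bf 1}_r$. The key observation is that $\UU^{[2]}=\UU-\rho^{(0)}\cdot\id$ is trace free by the very definition of $\rho^{(0)}=\frac1r\tr\UU$, and since $\UU=[B^{(0)}]$ as an endomorphism of $K$, this forces $\tr B^{(0)}=0$. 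For the vanishing $\tr A_i^{(0)}=0$, I would compare the $z^0$-part of the new $A_i$ against the old one: the $\Omega_M$-part of the twist $\ddd\big(\frac{\rho^{(0)}}{z}\big)$ is $\frac1z\ddd\rho^{(0)}$, so $A_i^{[2],(0)}=A_i^{(0)}-\paa_i\rho^{(0)}\cdot{\bf 1}_r$, and the integrability equation~\eqref{3.10} with $k=1$ (which reads $0=\paa_i B^{(0)}+A_i^{(0)}+[\ldots]$) taken in trace, together with $\tr B^{(0)}=0$, should pin down $\tr A_i^{(0)}$ in the twisted structure to vanish. For the third identity, I would use Lemma~\ref{t3.9}$(b)$: $\rho^{(1)}=\frac1r\tr B^{(1)}$ is a formal invariant and is constant, so $\tr\big(B^{(1)}-\rho^{(1)}{\bf 1}_r\big)=\tr B^{(1)}-r\rho^{(1)}=0$ is immediate from the definition.

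For the second part, the goal is to pass from an arbitrary basis satisfying only the trace-\emph{level} conditions~\eqref{3.18} to a special basis satisfying the stronger \emph{all-order} conditions~\eqref{3.19}, namely $\tr A_i=0$ for all $i$ and $\tr\big(B-z\rho^{(1)}{\bf 1}_r\big)=0$ identically in $z$. The strategy is a gauge transformation: replace $\uuuu{v}$ by $\uuuu{v}\cdot T$ for a suitable scalar $T=f(z,t)\cdot{\bf 1}_r$ with $f\in\C\{t,z\}^*$, $f(0,0)\neq 0$. Under such a scalar gauge the transformation rules~\eqref{3.12} and~\eqref{3.13} simplify dramatically because $T$ commutes with everything: the conjugation terms $T^{-1}A_iT$ and $T^{-1}BT$ leave $A_i,B$ unchanged, and only the inhomogeneous terms survive. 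Concretely, the new matrices become $\www A_i=A_i+z\,\frac{\paa_i f}{f}\cdot{\bf 1}_r$ and $\www B=B+z^2\frac{\paa_z f}{f}\cdot{\bf 1}_r$. Taking traces, $\tr\www A_i=\tr A_i+rz\,\paa_i\log f$ and $\tr\www B=\tr B+rz^2\,\paa_z\log f$.

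So the task reduces to solving for a single scalar potential $h:=\log f$ the equations $\paa_i h=-\frac{1}{rz}\tr A_i$ for each $i$ and $\paa_z h=-\frac{1}{rz^2}\big(\tr B-rz\rho^{(1)}\big)$. Here I would verify two things. First, these right-hand sides are genuinely holomorphic in $\C\{t,z\}$: since $\tr A_i^{(0)}=0$ by~\eqref{3.18}, the quantity $\tr A_i$ is divisible by $z$, so $\frac1z\tr A_i\in\C\{t,z\}$; likewise $\tr B=z\rho^{(1)}+O(z^2)$ because $\tr B^{(0)}=0$ and $\tr B^{(1)}=r\rho^{(1)}$, so $\tr B-rz\rho^{(1)}$ is divisible by $z^2$. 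Second, this overdetermined system is integrable, i.e.\ the mixed partials agree; this is precisely where I expect the main work to lie, and it is exactly what the flatness equations~\eqref{3.9}--\eqref{3.10} deliver. The cross-derivative condition $\paa_i\paa_j h=\paa_j\paa_i h$ amounts to $\paa_i\big(\frac1z\tr A_j\big)=\paa_j\big(\frac1z\tr A_i\big)$, which follows upon taking the trace of~\eqref{3.7} (the commutator $[A_i,A_j]$ is traceless); similarly $\paa_i\paa_z h=\paa_z\paa_i h$ follows from tracing~\eqref{3.8}, again using that $\tr[A_i,B]=0$ and that the surviving term $zA_i$ contributes compatibly. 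Once integrability is checked, a primitive $h\in\C\{t,z\}$ exists by the holomorphic Poincar\'e lemma, and $f=\exp h\in\C\{t,z\}^*$ gives the desired basis. The main obstacle, then, is purely the integrability verification; everything else is bookkeeping with the trace functional, which annihilates commutators and thereby kills most terms.
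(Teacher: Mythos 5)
Your proposal is correct, and for the identities~\eqref{3.18} it is the paper's own argument: $\tr B^{(0)}=\tr\UU^{[2]}=0$ by trace-freeness of the pole part, $\tr A_i^{(0)}=0$ from the trace of~\eqref{3.10} at $k=1$ together with $\paa_i\tr B^{(0)}=0$, and the third identity from $\rho^{(1)}=\frac1r\tr B^{(1)}$. (One harmless sign slip: $\nnn^{[2]}=\nnn+\ddd\big(\frac{\rho^{(0)}}{z}\big)\cdot\id$ gives $\Omega^{[2]}=\Omega+\ddd\big(\frac{\rho^{(0)}}{z}\big)\cdot{\bf 1}_r$, not minus, cf.\ Lemma~\ref{t3.10}$(c)$; either way the twist only changes $A_i^{(0)}$ and $B^{(0)}$, which is exactly what is needed to identify the twisted $\tr B^{(1)}$ with $r\rho^{(1)}$.)

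For~\eqref{3.19} you use the same mechanism as the paper --- a scalar gauge $T={\rm e}^h\cdot{\bf 1}_r$ --- but with a genuinely different decomposition of the work. You solve the single Pfaffian system $\paa_ih=-\frac{1}{rz}\tr A_i$, $\paa_zh=-\frac{1}{rz^2}\big(\tr B-rz\rho^{(1)}\big)$, whose right-hand sides lie in $\C\{t,z\}$ precisely because of~\eqref{3.18}, and whose integrability you verify by tracing~\eqref{3.7} (the $t_i$--$t_j$ directions) and~\eqref{3.8} (the $t_i$--$z$ direction, where the traced identity $\paa_i\tr B=z\paa_z\tr A_i-\tr A_i$ makes the mixed partials match, using also $\paa_i\rho^{(1)}=0$ from Lemma~\ref{t3.9}); one application of the holomorphic Poincar\'e lemma then finishes. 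The paper stages the same construction: the explicit potential $g\in z\C\{t,z\}$ of~\eqref{3.20} first normalizes $\tr B$ by direct term-by-term $z$-integration (no integrability check needed there); then no gauge at all is spent on $\tr A_i^{(l)}$ for $l\geq 2$, since these vanish automatically from the trace of~\eqref{3.10} for $k\geq 3$ once $\tr B$ is normalized; finally a $z$-independent potential $h\in\C\{t\}$ removes $\tr A_i^{(1)}$, with integrability given by the trace of~\eqref{3.9} at $k=2$. So your route buys a one-shot argument (the trace defect, divided by the appropriate powers of $z$, is a closed holomorphic $1$-form, hence exact) at the cost of the mixed $z$--$t$ integrability computation; the paper's route avoids that computation and instead exposes that most of~\eqref{3.19} is forced by flatness once finitely many coefficients are arranged. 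The two constructions in fact produce the same basis up to a constant scalar, since any residual scalar gauge $T={\rm e}^u{\bf 1}_r$ preserving~\eqref{3.19} satisfies $z\paa_iu=z^2\paa_zu=0$, i.e.\ $u$ is constant.
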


\begin{proof}
Any $\C\{t,z\}$-basis $\uuuu{v}$ of
$\OO\big(H^{[2]}\big)_0=\OO(H)_0$ satisfies
\begin{gather*}
\tr B^{(0)}=\tr\UU^{[2]}=0 \qquad
\text{as}\ \big(H^{[2]},\nnn^{[2]}\big)\ \text{has trace free pole part,}
\\
\tr A_i^{(0)}=0 \qquad
\text{because of}\ \tr\paa_iB^{(0)}=\paa_i\tr B^{(0)}=0
\ \text{and}\ \eqref{3.10}\ \text{for}\ k=1 ,
\\
 \tr\big(B^{(1)}- \rho^{(1)}{\bf 1}_r\big)=0 \qquad
\text{by Lemma~\ref{t3.9} and especially},
\\
\Omega =\Omega^{[2]}-\ddd\bigg(\frac{\rho^{(0)}}{z}\bigg)\cdot{\bf 1}_r
=\Omega^{[2]}-\sum_{i=1}^n z^{-1}\frac{\paa\rho^{(0)}}{\paa t_i}
\cdot{\bf 1}_r\ddd t_i + z^{-2}\rho^{(0)}\cdot{\bf 1}_r \ddd z.
\end{gather*}

Start with an arbitrary basis $\uuuu{v}$,
consider the function
\begin{gather}\label{3.20}
g:=\frac{1}{r}\sum_{k\geq 2}
\frac{-\tr B^{(k)}}{k-1}\cdot z^{k-1}\in z\C\{t,z\},
\end{gather}
{\samepage
consider $T:={\rm e}^g\cdot{\bf 1}_r$, and
$\uuuu{\www v}:=\uuuu{v}\cdot T$.~\eqref{3.13} gives
\begin{gather*}
\www B=B+T^{-1}z^2\paa_z T = B+ \bigg({-}\sum_{k\geq 2}\tr B^{(k)}z^k\bigg)\cdot \frac{1}{r}{\bf 1}_r,
\end{gather*}
so $\tr \www B^{(k)}=0$ for $k\geq 2$, $\www B^{(1)}=B^{(1)}$,
$\www B^{(0)}=B^{(0)}$.}

Therefore now suppose $\tr \big(B-z\rho^{(1)}{\bf 1}_r\big)=0$.
\eqref{3.10} for $k\geq 3$ gives
$\tr A_i^{(l)}=0$ for $l\geq 2$, because
$\tr \paa_i B^{(l)}=\paa_i \tr B^{(l)}=0$.

Finally, we consider $T=T^{(0)}={\rm e}^h\cdot{\bf 1}_r$ for a
suitable function $h\in\C\{t\}$.
Then $\www B=B$, $\www A_i^{(k)}=A_i^{(k)}$ for $k\neq 1$,
and $\www A_i^{(1)}=A_i^{(1)}+\paa_i h\cdot{\bf 1}_r$.
So we need $h\in\C\{t\}$ with
$\paa_ih=-\frac{1}{r}\tr A_i^{(1)}$.
Such a function exists because~\eqref{3.9} for $k=2$
implies $\paa_i \tr A_j^{(1)}=\paa_j \tr A_i^{(1)}$.
We have obtained a basis $\uuuu{v}$ with
$\tr\big(B-z\rho^{(1)}{\bf 1}_r\big)=0$ and $\tr A_i=0$ for all $i$.
\end{proof}

\subsection[$(TE)$-structures over $F$-manifolds with Euler fields]{$\boldsymbol{(TE)}$-structures over $\boldsymbol{F}$-manifolds with Euler fields}%\label{c3.3}

The pole part of a $(T)$-structure (or a $(TE)$-structure)
over $\C\times M$ along $\{0\}\times M$ induces a~Higgs bundle (together with $\UU$).
This is elementary (e.g.,~\cite{DH20-1} or~\cite{He03}).

\begin{Lemma}\label{t3.12}
Let $(H\to\C\times M,\nnn)$ be a $(T)$-structure.
Define $K:=H|_{\{0\}\times M}$.
Then $C:=[z\nnn]\in\Omega^1(M,{\rm End}(K))$,
more explicitly
\begin{eqnarray}\label{3.21}
C_X[a]:= [z\nnn_X a]\qquad\text{for}\quad X\in \TT_M,\quad a\in\OO(H),
\end{eqnarray}
is a Higgs field, i.e., the endomorphisms $C_X,C_Y\colon K\to K$
for $X,Y\in\TT_M$ commute.

If $(H\to\C\times M)$ is a $(TE)$-structure,
then its pole part $\UU\colon K\to K$ commutes with all
endomorphisms $C_X$, $X\in\TT_M$, short: $[C,\UU]=0$.
\end{Lemma}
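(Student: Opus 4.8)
The plan is to read off both claims from the matrix equations in Remark~\ref{t3.2}, specialized to $z=0$. First I would fix a point $t^0\in M$, choose a trivialization $\uuuu{v}$ of $H$ near $(0,t^0)$, and write the connection forms as in~\eqref{3.4}--\eqref{3.6}, so that $\nnn_{\paa_i}\uuuu{v}=\uuuu{v}\cdot z^{-1}A_i(z,t)$ and $\nnn_{\paa_z}\uuuu{v}=\uuuu{v}\cdot z^{-2}B(z,t)$. By definition~\eqref{3.21}, the Higgs endomorphism $C_{\paa_i}$ acts on the fiber $K=H|_{\{0\}\times M}$ by the matrix $A_i^{(0)}$ (the $z^0$-coefficient of $A_i$), since $C_{\paa_i}[\uuuu{v}]=[z\nnn_{\paa_i}\uuuu{v}]=[A_i(z,t)\uuuu{v}]=\uuuu{v}\cdot A_i^{(0)}$ at $z=0$. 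Likewise, by~\eqref{3.16}, $\UU$ acts on $K$ by the matrix $B^{(0)}$, because $\UU[\uuuu{v}]=[z^2\nnn_{\paa_z}\uuuu{v}]=[B(z,t)\uuuu{v}]=\uuuu{v}\cdot B^{(0)}$ at $z=0$.

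The commutativity of the Higgs field then follows by evaluating the integrability relation~\eqref{3.9} at the lowest power $k=0$. With the convention $A_i^{(-1)}=0$, equation~\eqref{3.9} for $k=0$ reads
\begin{gather*}
0=\big[A_i^{(0)},A_j^{(0)}\big],
\end{gather*}
which is precisely the statement $[C_{\paa_i},C_{\paa_j}]=0$ on the coordinate frame. Since the $C_X$ are $\OO_M$-linear in $X$, this extends to all $X,Y\in\TT_M$, giving the Higgs property. For the commutation with $\UU$, I would instead use~\eqref{3.10} at $k=0$. With $B^{(-1)}=0$ that equation becomes
\begin{gather*}
0=-(0-2)A_i^{(0)}+\big[A_i^{(0)},B^{(0)}\big]=2A_i^{(0)}+\big[A_i^{(0)},B^{(0)}\big].
\end{gather*}
This is not yet what I want, so the right identity to use is~\eqref{3.10} at $k=1$, which with the same conventions gives $0=\paa_iB^{(0)}-(1-2)A_i^{(0)}+[A_i^{(0)},B^{(1)}]+[A_i^{(1)},B^{(0)}]$; the clean vanishing of the commutator at $z=0$ is extracted from the lowest-order consistency between~\eqref{3.9} and~\eqref{3.10}.

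The one genuine subtlety, and the step I expect to require care, is which power of $z$ in~\eqref{3.10} actually delivers $[A_i^{(0)},B^{(0)}]=0$ cleanly, since a naive reading of $k=0$ produces the spurious term $2A_i^{(0)}$ coming from the $(k-2)$ factor. The correct approach is to read the full flatness equation~\eqref{3.8}, namely $0=z\paa_iB-z^2\paa_zA_i+zA_i+[A_i,B]$, and take its value at $z=0$: every term carrying an explicit factor of $z$ vanishes there, leaving exactly $0=[A_i,B]|_{z=0}=[A_i^{(0)},B^{(0)}]$. This is $[C_{\paa_i},\UU]=0$, and by $\OO_M$-linearity in $X$ it yields $[C_X,\UU]=0$ for all $X\in\TT_M$, i.e., $[C,\UU]=0$. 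I would present the argument in this order: identify $C_{\paa_i}$ and $\UU$ with $A_i^{(0)}$ and $B^{(0)}$, obtain commutativity of the $C_{\paa_i}$ from~\eqref{3.7} at $z=0$, and obtain $[C,\UU]=0$ from~\eqref{3.8} at $z=0$, each time noting that passing from the frame $\paa_i$ to arbitrary $X,Y$ is immediate by $\OO_M$-linearity.
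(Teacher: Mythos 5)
Your proof is correct, but note that the paper itself gives no proof of Lemma~\ref{t3.12}: it declares the statement elementary and cites~\cite{DH20-1} and~\cite{He03}, so your matrix computation is exactly the kind of argument being left to the reader, and it is the standard one. The identification of $C_{\paa_i}$ with $A_i^{(0)}$ and of $\UU$ with $B^{(0)}$, the use of~\eqref{3.7} (equivalently~\eqref{3.9} at $k=0$) at $z=0$ for $\big[A_i^{(0)},A_j^{(0)}\big]=0$, the use of~\eqref{3.8} at $z=0$ for $\big[A_i^{(0)},B^{(0)}\big]=0$, and the passage to arbitrary $X$, $Y$ by $\OO_M$-linearity are all sound.

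One correction to your side remark, however: there is no ``spurious term'' in~\eqref{3.10} at $k=0$. The term in question is $-(k-2)A_i^{(k-1)}$, which at $k=0$ equals $2A_i^{(-1)}=0$ by the stated convention, not $2A_i^{(0)}$; you shifted the index. So~\eqref{3.10} at $k=0$ reads $0=\big[A_i^{(0)},B^{(0)}\big]$ and already delivers the commutation with $\UU$ cleanly --- it is, after all, just the $z^0$-coefficient of the equation~\eqref{3.8} that you end up using instead. Your detour through~\eqref{3.8} evaluated at $z=0$ is perfectly valid, but the reason you give for needing it is mistaken. A last, very minor point: the lemma also asserts that $C$ is well defined as an element of $\Omega^1(M,\mathrm{End}(K))$, i.e., that $[z\nnn_Xa]$ depends only on $[a]$ and is $\OO_M$-linear in $[a]$; this follows in one line from $\nnn_X(z\OO(H))\subset\OO(H)$ and is worth recording if you want the proof to cover the full statement.
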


\begin{Definition}\label{t3.13}
The Higgs field of a $(T)$-structure or a $(TE)$-structure
$(H\to\C\times M,\nnn)$ is {\it primitive}
if there is an open cover $\mathcal V$ of $M$ and for any
$U\in \mathcal V$
a section $\zeta_{U}\in {\mathcal O} (K\vert_{U})$
(called a~{\it local primitive section})
with the property that the map
$\TT_U\ni X\rightarrow C_{X} \zeta_{U} \in \OO(K)$
is an~isomorphism.
\end{Definition}

Theorems~\ref{t3.14} and~\ref{t3.16} show in two ways
that primitivity of a Higgs field is a good condition.
Theorem~\ref{t3.14} was first proved in
\cite[Theorem~3.3]{HHP10} (but see also
\cite[Lemma~10]{DH20-1}).

\begin{Theorem}\label{t3.14}
A $(T)$-structure $(H\rightarrow \mathbb{C}\times M, \nabla )$
with primitive Higgs field induces a multiplication $\circ$
on $TM$ which makes $M$ an $F$-manifold.
A $(TE)$-structure $(H\rightarrow \mathbb{C}\times M, \nabla )$
with primitive Higgs field induces in addition
a vector field $E$ on $M$, which, together with $\circ$,
makes $M$ an $F$-manifold with Euler field.
The multiplication $\circ$, unit field $e$ and Euler field $E$
$($the latter in the case of a $(TE)$-structure$)$, are defined by
\begin{gather*}%\label{3.22}
C_{X\circ Y} = C_{X} C_{Y},\qquad
C_{e} =\mathrm{Id},\qquad
C_{E} = - {\mathcal U},
\end{gather*}
where $C$ is the Higgs field defined by $\nabla$, and
$\mathcal U$ is defined in~\eqref{3.16}.
\end{Theorem}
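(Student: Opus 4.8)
The plan is to work locally and verify that the three defining equations produce a well-defined, commutative, associative multiplication with unit $e$, and that the integrability condition~\eqref{2.1} and the Euler-field condition $\Lie_E(\circ)=\circ$ follow from the flatness equations~\eqref{3.9}--\eqref{3.10} of the connection. First I would fix a local primitive section $\zeta=\zeta_U$, so that $X\mapsto C_X\zeta$ is an $\OO_U$-linear isomorphism $\TT_U\xrightarrow{\sim}\OO(K)$. Since by Lemma~\ref{t3.12} the endomorphisms $\{C_X\}_{X\in\TT_U}$ pairwise commute, the composition $C_XC_Y$ is again an endomorphism of $K$, and applying it to the primitive section gives $C_XC_Y\zeta\in\OO(K)$; by the isomorphism there is a unique vector field, which I define to be $X\circ Y$, characterized by $C_{X\circ Y}\zeta=C_XC_Y\zeta$. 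One checks that this $\circ$ is independent of the choice of $\zeta$ (a different primitive section differs by an invertible endomorphism commuting with all $C_X$), so the locally defined multiplications glue. Commutativity of $\circ$ is immediate from $[C_X,C_Y]=0$, associativity from associativity of composition of the $C$'s, and $\OO_U$-bilinearity from the $\OO_U$-linearity of $X\mapsto C_X$. The unit field $e$ is defined by $C_e=\mathrm{Id}$, i.e.\ $e$ is the preimage of $\zeta$ under $X\mapsto C_X\zeta$; then $C_{e\circ X}=C_eC_X=C_X$ forces $e\circ X=X$.

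The substantive point is the integrability condition~\eqref{2.1}, and this is where I expect the main work to lie. The strategy is to translate~\eqref{2.1} into a statement about the commutators $[C_X,C_Y]$ and the covariant derivatives of the Higgs field. Flatness of $\nnn$ in the $M$-directions, read off from~\eqref{3.9} at the relevant order in $z$, yields the ``potentiality'' or Higgs-bundle integrability: the $z^{-1}$-part of the connection is closed, which at the level of $K$ says that the Higgs field $C$ satisfies $\nnn^{r}_X(C_Y)-\nnn^{r}_Y(C_X)=C_{[X,Y]}$ for the induced residue connection $\nnn^r$ on $K$ (together with $[C_X,C_Y]=0$ from Lemma~\ref{t3.12}). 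The idea is to express $\Lie_{X}(\circ)$ in terms of the covariant derivative of $C$ against the primitive section and then use these two identities to verify that $\Lie_{X\circ Y}(\circ)-X\circ\Lie_Y(\circ)-Y\circ\Lie_X(\circ)$ vanishes. Concretely I would evaluate everything on $\zeta$, convert Lie derivatives of $\circ$ into expressions $C_{\Lie_X(\circ)(Y,Z)}\zeta$, and reduce the desired identity to the symmetry and flatness relations just described; the commutativity $[C,\UU]=0$ and $[C_X,C_Y]=0$ are what make the bracket terms collapse.

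For the Euler structure, I would read~\eqref{3.10} (the equations coming from the $\ddd z$-direction, i.e.\ the pole part $\UU$) against the primitive section. Defining $E$ by $C_E=-\UU$—possible and unique because $\UU$ commutes with all $C_X$ by Lemma~\ref{t3.12}, so $\UU\zeta\in\OO(K)$ has a unique preimage—one computes $\Lie_E(\circ)$ on a pair $(X,Y)$ by differentiating the identity $C_{X\circ Y}\zeta=C_XC_Y\zeta$ along $E$ and using the compatibility between $\UU$ and the $M$-derivatives of the $C_X$ encoded in~\eqref{3.8}/\eqref{3.10}. The expected outcome is that the extra term produced by the order-two pole is exactly $\circ$ itself, giving $\Lie_E(\circ)=\circ$. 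The main obstacle throughout is bookkeeping: correctly identifying which $z$-graded piece of~\eqref{3.9}--\eqref{3.10} encodes each geometric condition on $K$, and tracking the induced residue connection $\nnn^r$ on $K$ so that the Lie derivatives of $\circ$ and the covariant derivatives of $C$ are matched without sign or ordering errors. Once the dictionary between the matrix flatness equations and the $F$-manifold axioms is set up cleanly, each of commutativity, associativity, unit, integrability and the Euler condition follows by evaluating on $\zeta$ and invoking $[C_X,C_Y]=[C,\UU]=0$.
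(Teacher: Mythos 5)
Your proposal cannot be compared line-by-line with ``the paper's own proof'' for a simple reason: the paper does not prove Theorem~\ref{t3.14} at all, but cites \cite[Theorem~3.3]{HHP10} (and \cite[Lemma~10]{DH20-1}) for it. Measured against those sources, your route is the standard and correct one, and I checked that it does close up. The algebraic part of your first paragraph is fine; the cleanest formulation is that $C_{X\circ Y}=C_XC_Y$ holds as endomorphisms (evaluate on sections $C_Z\zeta_U$ and use Lemma~\ref{t3.12}), and that $X\mapsto C_X$ is injective by primitivity, so $\circ$ is determined by $C$ alone, which gives well-definedness and gluing at once. Your identification of the analytic inputs is also the right one: in a local trivialization, \eqref{3.9} for $k=1$ gives the potentiality relation $[D_{\paa_i},C_{\paa_j}]=[D_{\paa_j},C_{\paa_i}]$ for the locally defined connection $D_{\paa_i}=\paa_i+A_i^{(1)}$, and \eqref{3.10} for $k=1$ gives $D_{\paa_i}\UU+C_{\paa_i}+\big[C_{\paa_i},B^{(1)}\big]=0$; from these two identities together with $[C_X,C_Y]=0$, $[C_X,\UU]=0$ and the injectivity of $X\mapsto C_X$, both \eqref{2.1} and $\Lie_E(\circ)=\circ$ follow by exactly the evaluation argument you describe (the ``$+C_{\paa_i}$'' forced by the order-two pole is what produces the extra copy of $\circ$ in the Euler identity).

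Two caveats. First, a terminological slip: for a $(T)$-structure there is no canonical ``residue connection'' on $K$, since $\nnn$ has a pole of order $1$ in the $M$-directions; extracting a connection on $K$ requires a trivialization, and the object you need is the trivialization-dependent $D_{\paa_i}=\paa_i+A_i^{(1)}$ above. This is harmless because \eqref{2.1} and $\Lie_E(\circ)=\circ$ are local and trivialization-independent, but the write-up should not suggest the connection is intrinsic. Second, the core computation is only announced, and it does not quite reduce to ``bracket terms collapse by commutativity''. Writing $P_{ij}:=[D_{\paa_i},C_{\paa_j}]$ and $C_kC_l=a_{kl}^mC_m$, after substituting the differentiated multiplication law
\begin{gather*}
P_{ik}C_l+C_kP_{il}=\paa_i\big(a_{kl}^m\big)C_m+a_{kl}^mP_{im}
\end{gather*}
into the image under $X\mapsto C_X$ of $\Lie_{X\circ Y}(\circ)-X\circ\Lie_Y(\circ)-Y\circ\Lie_X(\circ)$, the symmetry $P_{ij}=P_{ji}$ and $[C_i,C_j]=0$ cancel most terms, but one is left with
\begin{gather*}
[P_{ik},C_j]C_l+C_k[P_{il},C_j]-a_{kl}^m[P_{im},C_j],
\end{gather*}
which vanishes only after the further observation that taking the commutator of the displayed relation with $C_j$ kills the $\paa_i\big(a_{kl}^m\big)$-term. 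This is a small step, but it is the one place where the proof is not pure bookkeeping, and a complete write-up must include it.
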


Definition~\ref{t3.15} recalls the notions of an
{\it unfolding} and of a {\it universal unfolding}
of a $(TE)$-structure over a germ of a manifold
from~\cite[Definition 2.3]{HM04}. It~turns out that any $(TE)$-structure over a germ of a manifold
with primitive Higgs field is a universal unfolding of itself.
Interestingly, we will see in Theorem~\ref{t8.5}
also $(TE)$-structures which are universal unfoldings
of themselves, but where the Higgs bundle is only
generically primitive. Still in the examples which
we consider, the base manifold is an $F$-manifold with
Euler field globally.

Malgrange~\cite{Ma86} proved that a $(TE)$-structure
over a point $t^0$ has a universal unfolding
with primitive Higgs field if the endomorphism
$\UU\colon K_{t^0}\to K_{t^0}$ is {\it regular}, i.e.,
it has for each eigenvalue only one Jordan block.
A generalization was given by Hertling and Manin~\cite[Theorem~2.5]{HM04}.
Theorem~\ref{t3.16} cites in part $(b)$
the generalization.
Part $(a)$ is the special case of a $(TE)$-structure
with primitive Higgs field.
Part $(c)$ is the special case of a $(TE)$-structure
over a~point, Malgrange's result.

\begin{Definition}\label{t3.15}
Let $\big(H\to\C\times \big(M,t^0\big),\nnn\big)$ be a~$(TE)$-structure over a~germ $\big(M,t^0\big)$ of a~mani\-fold.
\begin{enumerate}\itemsep=0pt
\item[$(a)$] An {\it unfolding} of it is a $(TE)$-structure
$\big(H^{[1]}\to\C\times \big(M\times \C^{l_1},\big(t^0,0\big)\big),\nnn^{[1]}\big)$
over a germ $\big(M\times \C^{l_1},\big(t^0,0\big)\big)$
(for some $l_1\in\N_0$)
together with a fixed isomorphism
\begin{gather*}%\label{3.23}
i^{[1]}\colon \ \big(H\to\C\times \big(M,t^0\big),\!\nnn\big)
\to \big(H^{[1]}\to\C\times \big(M\times \C^{l_1},\big(t^0,0\big)\big),
\!\nnn^{[1]}\big)|_{\C\times (M\times\{0\},(t^0,0))} .
\end{gather*}

\item[$(b)$] One unfolding $\big(H^{[1]}\to\C\times\big(M\times \C^{l_1},
\big(t^0,0\big)\big),\nnn^{[1]},i^{[1]}\big)$ {\it induces}
a second unfolding $\big(H^{[2]}\to\C\times\big(M\times \C^{l_2},
\big(t^0,0\big)\big),\nnn^{[2]},i^{[2]}\big)$ if there
are a holomorphic map germ
\begin{gather*}%\label{3.24}
\varphi\colon\ \big(M\times \C^{l_2},\big(t^0,0\big)\big)\to \big(M\times \C^{l_1},\big(t^0,0\big)\big),
\end{gather*}
which is the identity on $M\times \{0\}$,
and an isomorphism $j$ from the second unfolding to the
pullback of the first unfolding by $\varphi$ such that
\begin{gather}\label{3.25}
i^{[1]}= j|_{\C\times (M\times \{0\},(t^0,0))}\circ i^{[2]}.
\end{gather}
(Then $j$ is uniquely determined by $\varphi$ and~\eqref{3.25}.)

\item[$(c)$] An unfolding is {\it universal} if it induces any unfolding
via a unique map $\varphi$.
\end{enumerate}
\end{Definition}

By definition of a universal unfolding in part $(c)$,
a $(TE)$-structure has (up to canonical isomorphism)
at most one universal unfolding, because any two universal
unfoldings induce one another by unique maps.

\begin{Theorem}\label{t3.16}\qquad

\begin{enumerate}\itemsep=0pt
\item[$(a)$] {\rm (\cite[Theorem~2.5]{HM04})}
A $(TE)$-structure over a germ $\big(M,t^0\big)$ with
primitive Higgs field is a~universal unfolding of itself.

%\medskip
\item[$(b)$] {\rm (\cite[Theorem~2.5]{HM04})}
Let $\big(H\to\C\times\big(M,t^0\big),\nnn\big)$ be a $(TE)$-structure over
a germ $\big(M,t^0\big)$ of a~manifold.
Let $\big(K\to \big(M,t^0\big),C\big)$ be the induced Higgs bundle over
$\big(M,t^0\big)$. Suppose that a~vec\-tor $\zeta_{t^0}\in K_{t^0}$
with the following properties exists:
\begin{enumerate}\itemsep=0pt\setlength{\leftskip}{0.25cm}
\item[$(IC)$] $($Injectivity condition$)$ The map
$C_\bullet \zeta_{t^0}\colon T_{t^0}M\to K_{t^0}$ is injective.
\item[$(GC)$] $($Generation condition$)$ $\zeta_{t^0}$ and
its images under iteration of the maps
$\UU|_{t^0}\colon K_{t^0}\to K_{t^0}$ and $C_X\colon K_{t^0}\to K_{t^0}$
for $X\in T_{t^0}M$ generate $K_{t^0}$.
\end{enumerate}
Then a universal unfolding of the $(TE)$-structure over
a germ $\big(M\times \C^l,\big(t^0,0\big)\big)$ $(l\in\N_0$ suitable$)$
exists. It~is unique up to isomorphism.
Its Higgs field is primitive.

\item[$(c)$] {\rm (\cite{Ma86})} A $(TE)$-structure over a point $t^0$ has
a universal unfolding with primitive Higgs field
if the endomorphism
$\big[z^2\nnn_{\paa_z}\big]=\UU\colon K_{t^0}\to K_{t^0}$
is regular, i.e., it has for each eigenvalue only one
Jordan block. In~that case, the germ of the $F$-manifold with Euler field
which underlies the universal unfolding,
is by definition $($Definition~$\ref{t2.4})$ regular.
\end{enumerate}
\end{Theorem}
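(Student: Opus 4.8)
The three parts are organized so that (b) is the substantive statement and (a), (c) are special cases, so the plan is to prove (b) first and then deduce the other two. \emph{For (c)} the base is a point, whence $T_{t^0}M=0$, the Higgs field is trivial, and the injectivity condition (IC) holds vacuously. The generation condition (GC) then reduces to the requirement that $\zeta_{t^0},\UU\zeta_{t^0},\UU^2\zeta_{t^0},\dots$ span $K_{t^0}$, i.e.\ that $\zeta_{t^0}$ be a cyclic vector for $\UU$; such a vector exists precisely when $\UU$ is regular, so (c) is exactly the case of (b) in which one chooses a cyclic vector. \emph{For (a)}, primitivity supplies a local section $\zeta$ for which $X\mapsto C_X\zeta$ is an isomorphism $T_{t^0}M\to K_{t^0}$; this is injective, giving (IC), and already surjective, giving (GC) without the use of $\UU$. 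Part (b) then produces a universal unfolding, and since $\zeta$ generates $K_{t^0}$ through $C$ alone, no extra parameters are forced: $\dim M=\rk K$ gives $l=0$, so the structure is its own universal unfolding.

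\emph{Plan for (b).} I would construct the universal unfolding over $\big(M\times\C^l,(t^0,0)\big)$ with $l$ chosen so that the total base dimension equals $\rk K=r$, the extra coordinates $s_1,\dots,s_l$ standing for the directions along which the $\UU$-iterates of $\zeta$ appearing in (GC) are turned into Higgs directions. The construction is recursive: writing the connection forms as in \eqref{3.4}--\eqref{3.6} but now as power series in $(t,s)$, I would solve the flatness equations \eqref{3.9}--\eqref{3.10} order by order in the new variables, prescribing the new terms through the maps $X\mapsto C_X\zeta_{t^0}$ and $\UU$. Condition (GC) guarantees that these maps are jointly surjective onto $K_{t^0}$, so each step is solvable, while the commutativity $[C,\UU]=0$ from Lemma~\ref{t3.12} together with the ambient flatness should make the obstruction at each order vanish. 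This yields a formal unfolding, and the analytic crux is then to prove convergence of the resulting series, which I would attack by a majorant/Cauchy--Kovalevskaya estimate (or, in the point case, by invoking Malgrange's analytic solution of the associated Birkhoff problem).

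\emph{Universality and primitivity.} To show the constructed object induces every unfolding, I would read off the first-order (Kodaira--Spencer) datum of an arbitrary unfolding as a map into $K_{t^0}$ and use (GC) to check that the directions built above realize all such first-order deformations; this produces an inducing map $\varphi$ formally, which the same recursion then extends to all orders and which a convergence argument makes holomorphic. Uniqueness of $\varphi$ is where (IC) enters: the injectivity of $C_\bullet\zeta_{t^0}$ forces the derivative of $\varphi$ along $M$ and, inductively, its higher jets. Finally, since in the unfolding the Euler field satisfies $C_E=-\UU$ (Theorem~\ref{t3.14}), generation of $K_{t^0}$ by $\zeta$ under $\UU$ and $C$ becomes generation under $C$ alone, so the extended $\zeta$ is a primitive section and the Higgs field of the universal unfolding is primitive.

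\emph{Main obstacle.} I expect the genuine difficulty to lie in the convergence of the formally constructed unfolding, and in the matching convergence of the inducing map $\varphi$ together with the precise verification of its uniqueness; the algebraic solvability and the vanishing of obstructions are comparatively routine consequences of (IC), (GC) and $[C,\UU]=0$.
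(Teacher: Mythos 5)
The first thing to note is that the paper does not prove this theorem at all: parts $(a)$ and $(b)$ are cited from [HM04, Theorem~2.5] and part $(c)$ from Malgrange [Ma86], and the only argument the paper records is the observation (Remarks~\ref{t3.17}$(i)$) that $(a)$ and $(c)$ are special cases of $(b)$. Your reductions of $(a)$ and $(c)$ to $(b)$ are correct and reproduce exactly that observation, with the right details: for $(c)$, $T_{t^0}M=0$ makes $(IC)$ vacuous and $(GC)$ becomes the existence of a cyclic vector for $\UU$, which is equivalent to regularity (one Jordan block per eigenvalue); for $(a)$, primitivity makes $C_\bullet\zeta_{t^0}$ an isomorphism, so $(IC)$ and $(GC)$ hold, and since the universal unfolding produced by $(b)$ has primitive Higgs field, its base dimension equals $\rk K=\dim M$, forcing $l=0$ and hence self-universality. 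This part of your proposal is sound.

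The genuine gap is part $(b)$ itself, which is precisely the content the paper outsources to Hertling--Manin and which carries all the weight. What you offer there is a program, not a proof: the claim that the order-by-order recursion for the flatness equations \eqref{3.9}--\eqref{3.10} is solvable, that the obstructions vanish because of $(GC)$ and $[C,\UU]=0$, that the resulting formal series converges (by an unspecified majorant argument), and that universality and uniqueness of the inducing map $\varphi$ follow from a Kodaira--Spencer computation plus $(IC)$, are each asserted rather than established. In particular the obstruction-vanishing step and the convergence step are exactly where the difficulty of [HM04, Theorem~2.5] lies (note also Remarks~\ref{t3.17}$(v)$: the theorem is known to fail, or at least is not known, for formal $(TE)$-structures, so convergence cannot be a routine afterthought --- the holomorphic category is used essentially). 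As it stands, your proposal correctly reduces the theorem to its hard core and then leaves that core unproven.
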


\begin{Remarks}\label{t3.17}\qquad
\begin{enumerate}\itemsep=0pt
\item[$(i)$] As said above, the parts $(a)$ and $(c)$ are special cases
of part $(b)$.

\item[$(ii)$] A germ $\big(\big(M,t^0\big),\circ,e,E\big)$ of a regular $F$-manifold
is uniquely determined by the regular endomorphism
$E\circ|_{t^0}\colon T_{t^0}\to T_{t^0}$ (Theorem~\ref{t2.5}).

\item[$(iii)$] Consider the germ $(M,0)=\big(\C^2,0\big)$ of a 2-dimensional
$F$-manifold with Euler field $E$ in~Theorem~\ref{t2.2}. It~is regular if and only if
$E\circ|_{t=0}\notin\{\lambda\id\,|\, \lambda\in\C\}$. In~the semisimple case (Theorem~\ref{t2.2}$(a)$) this
holds if and only if $c_1\neq c_2$. In~the cases $I_2(m)$ $(m\geq 3)$ it does not hold. In~the case of $\NN_2$ with $E=t_1\paa_1+g(t_2)\paa_2$
it holds if and only if $g(0)\neq 0$.
See also Remark~\ref{t2.6}$(ii)$.

\item[$(iv)$] Theorem~\ref{t3.16}$(c)$ implies that a $(TE)$-structure
with primitive Higgs field
over a germ $\big(M,t^0\big)$ of a regular $F$-manifold with
Euler field is determined up to gauge isomorphism
by the restriction of the $(TE)$-structure to $t^0$.

\item[$(v)$] Lemma~\ref{t3.6}, Definition~\ref{t3.8},
Lemmata~\ref{t3.9}--\ref{t3.12}, Definition~\ref{t3.13},
Theorem~\ref{t3.14} and Definition~\ref{t3.15}
hold or make sense also for {\it formal} $(T)$-structures
or $(TE)$-structures. However, the proof of Theorem~\ref{t3.16}
used in an essential way {\it holomorphic} $(TE)$-structures.
We do not know whether Theorem~\ref{t3.16}
holds also for formal $(TE)$-structures.
\end{enumerate}
\end{Remarks}

\subsection{Birkhoff normal form}%\label{c3.4}

\begin{Definition}\label{t3.18}
Let $(H\to\C\times M,\nnn)$ be a $(TE)$-structure
over a manifold $M$ with coordinates $t=(t_1,\dots ,t_n)$.
A {\it Birkhoff normal form} consists of a basis
$\uuuu{v}$ of $H$ and associated matrices
$A_1,\dots ,A_n,B$ as in~\eqref{3.4} such that
\begin{gather*}%\label{3.26}
A_1^{(k)}=\dots =A_n^{(k)}=0\qquad \text{for}\quad k\geq 1,\qquad
B^{(k)}=0\qquad\text{for}\quad k\geq 2,\qquad
\paa_i B^{(1)}=0.
\end{gather*}
\end{Definition}

\begin{Remarks}\label{t3.19}\qquad
\begin{enumerate}\itemsep=0pt
\item[$(i)$] Such a basis defines an extension of the
$(TE)$-structure to a pure $(TLE)$-structure.
Then it is a basis of the $(TLE)$-structure
whose restriction
to $\{\infty\}\times M$ is flat with respect to the
residual connection (that is just the restriction
of the connection $\nnn$ of the underlying $(TL)$-structure
to~$H|_{\{\infty\}\times M}$). Then the conditions
\eqref{3.9} and~\eqref{3.10} boil down to
\begin{gather}\label{3.27}
0= \big[A_i^{(0)},A_j^{(0)}\big],\qquad
\paa_iA_j^{(0)}=\paa_jA_i^{(0)},
\\
0= \big[A_i^{(0)},B^{(0)}\big],\qquad
\paa_iB^{(0)}+A_i^{(0)}+\big[A_i^{(0)},B^{(1)}\big],\qquad
0=\paa_iB^{(1)}.\label{3.28}
\end{gather}
Such a basis is relevant for the construction of
Frobenius manifolds (see, e.g.,~\cite{DH20-2}).

\item[$(ii)$] Vice versa, if the $(TE)$-structure has an extension
to a pure $(TLE)$-structure, then a~basis~$\uuuu{v}$
of the $(TLE)$-structure exists whose restriction to
$\{\infty\}\times M$ is flat with respect to the
residual connection.
Then this basis $\uuuu{v}$ and the associated matrices
form a Birkhoff normal form.

\item[$(iii)$] A Birkhoff normal form does not always exist.
But if a Birkhoff normal form of the restriction of
a $(TE)$-structure over $M$ to a point $t^0\in M$
exists, it extends to a Birkhoff normal form
of the $(TE)$-structure over the germ $\big(M,t^0\big)$~\cite[Chapter~VI, Theorem~2.1]{Sa02}
(or~\cite[Theorem~5.1(c)]{DH20-2}).

\item[$(iv)$] The problem whether a $(TE)$-structure over a point
has an extension to a pure $(TLE)$-structure is a
special case of the {\it Birkhoff problem},
which itself is a special case of the Riemann--Hilbert--Birkhoff
problem. The book~\cite{AB94} and Chapter~IV in~\cite{Sa02} are devoted to these problems and results
on them.
\end{enumerate}
\end{Remarks}

Here the following two results on the Birkhoff problem will
be useful. However, we will use part $(a)$ only in the
case of a $(TE)$-structure over a point $t^0$ with
a logarithmic pole at $z=0$, in which case it is trivial.

\begin{Theorem}\label{t3.20}
Let $\big(H\to\C\times\big\{t^0\big\},\nnn\big)$ be a $(TE)$-structure
over a point $t^0$.
\begin{enumerate}\itemsep=0pt
\item[$(a)$] {\rm (Plemely,~\cite[Chapter~IV, Corollary~2.6(1)]{Sa02})}
If the monodromy is semisimple, the $(TE)$-structure
has an extension to a pure $(TLE)$-structure.

\item[$(b)$] {\rm (Bolibroukh and Kostov,~\cite[Chapter~IV, Corollary 2.6(3)]{Sa02})}
The germ $\OO(H)_0\otimes_{\C\{z\}}\C\{z\}\big[z^{-1}\big]$ is a~$\C\{z\}\big[z^{-1}\big]$-vector space of dimension $r=\rk H\in\N$
on which $\nnn$ acts.

If no $\C\{z\}\big[z^{-1}\big]$ sub vector space of dimension
in $\{1,\dots ,r-1\}$ exists which is $\nnn$-invariant,
then the $(TE)$-structure has an extension to a
pure $(TLE)$-structure.
\end{enumerate}
\end{Theorem}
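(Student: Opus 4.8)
The plan is to recast the statement, via Remark~\ref{t3.19}$(i)$--$(ii)$, as the problem of extending the connection germ at $z=0$ to a connection on the \emph{trivial} bundle $\OO_{\P^1}^r$ over $\P^1$ with a logarithmic pole at $z=\infty$; equivalently, of producing a basis $\uuuu{v}$ putting the connection $1$-form into the Birkhoff normal shape $\big(z^{-2}B^{(0)}+z^{-1}B^{(1)}\big)\ddd z$ with constant $B^{(0)},B^{(1)}$. I would realize such an extension by gluing two lattices inside the meromorphic connection germ, and then the whole difficulty concentrates in a single triviality statement.

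First I would pass to the meromorphic connection $(\MM,\nnn)$ with $\MM=\OO(H)_0[z^{-1}]$, the $\C\{z\}[z^{-1}]$-vector space of part~$(b)$. The given $(TE)$-structure supplies one lattice $V_0:=\OO(H)_0\subset\MM$, a free $\C\{z\}$-module stable under $z^2\nnn_{\paa_z}$, which fixes the bundle near $z=0$. Using flatness on $\C^*$ and the monodromy $M$ around $z=0$, I would build a companion logarithmic lattice $V_\infty$ near $z=\infty$, spanned by the elementary sections $z^{\alpha}(\log z)^k$ attached to the Jordan blocks of $M$, so that $\nnn$ acquires a logarithmic pole at $\infty$. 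Gluing $V_0$ near $0$ with $V_\infty$ near $\infty$ over the overlap $\C^*$ yields a holomorphic bundle $\wwh H$ on $\P^1$ carrying $\nnn$ with the prescribed pole orders; by Birkhoff--Grothendieck it splits as $\bigoplus_{i=1}^r\OO_{\P^1}(d_i)$.

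Everything then reduces to showing that this splitting is balanced and can be normalized to $d_1=\dots=d_r=0$, i.e.\ that $\wwh H$ is trivial, which is exactly the pure condition; this is where the two hypotheses enter. For part~$(a)$, semisimplicity of $M$ means its unipotent part is trivial, so no $\log z$ terms occur and the elementary sections are pure powers $z^{\alpha}$; the fundamental matrix can then be assembled blockwise along the eigenspaces, and Plemely's classical realization argument produces a global frame with no positive or negative degree jumps, forcing all $d_i=0$. For part~$(b)$, I would argue by contradiction: if the $d_i$ are not all equal, then after passing to a defect-minimizing extension a degree count — the Liouville-type vanishing of ${\rm Hom}\big(\OO(d_j),\OO(d_i)\big)\otimes\Omega^1_{\P^1}(2[0]+[\infty])$ for $d_j\gg d_i$ — forces a block structure preserving the subbundle spanned by the maximal-degree summands; its generic fibre then spans a proper $\nnn$-invariant $\C\{z\}[z^{-1}]$-subspace of $\MM$ of dimension in $\{1,\dots,r-1\}$, contradicting the irreducibility hypothesis. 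Hence the splitting is balanced, and a twist normalizes it to $\OO_{\P^1}^r$.

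The main obstacle is precisely this last triviality step: the Birkhoff problem is genuinely global, since the local connection germ at $0$ does not by itself determine the splitting type of any $\P^1$-extension. The technical heart is the construction of an opposite lattice $V_\infty$ whose gluing with $V_0$ is balanced, together with — in part~$(b)$ — the verification that a non-trivial Harder--Narasimhan subbundle of a connection with controlled poles descends to a $\nnn$-stable meromorphic subspace. These two substantive inputs are supplied respectively by Plemely and by Bolibroukh--Kostov, which is why we import the theorem rather than reprove it.
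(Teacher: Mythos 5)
The paper gives no proof of Theorem~\ref{t3.20} at all: it is imported from the literature, with part~$(a)$ cited to Plemely and part~$(b)$ to Bolibroukh and Kostov via~\cite[Chapter~IV, Corollary~2.6]{Sa02}. Your proposal is correct and does essentially the same thing — the lattice-gluing reduction to the Birkhoff problem that you sketch is the standard framework behind those citations, and you explicitly defer the two substantive triviality steps (the balanced splitting in the semisimple case, and the irreducibility-forces-triviality argument) to exactly the same sources the paper cites.
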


\subsection[Regular singular $(TE)$-structures]{Regular singular $\boldsymbol{(TE)}$-structures}%\label{c3.5}

A $(TE)$-structure over a point $t^0$ is regular singular
if all its holomorphic sections have moderate growth
near 0. A good tool to treat this situation are
special sections of moderate growth, the
{\it elementary sections}.
Definition~\ref{t3.21} explains them and other basic notations.
We work with a simply connected manifold~$M$,
so that the only monodromy is the monodromy along closed
paths in the punctured $z$-plane going around~0.
One important case is the case of a germ $\big(M,t^0\big)$
of a manifold.
The most important case is the case of a point, $M=\big\{t^0\big\}$.

\begin{Definition}\label{t3.21}
Let $(H\to \C\times M,\nnn)$ be a $(TE)$-structure
of rank $r=\rk H\in\N$ over a simply connected manifold $M$.
We associate the following data to it.
\begin{enumerate}\itemsep=0pt
\item[$(a)$] $H':=H|_{\C^*\times M}$ is the flat bundle on
$\C^*\times M$.
$H^\infty$ denotes the $\C$-vector space (of dimen\-sion~$r$)
of global flat multivalued sections on $H'$.
Let the endomorphism $M^{\rm mon}\colon H^\infty\to H^\infty$
be the monodromy on it with semisimple part
$M^{\rm mon}_s$, unipotent part $M^{\rm mon}_u$
(with $M^{\rm mon}_sM^{\rm mon}_u=M^{\rm mon}_uM^{\rm mon}_s$),
nilpotent part $N^{\rm mon}:=\log M^{\rm mon}_u$
so that $M^{\rm mon}_u={\rm e}^{N^{\rm mon}}$,
and with eigenvalues in the finite set
$\Eig(M^{\rm mon})\subset\C$. For $\lambda\in\C$, let
\begin{gather*}
H^\infty_\lambda:=\ker\big(M^{\rm mon}_s-\lambda\id\colon H^\infty\to
H^\infty\big)
\end{gather*}
be the generalized eigenspace in $H^\infty$
of the monodromy with eigenvalue $\lambda$. It~is not $\{0\}$ if and only if $\lambda\in\Eig(M^{\rm mon})$.

\item[$(b)$] For $\alpha\in\C$, define the finite dimensional
$\C$-vector space $C^\alpha$
of the following global sections of $H'$,
\begin{gather*}%\label{3.29}
C^\alpha:= \big\{\sigma\in\OO(H')(\C^*)\,|\, (\nnn_{z\paa_z}-\alpha\id)^r(\sigma)=0,
\nnn_{\paa_i}(\sigma)=0\big\}
\end{gather*}
(where $t=(t_1,\dots ,t_n)$ are local coordinate and
$\paa_i$ are the coordinate vector fields).
Observe $z^k\cdot C^\alpha =C^{\alpha+k}$ for $k\in\Z$.
For each $\alpha$ the map
\begin{align*}
s(\cdot,\alpha)\colon\ H^\infty_{{\rm e}^{-2\pi {\rm i}\alpha}}&\to C^\alpha,
%\label{3.30}
\\
A&\mapsto s(A,\alpha):=z^\alpha\cdot
{\rm e}^{-\log z\cdot N^{\rm mon}/2\pi {\rm i}}A(\log z),
\end{align*}
is an isomorphism. So, $C^\alpha\neq\{0\}$ if and only if
${\rm e}^{-2\pi {\rm i}\alpha}\in\Eig(M^{\rm mon})$.
The sections $s(A,\alpha)$ are called
{\it elementary sections}.

\item[$(c)$] A holomorphic section $\sigma$
of $H'|_{(U_1\setminus\{0\})\times U_2}$ for $U_1\subset \C$
a neighborhood of $0\in\C$ and $U_2\subset M$ open in $M$
can be written uniquely as an (in general infinite) sum
of elementary sections
$\operatorname{es}(\sigma,\alpha)\in \OO_{U_2}\cdot C^\alpha$
with coefficients in $\OO_{U_2}$,
\begin{gather*}%\label{3.31}
\sigma = \sum_{\alpha\colon {\rm e}^{-2\pi {\rm i}\alpha}\in\Eig(M^{\rm mon})}
\operatorname{es}(\sigma,\alpha).
\end{gather*}
In order to see this, choose numbers $\alpha_j\in\C$
and elementary sections $s_j\in C^{\alpha_j}$
for $j\in\{1,\dots ,r\}$ such that $s_1,\dots ,s_r$
form a global basis of $H'$. Then
\begin{gather}%label{3.32}
\sigma =\sum_{j=1}^r a_js_j \qquad\text{with}\quad
a_j=a_j(z,t)= \sum_{k=-\infty}^\infty a_{kj}(t)z^k\in
\OO_{(U_1\setminus\{0\})\times U_2}.\label{3.33}
\end{gather}
Here~\eqref{3.33} is the expansion of $a_j$ as a Laurent
series in $z$ with holomorphic coefficients
$a_{kj}\in \OO_{U_2}$ in $t$. Then
\begin{eqnarray*}%\label{3.34}
\operatorname{es}(\sigma,\alpha)(z,t)&=& \sum_{j\colon \alpha-\alpha_j\in\Z}
a_{\alpha-\alpha_j,j}(t)z^{\alpha-\alpha_j}s_j.
\end{eqnarray*}

\item[$(d)$] A holomorphic section $\sigma$ as in $(c)$
has {\it moderate growth}
if a bound $b\in\R$ with $\operatorname{es}(\sigma,\alpha)=0$
for all $\alpha$ with $\Ree(\alpha)<b$ exists.
The sheaf $\VV^{>-\infty}$ on $\C\times M$
of all sections of moderate growth is
\begin{gather*}%\label{3.35}
\VV^{>-\infty}:= \bigoplus_{\alpha\colon -1<\Ree(\alpha)\leq 0}
\OO_{\C\times M}\big[z^{-1}\big]\cdot C^\alpha.
\end{gather*}
The {Kashiwara--Malgrange $V$-filtration} is given by the
locally free subsheaves for $r\in\R$,
 \begin{gather*}%\label{3.36}
\VV^{r}:= \bigoplus_{\alpha\colon \Ree(\alpha)\in[r,r+1[}
\OO_{\C\times M}\cdot C^\alpha.
\end{gather*}
\end{enumerate}

\end{Definition}

\begin{Definition}\qquad%\label{t3.22}
\begin{enumerate}\itemsep=0pt
\item[$(a)$] A $(TE)$-structure $(H\to\C\times M ,\nnn)$
over a simply connected manifold $M$ is
{\it regular singular} if $\OO(H)\subset \VV^{>-\infty}$,
so if all its holomorphic sections have moderate growth
near 0.

\item[$(b)$] A $(TE)$-structure $(H\to\C\times M ,\nnn)$
over a simply connected manifold $M$ is {\it logarithmic} if
it has a basis $\uuuu{v}$ whose connection 1-form
$\Omega$ has a logarithmic pole along $\{0\}\times M$
(then this holds for any basis). In~the notations of~\eqref{3.4}--\eqref{3.6} that means
$A_i^{(0)}=B^{(0)}=0$.
Then the restriction of $\nnn$ to $K:=H|_{\{0\}\times M}$
is well-defined. It~is called the {\it residual connection}~$\nnn^{\rm res}$. The~{\it residue endomorphism} is
${\rm Res}_0=[\nnn_{z\paa_z}]\colon K\to K$.
\end{enumerate}
\end{Definition}

\begin{Theorem}[{well known, e.g.,~\cite[Theorems 7.10 and~8.7]{He02}}]\label{t3.23}
Let $(H\to\C\times M,\nnn)$ with $H|_{\C^*\times M}=H'$
be a logarithmic $(TE)$-structure over a simply
connected manifold.
\begin{enumerate}\itemsep=0pt
\item[$(a)$] The bundle $H$ has a global basis which consists of
elementary sections $s_j\in C^{\alpha_j}$,
$j\in\{1,\dots ,\rk H\}$, for some $\alpha_j\in\C$.
Especially, $(\OO(H),\nnn)=\varphi_{t^0}^*
\big(\OO\big(H|_{\C\times\{t^0\}}\big),\nnn\big)$
for any $t^0\in M$, where $\varphi_{t^0}\colon M\to\big\{t^0\big\}$
is the projection. So it is just the pull back of
a logarithmic $(TE)$-structure over a point.
Especially, it is a regular singular $(TE)$-structure.

\item[$(b)$] The residual connection $\nnn^{\rm res}$ is flat. In~the notations~\eqref{3.4}--\eqref{3.6}, its
connection $1$-form is $\sum_{i=1}^n A_i^{(1)}\ddd t_i$.
The residue endomorphism ${\rm Res}$ is $\nnn^{\rm res}$-flat. In~the notations~\eqref{3.4}--\eqref{3.6}, it~is given
by $B^{(1)}$.

\item[$(c)$] The endomorphism ${\rm e}^{-2\pi {\rm i}{\rm Res}_0}\colon K\to K$
has the same eigenvalues as the monodromy $M^{\rm mon}$,
but it might have a simpler Jordan block structure.
If no eigenvalues of ${\rm Res}_0$
differ by a~nonzero integer
$($nonresonance condition$)$ then ${\rm e}^{-2\pi {\rm i}{\rm Res}_0}$
has the same Jordan block structure as the mono\-dromy $M^{\rm mon}$.
\end{enumerate}
\end{Theorem}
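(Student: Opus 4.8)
The logarithmic hypothesis $A_i^{(0)}=B^{(0)}=0$ is what drives everything: it forces $z^{-1}A_i=A_i^{(1)}+zA_i^{(2)}+\cdots$ and $z^{-1}B=B^{(1)}+zB^{(2)}+\cdots$ to be holomorphic, so both $\nnn_{\paa_i}=\paa_i+z^{-1}A_i$ and $\nnn_{z\paa_z}=z\paa_z+z^{-1}B$ preserve $\OO(H)$. Thus $\nnn$ restricts to a flat connection on $\OO(H)$ with a logarithmic pole along $\{0\}\times M$, whose residue along $z=0$ is ${\rm Res}_0=B^{(1)}$ and whose residual connection in the $t$-directions has $1$-form $\sum_i A_i^{(1)}\ddd t_i$, both read off by setting $z=0$. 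My plan is to prove $(b)$ first, then use it for $(a)$, and finally deduce $(c)$ from the elementary-section picture.

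For $(b)$ the two assertions are direct specializations of the flatness equations \eqref{3.9} and \eqref{3.10}. Taking $k=2$ in \eqref{3.9} and cancelling the terms containing $A_i^{(0)}=A_j^{(0)}=0$ leaves $0=\paa_iA_j^{(1)}-\paa_jA_i^{(1)}+\big[A_i^{(1)},A_j^{(1)}\big]$, which is exactly the flatness of $\nnn^{\rm res}$. Taking $k=2$ in \eqref{3.10} and using $A_i^{(0)}=B^{(0)}=0$ leaves $0=\paa_iB^{(1)}+\big[A_i^{(1)},B^{(1)}\big]$, which says $\nnn^{\rm res}_{\paa_i}({\rm Res}_0)=0$, i.e., ${\rm Res}_0=B^{(1)}$ is $\nnn^{\rm res}$-flat. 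I expect no obstacle here.

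For $(a)$ I would exploit that, by $(b)$, ${\rm Res}_0$ is a $\nnn^{\rm res}$-flat endomorphism of the flat bundle $(K,\nnn^{\rm res})$; since $M$ is simply connected this bundle is trivial and ${\rm Res}_0$ has constant eigenvalues and constant Jordan type, so one may pick a $\nnn^{\rm res}$-flat frame of $K$ putting ${\rm Res}_0$ into constant Jordan form. Writing $\beta_1,\dots,\beta_r$ for the eigenvalues, the aim is to lift each chosen (generalized) eigenvector to an elementary section $s_j\in C^{\beta_j}$ lying in $\OO(H)$ with $[s_j]$ the corresponding basis vector of $K$; the standard structure theory of logarithmic connections (Definition~\ref{t3.21} together with the references to~\cite{He02}) provides such a lift, because on a logarithmic lattice the $\nnn_{z\paa_z}$-eigendecomposition of $K$ lifts to the elementary-section decomposition with exponents equal to the residue eigenvalues. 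This is the step I expect to be the main obstacle: proving that the holomorphic lattice $\OO(H)$ is \emph{exactly} spanned by these elementary sections, rather than merely sitting inside the larger space $\VV^{>-\infty}$. Once this is in hand, each $s_j$ is by definition annihilated by all $\nnn_{\paa_i}$, hence constant in $t$; therefore $\OO(H)=\bigoplus_j\OO_M[[z]]\,s_j$ is the pullback $\varphi_{t^0}^*$ of the logarithmic $(TE)$-structure over $t^0$, and since every elementary section has moderate growth, $\OO(H)\subset\VV^{>-\infty}$, so the structure is regular singular.

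For $(c)$ I would compare the two endomorphisms through the isomorphisms $s(\cdot,\alpha)\colon H^\infty_{{\rm e}^{-2\pi {\rm i}\alpha}}\to C^\alpha$ of Definition~\ref{t3.21}$(b)$. A short computation gives $\nnn_{z\paa_z}s(A,\beta_j)=\beta_j\,s(A,\beta_j)+s\big({-}N^{\rm mon}A/2\pi {\rm i},\beta_j\big)$, so on the subspace of $K$ coming from $C^{\beta_j}$ the operator ${\rm Res}_0$ acts as $\beta_j\,\id+N_{\beta_j}$ with $N_{\beta_j}$ nilpotent and corresponding to $-N^{\rm mon}/2\pi {\rm i}$. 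Hence ${\rm e}^{-2\pi {\rm i}{\rm Res}_0}$ acts there as ${\rm e}^{-2\pi {\rm i}\beta_j}\cdot{\rm e}^{-2\pi {\rm i}N_{\beta_j}}$; its eigenvalue ${\rm e}^{-2\pi {\rm i}\beta_j}$ is precisely the monodromy eigenvalue indexing $H^\infty_{{\rm e}^{-2\pi {\rm i}\beta_j}}$, and ${\rm e}^{-2\pi {\rm i}N_{\beta_j}}$ corresponds to $M^{\rm mon}_u={\rm e}^{N^{\rm mon}}$, an exponential of a nilpotent that preserves Jordan block sizes, so within a single exponent the Jordan types of ${\rm e}^{-2\pi {\rm i}{\rm Res}_0}$ and $M^{\rm mon}$ agree. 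The only discrepancy arises when two residue eigenvalues $\beta_j,\beta_k$ differ by a nonzero integer: then ${\rm e}^{-2\pi {\rm i}\beta_j}={\rm e}^{-2\pi {\rm i}\beta_k}$, so $M^{\rm mon}$ may merge their blocks into larger ones while ${\rm e}^{-2\pi {\rm i}{\rm Res}_0}$ keeps them separate; this is exactly why the Jordan structure of ${\rm e}^{-2\pi {\rm i}{\rm Res}_0}$ can be strictly simpler, and why the nonresonance hypothesis makes the map $\beta\mapsto{\rm e}^{-2\pi {\rm i}\beta}$ injective on $\Eig({\rm Res}_0)$ and forces the two Jordan structures to coincide.
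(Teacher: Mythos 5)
The paper itself gives no proof of Theorem~\ref{t3.23}: it is quoted as well known, with \cite[Theorems 7.10 and~8.7]{He02} as the reference, so the only comparison available is between your arguments and that citation. Your part $(b)$ is complete and correct: it is exactly the specialization of~\eqref{3.9} and~\eqref{3.10} to $k=2$ after cancelling the terms containing $A_i^{(0)}=B^{(0)}=0$. Your part $(c)$ is essentially right as well: the computation $\nnn_{z\paa_z}s(A,\alpha)=\alpha s(A,\alpha)+s\big({-}N^{\rm mon}A/2\pi {\rm i},\alpha\big)$ is correct, and it gives both the equality of eigenvalue sets with multiplicities (since the flat sections $A_j$ with ${\rm e}^{-2\pi {\rm i}\alpha_j}=\lambda$ underlying the elementary basis form a basis of $H^\infty_\lambda$) and the agreement of Jordan structures under nonresonance. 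One imprecision: in the resonant case the nilpotent part of ${\rm Res}_0$ on the piece of $K$ coming from $C^{\beta_j}$ is not {}"{}corresponding to{}"{} $-N^{\rm mon}/2\pi {\rm i}$ but is a \emph{compression} of it, namely the projection of $N^{\rm mon}$ onto the span of the classes with the same exponent, killing components landing in exponents shifted by positive integers. That compression, rather than merging of eigenvalues alone, is the precise mechanism by which blocks shrink: for $\OO(H)_0=\C\{z\}s_1\oplus\C\{z\}s_2$ with~\eqref{4.62} and $\alpha_1-\alpha_2=1$, one has ${\rm Res}_0$ semisimple while $M^{\rm mon}$ has a $2\times 2$ Jordan block, even though {}"{}within a single exponent{}"{} both nilpotent parts vanish. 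This does not affect your conclusions, since under nonresonance no compression occurs.

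The genuine gap is in part $(a)$, and it is not quite the one you flagged. Deferring the pointwise statement --- that a logarithmic lattice over a \emph{point} is spanned by elementary sections --- to \cite{He02} is legitimate and matches what the paper does. What is missing is the bridge from that pointwise statement to the statement over $M$: you invoke {}"{}standard structure theory{}"{} to produce elementary sections lying in the family sheaf $\OO(H)$ and lifting a $\nnn^{\rm res}$-flat frame of $K$, but the cited theory only produces elementary sections of $\OO\big(H|_{\C\times\{t^0\}}\big)_0$ for each fixed $t^0$. To know that these, extended $t$-flatly as elementary sections of the family, remain in $\OO(H)$ over a whole neighbourhood of $\{0\}\times M$ --- and hence span it by the Nakayama argument --- one needs the integration step in which the logarithmic hypothesis on the $t$-directions enters: since $A_i^{(0)}=0$, the operators $\nnn_{\paa_i}$ preserve $\OO(H)$ with no pole, so the equations $\nnn_{\paa_i}\sigma=0$ can be solved \emph{inside} $\OO(H)$ (solutions of linear ODEs with coefficients holomorphic in $(z,t)$ depend holomorphically on all variables), yielding a $t$-flat local basis of $\OO(H)$; since this basis and the pointwise elementary basis at $t^0$ are both $t$-flat and agree over $t^0$, the elementary basis lies in $\OO(H)$ and spans it. Without this step, your {}"{}once this is in hand{}"{} assumes exactly the family-level conclusion being proved. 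A small further slip: for the holomorphic (not formal) structure the conclusion should read $\OO(H)=\bigoplus_j\OO_{\C\times M}\,s_j$ near $\{0\}\times M$, not $\bigoplus_j\OO_M[[z]]\,s_j$.
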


\begin{Remarks}\quad%\label{t3.24}
\begin{enumerate}\itemsep=0pt
\item[$(i)$] Part $(a)$ of Theorem~\ref{t3.23} implies that a
logarithmic $(TE)$-structure over a~simply connected
manifold $M$ is the pull back $\varphi^*((H,\nnn)|_{\C\times \{t^0\}})$
of its restriction to $t^0$ for any~$t^0\in M$.

\item[$(ii)$] In the case of a regular singular $(TE)$-structure
over a simply connected manifold $M$, one can choose
elementary sections $s_j\in C^{\alpha_j}$, $j\in\{1,\dots ,\rk H\}$,
for some $\alpha_j\in\C$, such that they form a basis of $H^*$
and such the extension to $\{0\}\times M$ which they define,
is a~logarithmic $(TE)$-structure.
Then the base change from any local basis of $H$ to the basis
$(s_1,\dots ,s_{\rk H})$ of this new $(TE)$-structure is meromorphic,
so the two $(TE)$-structures give the same meromorphic bundle.
This observation fits to the usual definition of meromorphic
bundle with regular singular pole.

\item[$(iii)$] The property of a section to have moderate growth,
is invariant under pull back. Therefore also the property
of a $(TE)$-structure to be regular singular is invariant
under pull back.
\end{enumerate}
\end{Remarks}

\subsection{Marked (\emph{TE})-structures and moduli spaces for them}
%\label{c3.6}

It is easy to give a $(TE)$-structure $(H\to\C\times M,\nnn)$
with nontrivial Higgs field and which is thus not the
pull back of the $(TE)$-structure over a point, such that
nevertheless the $(TE)$-structures over all points
$t^0\in M$ are isomorphic as abstract $(TE)$-structures.
Examples are given in Remark~\ref{t7.1}$(ii)$.
The existence of such $(TE)$-structures
obstructs the construction of nice Hausdorff
moduli spaces for $(TE)$-structures up to isomorphism.
The notion of a {\it marked} $(TE)$-structure hopefully
remedies this. However, in the moment, we have only results
in the regular singular cases.
Definition~\ref{t3.25} gives the notion of a {\it marked}
$(TE)$-structure. Definition~\ref{t3.26} defines {\it good}
families of marked regular singular $(TE)$-structures.
Definition~\ref{t3.28} defines a functor for such families.
Theorem~\ref{t3.29} states that this functor is represented
by a complex space. It~builds on results in~\cite[Chapter~7]{HS10}.
Several remarks discuss what is missing in the other cases
and what more we have in the regular singular rank $2$ case,
thanks to Theorems~\ref{t6.3},~\ref{t6.7} and~\ref{t8.5}.

\begin{Definition}\label{t3.25}\quad
\begin{enumerate}\itemsep=0pt
\item[$(a)$] A {\it reference pair} $\big(H^{{\rm ref},\infty},M^{\rm ref}\big)$
consists of a finite dimensional (reference)
$\C$-vector space $H^{{\rm ref},\infty}$
together with an automorphism $M^{\rm ref}$ of it.

\item[$(b)$] Let $M$ be a simply connected manifold.
A {\it marking} on a $(TE)$-structure
$(H\to\C\times M,\nnn)$ is an isomorphism
$\psi\colon (H^{\infty},M^{\rm mon})\to \big(H^{{\rm ref},\infty},M^{\rm ref}\big)$.
Here $H^{\infty}$ is (as in Definition~\ref{3.21})
the space of global flat multivalued sections
on the flat bundle $H':=H|_{\C^*\times M}$,
and $M^{\rm mon}$ is its monodromy.
$\big(H^{{\rm ref},\infty},M^{\rm ref}\big)$ is a reference pair.
The isomorphism $\psi$ of pairs means an isomorphism
$\psi\colon H^{\infty}\to H^{{\rm ref},\infty}$
with $\psi\circ M^{\rm mon}=M^{\rm ref}\circ\psi$.
A {\it marked} $(TE)$-structure is a $(TE)$-structure
with a marking.

{\sloppy\item[$(c)$] An isomorphism between two marked $(TE)$-structures
$\big(\big(H^{(1)},\nnn^{(1)}\big),\psi^{(1)}\big)$ and
$\big(\big(H^{(2)},\nnn^{(2)}\big),\psi^{(2)}\big)$ over the same base space
$M^{(1)}=M^{(2)}$ and with the same reference pair
$\big(H^{{\rm ref},\infty},M^{\rm ref}\big)$ is a gauge isomorphism $\varphi$
between the unmarked $(TE)$-structures such that
the induced isomorphism
$\varphi^{\infty}\colon H^{(1),\infty}\to H^{(2),\infty}$
is compatible with the marking,
\begin{gather*}%\label{3.37}
\psi^{(2)}\circ\varphi^{\infty}=\psi^{(1)}.
\end{gather*}

}

\item[$(d)$] ${\rm Set}^{(H^{{\rm ref},\infty},M^{\rm ref})}$ denotes the set of
marked $(TE)$-structures over a point with the same reference
pair $\big(H^{{\rm ref},\infty},M^{\rm ref}\big)$. Furthermore,
${\rm Set}^{(H^{{\rm ref},\infty},M^{\rm ref}),{\rm reg}}\subset
{\rm Set}^{(H^{{\rm ref},\infty},M^{\rm ref})}$
denotes the subset of marked regular singular
$(TE)$-structures over a point with the same reference pair
$\big(H^{{\rm ref},\infty},M^{\rm ref}\big)$.
\end{enumerate}
\end{Definition}

We hope that ${\rm Set}^{(H^{{\rm ref},\infty},M^{\rm ref})}$ carries for
any reference pair $\big(H^{{\rm ref},\infty},M^{\rm ref}\big)$ a natural
structure as a complex space.
Theorem~\ref{t3.29} says that this holds for
${\rm Set}^{(H^{{\rm ref},\infty},M^{\rm ref}),{\rm reg}}$
and that this space represents a functor of good families
of marked regular singular $(TE)$-structures.
Definition~\ref{t3.26} gives a notion of a
{\it family of marked $(TE)$-structures}
and the notion of a {\it good family of marked regular
singular $(TE)$-structures}.

\begin{Definition}\label{t3.26}
Let $X$ be a complex space.
Let $t^0$ be an abstract point and
$\varphi\colon X\to\big\{t^0\big\}$ be the projection.
Let $\big(H^{{\rm ref},\infty},M^{\rm ref}\big)$ be a reference pair.
Let $\big(H^{{\rm ref},*},\nnn^{\rm ref}\big)$ be a flat bundle
on~$\C^*\times\big\{t^0\big\}$ with monodromy $M^{\rm ref}$
and whose space of global flat multivalued sections
is identified with $H^{{\rm ref},\infty}$.
Let $i\colon \C^*\times X\hookrightarrow \C\times X$
be the inclusion.

$(a)$ A {\it family of marked $(TE)$-structures over $X$}
is a pair $(H,\psi)$ with the following properties:
\begin{enumerate}\itemsep=0pt\setlength{\leftskip}{0.65cm}
\item[$(i)$]
$H$ is a holomorphic vector bundle on $\C\times X$,
i.e., the linear space associated to a locally free sheaf
$\OO(H)$ of $\OO_{\C\times X}$-modules.
Denote $H':=H|_{\C^*\times X}$.
\item[$(ii)$]
$\psi$ is an isomorphism $\psi\colon H'\to \varphi^* H^{{\rm ref},*}$
such that the restriction of the induced flat connection on $H'$
to $\C^*\times\{x\}$ for any $x\in X$ makes $H|_{\C\times\{x\}}$
into a $(TE)$-structure over the point $x$, i.e.,
the connection has a pole of order $\leq 2$ on holomorphic
sections of~$H|_{\C\times\{x\}}$.
\end{enumerate}

$(b)$ Consider a family $(H,\psi)$ of marked
regular singular $(TE)$-structures over $X$.
The marking~$\psi$ induces for each $x\in X$
canonical isomorphisms
\begin{gather*}
\psi\colon\quad H^\infty(x)\to H^{{\rm ref},\infty},%\label{3.38}
\\
\psi\colon\quad C^{\alpha}(x)\to C^{{\rm ref},\alpha}\qquad
\big(\alpha\in\C\text{ with }{\rm e}^{-2\pi {\rm i}\alpha}\in\Eig\big(M^{\rm ref}\big)\big),
\nonumber
\\
\psi\colon\quad V^r(x)\to V^{{\rm ref},r} \qquad (r\in\R), \nonumber
\end{gather*}
where $H^{\infty}(x)$, $C^{\alpha}(x)$, $V^r(x)$ and
$C^{{\rm ref},\alpha}$, $V^{{\rm ref},r}$ are defined for the
$(TE)$-structure over $x$ respectively for $(H^{{\rm ref},*},\nnn)$
as in Definition~\ref{t3.21}.

The family $(H,\psi)$ is called {\it good} if some $r\in\R$
and some $N\in\N$ exist which satisfy
\begin{gather}\label{3.39}
\OO(H|_{\C\times\{x\}})_0\supset V^{r}(x)
\qquad\text{for any}\quad x\in X,
\\
\dim_\C \OO(H|_{\C\times\{x\}})_0/V^{r}(x)=N
\qquad\text{for any}\quad x\in X.\label{3.40}
\end{gather}
\end{Definition}

\begin{Remarks}%\label{t3.27}
\qquad
\begin{enumerate}\itemsep=0pt
\item[$(i)$] The notion of a family of marked $(TE)$-structures
is too weak. For example, it contains the following
pathological family of logarithmic $(TE)$-structures
of rank 1 over $X:=\C$ (with coordinate $t$)
and with trivial monodromy. Write $s_0\in C^0$ for a
generating flat section. Define $H$ by
\begin{gather*}%\label{3.41}
\OO(H)=\OO_{\C\times X}\cdot \big(t+z^l\big)s_0\qquad \text{for some}\quad l\in\N.
\end{gather*}
The marked $(TE)$-structures over all points $t\in\C^*\subset X=\C$
are isomorphic and even equal, the one over $t=0$ is
different. The dimension $\OO\big(H|_{\C\times\{t\}}\big)/V^l(t)$
is equal to $l$ for $t\in \C^*$ and equal to $0$ for $t=0$.
Therefore this family is not good in the sense of
Definition~\ref{t3.26}$(b)$.
Also, $z\nnn_{\paa_z}\big(t+z^l\big)s_0=lz^ls_0$ is not a
section in $\OO(H)$, although for each fixed $t\in X$,
the restriction to $\C\times\{t\}$ is a section in
$\OO(H|_{\C\times \{t\}})$.

\item[$(ii)$] Theorem~\ref{t3.29} gives evidence that the notion
of a good family of marked regular singular $(TE)$-structures
is useful. However, it is not clear a priori whether any
regular singular $(TE)$-structure $(H\to\C\times M,\nnn)$
over a simply connected manifold $M$ is a good family
of marked regular singular $(TE)$-structures over $X=M$.
A marking can be imposed as~$M$ is simply connected.
Though the condition~\eqref{3.40} is not clear a priori.
Theorem~\ref{t8.5} will show this for regular singular
rank $2$ $(TE)$-structures. It~builds on Theorems~\ref{t6.3} and~\ref{t6.7} which show this for
regular singular rank $2$ $(TE)$-structures over $M=\C$.

\item[$(iii)$] For not regular singular $(TE)$-structures,
we do not see an easy replacement of condition~\eqref{3.40}.
Is the condition $z^2\nnn_{\paa_z}\OO(H)\subset\OO(H)$
useful?
\end{enumerate}
\end{Remarks}

\begin{Definition}\label{t3.28}
Fix a reference pair $\big(H^{{\rm ref},\infty},M^{\rm ref}\big)$.
\begin{enumerate}\itemsep=0pt
\item[$(a)$] Define the functor $\MM^{(H^{{\rm ref},\infty},M^{\rm ref}),{\rm reg}}$
from the category of complex spaces to the category of sets by
\begin{gather*}%\label{3.42}
\MM^{(H^{{\rm ref},\infty},M^{\rm ref}),{\rm reg}}(X)
:= \{(H,\psi)\,|\, (H,\psi) \text{ is a good family of marked regular}
\\ \hphantom{\MM^{(H^{{\rm ref},\infty},M^{\rm ref}),{\rm reg}}(X):= \{}
\text{singular }(TE)\text{-structures over }X\},
\end{gather*}
{\sloppy
and, for any morphism $f\colon Y\to X$ of complex spaces and any element
$(H,\psi)$ of $\MM^{(H^{{\rm ref},\infty},M^{\rm ref}),{\rm reg}}(X)$, define
$\MM^{(H^{{\rm ref},\infty},M^{\rm ref}),{\rm reg}}(f)(H,\psi):=f^*(H,\psi)$.

}

\item[$(b)$] Choose $r\in\R$ and $N\in \N$.
Define the functor $\MM^{(H^{{\rm ref},\infty},M^{\rm ref}),r,N}$
from the category of~comp\-lex spaces to the category of sets by
\begin{gather*}%\label{3.43}
\MM^{(H^{{\rm ref},\infty},M^{\rm ref}),r,N}(X)
:=\{(H,\psi)\,|\, (H,\psi) \text{ is a good family of marked regular }
\\ \hphantom{\MM^{(H^{{\rm ref},\infty},M^{\rm ref}),r,N}(X):=\{}
\text{singular} (TE)\text{-structures over }X
\text{ which satisfies~\eqref{3.39} }
\\ \hphantom{\MM^{(H^{{\rm ref},\infty},M^{\rm ref}),r,N}(X):=\{}
\text{and~\eqref{3.40} for the given }r\text{ and }N\},
\end{gather*}
{\sloppy
and, for any morphism $f\colon Y\to X$ of complex spaces and any element
$(H,\psi)$ of $\MM^{(H^{{\rm ref},\infty},M^{\rm ref}),r,N}(X)$, define
$\MM^{(H^{{\rm ref},\infty},M^{\rm ref}),r,N}(f)(H,\psi):=f^*(H,\psi)$.

}
\end{enumerate}
\end{Definition}

{\sloppy\begin{Theorem}\label{t3.29}
The functors $\MM^{(H^{{\rm ref},\infty},M^{\rm ref}),{\rm reg}}$ and
$\MM^{(H^{{\rm ref},\infty},M^{\rm ref}),r,N}$ are represented by
complex spaces, which are called
$M^{(H^{{\rm ref},\infty},M^{\rm ref}),{\rm reg}}$ and
$M^{(H^{{\rm ref},\infty},M^{\rm ref}),r,N}$. In~the case of $\MM^{(H^{{\rm ref},\infty},M^{\rm ref}),r,N}$, the complex space
has even the structure of a projective algebraic variety.
As sets $M^{(H^{{\rm ref},\infty},M^{\rm ref}),{\rm reg}}
={\rm Set}^{(H^{{\rm ref},\infty},M^{\rm ref}),{\rm reg}}$.
\end{Theorem}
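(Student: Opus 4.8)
The plan is to use the marking to rigidify the monodromy data and reduce the classification to the position of a lattice inside a fixed reference meromorphic bundle. By Definition~\ref{t3.26}$(b)$ the marking $\psi$ of a family $(H,\psi)$ induces, compatibly in $x\in X$, canonical identifications $C^\alpha(x)\cong C^{{\rm ref},\alpha}$ and $V^r(x)\cong V^{{\rm ref},r}$. Thus the only free datum is the position of the lattice sheaf $\OO(H)$ relative to the fixed reference $V$-filtration. First I would record the pointwise dictionary: a marked regular singular $(TE)$-structure over a point is exactly a full $\C\{z\}$-lattice $L$ in the fixed reference meromorphic bundle which is an $\OO$-module (so $zL\subset L$) and satisfies the $(TE)$-condition $z^2\nnn_{\paa_z}L\subset L$. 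This already gives the asserted set-level equality $M^{(H^{{\rm ref},\infty},M^{\rm ref}),{\rm reg}}={\rm Set}^{(H^{{\rm ref},\infty},M^{\rm ref}),{\rm reg}}$.

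Next I would fix $r$ and $N$ and pass to a finite-dimensional model. Conditions~\eqref{3.39}, \eqref{3.40} read $V^{{\rm ref},r}\subset L$ and $\dim_\C L/V^{{\rm ref},r}=N$. Since $L/V^{{\rm ref},r}$ is a finitely generated $\C\{z\}$-module of finite $\C$-dimension $N$, multiplication by $z$ is nilpotent on it, so $z^N L\subset V^{{\rm ref},r}$ and hence $V^{{\rm ref},r}\subset L\subset z^{-N}V^{{\rm ref},r}=V^{{\rm ref},r-N}$. Setting $W:=V^{{\rm ref},r-N}/V^{{\rm ref},r}$, a fixed finite-dimensional $\C$-vector space, the lattice $L$ corresponds to the $N$-dimensional subspace $\oooo L:=L/V^{{\rm ref},r}\subset W$, a point of the Grassmannian $\mathrm{Gr}(N,W)$. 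Multiplication by $z$ and the operator $z^2\nnn_{\paa_z}$ both raise the $V$-degree by one and preserve $V^{{\rm ref},r-N}$ and $V^{{\rm ref},r}$, hence induce endomorphisms $\oooo z,\oooo T$ of $W$ (here $\nnn_{z\paa_z}$ acts on $C^\alpha$ as $\alpha\,\id-\frac{1}{2\pi {\rm i}}N^{\rm mon}$). The two lattice conditions become the closed invariance conditions $\oooo z(\oooo L)\subset\oooo L$ and $\oooo T(\oooo L)\subset\oooo L$, which cut out a closed subvariety $M^{(H^{{\rm ref},\infty},M^{\rm ref}),r,N}\subset\mathrm{Gr}(N,W)$; being closed in a Grassmannian, it is a projective algebraic variety.

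Then I would verify the universal property. Restricting the tautological rank-$N$ subbundle of $\mathrm{Gr}(N,W)$ to this subvariety and adding back $V^{{\rm ref},r}$ yields a lattice sheaf over $\C\times M^{(H^{{\rm ref},\infty},M^{\rm ref}),r,N}$, equipped with the pulled-back flat connection and the tautological marking; by construction this is a good family, hence an element of $\MM^{(H^{{\rm ref},\infty},M^{\rm ref}),r,N}\big(M^{(H^{{\rm ref},\infty},M^{\rm ref}),r,N}\big)$, and it is the candidate universal object. Conversely, a good family $(H,\psi)$ over $X$ produces fibrewise the points $\oooo L(x)\in\mathrm{Gr}(N,W)$; the substantive point is that $x\mapsto\oooo L(x)$ is a \emph{holomorphic} morphism into the closed subvariety pulling the universal object back to $(H,\psi)$. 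This reduces to the local freeness of $\OO(H)/V^{{\rm ref},r}$ of rank $N$ over $X$, which follows from~\eqref{3.39} (giving flatness) together with the constancy~\eqref{3.40} of the fibre dimension via semicontinuity; this is exactly the input taken from \cite[Chapter~7]{HS10}. Finally, for $\MM^{(H^{{\rm ref},\infty},M^{\rm ref}),{\rm reg}}$ I would observe that on a connected $X$ a good family has locally constant $(r,N)$, and that $r$ may be taken in the discrete set of jumps of the $V$-filtration, which is finite modulo $\Z$ because $\Eig\big(M^{\rm ref}\big)$ is finite; hence $M^{(H^{{\rm ref},\infty},M^{\rm ref}),{\rm reg}}$ is the disjoint union over the countably many admissible pairs $(r,N)$ of the projective varieties $M^{(H^{{\rm ref},\infty},M^{\rm ref}),r,N}$, a complex space representing the functor.

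The main obstacle is this representability step rather than the set bijection: one must upgrade the fibrewise $N$-dimensionality to $\OO_X$-local freeness of $\OO(H)/V^{{\rm ref},r}$, so that the classifying map $X\to\mathrm{Gr}(N,W)$ is complex-analytic. The pathological rank-$1$ family discussed in the remarks above shows that dropping~\eqref{3.40} destroys this local freeness; securing it is the technical heart of the argument and the precise reason the construction must be imported from \cite[Chapter~7]{HS10}.
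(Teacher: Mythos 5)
Your route is essentially the paper's route: the paper's own proof consists of citing \cite{HS10} (Lemma~7.2 and Theorem~7.3), whose content is precisely the Grassmannian construction you spell out, plus the observation that~\eqref{3.39} and~\eqref{3.40} force the two-sided bound~\eqref{3.44}. Your nilpotency argument (finite $\C$-dimension of $L/V^{{\rm ref},r}$ gives $z^NL\subset V^{{\rm ref},r}$, hence $L\subset V^{{\rm ref},r-N}$) is a correct proof of that observation, which the paper only asserts; and, exactly as in the paper, the technical heart --- upgrading the fibrewise conditions to $\OO_X$-local freeness of $\OO(H)/\varphi^*V^{{\rm ref},r}$ so that the classifying map is holomorphic --- is imported from \cite{HS10}, so on that point you and the paper rest on the same external input. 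Up to and including the identification of $M^{(H^{{\rm ref},\infty},M^{\rm ref}),r,N}$ with a closed subvariety of ${\rm Gr}(N,W)$ cut out by the $\oooo z$- and $\oooo T$-invariance conditions, your argument is sound.

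The genuine flaw is in your final assembly of $M^{(H^{{\rm ref},\infty},M^{\rm ref}),{\rm reg}}$. The varieties $M^{(H^{{\rm ref},\infty},M^{\rm ref}),r,N}$ for compatible pairs are nested, not disjoint: if $V^{{\rm ref},r}\subset L$ with $\dim_\C L/V^{{\rm ref},r}=N$, then also $V^{{\rm ref},r+1}\subset L$ with $\dim_\C L/V^{{\rm ref},r+1}=N+\dim H^{{\rm ref},\infty}$, so $M^{(H^{{\rm ref},\infty},M^{\rm ref}),r,N}$ sits inside $M^{(H^{{\rm ref},\infty},M^{\rm ref}),r+1,N+\dim H^{{\rm ref},\infty}}$ as a union of irreducible components. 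Consequently ``the disjoint union over the countably many admissible pairs $(r,N)$'' counts every lattice infinitely often: it neither coincides with ${\rm Set}^{(H^{{\rm ref},\infty},M^{\rm ref}),{\rm reg}}$ as a set, nor does it represent the functor, since a good family does not single out a pair $(r,N)$, so a morphism into the disjoint union carries strictly more data than an element of $\MM^{(H^{{\rm ref},\infty},M^{\rm ref}),{\rm reg}}(X)$. The correct assembly (as in the paper's proof) is: within each compatibility class $(r,N)\sim\big(r+n,N+n\cdot\dim H^{{\rm ref},\infty}\big)$ form the increasing union $\bigcup_{n\in\N}M^{(H^{{\rm ref},\infty},M^{\rm ref}),r+n,N+n\cdot\dim H^{{\rm ref},\infty}}$, gluing along the closed embeddings just described; this union is a complex space with countably many compact irreducible components, and $M^{(H^{{\rm ref},\infty},M^{\rm ref}),{\rm reg}}$ is the disjoint union of these increasing unions over the countably many compatibility classes (these are its topological components). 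Only after this identification do the set-level equality and the representability statement hold.
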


}

\begin{proof} The proof for $\MM^{(H^{{\rm ref},\infty},M^{\rm ref}),r,N}$
can be copied from the proof of Theorem~7.3 in~\cite{HS10}.
Here it is relevant that $r$ and $N$ with~\eqref{3.39} and
\eqref{3.40} imply the existence of an $r_2\in \R$ with $r_2<r$
and
\begin{gather}\label{3.44}
V^{r_2}(x)\supset \OO(H|_{\C\times\{x\}})_0
\qquad\text{for any}\quad x\in X.
\end{gather}
In~\cite{HS10}, $(TERP)$-structures are considered.
\eqref{3.39} and~\eqref{3.44} are demanded there.~\eqref{3.40} is not
demanded there explicitly, but it follows from the properties
of the pairing there, and this is used in Lemma~7.2 in~\cite{HS10}.
The additional conditions of $(TERP)$-structures are not
essential for the arguments in the proof of Lemma~7.2 and
Theorem~7.3 in~\cite{HS10}. Therefore these proofs apply
also here and give the statements for
$\MM^{(H^{{\rm ref},\infty},M^{\rm ref}),r,N}$.

Let us call $(r,N)\in\R\times \N$ and
$\big(\www r,\www N\big)\in\R\times \N$ compatible if $n\in\Z$
with $\big(\www r,\www N\big)=\big(r+n,\allowbreak N+n\cdot\dim H^{{\rm ref},\infty}\big)$ exists. In~the case $n>0$,
$\MM^{(H^{{\rm ref},\infty},M^{\rm ref}),\www r,\www N}$ is a union of
$\MM^{(H^{{\rm ref},\infty},M^{\rm ref}),r,N}$ and additional irreducible
components. Thus for fixed $(r,N)$ the union
\[
\bigcup_{n\in\N}M^{(H^{{\rm ref},\infty},M^{\rm ref}),r+n,N+n\cdot\dim H^{{\rm ref},\infty}}
\]
is a complex space with in general countably many irreducible
(and compact) components.
And~$M^{(H^{{\rm ref},\infty},M^{\rm ref}),{\rm reg}}$ is the union
of these unions for all possible $(r,N)$
(as $\Eig(M^{\rm mon})$ is finite, in each interval of length
1, only finitely many $r$ are relevant).
\end{proof}

\begin{Remarks}%\label{t3.30}
\quad
\begin{enumerate}\itemsep=0pt
\item[$(i)$] For each reference pair $\big(H^{{\rm ref},\infty},M^{\rm ref}\big)$
with $\dim H^{{\rm ref},\infty}=2$,
the representing complex space
$M^{(H^{{\rm ref},\infty},M^{\rm ref}),{\rm reg}}$
for the functor $\MM^{(H^{{\rm ref},\infty},M^{\rm ref}),{\rm reg}}$
is given in Theorem~\ref{t7.4}.
There the topological components are unions
$\bigcup_{n\in\N}M^{(H^{{\rm ref},\infty},M^{\rm ref}),r+n,N+n\cdot\dim H^{{\rm ref},\infty}}$
and have countably many irreducible components which are either isomorphic
to $\P^1$ or to the Hirzebruch surface $\F_2$ or to the variety $\www\F_2$
obtained by blowing down the $(-2)$-curve in $\F_2$.
The space
$M^{(H^{{\rm ref},\infty},M^{\rm ref}),{\rm reg}}$ is a union of
countably many copies of one topological component.

\item[$(ii)$] Corollary~\ref{t7.3} says that any marked rank $2$
regular singular $(TE)$-structure $(H\to\C\times M,\nnn,\psi)$
with reference pair $\big(H^{{\rm ref},\infty},M^{\rm ref}\big)$ is
a good family of marked regular singular $(TE)$-structures.
Therefore and because of Theorem~\ref{t3.29},
such a $(TE)$-structure is induced by a morphism
$\varphi\colon M\to M^{(H^{{\rm ref},\infty},M^{\rm ref}),{\rm reg}}$.
This is crucial for the usefulness of the
space $M^{(H^{{\rm ref},\infty},M^{\rm ref}),{\rm reg}}$.
We hope that Corollary~\ref{t7.3} and this implication
are also true for higher rank regular singular
$(TE)$-structures.
\end{enumerate}
\end{Remarks}

\section[Rank 2 $(TE)$-structures over a point]{Rank 2 $\boldsymbol{(TE)}$-structures over a point}\label{c4}

Here we will classify the rank $2$ $(TE)$-structures over
a point.

\subsection{Separation into 4 cases}%\label{c4.1}
They separate naturally into 4 cases.

\begin{Definition}%\label{t4.1}
Let $(H\to\C,\nnn)$ be a rank $2$ $(TE)$-structure
over a point $t^0=0$. Its formal invariants
$\delta^{(0)}$, $\rho^{(0)}$, $\delta^{(1)}$, $\rho^{(1)}$
from Lemma~\ref{t3.9} are complex numbers.
The eigenvalues of $-\UU$ are called $u_1,u_2\in\C$.
They are given by
$(x-u_1)(x-u_2)=x^2+2\rho^{(0)}x+\delta^{(0)}$.
We separate four cases:
\begin{enumerate}\itemsep=0pt\setlength{\leftskip}{0.3cm}
\item[(Sem)] $\UU$ has two different eigenvalues
$-u_1$ and $-u_2\in\C$, i.e.,
$0\neq \delta^{(0)}-\big(\rho^{(0)}\big)^2$.
\item[(Bra)] $\UU$ has only one eigenvalue
\big(which is then $\rho^{(0)}$\big)
and one $2\times 2$ Jordan block,
and $\delta^{(1)}-2\rho^{(0)}\rho^{(1)}\neq 0$.
\item[(Reg)] $\UU$ has only one eigenvalue
\big(which is then $\rho^{(0)}$\big)
and one $2\times 2$ Jordan block,
and $\delta^{(1)}-2\rho^{(0)}\rho^{(1)}= 0$.
\item[(Log)] $\UU=\rho^{(0)}\cdot\id$.
\end{enumerate}
Here (Sem) stands for {\it semisimple},
(Bra) for {\it branched}, (Reg) for {\it regular singular}
and (Log) for {\it logarithmic}.
\end{Definition}

\begin{Remark}\label{t4.2}
Rank 2 $(TE)$-structures over a point are richer than the
germs of mermorphic rank $2$ vector bundles with a pole of
order 2. Though the four types above are closely related
to the formal classification of the latter ones by their
slopes (the notion of slopes is developed for example in
\cite[Section~5]{Sa93}). In~rank $2$, three slopes are possible,
slope 1, slope $\frac{1}{2}$ and slope~0.
Slope 1 corresponds to the type (Sem), slope $\frac{1}{2}$
to type (Bra), and slope 0 to the types (Reg) and (Log).
\end{Remark}

First we will treat the semisimple case (Sem).
Then the cases (Bra), (Reg) and (Log)
will be considered together.
Lemma~\ref{t4.9} will justify the names (Bra) and (Reg).
Finally, the three cases (Bra), (Reg) and (Log) will be
treated one after the other.
The following lemma gives some first information.
Its proof is straightforward.

\begin{Lemma}\label{t4.3}
Let $(H\to\C,\nnn)$ be a rank $2$ $(TE)$-structure over a point.
Denote by $\big(\www H\to\C,\www\nnn\big)$ the $(TE)$-structure
with trace free pole part with
$\big(\OO\big(\www H\big),\www\nnn\big)=(\OO(H),\nnn)\otimes\EE^{\rho^{(0)}/z}$
from Lemma~$\ref{t3.10}(b)$
$\big($called $\big(H^{[2]}\to\C,\nnn^{[2]}\big)$ there$\big)$,
and denote its invariants from Lemma~$\ref{t3.9}$
by~$\www\UU$,~$\www\delta^{(0)}$, $\www\rho^{(0)},
\www\delta^{(1)}$, $\www\rho^{(1)}$. Then
\begin{gather*}%\label{4.1}
\www\UU= \UU-\rho^{(0)}\id,
\\
\www\delta^{(0)}= \delta^{(0)}-\big(\rho^{(0)}\big)^2,\qquad
\www\rho^{(0)}=0,\nonumber
\\
\www\delta^{(1)}= \delta^{(1)}-2\rho^{(0)}\rho^{(1)},\qquad
\www\rho^{(1)}=\rho^{(1)}.
\end{gather*}
$\big(\www H\to\C,\www\nnn\big)$ is of the same type $($Sem$)$ or $($Bra$)$
or $($Reg$)$ or $($Nil$)$ as $(H\to\C,\nnn)$.
The following table characterizes of which type the
$(TE)$-structures $(H\to\C,\nnn)$ and
$\big(\www H\to\C,\www\nnn\big)$ are
\[
\def\arraystretch{1.5}
\begin{tabular}{c|c|c|c}
\hline
$($Sem$)$ & $($Bra$)$ &$($Reg$)$ &$($Log$)$
\\ \hline
$\www\delta^{(0)}\neq 0$ & $\www\delta^{(0)}=0$, $\www\delta^{(1)}\neq 0$
&$\www\delta^{(0)}=\www \delta^{(1)}=0$, $\www\UU\neq 0$ & $\www\UU=0$
\\
\hline
\end{tabular}
\]
Especially, $\www\UU=0$ implies $\www\delta^{(0)}=
\www\delta^{(1)}=0$.
\end{Lemma}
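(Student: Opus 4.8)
The plan is to start from the twist relation in Lemma~\ref{t3.10}$(c)$, applied to $\big(\www H,\www\nnn\big)=(\OO(H),\nnn)\otimes\EE^{\rho^{(0)}/z}$ (this is the structure called $\big(H^{[2]},\nnn^{[2]}\big)$ there, so $\www\Omega=\Omega^{[2]}$). Over a point $t^0$ the invariant $\rho^{(0)}$ is a constant, there are no matrices $A_i$, and $\ddd\big(\rho^{(0)}/z\big)=-\rho^{(0)}z^{-2}\ddd z$. Writing $\Omega=z^{-2}B\,\ddd z$ and $\www\Omega=z^{-2}\www B\,\ddd z$, the relation $\Omega=\www\Omega-\ddd\big(\tfrac{\rho^{(0)}}{z}\big)\cdot\mathbf{1}_2$ collapses to $\www B=B-\rho^{(0)}\mathbf{1}_2$, hence
\begin{gather*}
\www B^{(0)}=B^{(0)}-\rho^{(0)}\mathbf{1}_2,\qquad \www B^{(k)}=B^{(k)}\quad(k\geq 1).
\end{gather*}
Since $\UU=B^{(0)}$ on $K$, this already yields the first identity $\www\UU=\UU-\rho^{(0)}\id$.

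From here I would read off the four remaining invariant identities by short $2\times 2$ computations. Using $\tr\UU=r\rho^{(0)}=2\rho^{(0)}$ gives $\www\rho^{(0)}=\tfrac12\tr\www\UU=0$, and the elementary identity $\det(X-\lambda\mathbf{1}_2)=\det X-\lambda\tr X+\lambda^2$ gives $\www\delta^{(0)}=\det\www\UU=\delta^{(0)}-\big(\rho^{(0)}\big)^2$. Since $\www B^{(1)}=B^{(1)}$, clearly $\www\rho^{(1)}=\tfrac12\tr\www B^{(1)}=\rho^{(1)}$. Finally, by Lemma~\ref{t3.9}$(b)$ the number $\delta^{(1)}$ is the coefficient of $z$ in $\det B(z)$, which for a $2\times 2$ matrix equals $\tr\bigl(\operatorname{adj}(B^{(0)})\,B^{(1)}\bigr)$ (Jacobi's formula at $z=0$); combined with the $2\times 2$ identity $\operatorname{adj}(X-\lambda\mathbf{1}_2)=\operatorname{adj}(X)-\lambda\mathbf{1}_2$ this gives $\www\delta^{(1)}=\delta^{(1)}-\rho^{(0)}\tr B^{(1)}=\delta^{(1)}-2\rho^{(0)}\rho^{(1)}$. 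These are exactly the five claimed formulas.

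For the type statement and the table, I would observe that twisting shifts $\UU$ by the scalar $\rho^{(0)}\id$, so $\UU$ and $\www\UU$ have the same number of distinct eigenvalues and the same Jordan type, while the quantity $\delta^{(1)}-2\rho^{(0)}\rho^{(1)}=\www\delta^{(1)}$ entering the definition of (Bra)/(Reg) is unchanged; hence $(H,\nnn)$ and $\big(\www H,\www\nnn\big)$ fall into the same one of the four cases. The table then follows by rephrasing the cases for the trace-free $\www\UU$: since $\tr\www\UU=0$, its eigenvalues are $\pm\sqrt{-\www\delta^{(0)}}$, so (Sem) $\Leftrightarrow\www\delta^{(0)}\neq0$; a single $2\times 2$ Jordan block means $\www\UU$ is nilpotent and nonzero, i.e.\ $\www\delta^{(0)}=0$ and $\www\UU\neq0$, and this splits into (Bra) and (Reg) according to $\www\delta^{(1)}\neq0$ versus $\www\delta^{(1)}=0$; and (Log) is $\www\UU=0$.

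For the concluding ``Especially'' assertion I would argue directly rather than through the eigenvalue picture: if $\www\UU=\www B^{(0)}=0$, then every entry of $\www B(z)$ is divisible by $z$, so $\det\www B(z)$ is divisible by $z^2$, forcing both $\www\delta^{(0)}=0$ and $\www\delta^{(1)}=0$. I do not expect any genuine obstacle here; the only points that require care are the constancy of $\rho^{(0)}$ over a point (which kills the $\ddd t_i$-contributions and makes the twist act purely on $B$) and the correct extraction of the $z^0$- and $z^1$-coefficients of the $2\times 2$ determinant via the adjugate identity.
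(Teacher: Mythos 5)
Your proof is correct. The paper in fact gives no argument at all for this lemma (it only remarks ``Its proof is straightforward''), and your computation — noting that over a point the twist by $\EE^{\rho^{(0)}/z}$ acts on the single connection matrix as $\www B=B-\rho^{(0)}\mathbf{1}_2$, then extracting $\www\UU$, $\www\rho^{(0)}$, $\www\delta^{(0)}$, $\www\rho^{(1)}$, $\www\delta^{(1)}$ via the $2\times 2$ determinant and adjugate identities, and finally rereading Definition~4.1 in the trace-free situation — is exactly the straightforward verification the author had in mind, with the basis-independence from Lemma~\ref{t3.9} correctly invoked to justify computing in a single basis.
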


\subsection{The case (Sem)}%\label{c4.2}

A $(TE)$-structure over a point with a semisimple
endomorphism $\UU$ with pairwise different eigenvalues
is formally isomorphic to a socalled {\it elementary model},
and its holomorphic isomorphism class is determined
by its Stokes structure. These two facts are well known.
A good reference is~\cite[Chapter~II, Sections~5 and~6]{Sa02}.
The older reference~\cite{Ma83a} considers only
the underlying meromorphic bundle, so
$\big(\OO(H)_0\otimes_{\C\{z\}}\C\{z\}\big[z^{-1}\big],\nnn\big)$.

In order to formulate the result for rank $2$ $(TE)$-structures
more precisely, we need some notation.

\begin{Definition}\label{t4.4}
Choose numbers $u_1,u_2,\alpha_1,\alpha_2\in\C$.
Consider the flat bundle $H'\to\C^*$ with flat connection
$\nnn$ and a basis $\uuuu{f}=(f_1,f_2)$
of global flat multivalued sections $f_1$ and $f_2$
with the monodromy
\begin{gather*}%\label{4.3}
\uuuu{f}\big(z\cdot {\rm e}^{2\pi {\rm i}}\big)= \uuuu{f}(z)
\begin{pmatrix}{\rm e}^{-2\pi {\rm i} \alpha_1} & 0 \\
0 & {\rm e}^{-2\pi {\rm i} \alpha_2} \end{pmatrix}\!.
\end{gather*}
The new basis $\uuuu{v}=(v_1,v_2)$ which is defined by
\begin{gather*}%\label{4.4}
\uuuu{v}(z)= \uuuu{f}(z)
\begin{pmatrix}{\rm e}^{u_1/z}z^{\alpha_1} & 0 \\
0 & {\rm e}^{u_2/z}z^{\alpha_2}\end{pmatrix}
\end{gather*}
(for some choice of $\log(z)$) is univalued. It~defines a $(TE)$-structure with
\begin{gather*}%\label{4.5}
z^2\nnn_{\paa_z}\uuuu{v}=\uuuu{v}\cdot B
\qquad\text{and}\qquad
B=\begin{pmatrix}-u_1+ z\alpha_1 & 0 \\
0 & -u_2+z\alpha_2\end{pmatrix}\!.
\end{gather*}
This $(TE)$-structure is called an {\it elementary model}.
The numbers $\alpha_1$ and $\alpha_2$ are called
the {\it regular singular exponents}.
The formal invariants
$\delta^{(0)},\rho^{(0)},\delta^{(1)},\rho^{(1)}\in\C$
of the $(TE)$-structure and the tuple
$(u_1,u_2,\alpha_1,\alpha_2)$ (up to joint exchange of the indices 1 and 2) are equivalent because of
\begin{gather}\label{4.6}
\delta^{(0)}-\big(\rho^{(0)}\big)^2= -\frac{1}{4}(u_1-u_2)^2,\qquad
\rho^{(0)}=-\frac{u_1+u_2}{2},
\\
\delta^{(1)}-2\rho^{(0)}\rho^{(1)}=\frac{u_1-u_2}{2}(\alpha_1-\alpha_2),\qquad
\rho^{(1)}=\frac{\alpha_1+\alpha_2}{2}.\label{4.7}
\end{gather}
Therefore also the tuple $(u_1,u_2,\alpha_1,\alpha_2)$
(up to joint exchange of the indices 1 and 2)
is a formal invariant of the $(TE)$-structure.
\end{Definition}

\begin{Theorem}\label{t4.5}\qquad

\begin{enumerate}\itemsep=0pt
\item[$(a)$] Any rank $2$ $(TE)$-structure over a point with
endomorphism $\UU$ with two different
eigen\-va\-lues is formally isomorphic to a unique
elementary model in Definition~$\ref{t4.4}$.
Here $-u_1$ and~$-u_2$ are the eigenvalues of $\UU$.

\item[$(b)$] The $(TE)$-structure in $(a)$ is up to holomorphic
isomorphism determined by the numbers~$u_1$, $u_2$, $\alpha_1$, $\alpha_2$ and two more numbers
$s_1,s_2\in\C$, the Stokes parameters. It~is holomorphically isomorphic to the elementary
model to which it is formally isomorphic if and only if~$s_1=s_2=0$.

\item[$(c)$] Any such tuple $(u_1,u_2,\alpha_1,\alpha_2,s_1,s_2)
\in \big(\C^2\setminus \{(u_1,u_1)\,|\, u_1\in\C\}\big)\times\C^4$
determines such a~$(TE)$-structure.
A second tuple $\big(\www u_1,\www u_2,\www\alpha_1,
\www \alpha_2,\www s_1,\www s_2\big)
\neq (u_1,u_2,\alpha_1,\alpha_2,s_1,s_2)$
determines an isomorphic $(TE)$-structure if and only
if $\big(\www u_1,\www u_2,\www\alpha_1,\www \alpha_2,\www s_1,\www s_2\big)
=(u_2,u_1,\alpha_2,\alpha_1,s_2,s_1)$.
\end{enumerate}
\end{Theorem}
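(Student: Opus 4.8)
The plan is to deduce all three parts from the classical formal and Stokes theory of germs of meromorphic connections of rank~$2$ with a pole of order~$2$ and regular leading term, as developed in~\cite[Chapter~II, Sections~5 and~6]{Sa02}, and then to read off the rank~$2$ normalisations. Throughout I work with the underlying flat bundle $H'=H|_{\C^*}$ together with the lattice $\OO(H)_0$; an isomorphism of $(TE)$-structures is a holomorphic gauge transformation $T\in GL_2(\C\{z\})$ with $T^{(0)}$ invertible, and it is this restricted gauge group (not the full meromorphic one) that must be tracked.

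For part~$(a)$, since $-\UU$ has the two distinct eigenvalues $u_1,u_2$, the meromorphic germ $\big(\OO(H)_0\otimes_{\C\{z\}}\C\{z\}\big[z^{-1}\big],\nnn\big)$ has slope~$1$ (Remark~\ref{t4.2}). The Hukuhara--Turrittin--Levelt formal decomposition theorem therefore splits it, after a formal base change, into a direct sum of two rank~$1$ pieces of the form $\EE^{u_j/z}\otimes R_j$, where each $R_j$ is regular singular of rank~$1$ and hence isomorphic to the model with connection matrix $z\alpha_j$ for a unique exponent $\alpha_j\in\C$ (the residue is a genuine, not merely $\bmod\,\Z$, invariant because the lattice is fixed). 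This direct sum is exactly the elementary model of Definition~\ref{t4.4}, and $-u_1,-u_2$ are the eigenvalues of $\UU$. Uniqueness is immediate from~\eqref{4.6}--\eqref{4.7}: the first pair of identities recovers the unordered set $\{u_1,u_2\}$ from $\delta^{(0)},\rho^{(0)}$, and, since $u_1\neq u_2$, the second pair then recovers $\{\alpha_1,\alpha_2\}$ compatibly; thus the formal isomorphism class corresponds bijectively to the tuple $(u_1,u_2,\alpha_1,\alpha_2)$ modulo the joint exchange of the two indices.

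For part~$(b)$, the passage from the formal to the holomorphic class is measured by the Stokes structure. By the main asymptotic existence theorem the formal isomorphism to the elementary model can be lifted, on each of two opposite sectors covering a punctured neighbourhood of $0$, to an actual holomorphic isomorphism carrying the prescribed asymptotics; two such lifts differ on a sector overlap by an automorphism of the elementary model asymptotic to the identity. For slope~$1$ with $u_1\neq u_2$ there are exactly two Stokes (anti-Stokes) directions, namely the two rays on which $\Ree\big((u_1-u_2)/z\big)=0$, so one obtains exactly two unipotent Stokes matrices, which relative to the splitting $\uuuu v=(v_1,v_2)$ take the triangular shapes $\bigl(\begin{smallmatrix}1&s_1\\0&1\end{smallmatrix}\bigr)$ and $\bigl(\begin{smallmatrix}1&0\\s_2&1\end{smallmatrix}\bigr)$ with $s_1,s_2\in\C$. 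Since the gauge transformation matching two $(TE)$-structures with the same formal type is holomorphic and hence asymptotic to an automorphism of the model, the holomorphic class is completely recorded by $(u_1,u_2,\alpha_1,\alpha_2)$ together with this Stokes data, and it coincides with the formal (elementary) model precisely when both Stokes matrices are trivial, i.e.\ $s_1=s_2=0$, the case where the formal trivialisation converges (the elementary model itself has the evident pure $(TLE)$-extension, cf.\ Theorem~\ref{t3.20}).

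For part~$(c)$, existence of a $(TE)$-structure realising an arbitrary tuple is the Riemann--Hilbert--Birkhoff realisation in this setting~\cite[Chapter~II, Section~6]{Sa02}: one prescribes the elementary formal type and the two Stokes matrices and reconstructs a holomorphic connection with a second order pole inducing them. The remaining, and main, task is to identify all isomorphisms, i.e.\ to compute the quotient of the Stokes data by the automorphism group of the formal model. That automorphism group is the constant diagonal torus $(\C^*)^2$ (off-diagonal morphisms are excluded by $u_1\neq u_2$, and the diagonal entries must be constant to preserve $\nnn_{z\paa_z}$); together with the relabelling of the two eigenspaces it generates all identifications among the tuples. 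Carrying out this bookkeeping precisely, so as to isolate the genuine invariants among $(s_1,s_2)$ and to confirm that the only surviving identification is the joint exchange $(u_1,u_2,\alpha_1,\alpha_2,s_1,s_2)\mapsto(u_2,u_1,\alpha_2,\alpha_1,s_2,s_1)$, is the step I expect to be the main obstacle; it is exactly here that the fixed-lattice ($(TE)$) setting must be carefully distinguished from the coarser classification of the bare meromorphic bundle in~\cite{Ma83a}.
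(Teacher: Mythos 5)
Your parts $(a)$ and $(b)$ are sound and follow exactly the route of the references to which the paper delegates this theorem (the paper gives no in-text proof: part $(a)$ is cited from \cite[Chapter~II, Theorem~5.7 and Remark~5.8]{Sa02}, and parts $(b)$ and $(c)$ are explicitly deferred to \cite[Chapter~II, Sections~5 and~6]{Sa02} and \cite{HS11}). In particular your observation that $\alpha_1,\alpha_2$ are genuine invariants, not merely invariants mod $\Z$, because the lattice is fixed, is exactly the point of Remark~5.8 there.

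The genuine gap is in part $(c)$, and it is not just that you postpone the ``bookkeeping'': within your own setup, that bookkeeping, if carried out, would not confirm the stated conclusion. You correctly identify $\Aut$ of the elementary model (lattice included) as the constant diagonal torus $\{\mathrm{diag}(c_1,c_2)\}\cong(\C^*)^2$. This torus acts on the pair of Stokes multipliers by conjugation, $(s_1,s_2)\mapsto\big((c_1/c_2)s_1,(c_2/c_1)s_2\big)$, and--contrary to your closing hope--the fixed-lattice setting does \emph{not} remove this action, because a constant diagonal gauge transformation preserves the lattice. Concretely: if $\uuuu v$ is a basis with $z^2\nnn_{\paa_z}\uuuu v=\uuuu v\big(\mathrm{diag}(-u_1,-u_2)+zB^{(1)}\big)$, then the constant base change $T=\mathrm{diag}(1,\mu)$ is an isomorphism of $(TE)$-structures which rescales the two off-diagonal entries of $B^{(1)}$ by $\mu^{\pm 1}$ and fixes everything else; equivalently, the gluing construction applied to the Stokes data $(s_1,s_2)$ and to $(\mu s_1,\mu^{-1}s_2)$ produces isomorphic $(TE)$-structures for every $\mu\in\C^*$. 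This is consistent with the fact that the monodromy conjugacy class, an obvious isomorphism invariant, sees only the product: in suitable conventions $\tr M^{\rm mon}={\rm e}^{-2\pi{\rm i}\alpha_1}+{\rm e}^{-2\pi{\rm i}\alpha_2}(1+s_1s_2)$. Hence, with $s_1,s_2$ defined--as in your proposal--as entries of Stokes matrices relative to an arbitrary formal isomorphism, the classifying map cannot be injective up to the joint exchange alone; the invariants extractable this way are $u_1,u_2,\alpha_1,\alpha_2$, the product $s_1s_2$, and the vanishing pattern of $(s_1,s_2)$. To prove part $(c)$ as stated, you must first specify a normalization of the pair $(s_1,s_2)$ (equivalently, a canonical marking, or canonical scalings of basis vectors in the two Stokes-filtration lines) that eliminates this $\C^*$-ambiguity, and only then compare tuples; this normalization is precisely what your proposal never supplies, and it is the nontrivial content hidden in the references the paper cites. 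Whether the theorem itself should be read as a statement about suitably marked or normalized structures is a separate question about the paper; the point here is that your argument, as designed, cannot close this step.
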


Part $(a)$ follows for example from
\cite[Chapter~II, Theorem~5.7]{Sa02} together with
\cite[Chapter~II, Remark 5.8]{Sa02} (Theorem~5.7 considers
only the underlying meromorphic bundle;
Remark 5.8 takes care of the holomorphic bundle).
For the parts $(b)$ and $(c)$, one needs to deal in detail
with the Stokes structure. We will not do it here,
as the semisimple case is not central in this paper.
We~refer to~\cite[Chapter~II, Sections~5 and~6]{Sa02}
or to~\cite{HS11}.

\begin{Remarks}%\label{t4.6}
\quad
\begin{enumerate}\itemsep=0pt
\item[$(i)$] Malgrange's unfolding result, Theorem~\ref{t3.16}$(c)$,
applies to these $(TE)$-structures.
Such a $(TE)$-structure has a unique universal unfolding.
The parameters $(\alpha_1,\alpha_2,s_1,s_2)$ are constant,
the parameters $(u_1,u_2)$ are local coordinates
on the base space. The base space is an $F$-manifold
of type $A_1^2$ with Euler field $E=u_1e_1+u_2e_2$.
See Remark~\ref{t5.3}$(iii)$.

\item[$(ii)$] We do not offer normal forms for the $(TE)$-structures
in Theorem~\ref{t4.5} for three reasons: (1)~As said in $(i)$,
the $(TE)$-structures above unfold uniquely to
$(TE)$-structures over germs of $F$-manifolds. In~that sense they are easy to deal with.
(2)~It looks difficult to write down normal forms.
(3)~Normal forms should be considered
together with the Stokes parameters, and the corresponding
Riemann--Hilbert map from the space of {\it monodromy data}
$(\alpha_1,\alpha_2,s_1,s_2)$ to a space of
parameters for normal forms should be studied.
This is a nontrivial project,
which does not fit into the main aims of this paper.
\end{enumerate}
\end{Remarks}

\subsection{Joint considerations on the cases (Bra), (Reg) and (Log)}%\label{c4.3}

\begin{Notation}%\label{t4.7}
We shall use the following matrices,
\begin{gather*}%\label{4.8}
C_1:={\bf 1}_2,\qquad
C_2:=\begin{pmatrix}0&0\\1&0\end{pmatrix}\!,\qquad
D:=\begin{pmatrix}1&0\\0&-1\end{pmatrix}\!,\qquad
E:=\begin{pmatrix}0&1\\0&0\end{pmatrix}\!,
\end{gather*}
and the relations between them,
\begin{gather*}
C_2^2=0,\qquad D^2=C_1,\qquad E^2=0,%\label{4.9}
\\[.5ex]
C_2D=C_2=-DC_2,\qquad [C_2,D]=2C_2,%\label{4.10}
\\
C_2E=\frac{1}{2}(C_1-D),\qquad
EC_2=\frac{1}{2}(C_1+D),\qquad
[C_2,E]=-D,%\label{4.11}
\\
DE=E=-ED,\qquad [D,E]=2E.%\label{4.12}
\end{gather*}
\end{Notation}

Consider a $(TE)$-structure $(H\to\C,\nnn)$
over a point with $\UU$ of type (Bra), (Reg) or (Log).
Then $\UU$ has only one eigenvalue, which is $\rho^{(0)}\in\C$.
We can and will restrict to $\C\{z\}$-bases $\uuuu{v}$
of~$\OO(H)_0$ such that the matrix
$B\in M_{2\times 2}(\C\{z\})$ with
$z^2\nnn_{\paa_z}\uuuu{v}=\uuuu{v}\cdot B$ has the shape
\begin{gather}\label{4.13}
B= b_1C_1+b_2C_2+zb_3D+zb_4E\qquad\text{with}\quad
b_1,b_2,b_3,b_4\in\C\{z\}.
\end{gather}
Write as in Remark~\ref{t3.2} $B=\sum_{k\geq 0}B^{(k)}z^k$
with $B^{(k)}\in M_{2\times 2}(\C)$, and write
\begin{gather}\label{4.14}
b_j=\sum_{k\geq 0}b_j^{(k)}z^k\qquad\text{with}\quad
b_j^{(k)}\in\C\qquad\text{for}\quad j\in\{1,2\},
\\
zb_j=\sum_{k\geq 1}b_j^{(k)}z^{k}\qquad\text{with}\quad
b_j^{(k)}\in\C\qquad\text{for}\quad j\in\{3,4\}.\label{4.15}
\end{gather}
Then the formal invariants $\delta^{(0)}$, $\rho^{(0)}$,
$\delta^{(1)}$ and $\rho^{(1)}$ of Lemma~\ref{t3.9} are
given by
\begin{gather*}%\label{4.16}
\rho^{(0)}=b_1^{(0)},\qquad \rho^{(1)}=b_1^{(1)},
\\
\delta^{(0)}-\big(\rho^{(0)}\big)^2=0,\qquad
\delta^{(1)}-2\rho^{(0)}\rho^{(1)}= -b_2^{(0)}b_4^{(1)}.%\label{4.17}
\end{gather*}
We are in the case (Bra) if $b_2^{(0)}\neq 0$ and
$b_4^{(1)}\neq 0$, in the case (Reg) if $b_2^{(0)}\neq 0$
and $b_4^{(1)}=0$,
and~in the case (Log) if $b_2^{(0)}=0$.

Consider $T\in {\rm GL}_2(\C\{z\})$ and the new basis
$\uuuu{\www v}=\uuuu{v}\cdot T$ and its matrix
$\www B =\sum_{k\geq 0}\www B^{(k)}z^k$ with~$z\nnn_{z\paa_z}\uuuu{\www v}=\uuuu{\www v}\cdot\www B$.
Write
\begin{gather*}%\label{4.18}
T=\tau_1C_1+\tau_2C_2+\tau_3D+\tau_4E\qquad\text{with}\quad
\tau_j=\sum_{k\geq 0}\tau_j^{(k)}z^k,\quad
\tau_j^{(k)}\in\C.
\end{gather*}
Then $\www B$ is determined by~\eqref{3.13}, which is
\begin{align}
0&=z^2\paa_zT+B\cdot T-T\cdot\www B\nonumber
\\
&= C_1\bigg(z^2\paa_z\tau_1+\big(b_1-\www b_1\big)\tau_1
+z\big(b_4-\www b_4\big)\frac{\tau_2}{2}+z\big(b_3-\www b_3\big)\tau_3
+\big(b_2-\www b_2\big)\frac{\tau_4}{2}\bigg)\nonumber
\\
&\phantom{=}+ C_2\big(z^2\paa_z \tau_2+\big(b_2-\www b_2\big)\tau_1
+\big(b_1-\www b_1\big)\tau_2 + z\big({-}b_3-\www b_3\big)\tau_2
+ \big(b_2+\www b_2\big)\tau_3\big)\nonumber
\\
&\phantom{=}+ D\bigg(z^2\paa_z\tau_3+z\big(b_3-\www b_3\big)\tau_1
+z\big(b_4+\www b_4\big)\frac{\tau_2}{2}+\big(b_1-\www b_1\big)\tau_3
+ \big({-}b_2-\www b_2\big)\frac{\tau_4}{2}\bigg)\nonumber
\\
&\phantom{=}+ E\big(z^2\paa_z\tau_4 +z\big(b_4-\www b_4\big)\tau_1
+z\big({-}b_4-\www b_4\big)\tau_3+\big(b_1-\www b_1\big)\tau_4
+z\big(b_3+\www b_3\big)\tau_4\big).\label{4.19}
\end{align}
We will use this quite often in order to construct
or compare normal forms. The following immediate corollary
of the proof of
Lemma~\ref{t3.11} provides a reduction of $b_1$.

\begin{Corollary}\label{t4.8}
The base change matrix $T={\rm e}^g\cdot C_1$ with $g$ as in
\eqref{3.20} leads to $\www b_j$ with
\begin{gather*}%\label{4.20}
\www b_1=b_1^{(0)}+zb_1^{(1)}=\rho^{(0)}+z \rho^{(1)},\qquad
\www b_2=b_2,\qquad \www b_3=b_3,\qquad \www b_4=b_4,
\end{gather*}
\end{Corollary}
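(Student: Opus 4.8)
The plan is to apply the transformation rule~\eqref{4.19} with the specific matrix $T={\rm e}^g\cdot C_1$, where $g$ is the function defined in~\eqref{3.20}, and to read off each component $\www b_j$. Since $T$ is a scalar multiple of the identity, we have $\tau_1={\rm e}^g$ and $\tau_2=\tau_3=\tau_4=0$. First I would substitute these into the four bracketed expressions multiplying $C_1$, $C_2$, $D$ and $E$ in~\eqref{4.19}.

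The $C_2$, $D$ and $E$ components simplify immediately: with $\tau_2=\tau_3=\tau_4=0$, the equations $0=C_2(\cdots)$, $0=D(\cdots)$ and $0=E(\cdots)$ reduce to $0=\big(b_2-\www b_2\big)\tau_1$, $0=z\big(b_3-\www b_3\big)\tau_1$ and $0=z\big(b_4-\www b_4\big)\tau_1$ respectively. Because $\tau_1={\rm e}^g$ is a unit in $\C\{z\}$ (its constant term is ${\rm e}^{g(0)}$, and $g\in z\C\{t,z\}$ forces $g(0)=0$, so $\tau_1(0)=1\neq 0$), we may cancel it and conclude $\www b_2=b_2$, $\www b_3=b_3$ and $\www b_4=b_4$. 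This is exactly the claimed invariance of $b_2$, $b_3$ and $b_4$.

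It remains to verify the formula for $\www b_1$. The $C_1$ component of~\eqref{4.19} reduces to $0=z^2\paa_z\tau_1+\big(b_1-\www b_1\big)\tau_1$, i.e.\ $\www b_1=b_1+\tau_1^{-1}z^2\paa_z\tau_1=b_1+z^2\paa_z g$. This is precisely the scalar computation already carried out in the proof of Lemma~\ref{t3.11}: there one shows that the choice of $g$ in~\eqref{3.20} yields $z^2\paa_z g=-\sum_{k\geq 2}b_1^{(k)}z^k$ (the same cancellation that produced $\tr\www B^{(k)}=0$ for $k\geq 2$ in the trace-free setting, applied here componentwise to $b_1$). Adding this to $b_1=\sum_{k\geq 0}b_1^{(k)}z^k$ kills all terms of order $\geq 2$ and leaves $\www b_1=b_1^{(0)}+zb_1^{(1)}$, which equals $\rho^{(0)}+z\rho^{(1)}$ by the identifications $\rho^{(0)}=b_1^{(0)}$ and $\rho^{(1)}=b_1^{(1)}$ recorded just before Corollary~\ref{t4.8}.

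There is no real obstacle here; the statement is, as the text says, an \emph{immediate corollary} of the proof of Lemma~\ref{t3.11}. The only point requiring a moment's care is checking that $T={\rm e}^g\cdot C_1$ genuinely preserves the normal form~\eqref{4.13} (so that writing $\www B$ in the basis $C_1,C_2,D,E$ is legitimate), which is automatic because a scalar base change commutes with everything and~\eqref{4.19} with $\tau_2=\tau_3=\tau_4=0$ shows the off-diagonal structure is untouched. Thus the entire argument is a direct specialization of~\eqref{4.19} together with the already-established computation of $z^2\paa_z g$.
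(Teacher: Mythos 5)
Your proposal is correct and follows essentially the same route as the paper: the paper derives the corollary from the proof of Lemma~\ref{t3.11}, where the scalar base change $T={\rm e}^g\cdot{\bf 1}_r$ gives $\www B=B+z^2\paa_z g\cdot {\bf 1}_r$ via~\eqref{3.13}, and your use of~\eqref{4.19} with $\tau_2=\tau_3=\tau_4=0$ is just the componentwise form of that same computation, with the identical cancellation $z^2\paa_z g=-\sum_{k\geq 2}b_1^{(k)}z^k$ (using $\tr B^{(k)}=2b_1^{(k)}$ for the shape~\eqref{4.13}) yielding $\www b_1=b_1^{(0)}+zb_1^{(1)}$.
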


From now on we will work in this section only with bases
$\uuuu{v}$ with $b_1=\rho^{(0)}+z \rho^{(1)}$. This is justified
by Corollary~\ref{t4.8}.

Furthermore, we will consider from now on in this section
mainly $(TE)$-structures with trace free pole part
\big(Definition~\ref{t3.8}, $\rho^{(0)}=\frac{1}{2}\tr\UU=0$\big).
See Lemmata~\ref{t3.10} and~\ref{t3.11} for the
relation to the general case.

The next lemma separates the cases (Bra) and (Reg).

\begin{Lemma}\label{t4.9}
Consider a $(TE)$-structure over a point with
$\UU$ of type $($Bra$)$ or type $($Reg$)$ and with trace free pole part
$($so $\UU$ is nilpotent but not~$0)$.

The $(TE)$-structure is regular singular if and only
if it is of type $($Reg$)$. If it is of type $($Bra$)$,
then the pullback of
$\OO(H)_0\otimes_{\C\{z\}}\C\{z\}\big[z^{-1}\big]$
by the map $\C\to\C$, $x\mapsto x^4=z$,
is the space of germs at~$0$ of sections of a meromorphic bundle
on $\C$ with a meromorphic connection
with an order~$3$ pole at~$0$ with semisimple pole part with
eigenvalues $\kappa_1$ and $\kappa_2=-\kappa_1$ with
$-\frac{1}{4}\kappa_1^2=\delta^{(1)}\in\C^*$.
Thus $\kappa_1^2$ is a formal invariant of the
$(TE)$-structure of type $($Bra$)$.
\end{Lemma}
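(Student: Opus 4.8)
The plan is to work inside the normal form already set up in this section. As the pole part is trace free we have $\rho^{(0)}=0$, and after Corollary~\ref{t4.8} I may take a basis $\uuuu v$ with $B=z\rho^{(1)}C_1+b_2C_2+zb_3D+zb_4E$, so that the matrix of $\nnn_{\paa_z}$ is
\begin{gather*}
z^{-2}B=\begin{pmatrix} z^{-1}\big(\rho^{(1)}+b_3\big) & z^{-1}b_4\\ z^{-2}b_2 & z^{-1}\big(\rho^{(1)}-b_3\big)\end{pmatrix}.
\end{gather*}
Here $\UU=B^{(0)}=b_2^{(0)}C_2$ is nilpotent and nonzero, i.e.\ $b_2^{(0)}\neq0$, the type is (Reg) when $b_4^{(1)}=0$ and (Bra) when $b_4^{(1)}\neq0$, and $\delta^{(1)}=-b_2^{(0)}b_4^{(1)}$. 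All gauge transformations below are meromorphic shearings $\operatorname{diag}\big(z^p,z^q\big)$, which is harmless: regular singularity is a property of the meromorphic connection on $\OO(H)_0\otimes_{\C\{z\}}\C\{z\}\big[z^{-1}\big]$ and so is unchanged by meromorphic base change.

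In the (Reg) case I would first prove regular singularity. When $b_4^{(1)}=0$ the entry $z^{-1}b_4$ is already holomorphic, and the shearing $T=\operatorname{diag}(z,1)$ turns the $(2,1)$-entry into $z^{-1}b_2$ and the $(1,2)$-entry into $z^{-2}b_4=z^{-1}b_4^{(2)}+\cdots$, which is still only a first order pole precisely because $b_4^{(1)}=0$. Since the diagonal and the contribution $T^{-1}\paa_zT=\operatorname{diag}\big(z^{-1},0\big)$ have poles of order $\leq1$ as well, the sheared basis has a logarithmic pole, so the $(TE)$-structure is regular singular.

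For the pull-back statement I would substitute $z=x^4$; since $\paa_x=4x^3\paa_z$ the connection matrix of $\nnn_{\paa_x}$ becomes $4x^{-5}B\big(x^4\big)$, whose contributions through order $x^{-3}$ are $4x^{-1}b_4^{(1)}$ in the $(1,2)$-slot, $4x^{-5}b_2^{(0)}$ in the $(2,1)$-slot, and $O\big(x^{-1}\big)$ on the diagonal. The shearing $T=\operatorname{diag}\big(1,x^{-2}\big)$ balances the two off-diagonal orders to $x^{-3}$ and leaves the diagonal (and $T^{-1}\paa_xT$) of order $x^{-1}$, so the pulled-back connection has a pole of order exactly $3$ with leading coefficient
\begin{gather*}
L=4\begin{pmatrix}0 & b_4^{(1)}\\ b_2^{(0)} & 0\end{pmatrix},\qquad \det\big(L-\lambda\,{\bf 1}_2\big)=\lambda^2-16\,b_2^{(0)}b_4^{(1)}=\lambda^2+16\,\delta^{(1)}.
\end{gather*}
In the (Bra) case $b_2^{(0)}b_4^{(1)}\neq0$, so $L$ has two distinct nonzero eigenvalues $\pm\lambda_1$ with $\lambda_1^2=-16\,\delta^{(1)}$ and is semisimple. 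Writing the formal exponential factors as $\exp\big(\kappa_ix^{-2}\big)$ and using $\ddd\big(\kappa_ix^{-2}\big)=-2\kappa_ix^{-3}\ddd x$ gives $\lambda_i^2=4\kappa_i^2$, whence $\kappa_1^2=\tfrac14\lambda_1^2=-4\,\delta^{(1)}$, i.e.\ $-\tfrac14\kappa_1^2=\delta^{(1)}\in\C^*$ and $\kappa_2=-\kappa_1$. Consequently the (Bra)-structure is \emph{not} regular singular: its pull-back has flat sections growing like $\exp\big(\kappa_ix^{-2}\big)$, hence of non-moderate growth, so the pull-back is not regular singular; since moderate growth is preserved under pull-back (the remark following Theorem~\ref{t3.23}), neither is the original. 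With the (Reg) computation this settles the first assertion, and $\kappa_1^2=-4\,\delta^{(1)}$ is a formal invariant by the formal invariance of $\delta^{(1)}$ (Lemma~\ref{t3.9}).

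The main obstacle is the bookkeeping hidden in the third paragraph. One must verify that the shearing really exhibits the genuine Levelt--Turrittin leading term, so that the pole order is exactly $3$ and $L$ is its true principal part with no cancellation coming from the suppressed higher-order terms, and one must track the factor from $\ddd z=4x^3\,\ddd x$ together with the normalization $\lambda_i^2=4\kappa_i^2$ to land precisely on the constant $-\tfrac14$. Once these are in place, semisimplicity of $L$ and the relation $\kappa_2=-\kappa_1$ read off directly from the characteristic polynomial.
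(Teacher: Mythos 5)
Your proof is correct and follows essentially the same route as the paper: the same normal form for $B$, the same ramified pullback $z=x^4$ combined with a shearing gauge (your $T=\operatorname{diag}\big(1,x^{-2}\big)$ differs from the paper's $x^D=\operatorname{diag}\big(x,x^{-1}\big)$ only by the scalar factor $x$, so both produce the same leading coefficient $4\big(b_2^{(0)}C_2+b_4^{(1)}E\big)$), and the standard fact that an order~$3$ pole with non-nilpotent leading coefficient is irregular. Three small divergences are worth recording. First, for type (Reg) the paper pulls back by $z=x^2$ and shears by $x^D$ to reach a logarithmic pole, whereas you shear directly by $\operatorname{diag}(z,1)$ with no pullback; both are valid, yours is marginally more economical. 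Second --- and this is a point in your favour --- you distinguish the eigenvalues $\lambda_i$ of the leading coefficient (which satisfy $\lambda_1^2=16\,b_2^{(0)}b_4^{(1)}$) from the exponents $\kappa_i$ of the exponential factors $\exp\big(\kappa_i x^{-2}\big)$, related by $\lambda_i=-2\kappa_i$; the paper's proof asserts that the matrix $4\big(b_2^{(0)}C_2+b_4^{(1)}E\big)$ itself has eigenvalues $\kappa_1$, $-\kappa_1$ with $\kappa_1^2=4\,b_2^{(0)}b_4^{(1)}$, which taken literally is off by a factor of~$4$ and only yields the constant $-\tfrac14$ in the lemma under exactly the normalization you make explicit. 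Third, you spell out why type (Bra) fails to be regular singular (irregularity of the pullback together with invariance of moderate growth under pullback, i.e., the remark following Theorem~\ref{t3.23}); the paper leaves this direction of the equivalence implicit in ``This shows the claims''.
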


\begin{proof}
Consider a $\C\{z\}$-basis $\uuuu{v}$ of $\OO(H)_0$
such that its matrix $B$ is as in~\eqref{4.13}
and such that $b_1=z\rho^{(1)}$. This is possible
by Corollary~\ref{t4.8} and the assumption $\rho^{(0)}=0$.
As $\UU$ is nilpotent, but not 0, $b_2^{(0)}\neq 0$.
Now $\delta^{(1)}=-b_2^{(0)}b_4^{(1)}$, so
$\delta^{(1)}\neq 0\iff b_4^{(1)}\neq 0$.

Consider the case $b_4^{(1)}\neq 0$, and consider
the pullback of the $(TE)$-structure by the map
$\C\to\C$, $x\mapsto x^4=z$.
Then $\frac{\ddd z}{z}=4\frac{\ddd x}{x}$ and
$z\paa_z=\frac{1}{4}x\paa_x$ and
\begin{gather}%\label{4.21}
\nnn_{x\paa_x}\uuuu{v}=
\uuuu{v}\cdot 4\sum_{k\geq 0}B^{(k)}x^{4k-4},\nonumber
\\
\nnn_{x\paa_x}\big(\uuuu{v}\cdot x^D\big)=
\big(\uuuu{v}\cdot x^D\big)4\nonumber
\\ \hphantom{\nnn_{x\paa_x}\big(\uuuu{v}\cdot x^D\big)=}
{}\times\!\bigg(x^{-2}\!\sum_{k\geq 0}\big(b_2^{(k)}C_2+b_4^{(k+1)}E\big)x^{4k}
\!+\rho^{(1)}C_1+\bigg(\frac{1}{4}
+\!\sum_{k\geq 0}b_3^{(k+1)}x^{4k}\!\bigg)D\bigg).\!\label{4.22}
\end{gather}
One sees a pole of order 3 with matrix
$4\big(b_2^{(0)}C_2+b_4^{(1)}E\big)$ of the pole part. It~is tracefree and has the eigenvalues
$\kappa_1$ and $\kappa_2=-\kappa_1$ with
$\kappa_1^2=4b_2^{(0)}b_4^{(1)}\in\C^*$.
This shows the claims in the case $b_4^{(1)}\neq 0$.

Consider the case $b_4^{(1)}= 0$, and consider
the pullback of the $(TE)$-structure by the map
$\C\to\C$, $x\mapsto x^2=z$.
Then $\frac{\ddd z}{z}=2\frac{\ddd x}{x}$ and
$z\paa_z=\frac{1}{2}x\paa_x$ and
\begin{gather}%\label{4.23}
\nnn_{x\paa_x}\uuuu{v}=
\uuuu{v}\cdot 2\sum_{k\geq 0}B^{(k)}x^{2k-2},\nonumber
\\
\nnn_{x\paa_x}\big(\uuuu{v}\cdot x^D\big)=\nonumber
\big(\uuuu{v}\cdot x^D\big)2
\\ \hphantom{\nnn_{x\paa_x}\big(\uuuu{v}\cdot x^D\big)=}
{}\times\bigg(\rho^{(1)}C_1 + \frac{1}{2}D
+\sum_{k\geq 0}
\big(b_2^{(k)}C_2+b_4^{(k+2)}E+b_3^{(k+1)}D\big)x^{2k}\bigg).\label{4.24}
\end{gather}
One sees a logarithmic pole. Therefore also the
sections $v_1$ and $v_2$ have moderate growth,
and~the $(TE)$-structure is regular singular.
\end{proof}

\begin{Remark}%\label{t4.10}
The two transformations in~\eqref{4.22}
(for the case (Bra)) and~\eqref{4.24} (for the case (Reg))
are special cases of a systematic development of such
ramified gauge transformations in~\cite{BV83}
(a short description is given in~\cite[p.~17]{Va96}).
The basic idea goes back to the shearing transformations
of Fabry (see~\cite{Fa85} and \cite[p.~4]{Va96}).
\end{Remark}

\subsection{The case (Bra)}%\label{c4.4}

The following theorem gives complete control on the
$(TE)$-structures over a point of the type (Bra).
Here $\Eig(M^{\rm mon})\subset\C$ is the set of eigenvalues
of the monodromy of such a $(TE)$-structure
(it has 1 or 2 elements).

\begin{Theorem}\label{t4.11}\quad

\begin{enumerate}\itemsep=0pt
\item[$(a)$] Consider a $(TE)$-structure over a point of the type $($Bra$)$.
The formal invariants $\rho^{(0)}$, $\rho^{(1)}$ and
$\delta^{(1)}\in\C$ from Lemma~$\ref{t3.9}$ and the
set $\Eig(M^{\rm mon})$
are invariants of its isomorphism class.
Together they form a complete set of invariants.
That means, the isomorphism class of the $(TE)$-structure
is determined by these invariants.

\item[$(b)$] Any such $(TE)$-structure has a $\C\{z\}$-basis
$\uuuu{v}$ of $\OO(H)_0$ such that its matrix is
in Birkhoff normal form, and more precisely, the matrix
$B$ has the shape
\begin{gather}\label{4.25}
B= \big(\rho^{(0)}+z\rho^{(1)}\big)C_1 + b_2^{(0)}C_2 + zb_3^{(1)}D+ zb_4^{(1)}E,
\end{gather}
where $b_2^{(0)},b_4^{(1)}\in\C^*$ and $b_3^{(1)}\in\C$ satisfy
$-b_2^{(0)}b_4^{(1)}=\delta^{(1)}-2\rho^{(0)}\rho^{(1)}$ and
$\Eig(M^{\rm mon})=\big\{{\rm e}^{-2\pi {\rm i} (\rho^{(1)}\pm b_3^{(1)})}\big\}$.
\end{enumerate}
\end{Theorem}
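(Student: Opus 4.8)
The plan is to prove part $(b)$ first and to deduce part $(a)$ from the resulting normal form. Throughout I would begin by reducing to the trace free case: tensoring with $\EE^{\rho^{(0)}/z}$ as in Lemma~\ref{t3.10}$(b)$--$(c)$ replaces $\UU$ by the nilpotent $\UU-\rho^{(0)}\id$ and, by Lemma~\ref{t4.3}, preserves the type (Bra) and the invariants $\rho^{(1)}$, $\delta^{(1)}-2\rho^{(0)}\rho^{(1)}$, so I may assume $\rho^{(0)}=0$ and restore the general case at the end. By Corollary~\ref{t4.8} I normalise $b_1=z\rho^{(1)}$. The first genuine step is the existence of a Birkhoff normal form. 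Since a type (Bra) structure has slope $\frac12$ (Remark~\ref{t4.2}, made explicit by the order $3$ pole after the fourfold cover in Lemma~\ref{t4.9}) and a rank $1$ meromorphic connection can only have integer slope, the meromorphic germ $\OO(H)_0\otimes_{\C\{z\}}\C\{z\}[z^{-1}]$ admits no $\nnn$-invariant line. Hence Theorem~\ref{t3.20}$(b)$ applies and produces an extension to a pure $(TLE)$-structure, i.e.\ a basis with $B=B^{(0)}+zB^{(1)}$.

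Next I would bring this Birkhoff form into the shape~\eqref{4.25}. As $\UU=B^{(0)}$ is a single nilpotent Jordan block, a constant conjugation puts it in the form $B^{(0)}=b_2^{(0)}C_2$ with $b_2^{(0)}\neq 0$. The constant matrices still commuting with $B^{(0)}$ are exactly $T^{(0)}=aC_1+cC_2$ with $a\in\C^*$, and a direct computation gives under such a conjugation $C_2\mapsto C_2$, $D\mapsto D-\frac{2c}{a}C_2$, $E\mapsto E+\frac{c}{a}D-\frac{c^2}{a^2}C_2$. Thus the $E$-coefficient of $B^{(1)}$ is unchanged, while its $C_2$-coefficient transforms by a quadratic in $c/a$ whose leading coefficient is that $E$-coefficient. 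In the (Bra) case the $E$-coefficient is $b_4^{(1)}\neq0$ (this is precisely $\delta^{(1)}-2\rho^{(0)}\rho^{(1)}=-b_2^{(0)}b_4^{(1)}\neq0$), so I can solve the quadratic, kill the $C_2$-part of $B^{(1)}$, and reach $B^{(1)}=\rho^{(1)}C_1+b_3^{(1)}D+b_4^{(1)}E$. This is~\eqref{4.25}, with $b_2^{(0)},b_4^{(1)}\in\C^*$ and the relation $-b_2^{(0)}b_4^{(1)}=\delta^{(1)}-2\rho^{(0)}\rho^{(1)}$ read off from the formulas for the formal invariants preceding Lemma~\ref{t4.9}.

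It remains to compute $\Eig(M^{\rm mon})$ and to establish completeness. The Birkhoff form realises the $(TE)$-structure as a connection on $\P^1$ with an irregular point at $0$ and a logarithmic point at $\infty$: in the coordinate $w=1/z$ one has $\nnn_{w\paa_w}\uuuu{v}=-\uuuu{v}\big(wB^{(0)}+B^{(1)}\big)$, so ${\rm Res}_\infty=-B^{(1)}$ with eigenvalues $-(\rho^{(1)}\pm b_3^{(1)})$. Since on $\C^*=\P^1\setminus\{0,\infty\}$ the loop around $0$ is inverse to the loop around $\infty$, the monodromy $M^{\rm mon}$ is conjugate to the inverse of $\exp(-2\pi {\rm i}\,{\rm Res}_\infty)$, hence to $\exp(-2\pi {\rm i}\,B^{(1)})$, giving $\Eig(M^{\rm mon})=\big\{{\rm e}^{-2\pi {\rm i}(\rho^{(1)}\pm b_3^{(1)})}\big\}$ as claimed. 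For part $(a)$, the numbers $\rho^{(0)},\rho^{(1)},\delta^{(1)}$ are formal invariants by Lemma~\ref{t3.9} and $\Eig(M^{\rm mon})$ is manifestly an invariant; conversely~\eqref{4.25} shows that these data determine the class, since the residual freedom among such normal forms is only the diagonal rescaling of $(b_2^{(0)},b_4^{(1)})$ keeping the product fixed (always an isomorphism), the sign change $b_3^{(1)}\mapsto-b_3^{(1)}$ (conjugation by the swap $C_2+E$, after renormalising the Jordan block), and integer shifts $b_3^{(1)}\mapsto b_3^{(1)}+n$; none of these alters the four invariants.

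The step I expect to be the main obstacle is the last one: proving that an integer shift $b_3^{(1)}\mapsto b_3^{(1)}+n$ yields an \emph{isomorphic} $(TE)$-structure, which is what matches the normal form of $(b)$ to the completeness claim of $(a)$ (note that $\Eig(M^{\rm mon})$ only sees $\pm b_3^{(1)}$ modulo $\Z$). This reflects the non-uniqueness of the pure $(TLE)$-extension in the resonant case: two Birkhoff forms whose residue eigenvalues at $\infty$ differ by integers define the same germ at $z=0$. To make it rigorous I would exhibit an explicit $T\in GL_2(\C\{z\})$ relating the two normal forms through~\eqref{4.19}, equivalently perform the corresponding elementary modification of the lattice at $\infty$, and check it preserves the germ at $0$. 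Together with the sign change this exhausts the ambiguity in $\Eig(M^{\rm mon})$, completing part $(a)$.
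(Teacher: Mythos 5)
Your proposal follows the same route as the paper's proof: existence of a Birkhoff normal form from Theorem~\ref{t3.20}$(b)$ via irreducibility of the meromorphic germ (the slope-$\frac{1}{2}$ argument), reduction to the shape~\eqref{4.25} by a constant conjugation (your quadratic in $c/a$ is exactly the paper's base change~\eqref{4.29}--\eqref{4.30}), the formula for $\Eig(M^{\rm mon})$ from the logarithmic pole at $\infty$ (the paper cites Theorem~\ref{t3.23}$(c)$; note ``conjugate to'' is too strong in resonant cases, but only the eigenvalues are needed), and completeness of the invariants via the residual transformations of the normal form. However, two of those transformations are not actually established by what you wrote, and they carry the real content of part~$(a)$. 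The smaller one is the sign flip: conjugation by the swap $C_2+E$ sends $C_2\mapsto E$, $E\mapsto C_2$, $D\mapsto -D$, so it moves the pole part to $b_2^{(0)}E$ and takes the matrix out of the shape~\eqref{4.25}; bringing the pole part back to a multiple of $C_2$ by the evident anti-diagonal renormalisation flips $D$ a second time and returns $+b_3^{(1)}$. So ``swap, then renormalise the Jordan block'' does not by itself produce $-b_3^{(1)}$; what does is a specific lower-triangular renormalisation, at which point the swap is superfluous. The correct transformation is already contained in your own formulas: the quadratic $-2(c/a)b_3^{(1)}-(c/a)^2b_4^{(1)}$ which you solve to kill the $C_2$-part of $B^{(1)}$ has the second root $c/a=-2b_3^{(1)}/b_4^{(1)}$, and that unipotent conjugation keeps $b_2^{(1)}=0$, fixes $b_2^{(0)}$ and $b_4^{(1)}$, and sends $b_3^{(1)}\mapsto -b_3^{(1)}$; this is~\eqref{4.31} in the paper.

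The genuine gap is the integer shift $b_3^{(1)}\mapsto b_3^{(1)}+n$, which you yourself single out as the crux (since $\Eig(M^{\rm mon})$ only sees $\pm b_3^{(1)}$ modulo $\Z$) but leave as a plan. It cannot be dispatched by appealing to an elementary modification of the lattice at $\infty$: such a modification changes the extension over $\infty$, and whether the modified bundle on $\P^1$ is again trivial --- so that one again obtains a Birkhoff normal form of shape~\eqref{4.25} --- is precisely what has to be proved. That this is not automatic is shown by type (Reg): there the leading exponents are honest invariants (Theorem~\ref{t4.17}), so integer shifts of the residue eigenvalues at $\infty$ do change the isomorphism class, even though the same ``modification at $\infty$'' heuristic would apply verbatim. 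The paper settles the (Bra) case by explicitly solving~\eqref{4.19} with the ansatz $T=\big(1+z\tau_1^{(1)}\big)C_1+\tau_2^{(0)}C_2+z\tau_3^{(1)}D+z\tau_4^{(1)}E$ and $\www b_3=b_3\pm 1$; the resulting inhomogeneous linear system~\eqref{4.36} has determinant $-2b_2^{(0)}b_4^{(1)}$, which is nonzero exactly because the structure is of type (Bra), and iterating gives all integer shifts. This is where the hypothesis $b_4^{(1)}\neq 0$ does its essential work; without this computation (or an equivalent argument) the completeness claim of part~$(a)$ remains unproved.
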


\begin{Remarks}\label{t4.12}\quad
\begin{enumerate}\itemsep=0pt
\item[$(i)$] Because of part $(a)$, two Birkhoff normal forms as in~\eqref{4.25}
with data $\big(\rho^{(0)},\rho^{(1)},b_2^{(0)},\allowbreak b_3^{(1)},b_4^{(1)}\big)$
and $\big(\www\rho^{(0)},\www\rho^{(1)},\www b_2^{(0)},\www b_3^{(1)}, \www b_4^{(1)}\big)$
give isomorphic $(TE)$-structures
if and only if $\www \rho^{(0)}=\rho^{(0)}$,
$\www\rho^{(1)}=\rho^{(1)}$,
$\www b_2^{(0)}\www b_4^{(1)}=b_2^{(0)} b_4^{(1)}$
and $\www b_3^{(1)}\in \big\{{\pm} b_3^{(1)}+k\,|\, k\in\Z\big\}$.
However, the pure $(TLE)$-structures which they
define, are isomorphic only if additionally
$\www b_3^{(1)}\in\big\{{\pm} b_3^{(1)}\big\}$.

\item[$(ii)$] We could restrict to Birkhoff normal forms with
$b_2^{(0)}=1$ or with $b_4^{(1)}=1$. Though in view of the
$(TE)$-structures in the 4th case in Theorem~\ref{t6.3}
we prefer not to do that.
\end{enumerate}
\end{Remarks}

\begin{proof}[Proof of Theorem~\ref{t4.11}] The proof has 3 steps.

\medskip\noindent
{\it Step 1:} Birkhoff normal forms exist. In~order to show this, it is sufficient to prove
the hypothesis in Theorem~\ref{t3.20}$(b)$.
The hypothesis says that the germ of the meromorphic
bundle underlying a $(TE)$-structure of type (Bra)
is irreducible.
A proof by calculation is given in the proof of~Lemma~28
in~\cite{DH20-3}.
Though this is also well known as the rank is two
and the slope is $\frac{1}{2}$ (see Remark~\ref{t4.2}).

\medskip\noindent
{\it Step 2:} Analysis of the Birkhoff normal forms.
The matrix $B$ of a Birkhoff normal form can be chosen
with $b_1=\rho^{(0)}+z\rho^{(1)}$ because of Corollary~\ref{t4.8}. Then it has the shape
\begin{gather*}%\label{4.28}
B= \big(\rho^{(0)}+z\rho^{(1)}\big)C_1+\big(b_2^{(0)}+zb_2^{(1)}\big)C_2
+zb_3^{(1)}D+zb_4^{(1)}E
\end{gather*}
with $b_2^{(0)}\neq 0$ and $b_4^{(1)}\neq 0$.

Consider the new basis $\uuuu{\www v}=\uuuu{v}\cdot T$
and its matrix $\www B$, where
\begin{gather}\label{4.29}
T=C_1+\tau_2^{(0)}C_2 \qquad\text{for some}\quad \tau_2^{(0)}\in\C.
\end{gather}
Equation~\eqref{4.19} gives
\begin{gather*}
0= \big(b_1-\www b_1\big)+z\big(b_4^{(1)}-\www b_4\big)\frac{\tau_2^{(0)}}{2},
\qquad
0= \big(b_2-\www b_2\big)+\big(b_1-\www b_1\big)\tau_2^{(0)}+z\big({-}b_3^{(1)}-\www b_3\big)\tau_2^{(0)},
\\
0= \big(b_3^{(1)}-\www b_3\big)+\big(b_4^{(1)}+\www b_4\big)\frac{\tau_2^{(0)}}{2},
\qquad
0= \big(b_4^{(1)}-\www b_4\big),
\end{gather*}
so
\begin{gather}
\www b_4= \www b_4^{(1)}=b_4^{(1)},\qquad
\www b_1=b_1,\qquad
\www b_3= \www b_3^{(1)}=b_3^{(1)}+b_4^{(1)}\tau_2^{(0)},
\nonumber
\\
\www b_2^{(0)}= b_2^{(0)},\qquad
\www b_2^{(1)}= b_2^{(1)}-2b_3^{(1)}\tau_2^{(0)}-b_4^{(1)}\big(\tau_2^{(0)}\big)^2.
\label{4.30}
\end{gather}
$\tau_2^{(0)}$ can be chosen such that $\www b_2^{(1)}=0$.
Then the Birkhoff normal form $\www B$ has the shape in~\eqref{4.25}.

Suppose now that $B$ has this shape, so $b_2=b_2^{(0)}$.
The choice $\tau_2^{(0)}:=-2b_3^{(1)}/b_4^{(1)}$ in~\eqref{4.29} leads to
\begin{gather}\label{4.31}
\www b_1=b_1,\qquad
\www b_2=b_2,\qquad
\www b_4=b_4\qquad\text{and}\qquad
\www b_3=-b_3.
\end{gather}

Consider the new basis $\uuuu{\www v}=\uuuu{v}\cdot T$
and its matrix $\www B$, where
\begin{gather*}%\label{4.32}
T=C_1+\tau_3^{(0)}D \qquad\text{for some}\quad \tau_3^{(0)}\in\C\setminus\{\pm 1\}.
\end{gather*}
Equation~\eqref{4.19} gives
\begin{gather*}
0= \big(b_1-\www b_1\big)+z\big(b_3^{(1)}-\www b_3\big)\tau_3^{(0)},
\\
0= \big(b_2^{(0)}-\www b_2\big)+\big(b_2^{(0)}+\www b_2\big)\tau_3^{(0)},
\\
0= z\big(b_3^{(1)}-\www b_3\big)+\big(b_1-\www b_1\big)\tau_3^{(0)},
\\
0= \big(b_4^{(1)}-\www b_4\big) + \big({-}b_4^{(1)}-\www b_4\big)\tau_3^{(0)},
\end{gather*}
so
\begin{gather}\label{4.33}
\www b_1= b_1,\qquad
\www b_3=b_3^{(1)},\qquad
\www b_2= b_2^{(0)}\frac{1+\tau_3^{(0)}}{1-\tau_3^{(0)}},\qquad
 \www b_4= b_4^{(1)}\frac{1-\tau_3^{(0)}}{1+\tau_3^{(0)}}.
\end{gather}
So, in a Birkhoff normal form in~\eqref{4.25},
one can change $b_2^{(0)}$ and $b_4^{(1)}$ arbitrarily
with constant product $b_2^{(0)}b_4^{(1)}$ and without
changing $b_1=\rho^{(0)}+z\rho^{(1)}$ and $b_3^{(1)}$.

Consider the new basis $\uuuu{\www v}=\uuuu{v}\cdot T$
and its matrix $\www B$, where
\begin{gather*}%\label{4.34}
T=\big(1+z\tau_1^{(1)}\big)C_1+\tau_2^{(0)}C_2+z\tau_3^{(1)}D
+z\tau_4^{(1)}E,\qquad \text{for some}\quad
\tau_1^{(1)},\tau_2^{(0)},
\tau_3^{(1)},\tau_4^{(1)}\in\C.%\nonumber
\end{gather*}
We are searching for coefficients
$\tau_1^{(1)},\tau_2^{(0)},\tau_3^{(1)},\tau_4^{(1)}\in\C$
such that
\begin{gather}\label{4.35}
\www b_1=b_1,\qquad
\www b_2=b_2,\qquad
\www b_4=b_4,\qquad
\www b_3=b_3+\varepsilon\qquad \text{with}\quad \varepsilon=\pm 1.
\end{gather}
Under these constraints,
\eqref{4.19} gives
\begin{gather*}
0= \tau_1^{(1)}-\varepsilon \tau_3^{(1)},
\\
0= \big({-}2b_3^{(1)}-\varepsilon\big)\tau_2^{(0)}+ 2b_2^{(0)}\tau_3^{(1)},
\\
0= z\tau_3^{(1)}-\varepsilon\big(1+z\tau_1^{(1)}\big)+b_4^{(1)}\tau_2^{(0)} -b_2^{(0)}\tau_4^{(1)},
\\
0=\tau_4^{(1)} -2b_4^{(1)}\tau_3^{(1)}+\big(2b_3^{(1)}+\varepsilon\big)\tau_4^{(1)}.
\end{gather*}
With $\tau_1^{(1)}=\varepsilon\tau_3^{(1)}$, these
equations boil down to the inhomogeneous linear system
of equations
\begin{gather}\label{4.36}
\begin{pmatrix}0\\ \varepsilon\\ 0 \end{pmatrix} =
\begin{pmatrix} -2b_3^{(1)}-\varepsilon & 2b_2^{(0)} & 0 \\
b_4^{(1)} & 0 & -b_2^{(0)} \\
0 & -2b_4^{(1)} & 2b_3^{(1)}+\varepsilon+1 \end{pmatrix}
\begin{pmatrix} \tau_2^{(0)}\\ \tau_3^{(1)}\\ \tau_4^{(1)}
\end{pmatrix}\!.
\end{gather}
The determinant of the $3\times 3$ matrix is
$-2 b_2^{(0)}b_4^{(1)}\neq 0$. Therefore the system~\eqref{4.36}
has a unique solution $\big(\tau_2^{(0)},\tau_3^{(1)},
\tau_4^{(1)}\big)^t$.
Thus a new basis $\uuuu{\www v}=\uuuu{v}\cdot T$
with~\eqref{4.35} exists.

Iterating this construction, one finds that one can
change the matrix $B$ in~\eqref{4.25} by a holomorphic
base change to a matrix $\www B$ with
\begin{gather}\label{4.37}
\www b_1=b_1,\qquad
\www b_2=b_2,\qquad
\www b_4=b_4,\qquad
\www b_3=b_3+k,
\end{gather}
for any $k\in\Z$.

Putting together~\eqref{4.30},~\eqref{4.31},~\eqref{4.33}
and~\eqref{4.37}, one sees that two Birkhoff normal forms
as in~\eqref{4.25}
with data $\big(\rho^{(0)},\rho^{(1)},b_2^{(0)},b_3^{(1)},b_4^{(1)}\big)$
and $\big(\www \rho^{(0)},\www \rho^{(1)},\www b_2^{(0)},\www b_3^{(1)},
\www b_4^{(1)}\big)$ give isomorphic $(TE)$-structures
if $\www \rho^{(0)}=\rho^{(0)}$, $\www\rho^{(1)}=\rho^{(1)}$,
$\www b_2^{(0)}\www b_4^{(1)}=b_2^{(0)} b_4^{(1)}$
and $\www b_3^{(1)}\in \big\{{\pm}\, b_3^{(1)}+k\,|\, k\in\Z\big\}$.
This shows {\it if} in Remark~\ref{t4.12}$(i)$.

\medskip\noindent
{\it Step 3:} Discussion of the invariants.
By Lemma~\ref{t3.9}, $\rho^{(0)}$, $\rho^{(1)}$ and $\delta^{(1)}$
are even formal inva\-ri\-ants of the $(TE)$-structure.
The set $\Eig(M^{\rm mon})$ is obviously an invariant of the
isomorphism class of the $(TE)$-structure.

The Birkhoff normal form in~\eqref{4.25} gives a pure
$(TLE)$-structure with a logarithmic pole at~$\infty$.
From its pole part at~$\infty$
and Theorem~\ref{t3.23}$(c)$ one reads off
\begin{gather*}%\label{4.38}
\Eig(M^{\rm mon})=\big\{{\rm e}^{-2\pi {\rm i}(\rho^{(1)}\pm b_3^{(1)})}\big\}.
\end{gather*}
As $\rho^{(1)}$ is an invariant of the $(TE)$-structure,
also the set $\big\{{\pm}\, b_3^{(1)}+k\,|\, k\in\Z\big\}$
is an invariant of the $(TE)$-structure.

\looseness=1
Together with Step 2, this shows {\it only if} in
Remark~\ref{t4.12}$(i)$ and all statements in Theo\-rem~\ref{t4.11}.
\end{proof}

\begin{Corollary}%\label{t4.13}
The monodromy of a $(TE)$-structure over a point
of the type $($Bra$)$ has a $2\times 2$ Jordan block if its
eigenvalues coincide $\big($equivalently, if
$b_3^{(1)}\in\frac{1}{2}\Z$ for some $($or any$)$
Birkhoff normal form in Theorem~$\ref{t4.11}(b)\big)$.
\end{Corollary}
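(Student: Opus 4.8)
The statement has two parts: the \emph{equivalence} of the two conditions, and the implication that coinciding eigenvalues force a $2\times 2$ Jordan block. For a Birkhoff normal form~\eqref{4.25}, Theorem~\ref{t4.11}$(b)$ gives the monodromy eigenvalues ${\rm e}^{-2\pi {\rm i}(\rho^{(1)}+b_3^{(1)})}$ and ${\rm e}^{-2\pi {\rm i}(\rho^{(1)}-b_3^{(1)})}$, so they coincide if and only if ${\rm e}^{-4\pi {\rm i}b_3^{(1)}}=1$, i.e.\ $b_3^{(1)}\in\frac{1}{2}\Z$; this settles the ``equivalently''. For the Jordan type I would first reduce. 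Tensoring with $\EE^{\rho^{(0)}/z}$ (Lemma~\ref{t3.10}) alters $\nnn$ only by the single-valued factor ${\rm e}^{\rho^{(0)}/z}$, hence leaves $M^{\rm mon}$ unchanged, so I may take $\rho^{(0)}=0$; similarly the summand $z\rho^{(1)}C_1$ in $B$ contributes only the overall scalar ${\rm e}^{-2\pi {\rm i}\rho^{(1)}}$ to $M^{\rm mon}$, which does not affect the Jordan type, so I set $\rho^{(1)}=0$ as well.

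Next I would dispose of the case $b_3^{(1)}\in\Z$: a holomorphic base change of the form~\eqref{4.37} (an isomorphism of $(TE)$-structures also over $\C^*$, hence preserving $M^{\rm mon}$ exactly) brings it to $b_3^{(1)}=0$. Then $B=b_2^{(0)}C_2+zb_4^{(1)}E$, the normal form extends to a pure $(TLE)$-structure, and its residue at $\infty$ is $-B^{(1)}=-b_4^{(1)}E$, a single $2\times 2$ Jordan block since $b_4^{(1)}\neq 0$. Thus ${\rm e}^{-2\pi {\rm i}\,{\rm Res}_\infty}$ is non-semisimple, and Theorem~\ref{t3.23}$(c)$ (read off at $\infty$) forces $M^{\rm mon}$ to have at least as complicated a Jordan structure, i.e.\ a $2\times 2$ block. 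The point where this collapses, and the main obstacle, is the remaining case $b_3^{(1)}\in\frac{1}{2}+\Z$: there the two exponents at $\infty$ differ by a nonzero integer, ${\rm Res}_\infty$ is semisimple and ${\rm e}^{-2\pi {\rm i}\,{\rm Res}_\infty}$ is scalar, so Theorem~\ref{t3.23}$(c)$ yields no information whatsoever.

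I would resolve this resonant case (in fact all cases uniformly) by computing $M^{\rm mon}$ directly from the flat system. Writing flat sections as $\uuuu{v}f$, flatness reads $z^2f'=-Bf$ with $B=b_3^{(1)}zD+b_2^{(0)}C_2+b_4^{(1)}zE$; eliminating $f_2$ gives the scalar equation
\[
z^2f_1''+zf_1'-\big((b_3^{(1)})^2+b_2^{(0)}b_4^{(1)}z^{-1}\big)f_1=0 ,
\]
and the substitution $u=z^{-1/2}$ turns it into the modified Bessel equation $u^2y''+uy'-(4b_2^{(0)}b_4^{(1)}u^2+\nu^2)y=0$ of order $\nu=2b_3^{(1)}$ (with $b_2^{(0)}b_4^{(1)}\neq 0$ by Theorem~\ref{t4.11}$(b)$, consistent with the slope $\tfrac12$ of Lemma~\ref{t4.9}). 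Going once around $z=0$ is the continuation $u\mapsto {\rm e}^{-{\rm i}\pi}u$; on the standard solution basis it acts as $\mathrm{diag}({\rm e}^{-{\rm i}\pi\nu},{\rm e}^{{\rm i}\pi\nu})$ when $\nu\notin\Z$, recovering the distinct eigenvalues ${\rm e}^{\mp 2\pi {\rm i}b_3^{(1)}}$; but when $\nu=2b_3^{(1)}\in\Z$ the second solution $K_\nu$ carries a term $\log u$, and $\log u\mapsto \log u-{\rm i}\pi$ produces a nonzero off-diagonal contribution proportional to the eigenvector $I_\nu$, i.e.\ a genuine $2\times 2$ Jordan block. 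Hence $M^{\rm mon}$ has a $2\times 2$ Jordan block exactly when $b_3^{(1)}\in\frac{1}{2}\Z$, as claimed. The one delicate step is the branch bookkeeping in $u\mapsto {\rm e}^{-{\rm i}\pi}u$, which must be extracted from the explicit Bessel solutions rather than from a naive substitution $u\mapsto -u$ (the latter would falsely suggest an involution); this is where the logarithm, and thus the Jordan block, actually enters.
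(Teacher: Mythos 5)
Your proposal is correct, and it diverges from the paper precisely at the crucial resonant case. For the equivalence of the two conditions and for $b_3^{(1)}\in\Z$ you argue exactly as the paper does: normalize via~\eqref{4.37} to $b_3^{(1)}=0$, read off the residue $-B^{(1)}$ of the pure $(TLE)$-extension at $\infty$, note it is nonresonant with a $2\times 2$ Jordan block (since $b_4^{(1)}\neq 0$), and apply Theorem~\ref{t3.23}$(c)$. But for $b_3^{(1)}\in\frac{1}{2}+\Z$ the paper stays algebraic: it performs the meromorphic base change $\uuuu{\www v}=\uuuu{v}\cdot\operatorname{diag}(z,1)$, a shearing transformation which leaves $M^{\rm mon}$ untouched while moving the resonance away, so that the new residue at $\infty$ is again nonresonant but now carries its Jordan block in the $C_2$-entry (nonzero because $b_2^{(0)}\neq 0$), and Theorem~\ref{t3.23}$(c)$ finishes the proof in two lines. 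You instead compute $M^{\rm mon}$ transcendentally: eliminating $f_2$ from $z^2f'=-Bf$, substituting $u=z^{-1/2}$, and recognizing the modified Bessel equation of order $\nu=2b_3^{(1)}$ is a correct reduction (I checked the computation; it is also consistent with the ramified picture of Lemma~\ref{t4.9}), and the classical connection formula $K_n\big(w{\rm e}^{-\pi {\rm i}}\big)={\rm e}^{n\pi {\rm i}}K_n(w)+\pi {\rm i}\, I_n(w)$ for integer $n$ indeed exhibits the off-diagonal term, hence the Jordan block, exactly when $\nu\in\Z$, i.e., $b_3^{(1)}\in\frac{1}{2}\Z$. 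What each approach buys: the paper's argument is short, purely algebraic and self-contained within its own Theorem~\ref{t3.23}$(c)$; yours is uniform in $\nu$ (it simultaneously re-derives the eigenvalues and the semisimplicity for $b_3^{(1)}\notin\frac{1}{2}\Z$, making the second reduction via~\eqref{4.37} redundant) and identifies the underlying classical special function, but it imports the Bessel monodromy formulas as an external input and requires the branch bookkeeping $u\mapsto {\rm e}^{-{\rm i}\pi}u$ that you rightly flag as the delicate step.
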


\begin{proof}
Consider a $(TE)$-structure over a point
of the type (Bra) such that the eigenvalues of its monodromy
coincide. Then for any Birkhoff normal form in
Theorem~\ref{t4.11}$(b)$ $b_3^{(1)}\in\frac{1}{2}\Z$,
and one can choose a Birkhoff normal form with
$b_3^{(1)}\in\big\{0,-\frac{1}{2}\big\}$.
The induced pure $(TLE)$-structure
has at $\infty$ a logarithmic pole, and its residue
endomorphism $[\nnn_{\www z\paa_{\www z}}]$, where
$\www z=z^{-1}$, is given by the
matrix $-\big(\rho^{(1)}C_1+b_3^{(1)}D+b_4^{(1)}E\big)$.

In the case $b_3^{(1)}=0$, the nonresonance condition
in Theorem~\ref{t3.23}$(c)$ is satisfied,
so Theorem~\ref{t3.23}$(c)$ can be applied.
Because of $b_4^{(1)}\neq 0$,
the monodromy has a $2\times 2$ Jordan block.

In the case $b_3^{(1)}=-\frac{1}{2}$, the meromorphic
base change
\begin{gather*}
\uuuu{\www v}:=\uuuu{v}\cdot\begin{pmatrix}z&0\\0&1
\end{pmatrix}
\end{gather*}
gives the new connection matrix
\begin{gather*}
\www B=\bigg(\rho^{(0)}+z\bigg(\rho^{(1)}+\frac{1}{2}\bigg)\bigg)C_1+zb_2^{(0)}C_2+b_4^{(1)}E.
\end{gather*}
Again, the pole at $\infty$ is logarithmic.
Now the nonresonance condition in Theorem~\ref{t3.23}$(c)$
is satisfied. Because of $b_2^{(0)}\neq 0$,
the monodromy has a $2\times 2$ Jordan block.
\end{proof}

For $(TE)$-structures of the type (Bra), formal isomorphism
is coarser than holomorphic isomorphism.

\begin{Lemma}\label{t4.14}
Consider a $(TE)$-structure over a point of the type $($Bra$)$.
By Lemma~$\ref{t3.9}$, the numbers $\rho^{(0)}$, $\rho^{(1)}$
and $\delta^{(1)}$ are formal invariants of the
$(TE)$-structure.

\begin{enumerate}\itemsep=0pt
\item[$(a)$] The set $\Eig(M^{\rm mon})$ and the equivalent set
$\big\{{\pm}\, b_3^{(1)}+k\,|\, k\in\Z\big\}$ are holomorphic invariants,
but not formal invariants.

\item[$(b)$]
The $(TE)$-structure with Birkhoff normal form in~\eqref{4.25}
is formally isomorphic to the $(TE)$-structure
with Birkhoff normal form in~$\eqref{4.25}$
with the same values $\rho^{(0)}$, $\rho^{(1)}$, $b_2^{(0)}$ and $b_4^{(1)}$,
but with an arbitrary $\www b_3^{(1)}$.
\end{enumerate}
\end{Lemma}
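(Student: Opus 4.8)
Part $(a)$ will be an immediate consequence of part $(b)$ together with Theorem~\ref{t4.11}. Indeed, by Theorem~\ref{t4.11}$(b)$ one has $\Eig(M^{\rm mon})=\big\{{\rm e}^{-2\pi {\rm i}(\rho^{(1)}\pm b_3^{(1)})}\big\}$, and by Theorem~\ref{t4.11}$(a)$ this set (equivalently $\big\{{\pm}\,b_3^{(1)}+k\mid k\in\Z\big\}$, as $\rho^{(1)}$ is fixed) is an invariant of the holomorphic isomorphism class; but part $(b)$ exhibits, for a fixed structure, formally isomorphic structures with arbitrary $\www b_3^{(1)}$, hence with different such sets, so these sets are not formal invariants. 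The whole task is therefore part $(b)$. Here I would first discard $\rho^{(0)},\rho^{(1)}$: in the gauge equation~\eqref{4.19} the entry $b_1=\rho^{(0)}+z\rho^{(1)}$ enters only through the differences $b_1-\www b_1$, which vanish since $\rho^{(0)},\rho^{(1)}$ are formal invariants (Lemma~\ref{t3.9}) and so agree for the two normal forms. Thus~\eqref{3.13} for a base change $T$ between the two forms~\eqref{4.25} coincides with the equation for the associated trace free structures, and I may take $\rho^{(0)}=\rho^{(1)}=0$. It remains to produce a formal $T\in\mathrm{GL}_2(\C[[z]])$ solving~\eqref{3.13} between $B=b_2^{(0)}C_2+zb_3^{(1)}D+zb_4^{(1)}E$ and $\www B=b_2^{(0)}C_2+z\www b_3^{(1)}D+zb_4^{(1)}E$, with $b_2^{(0)},b_4^{(1)}\in\C^*$ and $\www b_3^{(1)}$ arbitrary.

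The plan is to compute the formal normal form via the ramification adapted to the slope $\frac{1}{2}$ (Remark~\ref{t4.2}), reusing the shearing transformation from the proof of Lemma~\ref{t4.9}. Pulling back by $x\mapsto x^4=z$ and passing to $\uuuu{v}\cdot x^D$, formula~\eqref{4.22} gives for the Birkhoff normal form~\eqref{4.25} (with $\rho^{(1)}=0$, so $b_2=b_2^{(0)}$, $b_3=b_3^{(1)}$, $b_4=b_4^{(1)}$) the connection matrix $4x^{-2}\big(b_2^{(0)}C_2+b_4^{(1)}E\big)+\big(1+4b_3^{(1)}\big)D$. Its leading coefficient $b_2^{(0)}C_2+b_4^{(1)}E$ is semisimple with two distinct eigenvalues $\pm\kappa$, where $\kappa^2=b_2^{(0)}b_4^{(1)}=-\delta^{(1)}\in\C^*$, and is independent of $b_3^{(1)}$. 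Diagonalizing it by a constant $P$, the order-zero term becomes $\big(1+4b_3^{(1)}\big)P^{-1}DP$; since $D$ anticommutes with the leading coefficient ($DC_2=-C_2D$ and $DE=-ED$), this term is off-diagonal in the eigenbasis, i.e.\ has vanishing diagonal part, for every value of $b_3^{(1)}$.

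As the leading coefficient has distinct eigenvalues, the formal splitting lemma (Hukuhara--Turrittin--Levelt; see, e.g.,~\cite[Chapter~III]{Sa02}) applies on the cover: the off-diagonal order-zero term is removed by a formal gauge transformation, so the formal normal form is determined by the exponential parts $\pm\kappa$ and the vanishing exponents alone, and in particular is independent of $b_3^{(1)}$. Since the formal isomorphism class of the meromorphic germ is determined by this formal normal form, the structures given by $B$ and $\www B$ are formally isomorphic; this is part $(b)$, and it shows moreover that the formal class depends only on $\rho^{(0)},\rho^{(1)},\delta^{(1)}$.

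The delicate point, and the step I expect to require the most care, is the descent from the ramified cover: the formal isomorphism is manufactured on the $x$-line, and one must check that it is equivariant for the deck group of $x\mapsto x^4=z$ so as to descend to a formal $T\in\mathrm{GL}_2(\C[[z]])$ on the $z$-line. This is handled most cleanly by phrasing the formal classification theorem intrinsically for the germ over $(\C,0)$ in $z$, the ramified cover serving only to display the exponential parts and exponents; alternatively one verifies directly that the canonical splitting gauge commutes with the deck action, so that the composite base change is invariant under it. Once the descent is secured, parts $(b)$ and then $(a)$ follow as above.
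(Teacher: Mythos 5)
Your strategy is genuinely different from the paper's. There, part $(b)$ is proved by solving the gauge equation~\eqref{4.19} between the two normal forms directly over $\C[[z]]$: the coefficients of $T$ are determined by the explicit recursion~\eqref{4.41}, with one free constant $\tau_1^{(0)}\in\C^*$, so no ramification ever enters. Your reduction of $(a)$ to $(b)$, your reduction to $b_1=\www b_1=0$ (correct, since $b_1$ enters~\eqref{4.19} only through $b_1-\www b_1$), and your upstairs computation are all sound: in the sheared system~\eqref{4.22} the parameter $b_3^{(1)}$ occurs only in the residue term $\big(1+4b_3^{(1)}\big)D$, this term is purely off-diagonal in an eigenbasis of $S=b_2^{(0)}C_2+b_4^{(1)}E$ because $DSD=-S$, and the splitting gauge normalized to have diagonal part ${\bf 1}_2$ then produces the diagonal normal form $4\kappa x^{-2}\,\mathrm{diag}(1,-1)$ independently of $b_3^{(1)}$.

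The gap is the descent, and it is not a formality but the decisive step. Formal isomorphism of the pullbacks does \emph{not} imply formal isomorphism over $\C[[z]]$: the rank~$1$ connections with $z\nnn_{\paa_z}=\alpha$ and $z\nnn_{\paa_z}=\alpha+\frac14$ are not formally isomorphic, yet their pullbacks under $x\mapsto x^4=z$ have exponents $4\alpha$ and $4\alpha+1$ and are isomorphic via the gauge $x$. So you must actually produce a deck-equivariant isomorphism upstairs; your proposal only asserts that this can be done. It can, but it takes real content: choose $P$ diagonalizing $S$ so that $P^{-1}DP=J$ (the antidiagonal involution; possible because $D$ swaps the two eigenlines of $S$); use the uniqueness of the normalized splitting gauge to deduce $\hat T(ix)=J\hat T(x)J$ (and likewise for the diagonal gauge removing positive-order diagonal terms); conclude that the comparison map $\Xi$ between the two sheared systems satisfies $\Xi(ix)=D\Xi(x)D$, which exactly cancels the twist $(ix)^D=(iD)\,x^D$, so that $\Theta:=x^D\,\Xi\,x^{-D}$ is invariant under $x\mapsto ix$; finally, invariance forces the entries of $\Xi$ to contain only powers $x^k$ with $k\equiv 2\ (\mathrm{mod}\ 4)$ off the diagonal and $k\equiv 0\ (\mathrm{mod}\ 4)$ on it, so that all entries of $\Theta$ lie in $\C[[z]]$ and $\Theta^{(0)}$ is invertible, i.e., $\Theta\in \mathrm{GL}_2(\C[[z]])$ is the required formal isomorphism. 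Without this argument (or a citation of the intrinsic, unramified Turrittin--Levelt classification over $\C((z))$ that pins down the regular singular exponents), the proof is incomplete precisely where a naive version of it is false. Once the descent is filled in, your route does have a merit the paper's computation lacks: it explains structurally why only $\rho^{(0)}$, $\rho^{(1)}$, $\delta^{(1)}$ survive as formal invariants.
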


\begin{proof}
Part $(a)$ follows from part $(b)$.
For the proof of part (b), we have to find
$T\in {\rm GL}_2(\C[[z]])$ such that $T$, $B$ in~\eqref{4.25}
and
\begin{gather*}
\www B=\big(\rho^{(0)}+z\rho^{(1)}\big)C_1+b_2^{(0)}C_2+z\www b_3^{(1)}D+zb_4^{(1)}E
\end{gather*}
with $\www b_3^{(1)}\in\C$ arbitrary satisfy~\eqref{4.19}.
Here~\eqref{4.19} says
\begin{gather*}%\label{4.39}
0= z\paa_z\tau_1 + \big(b_3^{(1)}-\www b_3^{(1)}\big)\tau_3,
\\
0= z^2\paa_z\tau_2 + z\big({-}b_3^{(1)}-\www b_3^{(1)}\big)\tau_2+2b_2^{(0)}\tau_3,
\\
0= z^2\paa_z\tau_3 + z\big(b_3^{(1)}-\www b_3^{(1)}\big)\tau_1+ zb_4^{(1)}\tau_2-b_2^{(0)}\tau_4,
\\
0= z\paa_z\tau_4 -2b_4^{(1)}\tau_3+ \big(b_3^{(1)}+\www b_3^{(1)}\big)\tau_4.
\end{gather*}
This is equivalent to
\begin{gather*}%\label{4.40}
0=\tau_3^{(0)}=\tau_4^{(0)},
\\
0= k\tau_1^{(k)} + \big(b_3^{(1)}-\www b_3^{(1)}\big)\tau_3^{(k)}\qquad
\text{for}\quad k\geq 1,
\\
0= \big(k-1-b_3^{(1)}-\www b_3^{(1)}\big)\tau_2^{(k-1)}+2b_2^{(0)}\tau_3^{(k)}\qquad
\text{for}\quad k\geq 1,
\\
0= (k-1)\tau_3^{(k-1)}+ \big(b_3^{(1)}-\www b_3^{(1)}\big)\tau_1^{(k-1)}
+ b_4^{(1)}\tau_2^{(k-1)}-b_2^{(0)}\tau_4^{(k)}\qquad
\text{for}\quad k\geq 1,
\\
0= -2b_4^{(1)}\tau_3^{(k)}+ \big(k+b_3^{(1)}+\www b_3^{(1)}\big)\tau_4^{(k)}\qquad
\text{for}\quad k\geq 1.
\end{gather*}
This is equivalent to
\begin{gather}
\tau_3^{(0)}=\tau_4^{(0)}=0,\nonumber
\\
\tau_1^{(k)}= \frac{-1}{k}\big(b_3^{(1)}-\www b_3^{(1)}\big)\tau_3^{(k)}\qquad
\text{for}\quad k\geq 1,\nonumber
\\
2b_2^{(0)}\tau_3^{(k)}= \big(b_3^{(1)}+\www b_3^{(1)}+1-k\big)\tau_2^{(k-1)}\qquad
\text{for}\quad k\geq 1,\nonumber
\\
b_2^{(0)}\tau_4^{(1)}=b_4^{(1)}\tau_2^{(0)}+\big(b_3^{(1)}-\www b_3^{(1)}\big)\tau_1^{(0)},\nonumber
\\
b_2^{(0)}\tau_4^{(k)}=b_4^{(1)}\tau_2^{(k-1)}
+\bigg(k-1+\frac{-1}{k-1}\big(b_3^{(1)}-\www b_3^{(1)}\big)^2\bigg)
\big(2b_2^{(0)}\big)^{-1}\big(b_3^{(1)}+\www b_3^{(1)}+2-k\big)\tau_2^{(k-2)}\nonumber
\\
\qquad\text{for}\quad k\geq 2,\nonumber
\\
0= b_4^{(1)}\tau_2^{(0)}+\big(1+b_3^{(1)}+\www b_3^{(1)}\big)
\big(b_3^{(1)}-\www b_3^{(1)}\big)\tau_1^{(0)},\nonumber
\\
0= b_4^{(1)}(2k+1)\tau_2^{(k)}+
\big(k+1+b_3^{(1)}+\www b_3^{(1)}\big)\bigg(k+\frac{-1}{k}\big(b_3^{(1)}-\www b_3^{(1)}\big)^2\bigg)\big(2b_2^{(0)}\big)^{-1}
\nonumber
\\ \hphantom{0=}
{}\times\big(b_3^{(1)}+\www b_3^{(1)}+1-k\big)\tau_2^{(k-1)}
\qquad\text{for}\quad k\geq 1.
\label{4.41}
\end{gather}
One can choose $\tau_1^{(0)}\in\C^*$ freely.
Then the equations~\eqref{4.41} have unique
solutions $\tau_1-\tau_1^{(0)},\tau_2$, $\tau_3,\tau_4\in\C[[z]]$.
Therefore $T\in {\rm GL}_2(\C[[z]])$ exists such that $T$,
$B$ as in~\eqref{4.25} and $\www B$ as above satisfy~\eqref{4.19}. This shows part $(b)$.
\end{proof}

\begin{Remarks}\qquad%\label{t4.15}
\begin{enumerate}\itemsep=0pt
\item[$(i)$] Because of Lemma~\ref{t4.14}, the set $\Eig(M^{\rm mon})$
takes here the role of the Stokes structure: It~distinguishes the
holomorphic isomorphism classes within one formal
isomorphism class.

\item[$(ii)$] The following $(TE)$-structures
have trivial Stokes structure.
The proof of Lemma~\ref{t4.9} leads to these $(TE)$ structures. They are the $(TE)$-structures
with $\Eig(M^{\rm mon})=\{\lambda_1,\lambda_2\}$
with $\lambda_2=-\lambda_1$,
respectively with $b_3^{(1)}\in\big({\pm}\,\frac{1}{4}+\Z\big)$
for any Birkhoff normal form in~\eqref{4.25}.

Choose a number $\alpha^{(1)}\in\C$.
Consider the rank $2$ bundle $H'\to\C^*$ with flat
connection $\nnn$ and flat multivalued basis
$\uuuu{f}=(f_1,f_2)$ with monodromy given by
\begin{gather*}%\label{4.42}
\uuuu{f}(z{\rm e}^{2\pi {\rm i}})=\uuuu{f}(z)\cdot {\rm i}{\rm e}^{-2\pi {\rm i}\alpha^{(1)}}
\cdot (C_2+E).
\end{gather*}
The eigenvalues are $\pm {\rm i}{\rm e}^{-2\pi {\rm i}\alpha^{(1)}}$.
Choose numbers $t_1\in\C$ and $t_2\in\C^*$.
The following basis of $H'$ is univalued.
\begin{eqnarray}\label{4.43}
\uuuu{v}:= \uuuu{f}\cdot {\rm e}^{t_1/z}z^{\alpha^{(1)}}
\begin{pmatrix}z^{-1/4}{\rm e}^{t_2z^{-1/2}}
& z^{1/4}{\rm e}^{t_2z^{-1/2}}\\
z^{-1/4}{\rm e}^{-t_2z^{-1/2}}
&-z^{1/4}{\rm e}^{-t_2z^{-1/2}}
\end{pmatrix}\!.
\end{eqnarray}
The matrix $B$ with
$z^2\nnn_{\paa_z}\uuuu{v}=\uuuu{v}\cdot B$ is
\begin{gather}\label{4.44}
B= \big({-}t_1+z\alpha^{(1)}\big)C_1-\frac{t_2}{2} C_2
-z\frac{1}{4}D-z\frac{t_2}{2}E.
\end{gather}
So here $\rho^{(1)}=\alpha^{(1)}$, $\rho^{(0)}=-t_1$,
$\delta^{(1)}-2\rho^{(0)}\rho^{(1)}=-\frac{1}{4}t_2^2$.

\item[$(iii)$] Part $(ii)$ generalizes to a $(TE)$-structure over
$M=\C^2$ with coordinates $t=(t_1,t_2)$.
Consider the rank $2$ bundle $H'\to\C^*\times M$ with
flat connection and flat multivalued basis $\uuuu{f}=(f_1,f_2)$
with monodromy given by
\begin{gather*}%\label{4.45}
\uuuu{f}\big(z{\rm e}^{2\pi {\rm i}},t\big)=\uuuu{f}(z,t)\cdot {\rm i}{\rm e}^{-2\pi {\rm i}\alpha^{(1)}}
\cdot (C_2+E).
\end{gather*}
The basis $\uuuu{v}$ in~\eqref{4.43} is univalued.
The matrices $A_1$, $A_2$ and $B$ in its connection 1-form
$\Omega$ as in~\eqref{3.4}--\eqref{3.6} are given
by~\eqref{4.44} and
\begin{gather*}%\label{4.46}
A_1=C_1,\qquad
A_2=C_2+zE.
\end{gather*}
The restriction to a point $t\in \C\times\C^*$ is a
$(TE)$-structure of type (Bra) with trivial Stokes
structure.
The restriction to a point $t\in\C\times\{0\}$
is a $(TE)$-structure of type (Log).
\end{enumerate}
\end{Remarks}

\subsection[The case (Reg) with $\tr\UU=0$]
{The case (Reg) with $\boldsymbol{\tr\UU=0}$}\label{c4.5}

The $(TE)$-structures over a point of the type (Reg) with
$\tr\UU=0$ are the regular singular $(TE)$-structures
over a point which are not logarithmic.
They can be easily classified using elementary sections.
Theorem~\ref{t4.17} splits them into three cases
(one in part $(a)$, two in part $(b)$: $\alpha_1=\alpha_2$
and $\alpha_1-\alpha_2\in\N$).

\begin{Notation}\label{t4.16}
Start with a $(TE)$-structure $(H\to\C,\nnn)$ of rank $2$
over a point. Recall the notions from Definition~\ref{t3.21}:
$H':=H|_{\C^*}$, $M^{\rm mon}$, $M^{\rm mon}_s$, $M^{\rm mon}_u$, $N^{\rm mon}$,
$\Eig(M^{\rm mon})$, $H^\infty$, $H^\infty_{\lambda}$, $C^\alpha$
for $\alpha\in\C$ with ${\rm e}^{-2\pi {\rm i}\alpha}\in \Eig(M^{\rm mon})$,
$s(A,\alpha)\in C^\alpha$ for
$A\in H^{\infty}_{{\rm e}^{-2\pi {\rm i}\alpha}}$,
$\operatorname{es}(\sigma,\alpha)\in C^\alpha$ for~$\sigma$
a~holomorphic section on $H|_{U_1\setminus\{0\}}$ for
$U_1\subset\C$ a neighborhood of 0.
Now the eigenvalues of $M^{\rm mon}$ are called $\lambda_1$
and $\lambda_2$ ($\lambda_1=\lambda_2$ is allowed).
The sheaf $\VV^{>-\infty}$ simplifies here to a~$\C\{z\}\big[z^{-1}\big]$-vector space of dimension $2$,
\begin{gather*}%\label{4.47}
V^{>-\infty}:= \begin{cases}
\C\{z\}\big[z^{-1}\big]\cdot C^{\alpha_1}
\oplus \C\{z\}\big[z^{-1}\big]\cdot C^{\alpha_2}&
\text{if}\quad\lambda_1\neq\lambda_2,
\\[1ex]
\C\{z\}\big[z^{-1}\big]\cdot C^{\alpha_1}&
\text{if}\quad\lambda_1=\lambda_2,\end{cases}
\end{gather*}
where $\alpha_1,\alpha_2\in\C$ with
${\rm e}^{-2\pi {\rm i}\alpha_j}=\lambda_j$.
$V^{>-\infty}$ is the space of sections of moderate growth.
\end{Notation}

\begin{Theorem}\label{t4.17}
Consider a regular singular, but not logarithmic,
rank $2$ $(TE)$-structure $(H\to\C,\nnn)$ over a point.
Associate to it the data in the Notation~$\ref{t4.16}$.
\begin{enumerate}\itemsep=0pt
\item[$(a)$] The case $N^{\rm mon}=0$:
There exist unique numbers $\alpha_1$, $\alpha_2$
with ${\rm e}^{-2\pi {\rm i}\alpha_j}=\lambda_j$
and $\alpha_1\neq\alpha_2$ and the following properties:
There exist elementary sections $s_1\in C^{\alpha_1}\setminus\{0\}$
and $s_2\in C^{\alpha_2}\setminus\{0\}$ and a number $t_2\in\C^*$
such that
\begin{align}
\label{4.48}
\OO(H)_0&=\C\{z\}(s_1+t_2s_2)\oplus \C\{z\}(zs_2)
\\
&= \C\{z\}\big(s_2+t_2^{-1}s_1\big)\oplus \C\{z\}(zs_1).
\label{4.49}
\end{align}
The isomorphism class of the $(TE)$-structure is
uniquely determined by the information $N^{\rm mon}=0$
and the set $\{\alpha_1,\alpha_2\}$.
The numbers $\alpha_1$ and $\alpha_2$ are called
leading exponents.

\item[$(b)$] The case $N^{\rm mon}\neq 0$ $($thus $\lambda_1=\lambda_2)$:
There exist unique numbers $\alpha_1$, $\alpha_2$
with ${\rm e}^{-2\pi {\rm i}\alpha_j}=\lambda_1$ and
$\alpha_1-\alpha_2\in\N_0$
and the following properties:
Choose any elementary section
$s_1\in C^{\alpha_1}\setminus \ker(z\nnn_{\paa_z}-\alpha_1\colon
C^{\alpha_1}\to C^{\alpha_1})$.
The elementary section $s_2\in C^{\alpha_2}$ with
\begin{gather}\label{4.50}
(z\nnn_{\paa_z}-\alpha_1)(s_1)=z^{\alpha_1-\alpha_2}s_2.
\end{gather}
is a generator of
$\ker(z\nnn_{\paa_z}-\alpha_2\colon C^{\alpha_2}\to C^{\alpha_2})$.
Then
\begin{gather}\label{4.51}
\OO(H)_0=\C\{z\}(s_1+t_2s_2)\oplus \C\{z\}(zs_2)
\end{gather}
for some $t_2\in\C$. If $\alpha_1>\alpha_2$ then
$t_2$ is in $\C^*$ and is independent of the choice of
$s_1$. If~\mbox{$\alpha_1=\alpha_2$}, then one can replace
$s_1$ by $s_1^{\rm[new]}:=s_1+t_2s_2$, and then
$t_2^{\rm[new]}=0$.
The~iso\-morphism class of the $(TE)$-structure is
uniquely determined by the information $N^{\rm mon}\neq 0$
and the pair $(\alpha_1,\alpha_2)$ if $\alpha_1=\alpha_2$
and the triple $(\alpha_1,\alpha_2,t_2)$ if $\alpha_1>\alpha_2$.
The numbers $\alpha_1$ and $\alpha_2$ are called
leading exponents.
\end{enumerate}
\end{Theorem}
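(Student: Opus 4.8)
The plan is to translate the hypotheses into the language of lattices and elementary sections and then classify directly. Since the $(TE)$-structure is regular singular, $L:=\OO(H)_0$ is a free $\C\{z\}$-module of rank $2$ inside the two-dimensional $\C\{z\}\big[z^{-1}\big]$-vector space $V^{>-\infty}$ of germs of moderate growth from Definition~\ref{t3.21}, and the pole of Poincar\'e rank $1$ is exactly the stability condition $z\nnn_{z\paa_z}L\subseteq L$ (where $z\nnn_{z\paa_z}=z^2\nnn_{\paa_z}$). Over a point the structure is logarithmic iff $\UU=\big[z^2\nnn_{\paa_z}\big]=B^{(0)}$ vanishes, so ``not logarithmic'' means $\UU\neq 0$; regular singularity forces $\UU$ to have a single eigenvalue, and the standing assumption $\tr\UU=0$ then makes $\UU$ a single \emph{nonzero} nilpotent Jordan block. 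So the problem becomes: classify the $z\nnn_{z\paa_z}$-stable rank-$2$ lattices $L\subset V^{>-\infty}$ with $\UU\neq 0$, and I would split according to whether $N^{\rm mon}=0$, which is precisely the division into parts $(a)$ and $(b)$.

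For part $(a)$ I would first note that $N^{\rm mon}=0$ makes $V^{>-\infty}$ the direct sum of the two monodromy eigenlines $W_1,W_2$ over $\C\{z\}\big[z^{-1}\big]$, with $\nnn_{z\paa_z}$ acting on $W_j$ as multiplication by the exponent (well defined modulo $\Z$). I would rule out $\lambda_1=\lambda_2$ by a one-line computation: if the monodromy is scalar then $z^2\nnn_{\paa_z}$ sends every lattice into $zL$, forcing a logarithmic pole against the hypothesis; hence $\lambda_1\neq\lambda_2$ and $\alpha_1\neq\alpha_2$. Uniqueness of the leading exponents is then forced by defining $\alpha_j$ intrinsically as the minimal $\Ree(\alpha)$ of an elementary section occurring in the projection $\pi_j(L)\subset W_j$. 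The existence of the normal form \eqref{4.48} is the real content: analysing $L$ through its intersections $L\cap W_j$ and projections $\pi_j(L)$, I would show that $\UU\neq 0$ forces $L$ non-split, after which generators $s_1\in C^{\alpha_1}$, $s_2\in C^{\alpha_2}$ can be chosen giving $L=\C\{z\}(s_1+t_2s_2)\oplus\C\{z\}(zs_2)$ with $t_2\in\C^*$. The symmetric form \eqref{4.49} then follows from $s_2+t_2^{-1}s_1=t_2^{-1}(s_1+t_2s_2)$ and $zs_1=z(s_1+t_2s_2)-t_2(zs_2)\in L$. Finally the rescaling $s_2\mapsto t_2s_2$ absorbs $t_2$, so the isomorphism class depends only on the unordered pair $\{\alpha_1,\alpha_2\}$.

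For part $(b)$ I would use that $N^{\rm mon}\neq 0$ forces $\lambda_1=\lambda_2=:\lambda$ with a single Jordan block on the two-dimensional $H^\infty$, so each $C^\alpha$ with ${\rm e}^{-2\pi {\rm i}\alpha}=\lambda$ is two-dimensional with $z\nnn_{\paa_z}-\alpha$ a single nilpotent Jordan block. Choosing a cyclic $s_1\in C^{\alpha_1}$ and defining $s_2$ by \eqref{4.50} produces a generator of $\ker(z\nnn_{\paa_z}-\alpha_2)$ in $C^{\alpha_2}$, with $\alpha_1-\alpha_2=:d\in\N_0$ the integer built into this coupling. A direct computation of the matrix $B$ of $z\nnn_{z\paa_z}$ in the basis $v_1=s_1+t_2s_2$, $v_2=zs_2$ gives lower-left entry $z^{d}-d\,t_2$, so $\UU=B^{(0)}$ has lower-left entry $1$ when $d=0$ and $-d\,t_2$ when $d\geq 1$. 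Hence for $\alpha_1=\alpha_2$ the structure is automatically non-logarithmic and replacing $s_1$ by $s_1+t_2s_2$ (still cyclic since $s_2\in\ker$) normalises $t_2$ to $0$; for $\alpha_1>\alpha_2$ non-logarithmicity forces $t_2\in\C^*$, and I would check $t_2$ is unchanged under the only admissible modifications of $s_1$ (rescaling $s_1\mapsto cs_1$, which rescales $s_2$ in step and leaves $s_1+t_2s_2$ proportional, and adding multiples of $z^{d}s_2\in\C\{z\}\,zs_2\subset L$), so that $t_2$ is a genuine invariant.

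I expect the main obstacle to be the existence of the normal forms \eqref{4.48} and \eqref{4.51}, i.e.\ proving that \emph{every} stable rank-$2$ lattice with $\UU\neq 0$ is exactly of the displayed shape, rather than merely admitting such generators after altering the lattice. This requires controlling the finitely many off-diagonal coefficients of a general element of $L$ expanded in elementary sections, and the nilpotent case $(b)$ is the more delicate one, since there $N^{\rm mon}$ couples the two exponents through \eqref{4.50} and one must pin down the single integer $d=\alpha_1-\alpha_2$ from the lattice itself. Once existence is in hand, uniqueness of the $\alpha_j$ and the (non)invariance of $t_2$ are routine consequences of the explicit matrix $B$ above.
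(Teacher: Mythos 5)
Your plan for part $(b)$ and for the subcase $\lambda_1\neq\lambda_2$ of part $(a)$ is workable and close in spirit to the paper's proof (both classify $z^2\nnn_{\paa_z}$-stable lattices inside $V^{>-\infty}$ via elementary sections), but part $(a)$ contains a concrete error that destroys the classification. The claimed ``one-line computation'' --- scalar monodromy forces $z^2\nnn_{\paa_z}L\subset zL$, hence a logarithmic pole --- is false. The operator $\nnn_{z\paa_z}$ is scalar only on each space $C^\alpha$ separately; a lattice may mix $C^\alpha$ and $C^{\alpha+n}$ with $n\in\Z\setminus\{0\}$, and then the pole part need not vanish. Explicitly, take trivial monodromy with flat basis $A_1$, $A_2$, put $s_1:=A_1\in C^0$, $s_2:=zA_2\in C^1$, and
\begin{gather*}
\OO(H)_0:=\C\{z\}(s_1+s_2)\oplus\C\{z\}(zs_2).
\end{gather*}
Then $z^2\nnn_{\paa_z}(s_1+s_2)=z^2A_2=zs_2$, which is a generator of $\OO(H)_0$ and hence nonzero in $K=\OO(H)_0/z\OO(H)_0$; so $\UU\neq 0$ and this regular singular $(TE)$-structure is not logarithmic, yet its monodromy is the identity. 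It is exactly the case~\eqref{4.48} with $(\alpha_1,\alpha_2,t_2)=(0,1,1)$. Note that the theorem asserts only $\alpha_1\neq\alpha_2$, not $\lambda_1\neq\lambda_2$ (Notation~\ref{t4.16} explicitly allows $\lambda_1=\lambda_2$): the structures with $N^{\rm mon}=0$, $\lambda_1=\lambda_2$ and $\alpha_1-\alpha_2\in\Z\setminus\{0\}$ are part of the statement, see Corollary~\ref{t4.18} and Theorem~\ref{t7.4}$(a)$ $(ii)$, where they fill out the Hirzebruch surfaces $\F_2^{\alpha_1,\alpha_2}$. Your argument would ``prove'' that they do not exist. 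Moreover your machinery breaks down precisely there: with $\lambda_1=\lambda_2$ there is no canonical splitting of $V^{>-\infty}$ into eigenlines $W_1\oplus W_2$, so the intrinsic definition of $\alpha_j$ via the projections $\pi_j(L)$ has no meaning. This is why the paper's proof, which expands a basis of $\OO(H)_0$ in a fixed pair of elementary sections $\www s_1$, $\www s_2$ and reduces the coefficient matrix $(b_{ij})$ by $\deg_z$-considerations, inserts extra normalization steps exactly in the case $N^{\rm mon}=0$, $\lambda_1=\lambda_2$ (ordering the degrees, then replacing $\www s_2$ by a linear combination of $\www s_1$ and $\www s_2$) to manufacture a substitute for the missing eigenline decomposition.

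A second, smaller point: the heart of the theorem --- that \emph{every} stable lattice with $\UU\neq 0$ has exactly the shape~\eqref{4.48} resp.~\eqref{4.51} --- is the step you explicitly defer. In the paper this is where the work lies: after the degree reduction one has generators $s_1+(c_1+c_2)s_2$ and $zs_2$ with $c_1\in\C\big[z^{-1}\big]$, $c_2\in z\C\{z\}$, and the requirement that the pole have order exactly $2$ is what forces $c_1$ to be a nonzero constant (and, in case $(b)$, excludes $\alpha_1-\alpha_2\in\Z_{<0}$). Your part $(b)$ computations (the lower-left entry $z^{d}-dt_2$ of $B$, the invariance of $t_2$ under admissible changes of $s_1$, the normalization $t_2=0$ when $\alpha_1=\alpha_2$) are correct, but they presuppose the normal form rather than establish it. As it stands the proposal is not a proof: part $(a)$ must be repaired to include the scalar-monodromy case, and the existence of the normal forms needs an actual argument.
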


\begin{proof} First, $(a)$ and $(b)$ are considered together.
Let $\beta_1,\beta_2\in\C$ be the unique numbers
with ${\rm e}^{-2\pi {\rm i}\beta_j}=\lambda_j$ and
$-1<\Ree(\beta_j)\leq 0$. Choose elementary sections
$\www s_1\in C^{\beta_1}$ and
$\www s_2\in C^{\beta_2}$ which
form a global basis of $H'$. In~the case $N^{\rm mon}\neq 0$ (then $\beta_1=\beta_2$)
choose them such that
$\www s_1\notin \ker\big(z\nnn_{\paa_z}-\beta_1\colon C^{\beta_1}\to
C^{\beta_1}\big)$ and~$\www s_2\in\ker\big(z\nnn_{\paa_z}-\beta_2\colon
C^{\beta_2}\to C^{\beta_2}\big)$.

Let $\sigma_1^{[1]},\sigma_2^{[1]}\in\OO(H)_0$
be a $\C\{z\}$-basis of $\OO(H)_0$. Write
\begin{gather*}%\label{4.52}
\big(\sigma_1^{[1]},\sigma_2^{[1]}\big)=\big(\www s_1,\www s_2\big)
\begin{pmatrix}
b_{11}& b_{12}\\b_{21}&b_{22}\end{pmatrix}
\qquad\text{with}\quad
b_{ij}\in\C\{z\}\big[z^{-1}\big].
\end{gather*}
Recall that the degree $\deg_z g$ of a Laurent series
$g=\sum_{j\in\Z}g^{(j)}z^j\in\C\{z\}\big[z^{-1}\big]$ is the
minimal $j$ with $g^{(j)}\neq 0$ if $g\neq 0$,
and $\deg_z0:=+\infty$.

In the case $N^{\rm mon}=0$ and $\lambda_1=\lambda_2$
(then $\beta_1=\beta_2$),
we suppose $\min(\deg_z b_{11},\deg_z b_{12})
\leq \min(\deg_z b_{21},\deg_z b_{22})$.
If it does not hold a priori, we can exchange
$\www s_1$ and $\www s_2$.

In any case, we suppose $\deg_z b_{11}\leq \deg_z b_{12}$.
If it does not hold a priori, we can exchange $\sigma_1^{[1]}$
and $\sigma_2^{[1]}$.

Again in the case $N^{\rm mon}=0$ and $\lambda_1=\lambda_2$,
we suppose $\deg_zb_{11}<\deg_zb_{21}$. If it does not hold
a priori, we can replace $\www s_2$ by a certain linear
combination of $\www s_2$ and $\www s_1$.

Now $\www b_{11}:=z^{-\deg_z b_{11}}b_{11}\in\C\{z\}^*$ is
a unit. Consider $\alpha_1:=\beta_1+\deg_z b_{11}$
and $s_1:=z^{\deg_z b_{11}}\www s_1\allowbreak\in C^{\alpha_1}$
and the new basis $\big(\sigma_1^{[2]},\sigma_2^{[2]}\big)$
of $\OO(H)_0$ with
\begin{gather*}
\big(\sigma_1^{[2]},\sigma_2^{[2]}\big)
:=\big(\sigma_1^{[1]},\sigma_2^{[1]}\big)
\begin{pmatrix} \www b_{11}^{-1} &-b_{11}^{-1}b_{12} \\
0 & 1 \end{pmatrix}
= \big(s_1,\www s_2\big)
\begin{pmatrix} 1 & 0 \\ \www b_{11}^{-1} b_{21} &
b_{22}-b_{11}^{-1}b_{12}b_{21}\end{pmatrix}\!.%\label{4.53}
\end{gather*}

Consider $m:=\deg_z\big(b_{22}-b_{11}^{-1}b_{12}b_{21}\big)\in\Z$
($+\infty$ is impossible) and
$\alpha_2:=\beta_2+(m-1)$ and
$s_2:=z^{m-1}\www s_2\in C^{\alpha_2}$.
Write $z^{-m+1}\www b_{11}^{-1}b_{21}=c_1+c_2$
with $c_1\in\C\big[z^{-1}\big]$ and $c_2\in z\C\{z\}$.
We can replace $\sigma_2^{[2]}$ by
$\sigma_2^{[3]}:=zs_2$
and $\sigma_1^{[2]}=s_1+(c_1+c_2)s_2$ by
$\sigma_1^{[3]}=s_1+c_1s_2$.

$(a)$ Consider the case $N^{\rm mon}=0$.
If $\lambda_1=\lambda_2$ then $\deg_z b_{21}\geq \deg_z b_{11}+1$
and thus
\begin{align}
(c_1+c_2)s_2&=\www{b}_{11}^{-1}b_{21}\www{s}_2\in
\C\{z\}\cdot z^{\deg_zb_{21}}\cdot C^{\beta_2}\nonumber
\\
&{}\subset \C\{z\}\cdot z^{\deg_zb_{11}+1}\cdot C^{\beta_2}
= \C\{z\}\cdot C^{\alpha_1+1}.\label{4.54}
\end{align}
In any case (whether $\lambda_1=\lambda_2$ or
$\lambda_1\neq \lambda_2$), we must have $c_1\neq 0$.
Else the $(TE)$-structure is logarithmic.
As the pole has precisely order 2, $c_1$ is a constant
$\neq 0$ (if $\lambda_1=\lambda_2$, here we need~\eqref{4.54}),
which is now called $t_2$. This implies $m-1=\deg_z b_{21}$. In~the case $N^{\rm mon}=0$ and $\lambda_1=\lambda_2$
we have $\beta_1=\beta_2$ and
\begin{gather*}
\alpha_2-\alpha_1=m-1-\deg_z b_{11}
=\deg_z b_{21}-\deg_z b_{11}> 0,%\label{4.55}
\end{gather*}
so especially $\alpha_2\neq \alpha_1$.

$(b)$ Consider the case $N^{\rm mon}\neq 0$.
Then $s_2$ is generator of
$\ker(z\nnn_{\paa_z}-\alpha_2\colon C^{\alpha_2}\to C^{\alpha_2})$,
and we can rescale it such that~\eqref{4.50} holds.
First consider the case $c_1=0$.
As the pole has precisely order 2, we must have
$\alpha_2=\alpha_1$. Then~\eqref{4.51} holds with $t_2=0$.
Now consider the case $c_1\neq 0$. Then
$\uuuu{\sigma}^{[3]}=\big(\sigma_1^{[3]},\sigma_2^{[3]}\big)$ satisfies
\begin{gather*}%\label{4.56}
z\nnn_{\paa_z}\uuuu{\sigma}^{[3]}=\uuuu{\sigma}^{[3]}
\begin{pmatrix}\alpha_1 & 0 \\
z^{-1}(z\paa_z-\alpha_1+\alpha_2)(c_1)
+z^{\alpha_1-\alpha_2-1}& \alpha_2+1
\end{pmatrix}\!.
\end{gather*}
First case, $\alpha_1-\alpha_2\in\Z_{<0}$:
The coefficient of $z^{\alpha_1-\alpha_2-1}$
in $z^{-1}(z\paa_z-\alpha_1+\alpha_2)(c_1)
+z^{\alpha_1-\alpha_2-1}$ is~$1$.
Therefore the pole order is $>2$, a contradiction.

Second case, $\alpha_1\geq \alpha_2$:
As the pole has precisely order 2, $c_1$ is a constant $\neq 0$,
which is now called $t_2$. Then~\eqref{4.51} holds,
and $t_2\in\C^*$. In~the case $\alpha_1-\alpha_2\in\N$,
$t_2$ is obviously independent of the choice of $s_1$.
\end{proof}

Corollary~\ref{t4.18} is an immediate consequence of
Theorem~\ref{t4.17}.

\begin{Corollary}\label{t4.18}
The set of regular singular, but not logarithmic,
rank~$2$ $(TE)$-structures over a point is in bijection
with the set
\begin{align*}
\{(0,\{\alpha_1,\alpha_2\})\,|\, \alpha_1,\alpha_2\in\C,\alpha_1\neq\alpha_2\}
&\cup\{(1,\alpha_1,\alpha_2)\,|\, \alpha_1=\alpha_2\in\C\}
\\
&\cup\{(1,\alpha_1,\alpha_2,t_2)\,|\, \alpha_1,\alpha_2\in\C,
\alpha_1-\alpha_2\in \N,t_2\in\C^*\}.
\end{align*}
The first set parametrizes the cases with $N^{\rm mon}=0$,
the second and third set parametrize the cases with
$N^{\rm mon}\neq 0$. Theorem~$\ref{t4.17}$ describes the
corresponding $(TE)$-structures.
\end{Corollary}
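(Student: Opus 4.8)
The plan is to read the bijection off directly from the normal-form and uniqueness assertions of Theorem~\ref{t4.17}, so that the proof is essentially a matter of aligning its three cases with the three subsets on the right-hand side. First I would define a map $\Phi$ from the set of isomorphism classes of regular singular, non-logarithmic, rank $2$ $(TE)$-structures over a point to the target set. Given such a structure, the first coordinate records whether $N^{\rm mon}=0$ (value $0$) or $N^{\rm mon}\neq 0$ (value $1$); this is an isomorphism invariant because the monodromy, and hence its nilpotent part, is. For the remaining coordinates I would attach the leading exponents $\alpha_1,\alpha_2$ furnished by Theorem~\ref{t4.17}, together with $t_2$ in the one case where it is an invariant.

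Injectivity of $\Phi$ is then exactly the content of the uniqueness statements in Theorem~\ref{t4.17}. In the case $N^{\rm mon}=0$ (part~$(a)$) the unordered set $\{\alpha_1,\alpha_2\}$ with $\alpha_1\neq\alpha_2$ is well defined and determines the class; here $t_2$ is deliberately not recorded, since the two presentations \eqref{4.48} and \eqref{4.49} exhibit the class as independent of it. In the case $N^{\rm mon}\neq 0$ (part~$(b)$) the ordered pair $(\alpha_1,\alpha_2)$ with $\alpha_1-\alpha_2\in\N_0$ is well defined; if $\alpha_1=\alpha_2$ the replacement $s_1\mapsto s_1+t_2s_2$ normalises $t_2$ to $0$, so $(\alpha_1,\alpha_2)$ alone determines the class, whereas if $\alpha_1>\alpha_2$ the value $t_2\in\C^*$ is independent of the choice of $s_1$ and is needed to pin down the class. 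This reproduces the three pieces of the target set precisely.

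For surjectivity I would construct, for each tuple in the target set, a structure realising it by running the explicit formulas of Theorem~\ref{t4.17} in reverse. Given exponents $\alpha_1,\alpha_2$, set $\lambda_j:={\rm e}^{-2\pi {\rm i}\alpha_j}$ and take a flat bundle $H'\to\C^*$ whose monodromy is semisimple with eigenvalues $\lambda_1,\lambda_2$ in the first-coordinate-$0$ case, and has a single $2\times 2$ Jordan block (forcing $\lambda_1=\lambda_2$ and $N^{\rm mon}\neq0$) in the first-coordinate-$1$ case. Choosing elementary sections $s_1\in C^{\alpha_1}$ and $s_2\in C^{\alpha_2}$ as in the theorem and defining $\OO(H)_0$ by \eqref{4.48} (with any fixed $t_2\in\C^*$) respectively by \eqref{4.51} (with the prescribed $t_2\in\C$), one obtains a $(TE)$-structure. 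I would then verify that its pole has order exactly $2$: it is regular singular because all its sections are finite $\C\{z\}$-combinations of elementary sections and hence of moderate growth, and it is non-logarithmic because in the constructed basis the matrix of $z\nnn_{\paa_z}$ acquires a genuine $z^{-1}$ entry (coming from $t_2\neq0$ when $t_2$ is present, and otherwise from the Jordan coupling~\eqref{4.50}). Evaluating $\Phi$ on this structure returns the given tuple.

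The only point requiring care, and the one I expect to be the main obstacle, is keeping track of when $t_2$ is a genuine invariant: it must be discarded both for $N^{\rm mon}=0$ and for $N^{\rm mon}\neq0$ with $\alpha_1=\alpha_2$, but retained as an element of $\C^*$ for $N^{\rm mon}\neq0$ with $\alpha_1>\alpha_2$. Since each of these alternatives is already settled inside Theorem~\ref{t4.17}, once the three cases are matched to the three subsets the stated bijection follows, and nothing beyond this bookkeeping is needed.
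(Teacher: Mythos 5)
Your proposal is correct and matches the paper, which simply declares the corollary an immediate consequence of Theorem~\ref{t4.17}: the invariants $(N^{\rm mon},\{\alpha_1,\alpha_2\})$ resp.\ $(N^{\rm mon},\alpha_1,\alpha_2)$ resp.\ $(N^{\rm mon},\alpha_1,\alpha_2,t_2)$ are well defined and complete by the uniqueness statements there, and every tuple is realized by the explicit lattices~\eqref{4.48}/\eqref{4.51} with connection matrices~\eqref{4.57}/\eqref{4.59}. Your careful bookkeeping of when $t_2$ is an invariant is exactly the content already settled in Theorem~\ref{t4.17}$(a)$--$(b)$, so nothing further is needed.
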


\begin{Remark}\label{t4.19}
The connection matrices for the special bases in
Theorem~\ref{t4.17} can be written down easily.

The basis in~\eqref{4.48}:
\begin{gather}\label{4.57}
\nnn_{z\paa_z}(s_1+t_2s_2,zs_2)=
(s_1+t_2s_2,zs_2)\begin{pmatrix}\alpha_1 & 0 \\
z^{-1}(\alpha_2-\alpha_1)t_2 & \alpha_2+1\end{pmatrix}\!.
\end{gather}

The basis in~\eqref{4.49} with $\www t_2:=t_2^{-1}$:
\begin{gather}\label{4.58}
\nnn_{z\paa_z}\big(s_2+\www t_2s_1,zs_1\big)=
\big(s_2+\www t_2s_1,zs_1\big)\begin{pmatrix}\alpha_2 & 0 \\
z^{-1}(\alpha_1-\alpha_2)\www t_2 & \alpha_1+1\end{pmatrix}\!.
\end{gather}

The basis in~\eqref{4.51} with~\eqref{4.50}:
\begin{gather}\label{4.59}
\nnn_{z\paa_z}(s_1+t_2s_2,zs_2)=
(s_1+t_2s_2,zs_2)
\begin{pmatrix}\alpha_1 & 0 \\
z^{-1}(\alpha_2-\alpha_1)t_2 +z^{\alpha_1-\alpha_2-1}
& \alpha_2+1\end{pmatrix}\!.
\end{gather}

Finally, in the case $N^{\rm mon}\neq 0$ and $t_2\in\C^*$,
we consider with $\www t_2:=t_2^{-1}$ also the basis
$\big(s_2+\www t_2 s_1,zs_1\big)$. Again~\eqref{4.50} is assumed:
\begin{gather}
\nnn_{z\paa_z}\big(s_2+\www t_2s_1,zs_1\big)=\big(s_2+\www t_2s_1,zs_1\big)\nonumber
\\ \hphantom{\nnn_{z\paa_z}(s_2+\www t_2s_1,zs_1)=}
{}\times\begin{pmatrix}\alpha_2
+z^{\alpha_1-\alpha_2}\www t_2 &
z^{\alpha_1-\alpha_2+1} \\[.5ex]
z^{-1}(\alpha_1-\alpha_2)\www t_2 -z^{\alpha_1-\alpha_2-1}
\www t_2^2 & \alpha_1+1-z^{\alpha_1-\alpha_2}\www t_2
\end{pmatrix}\!.
\end{gather}\label{4.60}
\end{Remark}

\subsection[The case (Log) with $\tr\UU=0$]{The case (Log) with $\boldsymbol{\tr\UU=0}$}\label{c4.6}

The $(TE)$-structures over a point of the type (Log) with
$\tr\UU=0$ are the logarithmic $(TE)$-structures
over a point. Just as the regular singular $(TE)$-structures,
they can easily be classified using
elementary sections. Theorem~\ref{t4.20}
splits them into two cases.
We use again the Notation~\ref{t4.16}.

\begin{Theorem}\label{t4.20}
Consider a logarithmic rank $2$ $(TE)$-structure $(H\to\C,\nnn)$
over a point. Asso\-ci\-ate to it the data in the Notation~$\ref{t4.16}$.
\begin{enumerate}\itemsep=0pt
\item[$(a)$] The case $N^{\rm mon}=0$: There exist unique numbers
$\alpha_1$, $\alpha_2$ with ${\rm e}^{-2\pi {\rm i}\alpha_j}=\lambda_j$
and the following property:
There exist elementary sections $s_1\in C^{\alpha_1}\setminus\{0\}$
and $s_2\in C^{\alpha_2}\setminus\{0\}$ such that
\begin{gather}\label{4.61}
\OO(H)_0 = \C\{z\}\, s_1\oplus \C\{z\}\, s_2.
\end{gather}
The isomorphism class of the $(TE)$-structure is uniquely
determined by the information $N^{\rm mon}=0$ and
the set $\{\alpha_1,\alpha_2\}$.
The numbers $\alpha_1$ and $\alpha_2$ are called
leading exponents.

\item[$(b)$] The case $N^{\rm mon}\neq 0$ $($thus $\lambda_1=\lambda_2)$:
There exist unique numbers $\alpha_1$, $\alpha_2$
with ${\rm e}^{-2\pi {\rm i}\alpha_j}=\lambda_1$ and
$\alpha_1-\alpha_2\in\N_0$ and
the following properties:
Choose any elementary section
$s_1\in C^{\alpha_1}-\ker(\nnn_{z\paa_z}-\alpha_1\colon
C^{\alpha_1}\to C^{\alpha_1})$.
The elementary section $s_2\in C^{\alpha_2}$ with
\begin{gather}\label{4.62}
(z\nnn_{\paa_z}-\alpha_1)(s_1)=z^{\alpha_1-\alpha_2}s_2.
\end{gather}
is a generator of
$\ker(z\nnn_{\paa_z}-\alpha_2\colon C^{\alpha_2}\to C^{\alpha_2})$.
Then
\begin{gather}\label{4.63}
\OO(H)_0=\C\{z\}\, s_1\oplus \C\{z\}\, s_2.
\end{gather}
The isomorphism class of the $(TE)$-structure is
uniquely determined by the information $N^{\rm mon}\neq 0$
and the set $\{\alpha_1,\alpha_2\}$.
The numbers $\alpha_1$ and $\alpha_2$ are called
leading exponents.
\end{enumerate}
\end{Theorem}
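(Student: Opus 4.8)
Theorem \ref{t4.20} classifies logarithmic rank $2$ $(TE)$-structures over a point. The plan is to exploit that a logarithmic structure is entirely controlled by its residue endomorphism ${\rm Res}_0$ acting on $K=H|_{\{0\}}$, together with the elementary-section machinery of Definition \ref{t3.21} and Theorem \ref{t3.23}. Since $\tr\UU=0$ and the structure is of type (Log), we have $\UU=0$, so the pole is genuinely logarithmic; by Theorem \ref{t3.23}(a), $\OO(H)_0$ has a $\C\{z\}$-basis of elementary sections, and the whole structure is the pullback of nothing nontrivial — it is determined by the pair $(K,{\rm Res}_0)$ up to isomorphism. The two cases $N^{\rm mon}=0$ and $N^{\rm mon}\neq 0$ correspond exactly to whether ${\rm Res}_0$ (equivalently the monodromy, via Theorem \ref{t3.23}(c)) is semisimple or has a single $2\times 2$ Jordan block.

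\medskip\noindent
\textbf{Part $(a)$, the case $N^{\rm mon}=0$.} Here the monodromy $M^{\rm mon}$ is semisimple with eigenvalues $\lambda_1,\lambda_2$, so $H^\infty=H^\infty_{\lambda_1}\oplus H^\infty_{\lambda_2}$ splits into one-dimensional eigenspaces (if $\lambda_1\neq\lambda_2$) or ${\rm Res}_0$ acts as a scalar (if $\lambda_1=\lambda_2$). The plan is: first observe that because the pole is logarithmic, $\OO(H)_0$ is spanned over $\C\{z\}$ by elementary sections; then show that one can choose generators $s_1\in C^{\alpha_1}$, $s_2\in C^{\alpha_2}$ eigen-vectors for ${\rm Res}_0$ with ${\rm e}^{-2\pi {\rm i}\alpha_j}=\lambda_j$, giving the direct sum \eqref{4.61}. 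The numbers $\alpha_j$ are then pinned down uniquely because on a logarithmic structure $z\nnn_{\paa_z}s_j=\alpha_j s_j$ forces $\alpha_j$ to be the residue eigenvalue, not merely a representative modulo $\Z$ (any shift $s_j\mapsto z^k s_j$ would destroy the property that $s_j$ lies in $\OO(H)_0$ and generates it, since it changes the pole order). The isomorphism class is then determined by $\{\alpha_1,\alpha_2\}$: two logarithmic structures with the same unordered pair of residue eigenvalues are isomorphic by mapping a chosen eigensection basis to the other, and conversely the $\alpha_j$ are recovered as the eigenvalues of ${\rm Res}_0$, a formal (hence holomorphic) invariant.

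\medskip\noindent
\textbf{Part $(b)$, the case $N^{\rm mon}\neq 0$.} Now $\lambda_1=\lambda_2$ and ${\rm Res}_0$ has a single $2\times 2$ Jordan block. The plan mirrors Theorem \ref{t4.17}(b): choose $s_1\in C^{\alpha_1}$ outside $\ker(\nnn_{z\paa_z}-\alpha_1)$, so that $(z\nnn_{\paa_z}-\alpha_1)s_1$ is a nonzero elementary section, normalized as in \eqref{4.62} to define $s_2\in C^{\alpha_2}$ spanning the kernel. The crucial difference from the (Reg) case is that here, because the pole is already logarithmic, no obstruction of the form $c_1\neq 0$ arises: the section $\OO(H)_0$ is directly spanned by $s_1$ and $s_2$ themselves, giving \eqref{4.63} as a clean direct sum without the $zs_2$ factor that appeared in \eqref{4.51}. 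The exponents satisfy $\alpha_1-\alpha_2\in\N_0$, forced by the Jordan structure of ${\rm Res}_0$ together with the requirement that both $s_1,s_2$ lie in $\OO(H)_0$ and the nilpotent part act by lowering the $z$-power by the integer $\alpha_1-\alpha_2$; the invariant of the isomorphism class is again just the set $\{\alpha_1,\alpha_2\}$, since no continuous parameter $t_2$ survives (unlike \eqref{4.51}) — the logarithmic condition eliminates it.

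\medskip\noindent
The main obstacle I expect is \emph{uniqueness} of the exponents $\alpha_j$ rather than existence of the bases, which follows routinely from Theorem \ref{t3.23}(a). Specifically, I must argue that the pair $\{\alpha_1,\alpha_2\}$ (not merely $\{\alpha_1,\alpha_2\}$ modulo $\Z$) is determined: the residues ${\rm e}^{-2\pi {\rm i}\alpha_j}$ are fixed by the monodromy, but the actual complex numbers $\alpha_j$ are fixed by demanding that $s_j\in\OO(H)_0$ generate the lattice, i.e.\ that the logarithmic extension coincides with the given one. The delicate point is ruling out that a different integer shift of the $\alpha_j$ could yield an isomorphic \emph{logarithmic} structure; this is exactly where the type-(Log) hypothesis (as opposed to merely regular singular) is used, since for a genuinely logarithmic pole the residue eigenvalues are intrinsic and a shift $\alpha_j\mapsto\alpha_j+k$ would change ${\rm Res}_0$, contradicting invariance of the residue endomorphism. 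In case $(b)$ I would additionally need to verify carefully that the normalization \eqref{4.62} is consistent and that $s_2$ indeed lands in $\OO(H)_0$, which is where the constraint $\alpha_1-\alpha_2\in\N_0$ gets used.
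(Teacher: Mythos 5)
Your framework is the paper's: Theorem~\ref{t3.23}$(a)$ supplies an elementary-section basis of $\OO(H)_0$, and the leading exponents are the eigenvalues of the residue endomorphism, whence their uniqueness. For part~$(a)$ this is exactly the paper's (very short) argument, and your discussion of why integer shifts of the $\alpha_j$ are excluded is consistent with it. (One slip in your setup: when $\lambda_1=\lambda_2$ and $N^{\rm mon}=0$, the residue endomorphism need \emph{not} be a scalar --- the two generators can lie in $C^{\alpha_1}$ and $C^{\alpha_2}$ with $\alpha_1-\alpha_2\in\Z\setminus\{0\}$ --- but nothing in your argument actually uses that claim.)

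In part~$(b)$, however, there is a genuine gap, and it is exactly the point you defer at the end. You construct $s_1$ and $s_2$ abstractly (choose $s_1\notin\ker(\nnn_{z\paa_z}-\alpha_1)$, define $s_2$ by~\eqref{4.62}) and then \emph{assert} that $\OO(H)_0=\C\{z\}s_1\oplus\C\{z\}s_2$, saying that ``no obstruction of the form $c_1\neq 0$ arises.'' That assertion is the whole content of part~$(b)$: one must show that the \emph{given} lattice has generators of this shape. The paper argues in the opposite direction: by Theorem~\ref{t3.23}$(a)$ the lattice has \emph{some} elementary basis $s_1\in C^{\alpha_1}$, $s_2\in C^{\alpha_2}$, renumbered so that $\alpha_1-\alpha_2\in\N_0$; if $\alpha_1=\alpha_2$ then $\OO(H)_0=\C\{z\}C^{\alpha_1}$ and the basis can simply be adapted to the Jordan structure; if $\alpha_1-\alpha_2\in\N$, one shows that $s_2$ must lie in $\ker(\nnn_{z\paa_z}-\alpha_2)$ --- otherwise $(\nnn_{z\paa_z}-\alpha_2)(s_2)$ is a nonzero kernel element of $C^{\alpha_2}$ linearly independent of $s_2$, and a comparison of elementary components shows it cannot lie in $\C\{z\}s_1\oplus\C\{z\}s_2$, so $\nnn_{z\paa_z}$ would not preserve $\OO(H)_0$, contradicting logarithmicity --- and that $s_1$ must then lie \emph{outside} $\ker(\nnn_{z\paa_z}-\alpha_1)$, since otherwise both generators would correspond to monodromy-invariant flat vectors and $N^{\rm mon}$ would vanish; finally one rescales $s_2$ to obtain~\eqref{4.62}. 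Without this kernel/non-kernel analysis of the actual lattice generators there is no link between your abstractly chosen $s_1$, $s_2$ and $\OO(H)_0$, so~\eqref{4.63} (and with it the claim that no continuous modulus like $t_2$ survives) remains unproved.
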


\begin{proof}
First, $(a)$ and $(b)$ are considered together.
By Theorem~\ref{t3.23}$(a)$, $\OO(H)_0$ is
generated by two elementary sections
$s_1\in C^{\alpha_1}$ and $s_2\in C^{\alpha_2}$
for some numbers $\alpha_1$ and $\alpha_2$.
The numbers~$\alpha_1$ and~$\alpha_2$ are the
eigenvalues of the residue endomorphism.
So, they are unique.
This finishes already the proof of part $(a)$.

$(b)$ Consider the case $N^{\rm mon}\neq 0$.
We can renumber $s_1$ and $s_2$ if necessary, so that
afterwards $\alpha_1-\alpha_2\in\N_0$.
If $\alpha_1=\alpha_2$, then $\OO(H)_0=\C\{z\}C^{\alpha_1}$,
and $s_1$ and $s_2$ can be changed so that
$s_1\in C^{\alpha_1}\setminus \ker(\nnn_{z\paa_z}-\alpha_1)$
and $s_2\in \ker(\nnn_{z\paa_z}-\alpha_1\colon
C^{\alpha_1}\to C^{\alpha_1})\setminus\{0\}$ satisfy~\eqref{4.62}.
Then nothing more has to be shown.
Consider the case $\alpha_1-\alpha_2\in\N$.
If $s_2\in C^{\alpha_2}\setminus \ker(\nnn_{z\paa_z}-\alpha_2)$,
then $(\nnn_{z\paa_z}-\alpha_2)(s_2)$ is not in
$\OO(H)_0$, and thus the pole is not logarithmic,
a contradiction. Therefore
$s_2\in \ker(\nnn_{z\paa_z}-\alpha_2\colon
C^{\alpha_2}\to C^{\alpha_2})$. Then necessarily
$s_1\in C^{\alpha_1}\setminus \ker(\nnn_{z\paa_z}-\alpha_1\colon
C^{\alpha_1} \to C^{\alpha_1})$.
We~can rescale $s_2$ so that~\eqref{4.62} holds.
Nothing more has to be shown.
\end{proof}

Corollary~\ref{t4.21} is an immediate consequence of
Theorem~\ref{t4.20}.

\begin{Corollary}\label{t4.21}
The set of logarithmic rank $2$ $(TE)$-structures over a point
is in bijection with the set
\begin{gather*}
\{(0,\{\alpha_1,\alpha_2\})\,|\, \alpha_1,\alpha_2\in\C,\}
\cup\{(1,\alpha_1,\alpha_2)\,|\, \alpha_1,\alpha_2\in\C,\alpha_1-\alpha_2\in \N_0\}.
\end{gather*}
The first set parametrizes the cases with $N^{\rm mon}=0$,
the second set parametrizes the cases with $N^{\rm mon}\neq 0$.
Theorem~$\ref{t4.20}$ describes the corresponding
$(TE)$-structures.
\end{Corollary}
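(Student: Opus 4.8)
The plan is to exhibit explicitly the bijection asserted, reading it off from Theorem~\ref{t4.20}. The forward map sends an isomorphism class of a logarithmic rank $2$ $(TE)$-structure to $\big(0,\{\alpha_1,\alpha_2\}\big)$ when $N^{\rm mon}=0$ and to $(1,\alpha_1,\alpha_2)$ with $\alpha_1-\alpha_2\in\N_0$ when $N^{\rm mon}\neq 0$, where $\alpha_1,\alpha_2$ are the leading exponents. That $N^{\rm mon}$ is zero or not is itself an invariant of the isomorphism class, since it is determined by the monodromy of the flat bundle $H'$; so the target set splits correctly into the two listed pieces. Well-definedness of the forward map, together with its injectivity, is precisely the content of the two final sentences of Theorem~\ref{t4.20}$(a)$ and $(b)$: in each case the isomorphism class is uniquely determined by the stated leading-exponent data.

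For surjectivity I would construct, for each element of the target set, a logarithmic $(TE)$-structure realizing it, following the normal forms in the theorem. Given $\big(0,\{\alpha_1,\alpha_2\}\big)$, take the flat bundle $H'\to\C^*$ with semisimple monodromy having eigenvalues $\lambda_j={\rm e}^{-2\pi {\rm i}\alpha_j}$, pick elementary sections $s_j\in C^{\alpha_j}\setminus\{0\}$, and define $\OO(H)_0$ by~\eqref{4.61}; this is logarithmic with $N^{\rm mon}=0$ and leading exponents $\alpha_1,\alpha_2$. Given $(1,\alpha_1,\alpha_2)$ with $\alpha_1-\alpha_2\in\N_0$, the common value ${\rm e}^{-2\pi {\rm i}\alpha_1}={\rm e}^{-2\pi {\rm i}\alpha_2}$ (which holds because $\alpha_1-\alpha_2\in\Z$) lets me take $H'$ with a single $2\times 2$ monodromy Jordan block, choose $s_1\in C^{\alpha_1}$ off the kernel of $\nnn_{z\paa_z}-\alpha_1$ and $s_2\in C^{\alpha_2}$ via~\eqref{4.62}, and set $\OO(H)_0$ by~\eqref{4.63}. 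Theorem~\ref{t4.20} guarantees these are the only possibilities, so each target element is hit exactly once.

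The only genuine point to check is that the two families do not overlap and that the constraint $\alpha_1-\alpha_2\in\N_0$ in the nilpotent case is both necessary and a faithful normalization. Necessity of $\alpha_1-\alpha_2\in\Z$ follows since $N^{\rm mon}\neq 0$ forces $\lambda_1=\lambda_2$, hence ${\rm e}^{-2\pi {\rm i}(\alpha_1-\alpha_2)}=1$; passing to $\alpha_1-\alpha_2\in\N_0$ is just the canonical ordering used in Theorem~\ref{t4.20}$(b)$, and since the theorem records only the unordered set $\{\alpha_1,\alpha_2\}$ as the invariant, this ordering introduces no redundancy. Disjointness of the two families is immediate because the value of $N^{\rm mon}$ distinguishes them even when $\alpha_1=\alpha_2$ (semisimple monodromy $\lambda\cdot\id$ versus a nontrivial Jordan block). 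As this is an immediate repackaging of Theorem~\ref{t4.20}, no step presents a real obstacle; the only care needed is the bookkeeping just described.
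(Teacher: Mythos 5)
Your proposal is correct and follows exactly the route the paper takes: the paper states Corollary~\ref{t4.21} as an immediate consequence of Theorem~\ref{t4.20}, and your argument simply makes explicit the forward map (well-defined and injective by the uniqueness assertions in Theorem~\ref{t4.20}$(a)$ and~$(b)$), surjectivity via the normal forms~\eqref{4.61} and~\eqref{4.63}, and the disjointness bookkeeping. Nothing in your elaboration departs from or adds to the paper's intended reasoning.
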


\begin{Remark}\label{t4.22}
The connection matrices for the special bases in
Theorem~\ref{t4.20} can be written down easily.

The basis in~\eqref{4.61}:
\begin{gather*}%\label{4.64}
\nnn_{z\paa_z}(s_1,s_2)=(s_1,s_2)
\begin{pmatrix}\alpha_1 & 0 \\ 0 & \alpha_2\end{pmatrix}\!.
\end{gather*}

The basis in~\eqref{4.63}:
\begin{gather*}%\label{4.65}
\nnn_{z\paa_z}(s_1,s_2)=(s_1,s_2)
\begin{pmatrix}\alpha_1 & 0 \\ z^{\alpha_1-\alpha_2} & \alpha_2\end{pmatrix}\!.
\end{gather*}

The basis $(s_1,s_2)$ gives a Birkhoff normal form
in the cases $N^{\rm mon}=0$ and in the cases
$(N^{\rm mon}\neq 0$ and $\alpha_1=\alpha_2)$. In~the cases $(N^{\rm mon}\neq 0$ and $\alpha_1-\alpha_2\in\N)$,
a Birkhoff normal form does not exist.
\end{Remark}

\section[Rank 2 $(TE)$-structures over germs of regular $F$-manifolds]{Rank 2 $\boldsymbol{(TE)}$-structures over germs of regular $\boldsymbol{F}$-manifolds}\label{c5}

This section discusses unfoldings of
$(TE)$-structures over a point $t^0$
of type (Sem) or (Bra) or~(Reg).
Here Malgrange's unfolding result
Theorem~\ref{t3.16}$(c)$ applies. It~provides a universal unfolding for the $(TE)$-structure
over $t^0$. Any unfolding is induced by the universal
unfolding.
The universal unfoldings turn out to be precisely
the $(TE)$-structures with primitive Higgs fields
over germs of regular $F$-manifolds.

Sections~\ref{c6} and~\ref{c8} discuss
unfoldings of $(TE)$-structures over a point of type (Log).
Section~\ref{c8} treats arbitrary such unfoldings.
Section~\ref{c6} prepares this. It~treats 1-parameter unfoldings with trace free pole parts
of logarithmic $(TE)$-structures over a point.

If one starts with a $(TE)$-structure with primitive Higgs field
over a germ $\big(M,t^0\big)$ of a regular $F$-manifold, then
the endomorphism $\UU|_{t^0}\colon K_{t^0}\to K_{t^0}$ is regular.

Vice versa, if one starts with a $(TE)$-structure
over a point $t^0$ with a regular endomorphism $\UU\colon
K_{t^0}\to K_{t^0}$,
then it unfolds uniquely to a $(TE)$-structure
with primitive Higgs field over a~germ of a regular $F$-manifold
by Malgrange's result Theorem~\ref{t3.16}$(c)$.
The germ of the regular $F$-manifold is uniquely determined by
the isomorphism class of $\UU\colon K_{t^0}\to K_{t^0}$
(i.e., its Jordan block structure).
And the $(TE)$-structure is uniquely determined by its
restriction to $t^0$.

The following statement on the rank $2$ cases
is an immediate consequence of Malgrange's unfolding result
Theorem~\ref{t3.16}$(c)$, the classification of germs
of regular 2-dimensional $F$-manifolds in Remark~\ref{t2.6}$(ii)$
(building on Theorems~\ref{t2.2} and~\ref{t2.3},
see also Remark~\ref{t3.17}$(iii)$) and the classification
of the rank $2$ $(TE)$-structures into the cases
(Sem), (Bra), (Reg) and (Log) in Definition~\ref{t4.4}.

\begin{Corollary}\label{t5.1} \qquad

\begin{enumerate}\itemsep=0pt
\item[$(a)$] For any rank $2$ $(TE)$-structure over a point $t^0$
except those of type $($Log$)$, the endomorphism
$\UU\colon K_{t^0}\to K_{t^0}$ is regular.
The $(TE)$-structure has a unique universal unfolding.
This unfolding has a primitive Higgs field.
Its base space is a germ $\big(M,t^0\big)=\big(\C^2,0\big)$
of an $F$-manifold with Euler field and is as follows:
\[
\def\arraystretch{1.5}
\begin{tabular}{c|c|c}
\hline
Type & $F$-manifold & Euler field
\\
\hline
$($Sem$)$ & $A_1^2$ & $\sum_{i=1}^2(u_i+c_i)e_i$ with $c_1\neq c_2$
\\
$($Bra$)$ or $($Reg$)$ & $\NN_2$ & $t_1\paa_1+g(t_2)\paa_2$ with $g(0)\neq 0$
\\
\hline
\end{tabular}
\]
In the case of $($Bra$)$ or $($Reg$)$, a coordinate change brings
$E$ to the form $t_1\paa_1+\paa_2$.

\item[$(b)$] Any unfolding of a rank $2$ $(TE)$-structure over $t^0$
with regular endomorphism $\UU\colon K_{t^0}\to K_{t^0}$
is induced by the universal unfolding in $(a)$.
\end{enumerate}
\end{Corollary}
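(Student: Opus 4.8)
The statement assembles three ingredients that are already available: Malgrange's unfolding theorem (Theorem~\ref{t3.16}$(c)$), the classification of the germs of regular two-dimensional $F$-manifolds with Euler field (Remark~\ref{t2.6}$(ii)$, resting on Theorems~\ref{t2.2} and~\ref{t2.3}), and the case distinction into (Sem), (Bra), (Reg), (Log) recalled before Remark~\ref{t4.2}. My plan is therefore to read off regularity of $\UU$ from the definitions, invoke Malgrange to produce the universal unfolding, and then match the induced $F$-manifold against the known regular models; part $(b)$ will be immediate from the very definition of a universal unfolding.

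First I would observe that in the three types (Sem), (Bra), (Reg) the endomorphism $\UU\colon K_{t^0}\to K_{t^0}$ is regular: in (Sem) it has two distinct eigenvalues, hence one $1\times 1$ Jordan block per eigenvalue; in (Bra) and (Reg) it has the single eigenvalue $\rho^{(0)}$ together with a single $2\times 2$ Jordan block. (Only in type (Log), where $\UU=\rho^{(0)}\id$ carries two Jordan blocks for one eigenvalue, does regularity fail, which is exactly why (Log) is excluded.) Since $\UU$ is regular, Theorem~\ref{t3.16}$(c)$ applies and yields a universal unfolding whose Higgs field is primitive and whose underlying germ of $F$-manifold with Euler field is regular in the sense of Definition~\ref{t2.4}; uniqueness up to canonical isomorphism follows from the remark after Definition~\ref{t3.15}, since any two universal unfoldings induce one another by unique maps.

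The substantive bookkeeping is the identification of the base. By Theorem~\ref{t3.14} the induced structure satisfies $C_E=-\UU$. Choosing a local primitive section $\zeta$ and the resulting isomorphism $\Phi\colon\TT_M\to\OO(K)$, $X\mapsto C_X\zeta$, the multiplication rule $C_{E\circ X}=C_EC_X$ gives $\Phi\circ(E\circ)=C_E\circ\Phi=-\UU\circ\Phi$, so that $E\circ|_{t^0}$ is conjugate to $-\UU|_{t^0}$ and in particular shares its Jordan type. I would then feed this into Remark~\ref{t2.6}$(ii)$: if $E\circ|_{t^0}$ is semisimple with two distinct eigenvalues (type (Sem)) the germ must be $A_1^2$ with Euler field $\sum_{i}(u_i+c_i)e_i$ and $c_1\neq c_2$, the condition $c_1\neq c_2$ encoding precisely the distinctness of the two eigenvalues of $-\UU|_{t^0}$; whereas if $E\circ|_{t^0}$ is a single nontrivial Jordan block (types (Bra) and (Reg)) the germ must be $\NN_2$, whose regularity for the Euler field $t_1\paa_1+g(t_2)\paa_2$ is exactly the criterion $g(0)\neq 0$ of Remark~\ref{t3.17}$(iii)$. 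Finally Theorem~\ref{t2.3} normalizes this Euler field to $t_1\paa_1+\paa_2$ (case~\eqref{2.9}), giving the last assertion of $(a)$. Note that the scalar eigenvalue $\rho^{(0)}$ plays no role in distinguishing $A_1^2$ from $\NN_2$ (it affects neither the nilpotent-versus-semisimple dichotomy nor regularity), which is why it need not be tracked here.

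Part $(b)$ requires no further work: by Definition~\ref{t3.15}$(c)$ a universal unfolding induces every unfolding, so any unfolding of a rank~$2$ $(TE)$-structure over $t^0$ with regular $\UU$ is induced by the universal unfolding produced in $(a)$. The only point demanding care is the conjugacy $E\circ|_{t^0}\cong-\UU|_{t^0}$ of Step~3 and the correct reading of the eigenvalue data off the normal forms; I expect no genuine difficulty there, as all the analytic content is carried by Malgrange's theorem and by the $F$-manifold classification, both of which are cited.
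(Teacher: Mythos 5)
Your proposal is correct and follows essentially the same route as the paper: the paper itself presents this corollary as an immediate consequence of Malgrange's result (Theorem~\ref{t3.16}$(c)$), the classification of regular $2$-dimensional $F$-manifold germs (Remark~\ref{t2.6}$(ii)$, with Remark~\ref{t3.17}$(iii)$), and the case separation of Definition~\ref{t4.4}, which are exactly the three ingredients you assemble. Your middle paragraph (the conjugacy $E\circ|_{t^0}\cong-\UU|_{t^0}$ via $C_E=-\UU$ and a primitive section) merely makes explicit the matching step the paper leaves implicit through Theorem~\ref{t2.5}, so there is no substantive difference.
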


Because of the existence and uniqueness of the universal
unfolding, it is not really necessary
to give it explicitly. On the other hand, in rank $2$,
it is easy to give it explicitly.
The following lemma offers one way.

\begin{Lemma}\label{t5.2}
Let $(H\to\C,\nnn)$ by a $(TE)$-structure over
a point with monodromy $M^{\rm mon}$ of some rank $r\in\N$. It~has an unfolding which is a $(TE)$-structure
$\big(H^{\rm (unf)}\to \C\times M,\nnn\big)$, where
$M=\C\times\C^*$ with coordinates $t=(t_1,t_2)$
$($on $\C^2\supset M)$, with the following properties.
\begin{enumerate}\itemsep=0pt
\item[$(a)$] The monodromy around $t_2=0$ is $(M^{\rm mon})^{-1}$.

\item[$(b)$] The original $(TE)$-structure is isomorphic to the
one over $t^0=(0,1)$.

\item[$(c)$] If $\uuuu{v}^0$ is a $\C\{z\}$-basis of $\OO(H)_0$
with $z^2\nnn_{\paa_z}\uuuu{v}^0=\uuuu{v}^0\, B^0$, then
$H^{\rm (unf)}$ has over $(\C,0)\times M$ a~basis
$\uuuu{v}$ such that the matrices $A_1$, $A_2$ and $B$
in~\eqref{3.4}--\eqref{3.6} are as follows
\begin{gather}
A_1= C_1,\label{5.2}
\\
A_2= -B^0\bigg(\frac{z}{t_2}\bigg),\label{5.3}
\\
B = -t_1C_1+t_2B^0\bigg(\frac{z}{t_2}\bigg)=-t_1A_1-t_2 A_2.\label{5.4}
\end{gather}

\item[$(d)$] If $\UU|_{t^0}$ is regular and $\rank H=2$, then
the Higgs field of the $(TE)$-structure $H^{\rm (unf)}$
is everywhere primitive. Therefore then $M$ is an $F$-manifold
with Euler field. The Euler field is $E=t_1\paa_1+t_2\paa_2$.

\item[$(e)$] If $\UU|_{t^0}$ is regular and $\rank H=2$,
the $(TE)$-structure over the germ $\big(M,t^0\big)$
is the universal unfolding of the one over $t^0$.
\end{enumerate}
\end{Lemma}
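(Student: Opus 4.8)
The plan is to verify the claimed normal form directly and then read off $(a)$--$(e)$ from it. First I would take the matrices $A_1,A_2,B$ of \eqref{5.2}--\eqref{5.4} as the \emph{definition} of the connection $\nnn$ of $H^{\rm (unf)}$ in the basis $\uuuu{v}$ and check that they satisfy the integrability conditions \eqref{3.7} and \eqref{3.8}; together with the flat bundle they define on $\C^*\times M$ this shows that $\big(H^{\rm (unf)}\to\C\times M,\nnn\big)$ is a genuine $(TE)$-structure and gives $(c)$ by construction. Since $A_1={\bf 1}_r$ is constant and $A_2$ does not depend on $t_1$, condition \eqref{3.7} for $(i,j)=(1,2)$ is immediate, and \eqref{3.8} for $i=1$ collapses to $-z\,{\bf 1}_r+z\,{\bf 1}_r=0$. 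The only relation needing computation is \eqref{3.8} for $i=2$: writing $W:=B^0(z/t_2)$ one has $[A_2,B]=[-W,-t_1{\bf 1}_r+t_2W]=0$, and the three remaining terms $z\paa_2 B$, $-z^2\paa_z A_2$, $zA_2$ cancel after applying the chain rule to $W=B^0(z/t_2)$. This is the computational engine of the lemma, but it is routine.

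For $(b)$, restricting to $t^0=(0,1)$ gives $B|_{t^0}=B^0(z)$, so the restriction of $\big(H^{\rm (unf)},\nnn\big)$ to $\C\times\{t^0\}$ is the original $(TE)$-structure, and this identification is the marking isomorphism making $H^{\rm (unf)}$ an unfolding. For $(a)$ I would analyse the flat sections on $\C^*\times M$. Solving $\nnn_{\paa_1}\sigma=0$ forces $\sigma=\mathrm{e}^{-t_1/z}\sigma'(z,t_2)$; substituting into $\nnn_{\paa_z}\sigma=0$ and changing variables to $w:=z/t_2$ reduces the equation for $\sigma'$ to exactly the flat-section equation $\paa_w\tau+w^{-2}B^0(w)\tau=0$ of the original structure, and one then checks that $\nnn_{\paa_2}\sigma=0$ is automatically satisfied. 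Hence the flat sections of $H^{\rm (unf)}$ are $\mathrm{e}^{-t_1/z}\tau(z/t_2)$ with $\tau$ flat for $(H,\nnn)$. The loop $t_2\mapsto t_2\mathrm{e}^{2\pi\mathrm{i}}$ sends $w\mapsto w\mathrm{e}^{-2\pi\mathrm{i}}$, that is, it traverses the $w$-loop in the negative direction, so the monodromy around $t_2=0$ is $(M^{\rm mon})^{-1}$, while $\mathrm{e}^{-t_1/z}$ is single-valued in $t_2$ and contributes nothing. Getting the inverse (rather than $M^{\rm mon}$) from this orientation reversal is the one genuinely delicate point, and I expect it to be the main obstacle.

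For $(d)$, the Higgs field in the basis $\uuuu{v}$ is given by the $z^0$-coefficients of the $A_i$: one has $C_{\paa_1}=A_1^{(0)}={\bf 1}_2$ and $C_{\paa_2}=A_2^{(0)}=-\UU|_{t^0}$, where $\UU|_{t^0}=(B^0)^{(0)}$ is a \emph{constant} endomorphism of $K$. When $\rank H=2$ and $\UU|_{t^0}$ is regular it is non-scalar and hence admits a cyclic vector $\zeta$, so that $\{\zeta,\UU|_{t^0}\zeta\}$ is a basis of $K$ at every point; therefore $X\mapsto C_X\zeta$ is everywhere an isomorphism and the Higgs field is everywhere primitive. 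Theorem~\ref{t3.14} then endows $M$ with the structure of an $F$-manifold with Euler field, and computing $\UU=B^{(0)}=-t_1{\bf 1}_2+t_2\UU|_{t^0}$ together with the relation $C_E=-\UU$ identifies $E=t_1\paa_1+t_2\paa_2$ (using again that ${\bf 1}_2$ and $\UU|_{t^0}$ are independent).

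Finally $(e)$ is immediate: by $(d)$ and $(b)$ the $(TE)$-structure over $\big(M,t^0\big)$ has primitive Higgs field and restricts to the given structure over $t^0$, so Theorem~\ref{t3.16}$(a)$ says it is a universal unfolding of itself. By the uniqueness of universal unfoldings noted after Definition~\ref{t3.15} (and Corollary~\ref{t5.1}) it is therefore \emph{the} universal unfolding of the $(TE)$-structure over $t^0$.
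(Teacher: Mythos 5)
Your treatment of parts $(a)$--$(d)$ is correct, but it runs in the opposite direction from the paper's proof. The paper constructs the basis directly: it writes $\uuuu{v}^0=\uuuu{f}^0\cdot\Psi(z)$ with $\uuuu{f}^0$ a flat multivalued basis, so that $\Psi\big(z{\rm e}^{2\pi {\rm i}}\big)=\big(M^{\rm mat}\big)^{-1}\Psi(z)$ and $\Psi^{-1}\paa_z\Psi=z^{-2}B^0(z)$, then embeds $H'$ into a flat bundle on $\C^*\times M$ with the monodromies prescribed in $(a)$ and sets $\uuuu{v}=\uuuu{f}\cdot{\rm e}^{t_1/z}\Psi(z/t_2)$. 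Univaluedness of $\uuuu{v}$ is exactly what forces the monodromy around $t_2=0$ to be $\big(M^{\rm mon}\big)^{-1}$, so $(a)$ and $(b)$ are built into the construction, and only the covariant derivatives have to be computed to obtain \eqref{5.2}--\eqref{5.4}. You instead take \eqref{5.2}--\eqref{5.4} as the definition, verify the integrability conditions \eqref{3.7}--\eqref{3.8} (your cancellation for $i=2$ is the right computation, and the case $i=1$ is trivial as you say), and then solve the flat-section equations to find the flat sections ${\rm e}^{-t_1/z}\tau(z/t_2)$; the orientation reversal $w\mapsto w{\rm e}^{-2\pi{\rm i}}$ of $w=z/t_2$ under $t_2\mapsto t_2{\rm e}^{2\pi{\rm i}}$ then gives $\big(M^{\rm mon}\big)^{-1}$, which is correct. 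Both routes are sound and of comparable length; yours trades the paper's fundamental-matrix bookkeeping for an integrability check plus an ODE argument. Your part $(d)$ coincides with the paper's.

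Part $(e)$, however, has a genuine gap. Theorem~\ref{t3.16}$(a)$ says your structure over $\big(M,t^0\big)$ is a universal unfolding \emph{of itself}, i.e., of a structure over a two-dimensional germ; the lemma claims it is a universal unfolding of its restriction to the \emph{point} $t^0$, and the first statement does not imply the second. The paper itself shows that the inference ``universal unfolding of itself and an unfolding of $Y$, hence a universal unfolding of $Y$'' is false: the $(TE)$-structures in table~\eqref{8.12} are universal unfoldings of themselves (Theorem~\ref{t8.5}$(c)$) and unfold logarithmic structures over a point, yet almost all logarithmic point structures admit several mutually non-inducing unfoldings and hence no universal unfolding at all, so those structures cannot be universal unfoldings of their restrictions. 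The regularity of $\UU|_{t^0}$ must therefore enter essentially, and this is why the paper cites Theorem~\ref{t3.16}$(c)$ rather than $(a)$: by Malgrange's theorem (equivalently Corollary~\ref{t5.1}) the structure over $t^0$ has a universal unfolding $\big(H^u,\nnn^u\big)$ over $\big(\C^2,0\big)$ with primitive Higgs field, and your structure is induced from it by a unique map $\varphi\colon\big(M,t^0\big)\to\big(\C^2,0\big)$. Since the induced Higgs field satisfies $C_X=C^u_{{\rm d}\varphi(X)}$ and, by your part $(d)$, $X\mapsto C_X$ is injective, ${\rm d}\varphi|_{t^0}$ is injective and hence invertible (both bases are $2$-dimensional), so $\varphi$ is an isomorphism of germs and your structure is the universal unfolding. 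This short bridge is what your appeal to ``uniqueness of universal unfoldings'' cannot replace: uniqueness applies only after both objects are known to be universal unfoldings of the point structure, which is precisely what is to be proved.
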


\begin{proof}
Let $\uuuu{f}^0=\big(f_1^0,\dots ,f_r^0\big)$ be a flat multivalued basis
of $H':=H|_{\C^*}$. Let $M^{\rm mat}\in {\rm GL}_r(\C)$
be the matrix of its monodromy, so
$\uuuu{f}^0\big(z {\rm e}^{2\pi {\rm i}}\big)=\uuuu{f}^0\cdot M^{\rm mat}$.
Let $\uuuu{v}^0=\big(v_1^0,\dots ,v_r^0\big)$ be a $\C\{z\}$-basis
of~$\OO(H)_0$. Let $B^0\in {\rm GL}_r(\C\{z\})$ be the matrix
with $z^2\nnn_{\paa_z}\uuuu{v}^0=\uuuu{v}^0\, B^0$.
Consider the mat\-rix~$\Psi(z,t)$ with multivalued entries with
\begin{gather*}%\label{5.5}
\uuuu{v}^0=\uuuu{f}^0\cdot \Psi(z).
\end{gather*}
Then
\begin{gather*}%\label{5.6}
\Psi\big(z{\rm e}^{2\pi {\rm i}}\big)= \big(M^{\rm mat}\big)^{-1}\cdot\Psi(z),
\\
\Psi^{-1}\paa_z \Psi = z^{-2}B^0(z).%\label{5.7}
\end{gather*}

Embed the flat bundle $H':=H|_{\C^*}$ as the bundle
over $t^0=(0,1)$ into a flat bundle
${H^{(mf)}}'\to\C^*\times M$ with monodromy $M^{\rm mon}$
around $z=0$ and monodromy $(M^{\rm mon})^{-1}$ around
$t_2=0$. The~flat multivalued basis $\uuuu{f}^0$ of $H'$ extends
to a flat multivalued basis $\uuuu{f}$ of ${H^{(mf)}}'$ with
\begin{gather*}%\label{5.8}
\uuuu{f}\big(z{\rm e}^{2\pi {\rm i}},t\big)=\uuuu{f}(z,t)M^{\rm mat},
\\
\uuuu{f}\big(z,t_1,t_2{\rm e}^{2\pi {\rm i}}\big)=\uuuu{f}(z,t)\big(M^{\rm mat}\big)^{-1}.
\label{5.9}
\end{gather*}
The tuple of sections $\uuuu{v}$ with
\begin{gather*}%\label{5.10}
\uuuu{v}= \uuuu{f}\cdot {\rm e}^{t_1/z}\Psi\bigg(\frac{z}{t_2}\bigg)
\end{gather*}
is univalued, it is a basis of ${H^{(mf)}}'$ in a neighbourhood
of $\{0\}\times M$, and it has the connection matrices
in~\eqref{5.2}--\eqref{5.4}: The calculations for $A_2$
and $B$ are
\begin{align*}
\nnn_{\paa_2}\uuuu{v}&= \uuuu{f}\, {\rm e}^{t_1/z}\,
\bigg({-}\frac{z}{t_2^2}\bigg)(\paa_z\Psi)\bigg(\frac{z}{t_2}\bigg) = \uuuu{f}\, {\rm e}^{t_1/z}\, \bigg({-}\frac{z}{t_2^2}\bigg)
\Psi\bigg(\frac{z}{t_2}\bigg)\bigg(\frac{z}{t_2}\bigg)^{-2}B^0\bigg(\frac{z}{t_2}\bigg)\\
&=\uuuu{v}\, \bigg({-}\frac{1}{z}\bigg)B^0\bigg(\frac{z}{t_2}\bigg),
\\
\nnn_{\paa_z}\uuuu{v}
&= \uuuu{f}\, {\rm e}^{t_1/z}\,
\bigg(\bigg({-}\frac{t_1}{z^2}\bigg)\Psi\bigg(\frac{z}{t_2}\bigg)+ \bigg(\frac1{t_2}\bigg)(\paa_z\Psi)\bigg(\frac{z}{t_2}\bigg)\bigg)\\
&= \uuuu{f}\, {\rm e}^{t_1/z}\, \bigg(\bigg({-}\frac{t_1}{z^2}\bigg)\Psi\bigg(\frac{z}{t_2}\bigg)+ \bigg(\frac1{t_2}\bigg)
\Psi\bigg(\frac{z}{t_2}\bigg)\bigg(\frac{z}{t_2}\bigg)^{-2}B^0\bigg(\frac{z}{t_2}\bigg)\bigg)
\\
&=\uuuu{v}\, \bigg(\bigg({-}\frac{t_1}{z^2}\bigg)C_1+\bigg(\frac{t_2}{z^2}\bigg) B^0\bigg(\frac{z}{t_2}\bigg)\bigg).
\end{align*}

Therefore $\uuuu{v}$ defines a $(TE)$-structure,
which we call $(H^{\rm (unf)}\to\C\times M,\nnn)$. It~unfolds the one over $t^0=(0,1)$,
and that one is isomorphic to $(H\to\C,\nnn)$.

It rests to show $(d)$ and $(e)$.
Suppose $\rank H=2$.
Then $\UU|_{t^0}$ is regular if and only if
$\big(B^0\big)^{(0)}\notin \C\cdot C_1$.
Then also $A_2^{(0)}(t)=-\big(B^0\big)^{(0)}\notin\C\cdot C_1$,
so then the Higgs field of the $(TE)$-structure $H^{\rm (unf)}$
is everywhere primitive.
Because of $B^{(0)}=-t_1A_1^{(0)}-t_2A_2^{(0)}$,
the Euler field is $E=t_1\paa_1+t_2\paa_2$.
$(e)$ follows from $(d)$ and Malgrange's result
Theorem~\ref{t3.16}$(c)$.
\end{proof}

\begin{Remarks}\label{t5.3}\qquad
\begin{enumerate}\itemsep=0pt
\item[$(i)$] In the cases (Reg) we will see the universal unfoldings
again in Section~\ref{c7}, in Remarks~\ref{t7.2}. In~a first step in Remarks~\ref{t7.1}, the value $t_2$
in the normal form in Remarks~\ref{t4.19} is
turned into a parameter in $\P^1$.
Remarks~\ref{t7.2} add another parameter~$t_1$ in~$\C$.
Then the Higgs field becomes primitive and
the base space $\C\times\P^1$
becomes a~2-dimensional $F$-manifold with Euler field.
For each $t^0\in\C\times\C^*$, the $(TE)$-structure over~$t^0$
is of type (Reg), and the $(TE)$-structure over the
germ $\big(M,t^0\big)$ is a universal unfolding of the one over~$t^0$.

\item[$(ii)$] In the cases (Bra), the following formulas give
a universal unfolding over $\big(\C^2,0\big)$ of any $(TE)$-structure
of type (Bra) over the point 0 (see Theorem~\ref{t4.11} for
their classification), such that the Euler field
is $E=(t_1+c_1)\paa_1+\paa_2$. Here $\rho^{(1)}\in\C$,
$b_3^{(0)}\in\C$, $b_2^{(0)},b_4^{(1)}\in\C^*$,
\begin{gather*}%\label{5.11}
A_1= C_1,
\\
A_2= -b_2^{(0)}C_2 -z\bigg(\frac{1}{2}+b_3^{(1)}\bigg)D
- z b_4^{(1)}{\rm e}^{t_2}E,%\label{5.12}
\\
B= (-t_1-c_1)C_1+b_2^{(0)}C_2 + z\big(\rho^{(1)}C_1+b_3^{(1)}D+
b_4^{(1)}{\rm e}^{t_2}E\big)
\\ \hphantom{B}
{}= (-t_1-c_1)A_1 - A_2 + z\rho^{(1)}C_1-z\frac{1}{2}D.
%\label{5.13}
\end{gather*}

\item[$(iii)$] In the cases (Sem), a $(TE)$-structure over a point
extends uniquely to a $(TE)$-structure over the
universal covering $M$ of the manifold
$\big\{(u_1,u_2)\in\C^2\,|\, u_1\neq u_2\big\}$ (see~\cite{Ma83b} and~\cite[Chapter~III, Theorem~2.10]{Sa02}).
For each $t^0\in M$ the $(TE)$-structure over $t^0$
is of type (Sem), and the $(TE)$-structure over the germ
$\big(M,t^0\big)$ is the universal unfolding of the $(TE)$-structure
over~$t^0$.
\end{enumerate}
\end{Remarks}

\section[1-parameter unfoldings of logarithmic $(TE)$-structuresover a point]
{1-parameter unfoldings of logarithmic $\boldsymbol{(TE)}$-structures \\over a point}\label{c6}

This section classifies unfoldings over $\big(M,t^0\big)=(\C,0)$
with trace free pole part
of logarithmic $(TE)$-structures over the point~$t^0$.

It is a preparation for Section~\ref{c8},
which treats arbitrary unfoldings of $(TE)$-structures
of type (Log) over a point.

Section~\ref{c6.1}: An unfolding with trace free pole part
over $\big(M,t^0\big)=(\C,0)$ of a logarithmic rank $2$ $(TE)$-structure
over $t^0$ will be considered. Invariants of it will
be defined. Theorem~\ref{t6.2} gives constraints on these
invariants and shows that the monodromy is
semisimple if the generic type is (Sem) or (Bra).

By Theorem~\ref{t3.20}$(a)$ (which is trivial in our case
because of the logarithmic pole at $z=0$ of the $(TE)$-structure
over $t^0$) and Remark~\ref{t3.19}$(iii)$,
the $(TE)$-structure has a Birkhoff normal form,
i.e., an extension to a pure $(TLE)$-structure,
if its monodromy is semisimple.

Section~\ref{c6.2}: All pure $(TLE)$-structures over
$\big(M,t^0\big)=(\C,0)$ with trace free pole part and with
logarithmic restriction to $t^0$ are classified
in Theorem~\ref{t6.3}. These comprise all with
semisimple monodromy and thus all with generic
types (Sem) or (Bra).

Section~\ref{c6.3}: All $(TE)$-structures
over $\big(M,t^0\big)=(\C,0)$ with trace free pole part and
with logarithmic restriction over $t^0$ whose
monodromies have a $2\times 2$ Jordan block
are classified in Theorem~\ref{t6.7}.
Their generic types are (Reg) or (Log) because of
Theorem~\ref{t6.2}.
Most of them have no Birkhoff normal forms.
The intersection with Theorem~\ref{t6.3} is small
and consists of those which have Birkhoff normal forms.

Theorems~\ref{t6.3} and~\ref{t6.7} together give
all unfoldings with trace free pole parts
over $\big(M,t^0\big)=(\C,0)$ of logarithmic rank $2$
$(TE)$-structures over $t^0$.

\subsection{Numerical invariants for such (\emph{TE})-structures}
\label{c6.1}

The next definition gives some numerical invariants
for such $(TE)$-structures.
Recall the invariants $\delta^{(0)}$ and $\delta^{(1)}$
in Lemma~\ref{t3.9}.

\begin{Definition}\label{t6.1}
Let $\big(H\to\C\times \big(M,t^0\big),\nnn\big)$ be a $(TE)$-structure
with trace free pole part over $\big(M,t^0\big)=(\C,0)$
(with coordinate $t$) whose restriction over $t^0=0$ is
logarithmic. Let $M\subset \C$ be a neighborhood of 0
on which the $(TE)$-structure is defined.
On $M\setminus\{0\}$ it has a fixed type, (Sem) or (Bra) or
(Reg) or (Log), which is called the {\it generic type}
of the $(TE)$-structure. Lemma~\ref{t4.3} characterizes
the generic type in terms of (non)vanishing of
$\delta^{(0)},\delta^{(1)}\in t\C\{t\}$ and $\UU$:
\[
\def\arraystretch{1.5}
\begin{tabular}{c|c|c|c}
\hline
(Sem) &(Bra) &(Reg) &(Log)
\\ \hline
$\delta^{(0)}\neq 0$ & $\delta^{(0)}=0$,\ $\delta^{(1)}\neq 0$ &
$\delta^{(0)}=\delta^{(1)}=0$, $\UU\neq 0$ & $\UU=0$
\\
\hline
\end{tabular}
\]
For the generic types (Sem), (Bra) and (Reg), define
$k_1\in\N$ by
\begin{eqnarray}\label{6.1}
k_1 :=\max(k\in\N\,|\, \UU(\OO(H)_0)\subset t^k\OO(H)_0.
\end{eqnarray}
For the generic types (Sem) and (Bra) define $k_2\in\Z$ by
\begin{gather*}%\label{6.2}
k_2:= \begin{cases}
\deg_t\delta^{(0)}-k_1 & \text{for the generic type (Sem)},\\
\deg_t\delta^{(1)}-k_1 & \text{for the generic type (Bra)}.
\end{cases}
\end{gather*}
\end{Definition}

The following theorem gives for the generic type (Bra) and
part of the generic type (Sem) restrictions on the eigenvalues
of the residue endomorphism of the logarithmic pole at $z=0$
of the $(TE)$-structure over $t^0=0$.
And it shows that the monodromy is semisimple if the generic
type is (Sem) or (Bra).

\begin{Theorem}\label{t6.2}
Let $\big(H\to\C\times \big(M,t^0\big),\nnn\big)$ be a rank $2$ $(TE)$-structure
with trace free pole part over $\big(M,t^0\big)=(\C,0)$
whose restriction over $t^0=0$ is logarithmic.
Recall the invariant $\rho^{(1)}\in\C$ from Lemma~$\ref{t3.9}(b)$, and recall the invariants $k_1\in\N$ and $k_2\in\Z$
from Definition~$\ref{t6.1}$
if the generic type is $($Sem$)$ or $($Bra$)$.
\begin{enumerate}\itemsep=0pt
\item[$(a)$] Suppose that the generic type is $($Sem$)$.
\begin{enumerate}\itemsep=0pt
\item[$(i)$] Then $k_2\geq k_1$.

\item[$(ii)$] If $k_2>k_1$ then the eigenvalues of the
residue endomorphism of the logarithmic pole at~$z=0$
of the $(TE)$-structure over $t^0$ are
$\rho^{(1)}\pm \frac{k_1-k_2}{2(k_1+k_2)}$. Their difference
is smaller than~$1$. Especially, the eigenvalues of the
monodromy are different, and the mono\-dromy is semisimple.

\item[$(iii)$] Also if $k_1=k_2$, the monodromy is semisimple.
\end{enumerate}

\item[$(b)$] Suppose that the generic type is $($Bra$)$.
\begin{enumerate}\itemsep=0pt
\item[$(i)$] Then $k_2\in\N$.

\item[$(ii)$] The eigenvalues of the residue endomorphism of the
logarithmic pole at $z=0$ of the $(TE)$-structure over $t^0$
are $\rho^{(1)}\pm \frac{-k_2}{2(k_1+k_2)}$. Their difference
is smaller than $1$. Especially, the eigenvalues of the
monodromy are different, and the monodromy is semisimple.
\end{enumerate}
\end{enumerate}
\end{Theorem}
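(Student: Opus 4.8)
Throughout I work in a basis as in Lemma~\ref{t3.11}, so that the single matrices $A=A_1$ and $B$ (with $t$ the coordinate on $M$) satisfy $\tr A=0$ and $\tr\big(B-z\rho^{(1)}{\bf 1}_2\big)=0$; in particular $B^{(0)}$ is trace free and $\tr B^{(1)}=2\rho^{(1)}$. Since the restriction over $t^0=0$ is logarithmic, $B^{(0)}(0)=\UU|_{0}=0$, whence $k_1\geq 1$ and $B^{(0)}=t^{k_1}\www B$ with $\www B(0)\neq 0$. By Theorem~\ref{t3.23}$(b)$ the residue endomorphism of the pole at $z=0$ over $t^0$ is $R:=B^{(1)}(0)$, with $\tr R=2\rho^{(1)}$; writing $R_0:=R-\rho^{(1)}{\bf 1}_2$, I must locate the eigenvalues $\pm\Delta$ of the trace free matrix $R_0$. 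Part $(a)(i)$ is immediate: $\delta^{(0)}=\det B^{(0)}=t^{2k_1}\det\www B$, so $\deg_t\delta^{(0)}\geq 2k_1$ and $k_2=\deg_t\delta^{(0)}-k_1\geq k_1$.

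The plan is to reduce $B^{(0)}$ to a companion form and to exploit the commutation $\big[A^{(0)},B^{(0)}\big]=0$ (the $k=0$ case of~\eqref{3.10}, i.e.\ Lemma~\ref{t3.12}). Choosing a constant cyclic vector $w$ for the nonzero trace free matrix $\www B(0)$ and passing to the $t$-holomorphic, $z$-independent frame $\big(w,\www B\,w\big)$ (which preserves $\tr A^{(0)}=0$ and the eigenvalues of $R$) brings $\www B$ to $X=\begin{pmatrix}0&q\\1&0\end{pmatrix}$ with $q=-\det\www B$, so $B^{(0)}=t^{k_1}X$; here $\deg_t q=k_2-k_1$ in the generic type $($Sem$)$, while $q\equiv 0$ and $X=C_2$ in the generic type $($Bra$)$. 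As $X(t)$ is regular for every $t$, the trace free $A^{(0)}$ lies in $\C[X]$, hence $A^{(0)}=b(t)X$ for a holomorphic scalar $b$. Feeding $B^{(0)}=t^{k_1}X$ and $A^{(0)}=bX$ into the $k=1$ case of~\eqref{3.10} yields the master equation
\begin{gather*}
k_1t^{k_1-1}X+t^{k_1}q'E=-bX-b\big[X,B^{(1)}\big]-t^{k_1}\big[A^{(1)},X\big].
\end{gather*}
Everything now hinges on the leading coefficient $\www b(0)$ of $b=t^{k_1-1}\www b$.

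In the generic type $($Sem$)$, pairing the master equation with $X$ under $\tr$ (using $\tr X^2=2q$ and $\tr\big(X[X,\cdot]\big)=0$) gives $b=-\tfrac{t^{k_1}}{2}\,(\delta^{(0)})'/\delta^{(0)}$, so $\www b(0)=-\tfrac{k_1+k_2}{2}$. In the generic type $($Bra$)$ the $X$-pairing is vacuous (since $\tr C_2^2=0$), and this is the main obstacle, as one can no longer diagonalize and the lowest-order balance degenerates. I overcome it by combining three scalar relations: the $D$- and $E$-traces of the master equation, namely $bB^{(1)}_{12}=t^{k_1}A^{(1)}_{12}$ and $t^{k_1}\big(A^{(1)}_{11}-A^{(1)}_{22}\big)=k_1t^{k_1-1}+b\big(1+B^{(1)}_{11}-B^{(1)}_{22}\big)$, together with the $C_2$-trace of the $k=2$ case of~\eqref{3.10} (under which the $z^2$-terms $B^{(2)},A^{(2)}$ drop out), which after substitution collapses to the first order linear ODE
\begin{gather*}
\paa_t B^{(1)}_{12}=-\bigg(\frac{b}{t^{k_1}}+\frac{k_1}{t}\bigg)B^{(1)}_{12}.
\end{gather*}
Because $B^{(1)}_{12}=-\delta^{(1)}/t^{k_1}$ is holomorphic and vanishes to order exactly $\deg_t\delta^{(1)}-k_1$, the right side can have at most a simple pole, which forces $b=t^{k_1-1}\www b$ with $\www b(0)=-(k_1+k_2)$.

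With $\www b(0)$ in hand I finish by evaluating the master equation divided by $t^{k_1-1}$ at $t=0$, where $X(0)=C_2$ in the cases $(a)(ii)$ $(k_2>k_1)$ and $(b)$. Its $(1,1)$-entry reads $\www b(0)R_{12}=0$, and since $\www b(0)\neq 0$ this gives $R_{12}=0$: the residue is triangular, so the eigenvalues of $R_0$ are $\pm(R_0)_{11}$; combined with $B^{(1)}_{12}\sim c\,t^{k_2}$ and $R_{12}=0$ this also yields $k_2\geq 1$, which is $(b)(i)$. Its $(2,1)$-entry reads $k_1=-\www b(0)\big(1+R_{11}-R_{22}\big)$, whence $(R_0)_{11}=\tfrac12(R_{11}-R_{22})$ equals $\tfrac{k_1-k_2}{2(k_1+k_2)}$ in case $(a)(ii)$ and $\tfrac{-k_2}{2(k_1+k_2)}$ in case $(b)$. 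In each case the eigenvalue difference of $R_0$ is $\tfrac{k_2-k_1}{k_1+k_2}$ resp.\ $\tfrac{k_2}{k_1+k_2}$, lying in $(0,1)$; hence it is neither $0$ nor a nonzero integer, so $R$ has distinct eigenvalues and is semisimple, the two monodromy eigenvalues are distinct, and the monodromy ${\rm e}^{-2\pi {\rm i}R}$ is semisimple by the nonresonance part of Theorem~\ref{t3.23}$(c)$. Finally, in case $(a)(iii)$ $(k_1=k_2)$ one has $q(0)\neq 0$, so $X(0)$ is regular semisimple and $\www b(0)=-k_1$; the master equation at $t=0$ degenerates to $\big[X(0),R\big]=0$, so $R$ commutes with a regular semisimple matrix and is semisimple, again giving semisimple monodromy.
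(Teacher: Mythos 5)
Your arguments for $(a)(i)$, $(a)(ii)$ and $(b)$ are correct, and they take a genuinely different route from the paper's proof: instead of normalizing $b_2^{(0)}$, $b_3^{(0)}$, $b_4^{(0)}$ to monomials by coordinate and gauge changes and reading off coefficient identities from the flatness equations, you put $\UU$ into companion form via a cyclic vector, use regularity of $X(t)$ to write $A^{(0)}=bX$, and determine the leading coefficient of $b$ by the trace pairing with $X$ (type (Sem)) respectively by the logarithmic-derivative ODE for $B^{(1)}_{12}=-\delta^{(1)}/t^{k_1}$ (type (Bra)); evaluating the $k=1$ flatness equation at $t=0$ then yields $R_{12}=0$ and the eigenvalues. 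In those cases the eigenvalue difference of the residue lies in $(0,1)$, so the nonresonance hypothesis of Theorem~\ref{t3.23}$(c)$ holds and your conclusion about the monodromy is justified.

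Part $(a)(iii)$, however, contains a genuine gap at its final step. There you only establish that the residue endomorphism $R$ is semisimple (it commutes with the regular semisimple matrix $X(0)$), and conclude that this ``again gives semisimple monodromy''. This implication is false in the resonant case: Theorem~\ref{t3.23}$(c)$ transfers the Jordan structure of ${\rm e}^{-2\pi {\rm i}R}$ to the monodromy only when no two eigenvalues of $R$ differ by a nonzero integer, and in case $(a)(iii)$ nothing bounds the eigenvalue difference of $R$. Indeed $[X(0),R]=0$ only gives $R=\rho^{(1)}{\bf 1}_2+\beta X(0)$ with $\beta\in\C$ arbitrary (in the third normal form of Theorem~\ref{t6.3} the corresponding parameter $\alpha_3$ is a free invariant), so residue eigenvalues differing by a nonzero integer genuinely occur here. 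In that situation semisimplicity of $R$ says nothing about the monodromy: for instance the logarithmic $(TE)$-structures of Theorem~\ref{t4.20}$(b)$ with $N^{\rm mon}\neq 0$ and $\alpha_1-\alpha_2\in\N$ have diagonal residue endomorphism, yet their monodromy has a $2\times 2$ Jordan block. This resonant case is exactly where the paper invests its main effort in $(a)(iii)$: after showing that $R$ is diagonal, for $b_{3,0}^{(1)}\in\frac{1}{2}\Z\setminus\{0\}$ it applies the meromorphic gauge transformation $\uuuu{v}\mapsto \uuuu{v}\cdot\left(\begin{smallmatrix}1&0\\0&z\end{smallmatrix}\right)$, verifies (using $a_2^{(0)}=0$, $b_2^{(0)}=0$ and $b_2^{(1)}=\frac{-1}{k_1}ta_2^{(1)}$) that this produces again a $(TE)$-structure with trace free pole part, logarithmic restriction over $t^0$, generic type (Sem) with $\www k_1=\www k_2$ (the possibility $\www k_1<\www k_2$ being excluded by part $(ii)$), the same monodromy, and $b_{3,0}^{(1)}$ lowered by $\frac{1}{2}$; iterating this reaches the nonresonant case $b_{3,0}^{(1)}=0$, where Theorem~\ref{t3.23}$(c)$ finally applies. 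Without an argument of this kind, your proof of $(a)(iii)$ is incomplete.
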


\begin{proof}
By Lemma~\ref{t3.11}, a $\C\{t,z\}$-basis $\uuuu{v}$
of the germ $\OO(H)_{(0,0)}$
can be chosen such that the matrices
$A$ and $B\in M_{2\times 2}(\C\{t,z\})$ with
$z\nnn_{\paa_t}\uuuu{v}=\uuuu{v} A$ and
$z^2\nnn_{\paa_z}\uuuu{v}=\uuuu{v} B$ satisfy~\eqref{3.19},
$0=\tr A=\tr\big(B-z\rho^{(1)} C_1\big)$, or, more explicitly,
\begin{gather*}
%\label{6.3}
A= a_2C_2+a_3D+a_4E\qquad\text{with}\quad
a_2,a_3,a_4 \in \C\{t,z\},
\\
B= z\rho^{(1)}C_1+b_2C_2+b_3D+b_4E\qquad\text{with}\quad
b_2,b_3,b_4\in\C\{t,z\}.
%\label{6.4}
\end{gather*}
Write $a_j=\sum_{k\geq 0}a_j^{(k)}z^k$ and
$a_j^{(k)}=\sum_{l\geq 0}a_{j,l}^{(k)}t^l\in\C\{t\}$,
and analogously for $b_j$.
Condition~\eqref{t3.8} says here
\begin{gather}
0= z\paa_tB-z^2\paa_z A + zA +[A,B]\nonumber
\\ \hphantom{0}
{}= C_2\bigg[ z\paa_t b_2 + za_2^{(0)} -
\sum_{k\geq 2}(k-1)a_2^{(k)}z^{k+1}+2a_2b_3-2a_3b_2\bigg]
\label{6.5}
\\ \hphantom{0=}
{}+ D\bigg[z\paa_tb_3 + za_3^{(0)} -
\sum_{k\geq 2}(k-1)a_3^{(k)}z^{k+1}-a_2b_4+a_4b_2\bigg]
\label{6.6}
\\ \hphantom{0=}
{}+E\bigg[ z\paa_t b_4 + za_4^{(0)} -
\sum_{k\geq 2}(k-1)a_4^{(k)}z^{k+1}-2a_4b_3+2a_3b_4\bigg].
\label{6.7}
\end{gather}

\medskip
$(a)$ Suppose that the generic type is (Sem).

\medskip
$(i)$ By definition of $k_1$ and $k_2$,
\begin{gather}\label{6.8}
k_1 = \min\big(\deg_tb_2^{(0)},\deg_tb_3^{(0)},\deg_tb_4^{(0)}\big),
\\
k_1+k_2= \deg_t\big(\big(b_3^{(0)}\big)^2+b_2^{(0)}b_4^{(0)}\big)\geq 2k_1,
\label{6.9}
\end{gather}
thus $k_2\geq k_1$.

\medskip
$(ii)$ Suppose $k_2>k_1$.
By a linear change of the basis $\uuuu{v}$, we can arrange
that $k_1=\deg_tb_2^{(0)}$. The base change matrix
$T=C_1+b_3^{(0)}/b_2^{(0)}\cdot E\in {\rm GL}_2(\C\{t\})$
gives the new basis $\uuuu{\www v}=\uuuu{v}\cdot T$
with matrix
\begin{gather*}
\www B^{(0)}=T^{-1}B^{(0)}T=b_2^{(0)}C_2+
\bigg(b_4^{(0)}+\frac{\big(b_3^{(0)}\big)^2}{b_2^{(0)}}\bigg)E.
\end{gather*}
We can make a coordinate change in $t$ such that afterwards
\begin{gather*}
b_2^{(0)}b_4^{(0)}+\big(b_3^{(0)}\big)^2=\gamma^2 t^{k_1+k_2}
\end{gather*}
for an arbitrarily chosen $\gamma\in\C^*$.
Then a diagonal base change leads to a basis
which is again called $\uuuu{v}$ with matrices
which are again called $A$ and $B$ with
\begin{gather*}%\label{6.10}
b_3^{(0)}=0,\qquad
b_2^{(0)}=\gamma t^{k_1},\qquad
b_4^{(0)}=\gamma t^{k_2}.
\end{gather*}
Now the vanishing of the coefficients in $\C\{t\}$ of
$C_2\cdot z^0$, $C_2\cdot z^1$, $D\cdot z^0$, $D\cdot z^1$
and $E\cdot z^1$ in~\eqref{6.5}--\eqref{6.7}
tells the following:
\begin{align*}
C_2\cdot z^0\colon \quad &a_3^{(0)}=0,
\\
C_2\cdot z^1\colon \quad &0=k_1\gamma t^{k_1-1}+a_2^{(0)}\big(1+2b_3^{(1)}\big)
-2a_3^{(1)}\gamma t^{k_1},
\\
&\text{so}\quad \deg_t a_2^{(0)}=k_1-1,\quad
0=k_1\gamma +a_{2,k_1-1}^{(0)}\big(1+2b_{3,0}^{(1)}\big),
\\
D\cdot z^0\colon\quad &a_2^{(0)}\gamma t^{k_2}=a_4^{(0)}\gamma t^{k_1},
\quad\text{so}\quad a_4^{(0)}=a_2^{(0)}t^{k_2-k_1},
\\
&\text{so}\quad \deg a_4^{(0)}=k_2-1,\quad\text{and}\quad
a_{4,k_2-1}^{(0)}=a_{2,k_1-1}^{(0)}.
\\
D\cdot z^1\colon\quad &a_2^{(0)}b_4^{(1)}+a_2^{(1)}\gamma t^{k_2}
=a_4^{(0)}b_2^{(1)}+a_4^{(1)}\gamma t^{k_1},
\\
&\text{so}\quad b_{4,0}^{(1)}=0 \quad \text{(here }
k_2>k_1\text{ is used)},
\\
E\cdot z^1\colon\quad & 0=k_2\gamma t^{k_2-1}+a_4^{(0)}\big(1-2b_3^{(1)}\big)
+2a_3^{(1)}\gamma t^{k_2},
\\
&\text{so}\quad 0=k_2\gamma +a_{4,k_2-1}^{(0)}\big(1-2b_{3,0}^{(1)}\big).
\end{align*}
This shows
\begin{gather*}%\label{6.11}
b_{4,0}^{(1)}=0,\qquad
b_{3,0}^{(1)}=\frac{k_1-k_2}{2(k_1+k_2)}
\in \bigg({-}\frac{1}{2},0\bigg)\cap\Q.
\end{gather*}
With respect to the basis $\uuuu{v}|_{(0,0)}$ of $K_{(0,0)}$,
the matrix of the residue endomorphism of the
logarithmic pole at $z=0$ of the $(TE)$-structure over $t^0=0$
is
\begin{eqnarray*}
B^{(1)}(0)&=& \rho^{(1)}C_1 + b_{3,0}^{(1)}D+b_{2,0}^{(1)}C_2.
\end{eqnarray*}
It is semisimple with the eigenvalues
$\rho^{(1)}\pm b_{3,0}^{(1)}$, whose difference is smaller
than 1. The monodromy is semisimple with the two different
eigenvalues $\exp\big({-}2\pi {\rm i}\big(\rho^{(1)}\pm b_{3,0}^{(1)}\big)\big)$.

\medskip
$(iii)$ Suppose $k_2=k_1$. As in the proof of $(ii)$, we
can make a coordinate change in $t$ and then obtain a
$\C\{t,z\}$-basis $\uuuu{\www v}$ of $\OO(H)_{(0,0)}$ with
\begin{gather*}
\www b_3^{(0)}=0 ,\qquad \www b_2^{(0)}=\www b_4^{(0)}=\gamma t^{k_1}
\end{gather*}
for an arbitrarily chosen $\gamma\in\C^*$.
Now the constant base change matrix
$T=\left(\begin{smallmatrix}1&\hphantom{-}1\\1&-1\end{smallmatrix}\right)$ gives the
basis $\uuuu{v}=\uuuu{\www v}\cdot T$ with
\begin{gather*}%\label{6.12}
b_2^{(0)}=b_4^{(0)}=0,\qquad b_3^{(0)}=\gamma t^{k_1}.
\end{gather*}
The vanishing of the coefficients in $\C\{t\}$ of
$C_2\cdot z^0$, $E\cdot z^0$, $D\cdot z^1$,
$C_2\cdot z^1$ and $E\cdot z^1$ in~\eqref{6.5}--\eqref{6.7}
tells the following:
\begin{align*}
&C_2\cdot z^0\colon&&\hspace{-55mm} a_2^{(0)}=0,
\\
&E\cdot z^0\colon&&\hspace{-55mm} a_4^{(0)}=0,
\\
&D\cdot z^1\colon&&\hspace{-55mm} 0=k_1\gamma t^{k_1-1}+a_3^{(0)},\quad
\text{so}\quad a_3^{(0)}=-k_1\gamma t^{k_1-1},
\\
&C_2\cdot z^1\colon&&\hspace{-55mm} b_2^{(1)}=\frac{b_3^{(0)}}{a_3^{(0)}}a_2^{(1)}
= \frac{-1}{k_1} \cdot t \cdot a_2^{(1)},\quad
\text{so}\quad b_{2,0}^{(1)}=0,
\\
&E\cdot z^1\colon&&\hspace{-55mm} b_4^{(1)}=\frac{b_3^{(0)}}{a_3^{(0)}}a_4^{(1)}
= \frac{-1}{k_1} \cdot t \cdot a_4^{(1)},\quad
\text{so}\quad b_{4,0}^{(1)}=0.
\end{align*}
With respect to the basis $\uuuu{v}|_{(0,0)}$ of $K_{(0,0)}$,
the matrix of the residue endomorphism of the logarithmic
pole at $z=0$ of the $(TE)$-structure over $t^0=0$ is
\begin{gather*}
B^{(1)}(0)= \rho^{(1)}C_1+b_{3,0}^{(1)}D.
\end{gather*}
It is diagonal with the eigenvalues $\rho^{(1)}\pm b_{3,0}^{(1)}$.
Therefore the monodromy has the eigenvalues
$\exp\big({-}2\pi {\rm i}\big(\rho^{(1)}\pm b_{3,0}^{(1)}\big)\big)$.

If $b_{3,0}^{(1)}\in\C\setminus \big(\frac{1}{2}\Z\setminus\{0\}\big)$, the eigenvalues
of the residue endomorphism do not differ by a~nonzero integer.
Because of Theorem~\ref{t3.23}$(c)$, then the monodromy is
semisimple.

We will show that the monodromy is also in the cases
$b_{3,0}^{(1)}\in\frac{1}{2}\Z\setminus\{0\}$ semisimple,
by reducing these cases to the case $b_{3,0}^{(1)}=0$.

Suppose $b_{3,0}^{(1)}\in\frac{1}{2}\N$. The case
$b_{3,0}^{(1)}\in \frac{1}{2}\Z_{<0}$ can be reduced to this
case by exchanging $v_1$ and~$v_2$.
We will construct a new $(TE)$-structure over $\big(M,t^0\big)=(\C,0)$
with the same monodromy and again with trace free pole part and
of generic type (Sem)
with logarithmic restriction over~$t^0$, but where
$B^{(1)}(0)$ is replaced by
\begin{gather*}%\label{6.13}
\www B^{(1)}(0)=\bigg(\rho^{(1)}+\frac{1}{2}\bigg)+
\bigg(b_{3,0}^{(1)}-\frac{1}{2}\bigg)D.
\end{gather*}
Applying this sufficiently often, we arrive at the case
$b_{3,0}^{(1)}=0$, which has semisimple monodromy.

The basis $\uuuu{\www v}:= \uuuu{v}\cdot
\left(\begin{smallmatrix}1&0\\0&z\end{smallmatrix}\right)$ of
$H':=H|_{\C^*\times (M,t^0)}$ in a neighborhood of $(0,0)$
defines a new $(TE)$-structure over $(M,0)$ because of
\begin{gather*}%\label{6.14}
z\nnn_{\paa_t}\uuuu{\www v}= \uuuu{\www v}
\bigl(z^{-1}a_2C_2+a_3D+za_4E\bigr)\qquad\text{and}\qquad a_2^{(0)}=0,
\\[1ex]
z^2\nnn_{\paa_z}\uuuu{\www v}= \uuuu{\www v}
\bigg(z\bigg(\rho^{(1)}+\frac{1}{2}\bigg)C_1+z^{-1}b_2C_2+\bigg(b_3-z\frac{1}{2}\bigg)D
+zb_4E\bigg)\qquad \text{and}\qquad b_2^{(0)}=0.
%\label{6.15}
\end{gather*}
Of course, it has the same monodromy.
The restriction over $t^0=0$ has a logarithmic pole at $z=0$
because $b_2^{(1)}=\frac{-1}{k_1}ta_2^{(1)}$
and $b_3^{(0)}=\gamma t^{k_1}$ with $k_1\in\N$.
Its generic type is still (Sem).
Its numbers $\www k_1$ and $\www k_2$ satisfy
$\www k_1+\www k_2=\deg_t\det\www\UU=\deg_t\big(b_3^{(0)}\big)^2=2k_1$.
The assumption $\www k_1<\www k_2$ would lead together with
part $(ii)$ to two different eigenvalues of the monodromy,
a contradiction. Therefore $\www k_1=\www k_2=k_1$.
Thus we are in the same situation as before, with
$b_{3,0}^{(1)}$ diminuished by~$\frac{1}{2}$.\looseness=-1

\medskip
$(b)$ Suppose that the generic type is (Bra).

$(i)$ and $(ii)$ $\UU$ is nilpotent, but not 0.
We can choose a $\C\{t,z\}$-basis $\uuuu{v}$ of
$\OO(H)_{(0,0)}$ such that
\begin{gather*}%\label{6.16}
B^{(0)}=b_2^{(0)}C_2,\qquad \text{so}\quad
b_3^{(0)}=b_4^{(0)}=0.
\end{gather*}
Then $\delta^{(1)}=-b_2^{(0)}b_4^{(1)}$.
Here $\deg_tb_2^{(0)}=k_1$ and $\deg_t\delta^{(1)}=k_1+k_2$,
so $k_2=\deg_tb_4^{(1)} \geq 0$. We can make a
coordinate change in $t$ such that afterwards
\begin{gather*}
b_2^{(0)}b_4^{(1)}=\gamma^2t^{k_1+k_2},
\end{gather*}
for an arbitrarily chosen $\gamma\in\C^*$.
Then a diagonal base change leads to a basis which is
again called $\uuuu{v}$ with matrices which are again
called $A$ and $B$ with
\begin{gather*}%\label{6.17}
b_2^{(0)}=\gamma t^{k_1},\qquad
b_3^{(0)}=b_4^{(0)}=0,\qquad
b_4^{(1)}=\gamma t^{k_2}.
\end{gather*}
The vanishing of the coefficients in $\C\{t\}$ of
$C_2\cdot z^0$, $D\cdot z^0$, $C_2\cdot z^1$, $D\cdot z^1$
and $E\cdot z^2$ in~\eqref{6.5}--\eqref{6.7}
tells the following
\begin{align*}
&C_2\cdot z^0\colon &&\hspace{-40mm} a_3^{(0)}=0,
\\
&D\cdot z^0\colon &&\hspace{-40mm}a_4^{(0)}=0,
\\
&C_2\cdot z^1\colon
&&\hspace{-40mm}0=k_1\gamma t^{k_1-1}+a_2^{(0)}\big(1+2b_3^{(1)}\big)-2a_3^{(1)}\gamma t^{k_1},
\\
&&&\hspace{-40mm}\text{so}\quad \deg_t a_2^{(0)}=k_1-1, \quad
0=k_1\gamma +a_{2,k_1-1}^{(0)}\big(1+2b_{3,0}^{(1)}\big),
\\
&D\cdot z^1\colon &&\hspace{-40mm}a_2^{(0)}\gamma t^{k_2}=a_4^{(1)}\gamma t^{k_1},\quad
\text{so}\quad t^{k_2}=a_4^{(1)}\frac{t^{k_1}}{a_2^{(0)}},
\\
&&&\hspace{-40mm}\text{so}\quad k_2=1+\deg a_4^{(1)}\geq 1,\quad\text{and}\quad
a_4^{(1)}=a_2^{(0)}t^{k_2-k_1},
\\
&E\cdot z^2\colon &&\hspace{-40mm}
0=k_2\gamma t^{k_2-1}+2a_3^{(1)}\gamma t^{k_2}-2a_4^{(1)}b_3^{(1)},
\\
&&&\hspace{-40mm} \text{so}\quad 0=k_2\gamma -2a_{2,k_1-1}^{(0)}b_{3,0}^{(1)}.
\end{align*}
This shows
\begin{gather*}%\label{6.18}
k_2\geq 1,\qquad b_{4,0}^{(1)}=0,\qquad
b_{3,0}^{(1)}=\frac{-k_2}{2(k_1+k_2)}
\in \bigg({-}\frac{1}{2},0\bigg)\cap\Q.
\end{gather*}
With respect to the basis $\uuuu{v}|_{(0,0)}$ of $K_{(0,0)}$,
the matrix of the residue endomorphism of the
logarithmic pole at $z=0$ of the $(TE)$-structure over $t^0=0$
is
\begin{gather*}
B^{(1)}(0)= \rho^{(1)}C_1 + b_{3,0}^{(1)}D+b_{2,0}^{(1)}C_2.
\end{gather*}
It is semisimple with the eigenvalues
$\rho^{(1)}\pm b_{3,0}^{(1)}$, whose difference is smaller
than 1. The mono\-dromy is semisimple with the two different
eigenvalues $\exp\big({-}2\pi {\rm i}\big(\rho^{(1)}\pm b_{3,0}^{(1)}\big)\big)$.
\end{proof}

\subsection[1-parameter unfoldings with trace free pole part
of logarithmic pure $(TLE)$-structures over a point]
{1-parameter unfoldings with trace free pole part
of logarithmic \\pure $\boldsymbol{(TLE)}$-structures over a point}\label{c6.2}

\looseness=1
Such unfoldings are themselves pure $(TLE)$-structures
over $(\C,0)$, see Remark~\ref{t3.19}$(iii)$ respectively
\cite[Chapter~VI, Theorem~2.1]{Sa02} or~\cite[Theorem~5.1(c)]{DH20-2}.
Their restrictions over $t^0=0$ have a logarithmic
pole at $z=0$.
Theorem~\ref{t6.3} classifies such pure $(TLE)$-structures.
The underlying $(TE)$-structures were subject of
Definition~\ref{t6.1} and Theorem~\ref{t6.2}.
They gave their generic type and
invariants $(k_1,k_2)\in\N^2$ (for the generic types
(Sem) and (Bra)) and $k_1\in\N$ (for the generic type (Reg)).
Theorem~\ref{t6.3} will give an invariant $k_1\in\N$ also
for the generic type (Log) with Higgs field $\neq 0$.
Lemma~\ref{t3.9}$(b)$ gave the invariant $\rho^{(1)}\in\C$.
The coordinate on $\C$ is again called~$t$.

\begin{Theorem}\label{t6.3}
Any pure rank $2$ $(TLE)$-structure over $\big(M,t^0\big)=(\C,0)$
with trace free pole part and with logarithmic
restriction over $t^0$ has after a suitable coordinate change
in $t$ a unique Birkhoff normal form in the following list.
Here the Birkhoff normal form
consists of two matrices $A$ and $B$ which are associated
to a global basis $\uuuu{v}$ of $H$ whose restriction to
$\{\infty\}\times \big(M,t^0\big)$ is flat with
respect to the residual connection along
$\{\infty\}\times \big(M,t^0\big)$, via
$z\nnn_{\paa_t}\uuuu{v}=\uuuu{v}A$ and
\mbox{$z^2\nnn_{\paa_z}\uuuu{v}=\uuuu{v}B$}.
The matrices have the shape
\begin{gather}
A=a_2^{(0)}C_2+a_3^{(0)}D+a_4^{(0)}E,\nonumber
\\
B=z\rho^{(1)}C_1-\gamma tA
+zb_2^{(1)}C_2+zb_3^{(1)}D,\label{6.19}%\label{6.20}
\end{gather}
with $a_2^{(0)},a_3^{(0)},a_4^{(0)}\in\C[t]$,
$\rho^{(1)},\gamma\in\C$, $b_2^{(1)},b_3^{(1)}\in\C$
$($so here $zb_4^{(1)}E$ does not turn up, resp.~$b_4^{(1)}\allowbreak=0)$.
The left column of the following list gives the
generic type of the
underlying $(TE)$-structure and, depending on the type,
the invariant $k_1\in\N$ or the invariants $k_1,k_2\in\N$
from Definition~$\ref{t6.1}$ of the underlying $(TE)$-structure.
The invariant $\rho^{(1)}\in \C$ is arbitrary and
is not listed in the table.
$\zeta\in\C$, $\alpha_3\in \R_{\geq 0}\cup\H$,
$\alpha_4\in\C\setminus \{-1\}$, $k_1\in\N$ and $k_2\in\N$
are invariants in some cases. In~the first $6$ cases, $a_i^{(0)}$ is determined by
$b_i^{(0)}=-\gamma t a_i^{(0)}$.
\[
\def\arraystretch{1.5}
\begin{tabular}{c|c|c|c|c|c|c}
\hline
\raisebox{1mm}[3.9ex][2.2ex]{\parbox[c]{25mm}{\centering Generic type and~invariants}} & $\gamma$
& $b_2^{(0)}$ & $b_3^{(0)}$ & $b_4^{(0)}$ &
$b_2^{(1)}$ & $b_3^{(1)}$
\\
\hline
$($Sem$)$ & & & & & &
\\
$k_2-k_1>0$\ odd & $\frac{2}{k_1+k_2}$ &
$t^{k_1}$ & $0$ & $t^{k_2}$ & $0$ & $\frac{k_1-k_2}{2(k_1+k_2)}$
\\
$k_2-k_1\in 2\N$& $\frac{2}{k_1+k_2}$ &
$t^{k_1}$ & $\zeta t^{(k_1+k_2)/2}$ & $\big(1-\zeta^2\big)t^{k_2}$ &
$0$ & $\frac{k_1-k_2}{2(k_1+k_2)}$
\\
$k_2=k_1$ & $\frac{1}{k_1}$ & $0$ & $t^{k_1}$ & $0$ &
$0$ & $\alpha_3$
\\
\hline
$($Bra$)$, $k_1$, $k_2$ & $\frac{1}{k_1+k_2}$ & $t^{k_1}$ &
$t^{k_1+k_2}$ & $-t^{k_1+2k_2}$ & $0$ & $\frac{-k_2}{2(k_1+k_2)}$
\\
\hline
$($Reg$)$, $k_1$ & $\frac{1+\alpha_4}{k_1}$ & $t^{k_1}$ & $0$ &
$0$ & $0$ & $\frac{1}{2}\alpha_4$
\\
$($Reg$)$, $k_1$ & $\frac{1}{k_1}$ & $t^{k_1}$ & $0$ & $0$ & $1$ & $0$
\\ \hline \hline
Generic type & $\gamma$
& $a_2^{(0)}$ & $a_3^{(0)}$ & $a_4^{(0)}$ &
$b_2^{(1)}$ & $b_3^{(1)}$
\\ \hline
$($Log$)$ & $0$ & $k_1t^{k_1-1}$ & $0$ & $0$ & $0$ & $-\frac{1}{2}$
\\
$($Log$)$ & $0$ & $0$ & $0$ & $0$ & $0$ & $\alpha_3$
\\
$($Log$)$ & $0$ & $0$ & $0$ & $0$ & $1$ & $0$
\\
\hline
\end{tabular}
\]
\end{Theorem}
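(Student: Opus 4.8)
The plan is to present the structure through its Birkhoff normal form, extract the two reduced flatness equations, and solve them generic type by generic type.

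\emph{Setup.} By Remark~\ref{t3.19}(ii) a pure $(TLE)$-structure over $(\C,0)$ is given by a basis $\uuuu v$, flat at $\infty$ for the residual connection, in Birkhoff normal form: $A=A^{(0)}$ and $B=B^{(0)}+zB^{(1)}$ with $\paa_tB^{(1)}=0$. Since the pole part is trace free, the normalization of Lemma~\ref{t3.11} holds here automatically (the gauges ${\rm e}^gC_1$, ${\rm e}^hC_1$ of its proof are trivial because $A^{(1)}=0$ and $B^{(k)}=0$ for $k\geq 2$), so I may write $A=a_2C_2+a_3D+a_4E$ and $B=z\rho^{(1)}C_1+b_2C_2+b_3D+b_4E$ with $a_i\in\C\{t\}$, $b_i=b_i^{(0)}+zb_i^{(1)}$, where $b_i^{(0)}\in t\C\{t\}$ (the $z^0$-part vanishes at $t=0$ by the logarithmic restriction) and $b_i^{(1)}\in\C$. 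By Remark~\ref{t3.19}(i), for $n=1$ the flatness conditions collapse to $[A,B^{(0)}]=0$ together with $\paa_tB^{(0)}+A+[A,B^{(1)}]=0$. Using the commutator relations of the Notation, these become six scalar equations: the first three assert that $A$ and $B^{(0)}$ commute, and the others read
\begin{gather*}
\paa_tb_2^{(0)}+a_2+2\big(a_2b_3^{(1)}-a_3b_2^{(1)}\big)=0,\qquad
\paa_tb_3^{(0)}+a_3+a_4b_2^{(1)}-a_2b_4^{(1)}=0,\\
\paa_tb_4^{(0)}+a_4+2\big(a_3b_4^{(1)}-a_4b_3^{(1)}\big)=0.
\end{gather*}

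Next I would read the generic type off $\delta^{(0)}=\det B^{(0)}=-\big(b_3^{(0)}\big)^2-b_2^{(0)}b_4^{(0)}$ and $\delta^{(1)}$ (Lemma~\ref{t3.9}), as in Definition~\ref{t6.1}, and solve the system in each of the four cases. The cases (Reg) and (Log) are the most direct. In (Reg), $B^{(0)}$ is nilpotent and $\not\equiv 0$; conjugating it to $b_2^{(0)}C_2$, the commuting equations force $a_3=a_4=0$, and after arranging $b_4^{(1)}=0$ (exactly as in the proof of Theorem~\ref{t4.11}) the $z^1$-equation leaves a single relation determining $a_2$ from $b_2^{(0)}$ and $b_3^{(1)}$. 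In (Log), $B^{(0)}\equiv 0$, the commuting equations are vacuous, and the $z^1$-equation becomes a linear system in $(a_2,a_3,a_4)$ with constant coefficients whose solution space is governed by the residue data $b_2^{(1)},b_3^{(1)}$. In either case a coordinate change $t\mapsto\phi(t)$ normalizes $b_2^{(0)}$ (resp.\ $a_2$) to a monomial, and $\gamma$ is then forced by $b_i^{(0)}=-\gamma t\,a_i^{(0)}$ (with $\gamma=0$ in (Log)); the two tabulated rows in each of (Reg) and (Log) separate semisimple monodromy ($b_2^{(1)}=0$) from a $2\times2$ Jordan block, the latter producing the non-removable resonant term $b_2^{(1)}=1$, $b_3^{(1)}=0$.

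For the generic types (Bra) and (Sem) I would feed in Theorem~\ref{t6.2}, which already pins down $b_{3,0}^{(1)}$, the vanishing $b_{4,0}^{(1)}=0$, and $k_2\in\N$ (resp.\ $k_2\geq k_1$). Substituting these into the reduced equations turns them into first-order ODEs for $a_i$ and $b_i^{(0)}$; solving, and then normalizing by a coordinate change in $t$ and a constant base change, produces the tabulated monomial entries. Uniqueness I would establish by exhibiting a complete set of invariants: the generic type and the integers $k_1,k_2$ (orders of vanishing of the invariantly defined $\UU$, $\delta^{(0)}$, $\delta^{(1)}$), the formal invariant $\rho^{(1)}$ of Lemma~\ref{t3.9}, the residue eigenvalues of the pole at $\infty$, and the remaining continuous parameters $\zeta,\alpha_3,\alpha_4$; one then checks that no two distinct table entries share all of these.

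I expect the main obstacle to be the generic case (Sem), and inside it the dependence on the parity of $k_2-k_1$. When $k_2-k_1$ is even, $(k_1+k_2)/2$ is an integer strictly between $k_1$ and $k_2$, so a resonant middle term $b_3^{(0)}=\zeta t^{(k_1+k_2)/2}$ survives every admissible base change and yields a genuine one-parameter family, with $b_4^{(0)}=\big(1-\zeta^2\big)t^{k_2}$ forced once $\delta^{(0)}$ is normalized to $-t^{k_1+k_2}$; when $k_2-k_1$ is odd no integer exponent is available and $b_3^{(0)}=0$. The delicate part is to track exactly which coordinate and base changes remain once $B^{(0)}$ has been reduced to monomial form (including the fundamental-domain normalizations for the residue parameters under the basis swap $v_1\leftrightarrow v_2$ and integer shifts), and to prove that $\zeta$, and likewise $\alpha_3$ and $\alpha_4$, genuinely cannot be normalized away. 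This bookkeeping is the heart of the classification.
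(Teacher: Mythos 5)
Your skeleton coincides with the paper's proof: the same reduction via Lemma~\ref{t3.11} and Remark~\ref{t3.19} to the two equations $[A,B^{(0)}]=0$ and $\paa_tB^{(0)}+A+[A,B^{(1)}]=0$, the same use of Theorem~\ref{t6.2} to pin $b_3^{(1)}$ and $b_4^{(1)}$ in the generic types (Sem) and (Bra), and the same parity mechanism producing $\zeta$. However, your treatment of the generic type (Reg) has a genuine gap: you propose to \emph{first} conjugate $B^{(0)}$ to $b_2^{(0)}C_2$ and only \emph{afterwards} arrange $b_4^{(1)}=0$, and this order fails. The only gauge freedom compatible with a Birkhoff normal form is a constant base change (a $t$-dependent $T(t)$ adds $zT^{-1}\paa_tT$ to $A$ and destroys $A=A^{(0)}$), plus coordinate changes in $t$; a constant conjugation puts the nilpotent family $B^{(0)}(t)$ into the form $b_2^{(0)}(t)C_2$ only if its kernel line is independent of $t$, and this is not automatic. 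Concretely, writing $B^{(0)}=\lambda(t)\big(C_2+\mu(t)D-\mu(t)^2E\big)$ with $\mu(0)=0$, commutativity gives $A^{(0)}=f(t)B^{(0)}$, and the flatness system~\eqref{6.38} yields the Riccati equation $\paa_t\mu=f\cdot\big(b_4^{(1)}+2b_3^{(1)}\mu-b_2^{(1)}\mu^2\big)$; since $f\not\equiv 0$ in type (Reg), the kernel rotates precisely when $b_4^{(1)}\neq 0$. So your step (1) presupposes exactly what your step (3) is meant to produce. Worse, once $B^{(0)}=b_2^{(0)}C_2$ is imposed, the remaining constant conjugations are lower triangular, and these cannot kill a nonzero $b_4^{(1)}$ (conjugation by $C_1+\tau C_2$ leaves the $E$-coefficient of $B^{(1)}$ unchanged, cf.~\eqref{4.30}). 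The paper proceeds in the opposite order: it first normalizes the constant matrix $B^{(1)}$ by linear algebra (semisimple residue: $b_2^{(1)}=b_4^{(1)}=0$; Jordan block: $b_3^{(1)}=b_4^{(1)}=0$, $b_2^{(1)}=1$), and then uses the ODEs to force $b_3^{(0)}=b_4^{(0)}=0$, including the contradiction argument that $\paa_t\big(ta_3^{(0)}\big)=\beta_4a_3^{(0)}+\beta_4t^{\beta_4-1}$ has no solution in $\C\{t\}$. That ODE work is what your "conjugate and read off" shortcut skips, and it cannot be skipped.

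A second, smaller gap is in the generic type (Log): the table has three rows there, not two, and the row with $a_2^{(0)}=k_1t^{k_1-1}$, $b_3^{(1)}=-\frac{1}{2}$ is not parametrized by free residue data. When $B^{(0)}\equiv 0$ but the Higgs field is nonzero, the system~\eqref{6.38} becomes homogeneous linear in $\big(a_2^{(0)},a_3^{(0)},a_4^{(0)}\big)$ with a nontrivial solution, which forces the determinant $1-4\big(b_3^{(1)}\big)^2-4b_2^{(1)}b_4^{(1)}$ of its coefficient matrix to vanish; hence the residue eigenvalues are $\pm\frac{1}{2}$, and a constant conjugation then gives $b_3^{(1)}=-\frac{1}{2}$, $b_2^{(1)}=b_4^{(1)}=0$ and $a_3^{(0)}=a_4^{(0)}=0$. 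Your sketch, which says the solution space is "governed by the residue data", misses this determinant constraint, which is the entire content of that case: it is why the residue is pinned to $\pm\frac{1}{2}$ whenever the Higgs field does not vanish, while arbitrary residue data $\alpha_3$ (or the Jordan block $b_2^{(1)}=1$) occur only in the pulled-back cases with $A\equiv 0$.
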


Before the proof, several remarks on these Birkhoff
normal forms are made.
The proof is given after Remark~\ref{t6.6}.

\begin{Remarks}\label{t6.4}\qquad
\begin{enumerate}\itemsep=0pt
\item[$(i)$] The matrix $B(0)=zB^{(1)}(0)$ is the matrix of the
logarithmic pole at $z=0$ of the restriction over $t^0=0$
of the $(TE)$-structure. In~all cases except the 6th case and the 9th case, it is
$z\big(\rho^{(1)}C_1+b_3^{(1)}D\big)$, so it is diagonal. In~these cases the monodromy is semisimple with eigenvalues
$\exp\big({-}2\pi {\rm i} \big(\rho^{(1)}\pm b_3^{(1)}\big)\big)$. In~the 6th case and the 9th case,
this matrix is $z\big(\rho^{(1)}C_1+C_2\big)$. Then the matrix and
the monodromy have a $2\times 2$ Jordan block,
and the monodromy has the eigenvalue $\exp\big({-}2\pi {\rm i}\rho^{(1)}\big)$. In~all cases, the leading exponents
(defined in Theorem~\ref{t4.20})
of the logarithmic $(TE)$-structure over $t^0$ are
called $\alpha_1^0$ and $\alpha_2^0$, and they~are
\begin{gather*}%\label{6.21}
\alpha_{1/2}^0=\rho^{(1)}\pm b_3^{(1)},\qquad
\text{i.e.},\qquad \frac{\alpha_1^0+\alpha_2^0}{2}=\rho^{(1)},\qquad
\alpha_1^0-\alpha_2^0=2b_3^{(1)}.
\end{gather*}
The 6th and 9th cases turn up again in Theorem~\ref{t6.7}.
See Remarks~\ref{t6.8}$(iv)$--$(vi)$.

\item[$(ii)$] In the generic types (Sem), the critical values satisfy
$u_2=-u_1$ because the pole part is trace free,
$-\frac{u_1+u_2}{2}=\rho^{(0)}=0$.
They and the regular singular exponents $\alpha_1$ and
$\alpha_2$ can be calculated with the formulas~\eqref{4.6} and~\eqref{4.7}:
\begin{gather}\label{6.22}
\delta^{(0)}= -b_2^{(0)}b_4^{(0)}-\big(b_3^{(0)}\big)^2=-t^{k_1+k_2},
\\
u_{1/2}=\pm\sqrt{\frac{1}{4}(u_1-u_2)^2}=\pm\sqrt{-\delta^{(0)}}
=\pm t^{(k_1+k_2)/2},\label{6.23}
\\
\frac{\alpha_1+\alpha_2}{2}= \rho^{(1)}, \label{6.24}
\\
\alpha_1-\alpha_2= u_1^{-1}\delta^{(1)}
= \begin{cases}
0,& \text{gen. type (Sem) with }k_2-k_1>0\text{ odd,}\\
\frac{k_2-k_1}{k_1+k_2}\zeta,
& \text{gen. type (Sem) with }k_2-k_1\in 2\N,\\
-2\alpha_3,& \text{gen. type (Sem) with }k_2=k_1.
\end{cases} \label{6.25}
\end{gather}
If $k_2=k_1$ then $\{\alpha_1,\alpha_2\}
=\big\{\alpha_1^0,\alpha_2^0\big\}$, but if
$k_2>k_1$ then $\{\alpha_1,\alpha_2\}\neq
\big\{\alpha_1^0,\alpha_2^0\big\}$, except if $\zeta\in\{\pm 1\}$.

\item[$(iii)$] In the generic type (Bra), $\rho^{(1)}\in\C$ is arbitrary,
$b_3^{(1)}=\frac{-k_2}{2(k_1+k_2)}$, and $\delta^{(1)}$
varies as follows,
\begin{eqnarray}\label{6.26}
\delta^{(1)}= \frac{k_2}{k_1+k_2}t^{k_1+k_2}.
\end{eqnarray}

\item[$(iv)$] In the 5th, 7th and 8th cases in Theorem~\ref{t6.3},
the monodromy is semisimple and the $(TE)$-structure is regular singular.
Associate to it the data in Definition~\ref{t3.18}:
$H':=H|_{\C\times (M,t^0)}$, $M^{\rm mon}$, $N^{\rm mon}$,
$\Eig(M^{\rm mon})=\{\lambda_1,\lambda_2\}$, $H^\infty$, $C^{\alpha}$ for
$\alpha\in\C$ with ${\rm e}^{-2\pi {\rm i} \alpha_j}\in\{\lambda_1,\lambda_2\}$.
The~leading exponents of the logarithmic $(TE)$-structure
over $t^0$ are called $\alpha_1^0$ and $\alpha_2^0$ as in $(i)$.
The leading exponents of the $(TE)$-structure over
$t\in \C\setminus\{0\}$ are now called $\alpha_1$ and $\alpha_2$.
Possibly after renumbering $\lambda_1$ and $\lambda_2$,
$\alpha_1^0$ and $\alpha_2^0$, and $\alpha_1$ and $\alpha_2$,
we have ${\rm e}^{-2\pi {\rm i}\alpha_j^0}={\rm e}^{-2\pi {\rm i}\alpha_j}=\lambda_j$
and the relations in the following table:
\begin{gather}\label{6.27}
\def\arraystretch{1.3}
\begin{tabular}{c|c|c|c|c}
\hline
In Theorem~\ref{t6.3} & $\alpha_1^0$ & $\alpha_2^0$&$\alpha_1$ &$\alpha_2$
\\ \hline
5th case &$\rho^{(1)}+\frac{1}{2}\alpha_4$ &$\rho^{(1)}-\frac{1}{2}\alpha_4$
&$\alpha_1^0$ & $\alpha_2^0-1$
\\
7th case & $\rho^{(1)}-\frac{1}{2}$ &
$\rho^{(1)}+\frac{1}{2}$ & $\alpha_1^0$ & $\alpha_2^0$
\\
8th case &$\rho^{(1)}+\alpha_3$ &$\rho^{(1)}-\alpha_3$ & $\alpha_1^0$ & $\alpha_2^0$
\\
\hline
\end{tabular}
\end{gather}
And there exist sections $s_j\in C^{\alpha_j}\setminus\{0\}$ with
\begin{gather}
\OO(H)_0= \C\{t,z\}\bigg(s_1+\frac{-1}{1+\alpha_4}t^{k_1}s_2\bigg)
\oplus \C\{t,z\}(zs_2)
\qquad \text{in the 5th case,} \label{6.28}\\
\OO(H)_0= \C\{t,z\}\big(s_1+t^{k_1}z^{-1}s_2\big)
\oplus \C\{t,z\}s_2
\qquad \text{in the 7th case,} \label{6.29}\\[.5ex]
\OO(H)_0= \C\{t,z\}s_1
\oplus \C\{t,z\}s_2
\qquad \text{in the 8th case.}\label{6.30}
\end{gather}
{\sloppy
One confirms~\eqref{6.28}--\eqref{6.30}
immediately by calculating the matrices
$A$ and $B$ with \mbox{$z\nnn_{\paa_t}\uuuu{v}=\uuuu{v}A$}
and $z^2\nnn_{\paa_z}\uuuu{v}=\uuuu{v}B$ for
$\uuuu{v}$ the basis in~\eqref{6.28}--\eqref{6.30}.

}

\item[$(v)$] Theorem~\ref{t6.7} contains for the 6th and 9th cases
in Theorem~\ref{t6.3} a description similar to part $(iv)$.
See Remarks~\ref{t6.8}$(iv)$--$(vi)$.
\end{enumerate}
\end{Remarks}

\begin{Remarks}\label{t6.5}
These remarks study the behaviour of the $(TE)$-structures
in Theorem~\ref{t6.3} under pull back via maps
$\varphi\colon (\C,0)\to(\C,0)$.
The normal forms in Theorem~\ref{t6.3}
are chosen such that the pull backs
by maps $\varphi$ with
$\varphi(s)=s^n$ for some $n\in\N$
are again normal forms in Theorem~\ref{t6.3}.

\begin{enumerate}\itemsep=0pt
\item[$(i)$] A general observation:
Let $\big(H\to\C\times \big(M,t^0\big),\nnn\big)$ be a $(TE)$-structure
over $\big(M,t^0\big)=(\C,0)$ of rank $r\in\N$.
Let $\uuuu{v}$ be $\C\{t,z\}$-basis
of $\OO(H)_0$ with $z\nnn_{\paa_t}\uuuu{v}=\uuuu{v}A$
and $z^2\nnn_{\paa_z}\uuuu{v}=\uuuu{v}B$ and~$A,B\in M_{r\times r}(\C\{t,z\})$.
Choose $n\in\N$ and consider a map
$\varphi\colon (\C,0)\to(\C,0)$, $s\mapsto \varphi(s)=t$.
Then the pull back $(TE)$-structure $\varphi^*(H,\nnn)$
has the basis $\uuuu{\www v}(z,s)=\uuuu{v}(z,\varphi(s))$.
The~matrices $\www A,\www B\in M_{r\times r}(\C\{s,z\})$
with $z\nnn_{\paa_s}\uuuu{\www v}=\uuuu{\www v}\www A$,
$z^2\nnn_{\paa_z}\uuuu{\www v}=\uuuu{\www v}\www B$ are
\begin{gather}\label{6.31}
\www A=\paa_s\varphi(s)\cdot A(z,\varphi(s)),\qquad
\www B=B(z,\varphi(s)).
\end{gather}

\item[$(ii)$] These formulas~\eqref{6.31} show for the 1st to 7th
cases in the list in Theorem~\ref{t6.3} the following:
The pull back via $\varphi\colon (\C,0)\to(\C,0)$ with
$\varphi(s)=s^n$ for some $n\in\N$
of such a $(TE)$-structure with invariants $(k_1,k_2)$
or $k_1$
is a $(TE)$-structure in the same case, where the invariants
$(k_1,k_2)$ or $k_1$ are replaced by $\big(\www k_1,\www k_2\big)=(nk_1,nk_2)$
or $\www{k}_1=nk_1$,
and where all other invariants coincide with the old invariants.

\item[$(iii)$] The following table says
which of the $(TE)$-structures in the 1st to 7th cases
in the list in Theorem~\ref{t6.3}
are not induced by other such $(TE)$-structures:
\[
\def\arraystretch{1.3}
\begin{tabular}{l|c}
\hline
Generic type and invariants & Not induced if
\\ \hline
(Sem)$\colon\ k_2-k_1>0$ odd & $\gcd(k_1,k_2)=1$
\\
(Sem)$\colon\ k_2-k_1\in 2\N$, $\zeta= 0$ & $\gcd(k_1,k_2)=1$
\\
(Sem)$\colon\ k_2-k_1\in 2\N$, $\zeta\neq 0$ &
$\gcd\big(k_1,\frac{k_1+k_2}{2}\big)=1$
\\
(Sem)$\colon\ k_2=k_1$ &$ k_2=k_1=1 $
\\ \hline
(Bra) & $\gcd(k_1,k_2)=1$
\\ \hline
(Reg)$\colon\ N^{\rm mon}=0$ &$ k_1=1$
\\
(Reg)$\colon\ N^{\rm mon}\neq$ 0 & $k_1=1$
\\ \hline
(Log) & $k_1=1$
\\
\hline
\end{tabular}
\]

\item[$(iv)$] In the 8th and 9th cases, the $(TE)$-structure is
induced by its restriction over $t^0$ via the map
$\varphi\colon \big(M,t^0\big)\to\big\{t^0\big\}$, so it is constant
along $M$.

\item[$(v)$] The formulas~\eqref{6.28} and~\eqref{6.29}
confirm part $(ii)$ for the 5th and 7th cases in Theorem~\ref{t6.3}.
Formula~\eqref{6.30} confirms part $(iv)$ in the
8th case in Theorem~\ref{t6.3}.
Analogous statements to part $(ii)$ and part $(iv)$
hold for the cases in Theorem~\ref{t6.7}.
They follow from the formulas~\eqref{6.49},~\eqref{6.50}
and~\eqref{6.51} there,
which are analogous to~\eqref{6.28},~\eqref{6.29}
and~\eqref{6.30}.
See Remarks~\ref{t6.8}$(ii)$ and $(iii)$.
\end{enumerate}
\end{Remarks}

\begin{Remark}\label{t6.6}
In the 2nd and 4th cases in the list in Theorem~\ref{t6.3},
another $\C\{t,z\}$-basis $\uuuu{\www v}$
of~$\OO(H)_0$ with nice matrices $\www A$ and $\www B$
is
\begin{gather*}%\label{6.33}
\uuuu{\www v}=\uuuu{v}\cdot T\quad\text{with}\quad
T=C_1+\frac{a_3^{(0)}}{a_2^{(0)}}E
=\begin{cases}
C_1+\zeta t^{(k_2-k_1)/2}E & \text{in the 2nd case,}\\
C_1+t^{k_2}E & \text{in the 4th case.}
\end{cases}
\end{gather*}
In the 2nd case
\begin{gather*}%\label{6.34}
\www A= -\gamma^{-1}\big(t^{k_1-1}C_2 + t^{k_2-1}E\big) + z\frac{k_2-k_1}{2}\zeta t^{(k_2-k_1-2)/2}E,
\\
\www B= z\rho^{(1)} C_1 -\gamma t\www A +
z b_3^{(1)} D.\nonumber
\end{gather*}
In the 4th case
\begin{gather*}%\label{6.35}
\www A= -\gamma^{-1}t^{k_1-1}C_2 + zk_2t^{k_2-1}E,\\
\www B= z\rho^{(1)} C_1 -\gamma t\www A +z b_3^{(1)} D.\nonumber
\end{gather*}
These matrices are not in Birkhoff normal form.
The basis $\uuuu{\www v}$ is still a global basis
of the pure $(TLE)$-structure, but
the sections $\www v_j|_{\{\infty\}\times M}$ are not
flat with respect to the residual connection along
$\{\infty\}\times M$.
\end{Remark}

\begin{proof}[Proof of Theorem~\ref{t6.3}]
Consider any pure $(TLE)$-structure over $\big(M,t^0\big)=(\C,0)$
with trace free pole part and with logarithmic restriction
to $t^0$. Choose a global basis $\uuuu{v}$ of $H$
whose restriction to
$\{\infty\}\times \big(M,t^0\big)$ is flat with
respect to the residual connection along
$\{\infty\}\times \big(M,t^0\big)$.
Its matrices $A$ and $B$ with
$z\nnn_{\paa_t}\uuuu{v}=\uuuu{v}A$ and
$z^2\nnn_{\paa_z}\uuuu{v}=\uuuu{v}B$
have because of~\eqref{3.18} (in Lemma~\ref{t3.11})
the shape~\eqref{6.19} and
\begin{gather*}%\label{6.36}
B=z\rho^{(1)}C_1+\big(b_2^{(0)}+zb_2^{(1)}\big)C_2 +
\big(b_3^{(0)}+zb_3^{(1)}\big)D+\big(b_4^{(0)}+zb_4^{(1)}\big)E
\end{gather*}
with $a_j^{(0)}\in\C\{t\}$, $b_j^{(0)}\in t\C\{t\}$,
$b_j^{(1)}\in\C$. They satisfy the relations~\eqref{3.28}
(and, equivalently,~\eqref{6.5}--\eqref{6.7}),
so, more explicitly,
\begin{gather}\label{6.37}
a_i^{(0)}b_j^{(0)}=a_j^{(0)}b_i^{(0)}\qquad\text{for}\quad
(i,j)\in\{(2,3),(2,4),(3,4)\},
\\[1ex]
\begin{pmatrix}-\paa_tb_2^{(0)} \\ -\paa_tb_3^{(0)} \\
-\paa_tb_4^{(0)} \end{pmatrix}
= \begin{pmatrix}1+2b_3^{(1)} & -2b_2^{(1)} & 0 \\
-b_4^{(1)} & 1 & b_2^{(1)} \\
0 & 2b_4^{(1)} & 1-2b_3^{(1)} \end{pmatrix}
\begin{pmatrix} a_2^{(0)} \\ a_3^{(0)} \\ a_4^{(0)}
\end{pmatrix}\!. \label{6.38}
\end{gather}

First we consider the cases when all $a_j^{(0)}$ are 0.
Then also all $b_j^{(0)}$ are 0, because of
$b_j^{(0)}\in t\C\{t\}$ and because of the differential
equations~\eqref{6.38}.
Then $B=zB^{(1)}$, and it is clear that this matrix
can be brought to the form $B=z\rho^{(1)}C_1+ z\alpha_3 D$
or $B=z\rho^{(1)}C_1+zC_2$
by a constant base change.
The number $\alpha_3\in\C$ can be replaced by $-\alpha_3$, so
$\alpha_3\in\R_{\geq 0}\cup\H$ is unique.
This gives the last two cases in the list.
There the generic type is (Log).

For the rest of the proof, we consider the cases
when at least one $a_j^{(0)}$ is not 0.
Then~\eqref{6.37} says
\begin{gather}\label{6.39}
\big(b_2^{(0)},b_3^{(0)},b_4^{(0)}\big)=\frac{b_j^{(0)}}{a_j^{(0)}}
\cdot \big(a_2^{(0)},a_3^{(0)},a_4^{(0)}\big),\qquad
\text{so} \quad B^{(0)} = \frac{b_j^{(0)}}{a_j^{(0)}}\cdot A^{(0)}.
\end{gather}
If $b_j^{(0)}=0$ then $b_2^{(0)}=b_3^{(0)}=b_4^{(0)}=0$,
and the generic type is (Log).
If $b_j^{(0)}\neq 0$, then the generic type is
(Sem) or (Bra) or (Reg).
Consider for a moment the cases when the residue endomorphism
of the logarithmic pole at $z=0$ of the $(TE)$-structure
over $t^0$ is semisimple. By Theorem~\ref{t6.1}, these cases
include the generic types (Sem) and (Bra).
Then a linear base change gives $b_2^{(1)}=b_4^{(1)}=0$,
so that the $3\times 3$-matrix in~\eqref{6.38} is diagonal.
Then denote $\www\beta_j:=\deg_t b_j^{(0)}\in\N$.
A coordinate change in $t$ leads to
$b_j^{(0)}=b_{j,\www\beta_j}^{(0)}\cdot t^{\www\beta_j}$.
The differential equation in~\eqref{6.38} leads to
$a_j^{(0)}=a_{j,\www\beta_j-1}^{(0)}\cdot t^{\www\beta_j-1}$,
and to $b_j^{(0)}/a_j^{(0)}=-\gamma t$ for some $\gamma\in\C^*$.
Define
\begin{gather}\label{6.40}
\beta_2=\frac{1+2b_3^{(1)}}{\gamma},\qquad
\beta_3=\frac{1}{\gamma},\qquad
\beta_4=\frac{1-2b_3^{(1)}}{\gamma}.
\end{gather}
Now~\eqref{6.39} and the differential equations in~\eqref{6.38}
show $\www\beta_j=\beta_j$ and
\begin{gather}
b_2^{(0)}=0\qquad\text{or}\qquad\big(\beta_2\in\N\text{ and }
b_2^{(0)}=b_{2,\beta_2}^{(0)}\cdot t^{\beta_2}\big)\neq 0,
\nonumber
\\
b_3^{(0)}=0\qquad\text{or}\qquad\big(\beta_3\in\N\text{ and }
b_3^{(0)}=b_{3,\beta_3}^{(0)}\cdot t^{\beta_3}\big)\neq 0,
\label{6.41}
\\
b_4^{(0)}=0\qquad\text{or}\qquad\big(\beta_4\in\N\text{ and }
b_4^{(0)}=b_{4,\beta_4}^{(0)}\cdot t^{\beta_4}\big)\neq 0.
\nonumber
\end{gather}

Now we discuss the generic types (Sem), (Bra), (Reg)
and (Log) separately.

\medskip\noindent
{\it Generic type $($Sem$)$}.
By Theorem~\ref{t6.2}, we can choose the basis $\uuuu{v}$
such that $b_2^{(1)}=b_4^{(1)}=0$. In~the cases $k_2>k_1$, by Theorem~\ref{t6.2},
$b_3^{(1)}$ is up to the sign unique, and we can choose it
to be
\begin{gather*}
b_3^{(1)}=\frac{k_1-k_2}{2(k_1+k_2)}\in\bigg({-}\frac{1}{2},0\bigg)\cap\Q
\end{gather*}
(possibly by exchanging $v_1$ and $v_2$). In~the cases $k_2=k_1$
we write $\alpha_3:=b_3^{(1)}\in\C$.
We can change its sign and get a unique
$\alpha_3\in\R_{\geq 0}\cup\H$.
We make a suitable coordinate change in $t$ and obtain
$b_2^{(0)}$, $b_3^{(0)}$, $b_4^{(0)}$ as in~\eqref{6.41}.
The relations~\eqref{6.8} and~\eqref{6.9} still hold.
Equation~\eqref{6.9} implies
\begin{gather*}
\big(b_2^{(0)}b_4^{(0)}\neq 0,\ \beta_2+\beta_4=k_1+k_2\big)
\qquad\text{or}\qquad
\big(b_3^{(0)}\neq 0,\ 2\beta_3=k_1+k_2\big)
\end{gather*}
(or both). In~both cases~\eqref{6.40} gives
\begin{gather*}%\label{6.42}
\gamma=\frac{2}{k_1+k_2}.
\end{gather*}
Thus
\begin{gather}\label{6.43}
(\beta_2,\beta_3,\beta_4)=
\begin{cases}
\big(k_1,\frac{k_1+k_2}{2},k_2\big)&\text{if}\quad k_2>k_1,
\\
(k_1(1+2\alpha_3),k_1,k_1(1-2\alpha_3)&\text{if}\quad k_2=k_1.\end{cases}
\end{gather}

In the cases $k_2>k_1$, we have $\beta_2<\beta_3<\beta_4$.
Then~\eqref{6.41} and the relation~\eqref{6.8} imply
$b_2^{(0)}\neq 0$, so $b_{2,\beta_2}^{(0)}\neq 0$.
The nonvanishing $\delta^{(0)}\neq 0 $ implies
$b_{2}^{(0)}b_{4}^{(0)}+\big(b_{3}^{(0)}\big)^2
\neq 0$.

In the case $k_2-k_1>0$ even, a linear coordinate change in $t$
and a diagonal base change allow to reduce the triple
$\big(b_{2,\beta_2}^{(0)},b_{3,\beta_3}^{(0)},b_{4,\beta_4}^{(0)}\big)
\in\C^3$ to a triple $\big(1,\zeta,1-\zeta^2\big)$
with $\zeta\in\C$ unique.

In the case $k_2-k_1>0$ odd, we have $\beta_3\notin\N$,
so $b_3^{(0)}=0$,
and a linear coordinate change in $t$ and a diagonal base
change allow to reduce the pair
$\big(b_{2,\beta_2}^{(0)},b_{4,\beta_4}^{(0)}\big)
\in(\C^*)^2$ to the pair $(1,1)$.

In the cases $k_2=k_1$ and $\alpha_3\neq 0$,~\eqref{6.8}
and~\eqref{6.43} imply $b_2^{(0)}=b_4^{(0)}=0$.
Then a linear coordinate change in $t$ allows to reduce
$b_{3,\beta_3}^{(0)}$ to the value $1$.

In the cases $k_2=k_1$ and $\alpha_3=0$,
as in the proof of Theorem~\ref{t6.2}$(a)$ $(iii)$,
a base change with constant coefficients leads to
$b_2^{(0)}=b_4^{(0)}=0$. Then a linear coordinate change
in $t$ allows to reduce $b_{3,\beta_3}^{(0)}$ to the value
$1$. In~all cases of generic type (Sem), we obtain the
normal forms in the list in Theorem~\ref{t6.3}.

\medskip\noindent
{\it Generic type $($Bra$)$.}
By Theorem~\ref{t6.2}, we can choose the basis $\uuuu{v}$
such that $b_2^{(1)}=b_4^{(1)}=0$,
and~$b_3^{(1)}$ is up to the sign unique.
We can choose it to be
\begin{gather*}
b_3^{(1)}=\frac{-k_2}{2(k_1+k_2)}\in
\Bigl(-\frac{1}{2},0\Bigr)\cap\Q
\end{gather*}
(possibly by exchanging $v_1$ and $v_2$).
We make a suitable coordinate change in $t$ and obtain
$b_2^{(0)}$, $b_3^{(0)}$, $b_4^{(0)}$ as in~\eqref{6.41}.
The nonvanishing $\delta^{(1)}\neq 0$
and $\deg_t\delta^{(1)}=k_1+k_2$ say
\begin{gather*}
0\neq \delta^{(1)}=-2b_3^{(1)}b_3^{(0)},
\end{gather*}
so $b_3^{(0)}\neq 0$ and
\begin{gather*}%\label{6.44}
\frac1\gamma=\beta_3=\deg b_3^{(0)}=\deg\delta^{(1)}=k_1+k_2,\qquad
\gamma=\frac{1}{k_1+k_2},
\\
(\beta_2,\beta_3,\beta_4)=(k_1,k_1+k_2,k_1+2k_2).%\label{6.45}
\end{gather*}
The relation~\eqref{6.8} still holds, and it implies
$b_2^{(0)}\neq 0$. The vanishing $\delta^{(0)}=0$ says
$b_{2,\beta_2}^{(0)}b_{4,\beta_4}^{(0)}+\big(b_{3,\beta_3}^{(0)}\big)^2
= 0$. Together with $b_{2,\beta_2}^{(0)}\neq 0$ and
$b_{3,\beta_3}^{(0)}\neq 0$ it implies
$b_{4,\beta_4}^{(0)}\neq 0$.
A linear coordinate change in $t$
and a diagonal base change allow to reduce the triple
$\big(b_{2,\beta_2}^{(0)},b_{3,\beta_3}^{(0)},b_{4,\beta_4}^{(0)}\big)
\in(\C^*)^3$ to the triple $(1,1,-1)$. We obtain the
normal form in the list in Theorem~\ref{t6.3}.

\medskip\noindent
{\it Generic type $($Reg$)$.}
First we consider the case when the residue endomorphism
of the logarithmic pole at $z=0$ of the $(TE)$-structure
over $t^0$ is semisimple. Then a linear base change
gives $b_2^{(1)}=b_4^{(1)}=0$. And a suitable coordinate
change in $t$ gives $b_2^{(0)}$, $b_3^{(0)}$, $b_4^{(0)}$
as in~\eqref{6.41}.

First consider the case $b_3^{(1)}\neq 0$. Then
the vanishing $0=\delta^{(1)}=-2b_3^{(1)}b_3^{(0)}$
says $b_3^{(0)}=0$. Now the vanishing
$0=\delta^{(0)}=-b_2^{(0)}b_4^{(0)}$ says that either
$b_2^{(0)}=0$ or $b_4^{(0)}=0$. Both together cannot be
0 as the generic type is (Reg) and not (Log).
After possibly exchanging $v_1$ and $v_2$, we suppose
$b_2^{(0)}\neq 0$, $b_4^{(0)}=0$. Now $k_1=\beta_2$.
Write $\alpha_4:=2b_3^{(1)}\in\C$.
By~\eqref{6.40},
\begin{eqnarray}\label{6.46}
\gamma=\frac{1+\alpha_4}{k_1}.
\end{eqnarray}
A diagonal base change allows to reduce $b_{2,\beta_2}^{(0)}$
to $1$.

Now consider the case $b_3^{(1)}=0$.
Then $\beta_2=\beta_3=\beta_4=1/\gamma$, and this is
equal to $k_1$, as $\beta_j\in\N$ for at least one $j$.
Write $\alpha_4:=b_3^{(1)}=0$. Then
\eqref{6.46} still holds.
By a base change with constant coefficients, we can obtain
$b_2^{(0)}=t^{k_1}$ and $b_3^{(0)}=0$.
The vanishing $0=\delta^{(0)}=-b_2^{(0)}b_4^{(0)}$
tells $b_4^{(0)}=0$.
For $\alpha_4\neq 0$ as well as for $\alpha_4=0$,
we obtain the normal form in the 5th case in the list
in Theorem~\ref{t6.3}.

Finally consider the case when the residue endomorphism
of the logarithmic pole at $z=0$ of the $(TE)$-structure
over $t^0$ has a $2\times 2$ Jordan block.
A base change with constant coefficients leads to
$b_3^{(1)}=b_4^{(1)}=0$ and $b_2^{(1)}=1$.
We will lead the assumption $b_4^{(0)}\neq 0$ as well as the
assumption $b_4^{(0)}=0,b_3^{(0)}\neq 0$ to a contradiction.

{\samepage
Assume $b_4^{(0)}\neq 0$.
Denote $\beta_4:=\deg_t b_4^{(0)}\in\N$.
A coordinate change in $t$ leads to
$b_4^{(0)}=\frac{-1}{\beta_4}t^{\beta_4}$.
The differential equation in~\eqref{6.38} for $b_4^{(0)}$
gives $a_4^{(0)}=t^{\beta_4-1}$.
Now~\eqref{6.39} gives $b_3^{(0)}=\frac{-1}{\beta_4}ta_3^{(0)}$.
The differential equation
in~\eqref{6.38} for $b_3^{(0)}$ becomes
\begin{gather*}
\paa_t \big(ta_3^{(0)}\big)= \beta_4 a_3^{(0)}+\beta_4 t^{\beta_4-1}.
\end{gather*}
This equation has no solution in $\C\{t\}$, a contradiction.

}

Assume $b_4^{(0)}=0$, $b_3^{(0)}\neq 0$.
The same arguments as for the case $b_4^{(0)}\neq 0$
give a contradiction if we replace
$\big(b_4^{(0)},a_4^{(0)},b_3^{(0)},a_3^{(0)}\big)$ by
$\big(b_3^{(0)},a_3^{(0)},b_2^{(0)},a_2^{(0)}\big)$.

Therefore $b_4^{(0)}=0$, $b_3^{(0)}=0$, $b_2^{(0)}\neq 0$.
Now $k_1=\deg_t b_2^{(0)}$.
A coordinate change in $t$ leads to
$b_2^{(0)}=t^{k_1}$. The differential equations~\eqref{6.38} gives
$a_4^{(0)}=a_3^{(0)}=0$, $a_2^{(0)}=-k_1t^{k_1-1}$.
We obtain the normal form in the 6th case in the list
in Theorem~\ref{t6.3}.

\medskip\noindent
{\it Generic type $($Log$)$.}
Now $b_2^{(0)}=b_3^{(0)}=b_4^{(0)}=0$.
The cases when all $a_i^{(0)}=0$, were consi\-de\-red above.
We assume now $a_j^{(0)}\neq 0$ for some $j\in\{2,3,4\}$.
The equations~\eqref{6.38} become a~homogeneous system
of linear equations with a nontrivial solution.
Therefore the determinant of the $3\times 3$-matrix in
\eqref{6.38} vanishes. It~is $1-4\big(b_3^{(1)}\big)^2-4b_2^{(1)}b_4^{(1)}$.
Its vanishing tells $\det \big(B^{(1)}-z\rho^{(1)}C_1\big)=\frac{-1}{4}$.
As $\tr\big(B^{(1)}-z\rho^{(1)}C_1\big)=0$, this matrix has the
eigenvalues $\pm\frac{1}{2}$.
Therefore a~linear base change gives
\begin{gather*}
b_2^{(1)}=b_4^{(1)}=0,\qquad b_3^{(1)}=-\frac{1}{2}.
\end{gather*}
Now the system of equations~\eqref{6.38} gives
$a_3^{(0)}=a_4^{(0)}=0$, whereas $a_2^{(0)}$ is
arbitrary in $\C\{t\}\setminus\{0\}$.
Denote $k_1:=1+\deg_t a_2^{(0)}\in\N$. A coordinate change
in $t$ leads to $a_2^{(0)}=k_1t^{k_1-1}$.
We obtain the normal form in the seventh case in the list
in Theorem~\ref{t6.3}.
\end{proof}

\subsection[Generically regular singular $(TE)$-structures
over $(\C,0)$ with logarithmic restriction over $t^0=0$
and not semisimple monodromy]
{Generically regular singular $\boldsymbol{(TE)}$-structures
over $\boldsymbol{(\C,0)}$ \\with logarithmic restriction over $\boldsymbol{t^0=0}$
and not semisimple monodromy}\label{c6.3}

The only 1-parameter unfoldings with trace free pole part
of logarithmic $(TE)$-structures over a point,
which are not covered by Theorem~\ref{t6.3}, have
generic type (Reg) or (Log) and not semisimple monodromy.
This follows from Theorem~\ref{t6.2}
and Theorem~\ref{t3.20}$(a)$.
These $(TE)$-structures are classified in Theorem~\ref{t6.7}. Some of them are in the 6th or 9th case
in Theorem~\ref{t6.3}, but most are not.

\begin{Theorem}\label{t6.7}
Consider a rank $2$ $(TE)$-structure $\big(H\to\C\times \big(M,t^0\big),\nnn\big)$
over $\big(M,t^0\big)=(\C,0)$ which is generically regular singular
$($so of generic type $($Reg$)$ or $($Log$))$, which has trace free
pole part, whose restriction over $t^0$ is logarithmic,
and whose monodromy has a $2\times 2$ Jordan block.
Associate to it the data in Definition~$\ref{t3.18}$:
$H':=H|_{\C\times (M,t^0)}$, $M^{\rm mon}$, $N^{\rm mon}$,
$\Eig(M^{\rm mon})=\{\lambda\}$, $H^\infty$,
$C^\alpha$ for $\alpha\in\C$ with ${\rm e}^{-2\pi {\rm i}\alpha}=\lambda$.

The leading exponents of the $(TE)$-structures over $t\neq 0$
come from Theorem~$\ref{4.15}(b)$ if the generic type is
$($Reg$)$ and from Theorem~$\ref{t4.20}(b)$ if the generic type
is $($Log$)$. In~both cases the leading exponents are independent
of $t$ and are still called $\alpha_1$ and $\alpha_2$.
Recall $\alpha_1-\alpha_2\in\N_0$.
The leading exponents of the logarithmic $(TE)$-structure
over $t^0=0$ from Theorem~$\ref{t4.20}(b)$ are now called
$\alpha_1^0$ and $\alpha_2^0$.
Recall $\alpha_1^0-\alpha_2^0\in\N_0$.

Precisely one of the three cases $(I)$, $(II)$ and $(III)$
in the following table holds:
\[
\def\arraystretch{1.5}
\begin{tabular}{l|l|l|l}
\hline
case $(I)$ & $\alpha_1^0=\alpha_1$ &$ \alpha_2^0=\alpha_2+1$ &thus $\alpha_1-\alpha_2\in\N$
\\
case $(II)$ & $\alpha_1^0=\alpha_1+1$ & $\alpha_2^0=\alpha_2$
\\
case $(III)$ & $\alpha_1^0=\alpha_1$ & $\alpha_2^0=\alpha_2$
\\
\hline
\end{tabular}
\]
Choose any section
$s_1\in C^{\alpha_1}\setminus \ker(\nnn_{z\paa_z}-\alpha_1\colon
C^{\alpha_1}\to C^{\alpha_1})$. It~determines uniquely
a section
$s_2\in \ker(\nnn_{z\paa_z}-\alpha_2\colon C^{\alpha_2}\to
C^{\alpha_2})\setminus\{0\}$
with
\begin{gather}\label{6.48}
(\nnn_{z\paa_z}-\alpha_1)(s_1)=z^{\alpha_1-\alpha_2}s_2.
\end{gather}
Then
\begin{gather}
\OO(H)_{(0,0)}=\C\{t,z\}(s_1+fs_2)\oplus \C\{t,z\}zs_2\ \
\text{for some } f\in t\C\{t\}\setminus\{0\}
\text{ in case $(I)$,}\label{6.49}
\\
\OO(H)_{(0,0)}= \C\{t,z\}(s_2+fs_1)\oplus \C\{t,z\}zs_1\ \
\text{for some }f\in t\C\{t\}\setminus\{0\}
\text{ in case $(II)$,}\!\!\label{6.50}
\\
\OO(H)_{(0,0)}=\C\{t,z\}s_1 \oplus \C\{t,z\}s_2\ \
\text{in case $(III)$.}\label{6.51}
\end{gather}
The function $f$ in the cases~\eqref{6.49} and~\eqref{6.50}
is independent of the choice of $s_1$, so it is an
invariant of the gauge equivalence class of the
$(TE)$-structure.
\end{Theorem}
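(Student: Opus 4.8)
The plan is to carry out, in the one-parameter family, the elementary-section analysis behind Theorems~\ref{t4.17}$(b)$ and~\ref{t4.20}$(b)$, and to read off how the lattice and the leading exponents jump as $t$ specializes to $0$. First I would set up the family of elementary sections. Since the restriction over $t^0$ is logarithmic and the restrictions over $t\neq 0$ are of type $($Reg$)$ or $($Log$)$, every restriction is regular singular, and regular singularity is preserved in families; hence the whole $(TE)$-structure over $\big(M,t^0\big)$ is regular singular. The monodromy around $z=0$ is constant in $t$ by flatness in the $t$-direction, so $H^\infty$, $N^{\rm mon}$ and the spaces $C^\alpha$ of Definition~\ref{t3.21}, taken to consist of sections flat in $t$, are global over $\big(M,t^0\big)$. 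As the monodromy is a single $2\times 2$ Jordan block, $\Eig(M^{\rm mon})=\{\lambda\}$, each $C^\alpha$ is two-dimensional and $(\nnn_{z\paa_z}-\alpha)$ acts on it as a nilpotent operator of rank one. This legitimizes the choice of $s_1\in C^{\alpha_1}\setminus\ker(\nnn_{z\paa_z}-\alpha_1)$ and the determination of $s_2$ by~\eqref{6.48} exactly as over a point, and $\{s_1,s_2\}$ is a $\C\{t,z\}\big[z^{-1}\big]$-basis of the meromorphic extension of $\OO(H)_{(0,0)}$.

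Next I would determine the lattice $\OO(H)_{(0,0)}$. Taking a $\C\{t,z\}$-basis and expanding it in $\{s_1,s_2\}$, I would run the degree-in-$z$ reduction from the proof of Theorem~\ref{t4.17}, now with coefficients in $\C\{t\}$: subtracting $\C\{t,z\}$-multiples of the section that carries the extra factor $z$ reduces the coupling coefficient modulo $z\C\{t,z\}$ to a single function $f$ of $t$. This produces exactly three normal shapes, according to whether the coupling is identically $0$ or not and, in the latter case, according to which of the two sections carries the surviving factor $z$; these are~\eqref{6.49},~\eqref{6.50} and~\eqref{6.51}. The logarithmic condition at $t=0$, evaluated through~\eqref{6.48} and the factor $z^{\alpha_1-\alpha_2}$, forces $f(0)=0$, i.e.\ $f\in t\C\{t\}$, since a nonzero constant term would shift the special exponents wrongly; and $f\not\equiv 0$ in the first two shapes because a vanishing $f$ gives a logarithmic pole for all $t$, placing us in the constant shape~\eqref{6.51}, where the lattice is spanned by the $t$-flat sections $s_1,s_2$ and the family is therefore constant along $M$.

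Reading off the exponents then gives the trichotomy. From the residue matrices in Remarks~\ref{t4.19} and~\ref{t4.22} one has $\rho^{(1)}=\frac12\big(\alpha_1^0+\alpha_2^0\big)$, while $\rho^{(1)}=\frac12(\alpha_1+\alpha_2)$ in generic type $($Log$)$ and $\rho^{(1)}=\frac12(\alpha_1+\alpha_2+1)$ in generic type $($Reg$)$, so the total shift $\alpha_1^0+\alpha_2^0-(\alpha_1+\alpha_2)$ is $0$ in~\eqref{6.51} and $1$ in~\eqref{6.49} and~\eqref{6.50}; computing the individual exponents of the three lattices shows the shift falls on $\alpha_2$ in~\eqref{6.49} (case $(I)$, forcing $\alpha_1-\alpha_2\in\N$ by $\alpha_1^0\ge\alpha_2^0$), on $\alpha_1$ in~\eqref{6.50} (case $(II)$), and nowhere in~\eqref{6.51} (case $(III)$). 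For independence of $f$, the residual freedom in $s_1$ is $s_1\mapsto c_1 s_1+c_2 z^{\alpha_1-\alpha_2}s_2$ with $c_1\in\C^*$ and $c_2\in\C$, which rescales $s_2\mapsto c_1 s_2$ by~\eqref{6.48}; rewriting the generator and reducing modulo the $z$-section changes $f$ only by an element of $z^{\alpha_1-\alpha_2}\C\{t,z\}$, which vanishes since $f$ is $z$-free and $\alpha_1-\alpha_2\in\N$ in case $(I)$.

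The main obstacle is precisely this family reduction: controlling the $z$-orders of the $t$-dependent coefficients so as to prove that only one-step exponent jumps occur and that the coupling reduces to a genuine function of $t$ alone. The resonant subcase $\alpha_1=\alpha_2$ of case $(II)$, where $z^{\alpha_1-\alpha_2}=1$ mixes $s_1$ and $s_2$ already at order $z^0$, needs separate treatment; there the invariant content of $f$ is its vanishing order rather than $f$ itself. Finally I would record the overlap with Theorem~\ref{t6.3}: its $6$th and $9$th cases are exactly the members of the present family that admit a Birkhoff normal form.
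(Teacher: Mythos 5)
Your outline follows the paper's own strategy in its broad lines (global spaces $C^\alpha$ of elementary sections flat in $t$, expansion of the lattice $\OO(H)_{(0,0)}$ in a fixed pair of elementary sections, reduction to a normal shape, then reading off the exponents), but it has a genuine gap at exactly the step you yourself call ``the main obstacle'': the assertion that the degree-in-$z$ reduction ``produces exactly three normal shapes'' \emph{is} the theorem, and you give no argument for it. Expanding a $\C\{t,z\}$-basis of $\OO(H)_{(0,0)}$ in $(s_1,s_2)$ yields a $2\times 2$ transition matrix with entries in $\C\{t,z\}\big[z^{-1}\big]$, and nothing in your reduction excludes shapes in which the coefficients of both $s_1$ and $s_2$ carry negative $z$-degrees with $t$-dependent leading terms, i.e., degenerations corresponding to exponent jumps of more than one step or to couplings that cannot be normalized to a function of $t$ alone. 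In the paper this exclusion is the bulk of the proof: anchoring at the central fiber with sections $s_1^0\in C^{\alpha_1^0}$, $s_2^0\in C^{\alpha_2^0}$ and writing the transition matrix $F$ as in~\eqref{6.57}, the proof distinguishes four cases $\www{\rm (I)}$--$\www{\rm (IV)}$ by the $z$-degrees $k_1,\dots,k_4$ of the entries of $F$; cases $\www{\rm (I)}$--$\www{\rm (III)}$ lead to~\eqref{6.49}--\eqref{6.51}, and the long contradiction argument excluding $\www{\rm (IV)}$ (using $\det F\in\C\{t,z\}^*$, the membership $z\nnn_{\paa_t}v_1\in\OO(H)_{(0,0)}$, and the application of $\nnn_{z\paa_z}$ to a normalized generator with a comparison of the coefficients of $s_2^0$) is precisely the content you defer. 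Without it, neither the trichotomy nor the normal forms are established.

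A secondary point is that your anchoring is mildly circular: you choose $s_1\in C^{\alpha_1}$ for the \emph{generic} exponents and treat their $t$-independence as given, whereas that constancy is part of what the theorem asserts; the paper instead expands in sections adapted to the central fiber, whose exponents $\alpha_1^0,\alpha_2^0$ are honestly defined from the hypothesis, and obtains the constancy over $t\neq 0$ as a by-product of~\eqref{6.49}--\eqref{6.51}. (Your version could be patched without circularity, via Theorem~\ref{t3.23}$(a)$ for generic type (Log) and Malgrange's theorem together with the explicit unfoldings of Lemma~\ref{t5.2} and Remarks~\ref{t7.1},~\ref{t7.2} for generic type (Reg), but as written it is an assumption, not a step.) On the credit side, your caveat about the resonant subcase $\alpha_1=\alpha_2$ of case (II) is correct and in fact sharper than the statement you were asked to prove: there the admissible changes $s_1\mapsto c_1s_1+c_2s_2$, $s_2\mapsto c_1s_2$ send $f$ to $f\big(1-c_1^{-1}c_2f\big)^{-1}$, so $f$ itself is \emph{not} independent of the choice of $s_1$; only its orbit under these substitutions (in particular $\deg_tf$) is invariant. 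The theorem's final sentence glosses over this, and the paper's own proof never addresses the invariance claim, so your separate treatment of this subcase would be a genuine improvement --- but it, too, is announced rather than carried out.
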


Before the proof, some remarks are made.

\begin{Remarks}\label{t6.8}\quad
\begin{enumerate}\itemsep=0pt
\item[$(i)$] Equation~\eqref{6.48} gives
\begin{gather}\label{6.52}
\nnn_{z\paa_z}((s_1,s_2))=
(s_1,s_2)\begin{pmatrix}\alpha_1 & 0 \\ z^{\alpha_1-\alpha_2}
& \alpha_2\end{pmatrix}
=(s_1,s_2)\cdot z^{-1}B,
\end{gather}
with
\begin{gather*}
B=z\frac{\alpha_1+\alpha_2}{2}C_1
+z^{\alpha_1-\alpha_2+1}C_2+z\frac{\alpha_1-\alpha_2}{2}D.
\end{gather*}

\item[$(ii)$] The generic type is (Log) in the case (III).
This $(TE)$-structure is induced by its restriction
over $t^0=0$ via the projection $\varphi\colon \big(M,t^0\big)\to\big\{t^0\big\}$.
The matrices $A$ and $B$ for the basis $\uuuu{v}=(s_1,s_2)$
are $A=0$ and $B$ as in~\eqref{6.52}.

\item[$(iii)$] The generic type is (Reg) in the cases
(I) and (II). In~these cases
the $(TE)$-structure is induced by the special
cases of~\eqref{6.49} respectively~\eqref{6.50} with
$\www f=t$ via the map $\varphi=f\colon (\C,0)\to(\C,0)$.

\item[$(iv)$] The matrices $A$ and $B$ for the basis
$\uuuu{v}=(s_1+fs_2,zs_2)$
in~\eqref{6.49} ($\Rightarrow$ case (I),
$\Rightarrow \alpha_1-\alpha_2\in\N$) are
\begin{gather}\label{6.53}
A=\paa_tf\cdot C_2,\qquad
B=\begin{pmatrix}z\alpha_1 & 0 \\ (\alpha_2-\alpha_1)f
+z^{\alpha_1-\alpha_2} & z(\alpha_2+1)\end{pmatrix}\!.
\end{gather}
The matrices $A$ and $B$ for the basis
$\uuuu{v}=(s_2+fs_1,zs_1)$
in~\eqref{6.50} ($\Rightarrow$ case (II),
$\Rightarrow \alpha_1-\alpha_2\allowbreak\in\N_0$) are
\begin{gather}\label{6.54}
A=\paa_tf\cdot C_2,\qquad
B=\begin{pmatrix}z\alpha_2+z^{\alpha_1-\alpha_2+1}f &
z^{\alpha_1-\alpha_2+2} \\ (\alpha_1-\alpha_2)f
-z^{\alpha_1-\alpha_2}f^2 & z(\alpha_1+1)-z^{\alpha_1-\alpha_2+1}f\end{pmatrix}\!.
\end{gather}

\item[$(v)$] The invariant $k_1\in\N$ from~\eqref{6.1} is here
$k_1=\deg_t f\in\N$ in the case~\eqref{6.49}
and the case (\eqref{6.50} and $\alpha_1-\alpha_2\in\N$). It~is $k_1=2\deg_t f\in 2\N$ in the case
(\eqref{6.50} and $\alpha_1=\alpha_2$).
A~suitable coordinate change in $t$
reduces $f$ to $f=t^{k_1}$ respectively $f=t^{k_1/2}$.

\item[$(vi)$] The overlap of the $(TE)$-structures in Theorem~\ref{t6.3}
and in Theorem~\ref{t6.7} is as follows.

The case~\eqref{6.49} with $\alpha_1=\alpha_2+1$
and $f=-t^{k_1}$
is the 6th case in Theorem~\ref{t6.3}
with $\rho^{(1)}=\alpha_1$.

The case~\eqref{6.51} with $\alpha_1=\alpha_2$
is the 9th case in Theorem~\ref{t6.3}
with $\rho^{(1)}=\alpha_1$.
\end{enumerate}
\end{Remarks}

\begin{proof}[Proof of Theorem~\ref{t6.7}]
Choose any section
$s_1^0\in C^{\alpha_1^0}\setminus \ker\big(\nnn_{z\paa_z}-\alpha_1^0\colon C^{\alpha_1^0}
\to C^{\alpha_1^0}\big)\setminus\{0\}$. It~determines uniquely a section
$s_2^0\in \ker\big(\nnn_{z\paa_z}\colon C^{\alpha_2^0}\to C^{\alpha_2^0}\big)\setminus\{0\}$
with
\begin{gather*}%\label{6.55}
\big(\nnn_{z\paa_z}-\alpha_1^0\big)s_1^0
=z^{\alpha_1^0-\alpha_2^0}s_2^0.
\end{gather*}
Then
\begin{gather*}%\label{6.56}
\OO\big(H|_{\C\times\{t^0\}}\big)_0=\C\{z\}s_1^0|_{\C\times\{t^0\}}
\oplus \C\{z\}s_2^0|_{\C\times\{t^0\}}.
\end{gather*}

{\sloppy
Choose a $\C\{t,z\}$-basis $\uuuu{v}=(v_1,v_2)$ of
$\OO(H)_{(0,0)}$ which extends this $\C\{z\}$-basis of
$\OO\big(H|_{\C\times\{t^0\}}\big)_0$. It~has the shape
\begin{gather}
\uuuu{v}= \big(s_1^0,s_2^0\big)\cdot F\qquad \text{with}\quad
F=\begin{pmatrix} f_1 & f_2\\f_3&f_4\end{pmatrix}
\qquad\text{and}\nonumber
\\
f_1,f_4\in\C\{t,z\}\big[z^{-1}\big],\qquad
f_1(z,0)=f_4(z,0)=1,\qquad
f_2,f_3\in t\C\{t,z\}\big[z^{-1}\big].\label{6.57}
\end{gather}}\noindent
We write $f_j=\sum_{k\geq \deg_zf_j}f_j^{(k)}z^k$ with
$f_j^{(k)}=\sum_{l\geq 0}f_{j,l}^{(k)}t^l\in\C\{t\}$
and $f_{j,l}^{(k)}\in\C$.
Also we write
$\det F=\sum_{k\geq \deg_z\det F}(\det F)^{(k)}z^k$
with $(\det F)^{(k)}\in\C\{t\}$.

A meromorphic function $g\in\C\{t,z\}\big[z^{-1}\big]$ on a
neighborhood $U\subset \C\times M$ which is holomorphic
and not vanishing on $(U\setminus\{0\}\times M)\cup\{(0,0)\}$
is in $\C\{t,z\}^*$.
This and the facts that $\uuuu{v}$ and $\big(s_1^0,s_2^0\big)$ are bases
of $H|_{U\setminus\{0\}\times M}$ for some neighborhood
$U\subset\C\times M$ of $(0,0)$ and that
$\uuuu{v}|_{\C\times\{t^0\}}=\big(s_1^0,s_2^0\big)$ imply
\begin{gather*}
(\det F)\in\C\{t,z\}^*,\qquad \text{so especially}\quad
(\det F)^{(k)}=0\quad\text{ for}\quad k<0.\label{6.58}
\end{gather*}
Write $k_j:=\deg_z f_j\in\Z\cup\{\infty\}$
($\infty$ if $f_j=0$). Recall~\eqref{6.57}. It~implies
$f_1^{(0)},f_4^{(0)}\in\C\{t\}^*$ and
$f_1^{(k)},f_4^{(k)}\in t\C\{t\}$ for $k\neq 0$
and $f_2^{(k)},f_3^{(k)}\in t\C\{t\}$ for all $k$.
Especially $k_1\leq 0$ and $k_4\leq 0$.
We~distinguish four cases. Precisely one of them holds:
\begin{align*}
&\text{Case }\www{\rm(I)}\colon&&\hspace{-60mm}
0=k_1\leq k_2,\qquad\ 0>\min(k_3,k_4),
\\%\label{6.59}\\
&\text{Case }\www{\rm(II)}\colon&&\hspace{-60mm}
0=k_4\leq k_3,\qquad\ 0>\min(k_1,k_2),
\\%\label{6.60}\\
&\text{Case }\www{\rm(III)}\colon&&\hspace{-60mm}
0=k_1=k_4,\qquad\ 0\leq k_2,\quad 0\leq k_3,
\\%\label{6.61}\\
&\text{Case }\www{\rm(IV)}\colon&&\hspace{-60mm}
0>\min(k_1,k_2),\ \, \ 0>\min(k_3,k_4).
%f_1^{(k_1)}f_4^{(k_4)}-f_2^{(k_2)}f_3^{(k_3)}=0.
%\label{6.62}
\end{align*}
We will show: Case $\www{\rm (I)}$ leads to~\eqref{6.49}
and case (I),
case $\www{\rm (II)}$ leads to~\eqref{6.50} and case (II),
case~$\www{\rm (III)}$ leads to~\eqref{6.51} and case~(III),
and case $\www{\rm (IV)}$ is impossible.

\medskip\noindent
{\it Case $\www{(III)}$}:
Then $F\in {\rm GL}_2(\C\{t,z\})$ and a base change leads to the
new basis $\uuuu{\www v}=\big(s_1^0,s_2^0\big)$. With
\begin{gather*}%\label{6.63}
(\alpha_1,\alpha_2,s_1,s_2)=\big(\alpha_1^0,\alpha_2^0,s_1^0,s_2^0\big),
\end{gather*}
this gives~\eqref{6.51} and case (III).

\medskip\noindent
{\it Case $\www{(I)}$}: Then $f_1\in\C\{t,z\}^*$, and
a base change leads to a new basis
$\uuuu{v}^{[1]}=\big(s_1^0,s_2^0\big)\cdot F^{[1]}$ with
\begin{gather*}
f_1^{[1]}=1,\qquad f_2^{[1]}=0,
\qquad f_4^{[1]}=\det F^{[1]}\in \C\{t,z\}^*,
\qquad f_3^{[1]}\in t\C\{t,z\}\big[z^{-1}\big].
\end{gather*}
As $k_4^{[1]}=0$, we have $k_3^{[1]}<0$.
A base change leads to a new basis
$\uuuu{v}^{[2]}=\big(s_1^0,s_2^0\big)\cdot F^{[2]}$ with
\begin{gather*}
F^{[2]}=C_1 + f_3^{[2]}C_2, \qquad\text{with}\quad
f_3^{[2]}\in tz^{-1}\C\{t\}\big[z^{-1}\big]\setminus\{0\}.
\end{gather*}
The covariant derivative
$z\nnn_{\paa_t}v_1^{[2]}=z\paa_t f_3^{[2]}\cdot v_2^{[2]}$
must be in $\OO(H)_0$. This shows
$f_3^{[2]}\in tz^{-1}\C\{t\}\allowbreak\setminus\{0\}$. With
{\samepage\begin{gather*}%\label{6.64}
(\alpha_1,\alpha_2,s_1,s_2,f)=\big(\alpha_1^0,\alpha_2^0-1,s_1^0,z^{-1}s_2^0,zf_3^{[2]}\big),
\end{gather*}
this gives~\eqref{6.49} and case (II).

}

\medskip\noindent
{\it Case $\www{(II)}$}: Then $f_4\in\C\{t,z\}^*$, and
a base change leads to a new basis
$\uuuu{v}^{[1]}=\big(s_1^0,s_2^0\big)\cdot F^{[1]}$ with
\begin{gather*}
f_4^{[1]}=1,\qquad f_3^{[1]}=0,
\qquad f_1^{[1]}=\det F^{[1]}\in \C\{t,z\}^*,
\qquad f_2^{[1]}\in t\C\{t,z\}\big[z^{-1}\big].
\end{gather*}
As $k_1^{[1]}=0$, we have $k_2^{[1]}<0$.
A base change leads to a new basis
$\uuuu{v}^{[2]}=\big(s_1^0,s_2^0\big)\cdot F^{[2]}$ with
\begin{gather*}
F^{[2]}=C_1 + f_2^{[2]}E, \qquad\text{with}\quad
f_2^{[2]}\in tz^{-1}\C\{t\}\big[z^{-1}\big]\setminus\{0\}.
\end{gather*}
The covariant derivative
$z\nnn_{\paa_t}v_2^{[2]}=z\paa_t f_2^{[2]}\cdot v_1^{[2]}$
must be in $\OO(H)_0$. This shows
$f_2^{[2]}\in tz^{-1}\C\{t\}\allowbreak\setminus\{0\}$. With
\begin{gather*}%\label{6.65}
(\alpha_1,\alpha_2,s_1,s_2)=\big(\alpha_1^0-1,\alpha_2^0,z^{-1}s_1^0,s_2^0\big),
\end{gather*}
this gives~\eqref{6.50} and almost case (II).
{}"{}Almost{}"{} because we still have to show
$\alpha_1-\alpha_2\in\N_0$.
This follows from the summand $-z^{\alpha_1-\alpha_2}f^2$ in
the left lower entry in the matrix $B$ in~\eqref{6.54}.

\medskip\noindent
{\it Case $\www{(IV)}$}: Exchange $v_1$ and $v_2$ if
$k_1>k_2$ or if $k_1=k_2$ and
$\deg_tf_1^{(k_1)}>\deg_tf_2^{(k_1)}$.
Keep the basis $\uuuu{v}$ if not. The new basis
$\uuuu{v}^{[1]}$ satisfies
$\min(k_1,k_2)=k_1^{[1]}\leq k_2^{[1]}$, and
in the case $k_1^{[1]}=k_2^{[1]}$ it satisfies
$\deg_t \big(f_1^{[1]}\big)^{(k_1^{[1]})}\leq \deg_t \big(f_2^{[1]}\big)^{(k_1^{[1]})}$.

By replacing $v_2^{[1]}$ by a suitable element in
$v_2^{[1]}+\C\{t,z\} v_1^{[1]}$ , we obtain a new basis
$\uuuu{v}^{[2]}$ either with $f_2^{[2]}=0$ or
with $k_1^{[2]}< k_2^{[2]}$ and
$\deg_t \big(f_1^{[2]}\big)^{(k_1^{[2]})}
>\deg_t \big(f_2^{[2]}\big)^{(k_2^{[2]})}$.

The case $f_2^{[2]}=0$ is impossible, as then
we would have
$f_1^{[2]}f_4^{[2]}=\det F^{[2]}\in\C\{t,z\}^*$, so
$f_1^{[2]}\in\C\{t,z\}^*$ and
$0= k_1^{[2]}$, but also $k_1^{[2]}=k_1^{[1]}=\min(k_1,k_2)<0$.
For the same reason, $f_3^{[2]}=0$ is impossible.

$f_4^{[2]}=0$ is impossible as then we would have
$-f_2^{[2]}f_3^{[2]}=\det F^{[2]}\in \C\{t,z\}^*$,
so $f_2^{[2]},f_3^{[2]}\in\C\{t,z\}^*$,
$0=k_2^{[2]}=k_3^{[2]}$ and $k_4^{[2]}=\infty$,
so $0=\min\big(k_3^{[2]},k_4^{[2]}\big)=\min(k_3,k_4)<0$,
a contradiction.

Write
\begin{gather*}
l_2:=\deg_t\big(f_2^{[2]}\big)^{(k_2^{[2]})}\in\N_0,\qquad
l_1:=\deg_t \big(f_1^{[2]}\big)^{(k_1^{[2]})}-l_2\in\N,
\\
l_3:=\deg_t\big(f_3^{[2]}\big)^{(k_3^{[2]})}\in\N_0,\qquad
l_4:=\deg_t \big(f_4^{[2]}\big)^{(k_4^{[2]})}\in\N_0.
\end{gather*}
Multiplying $v_1^{[2]}$ and $v_2^{[2]}$ by suitable units in
$\C\{t\}$, we obtain a basis $\uuuu{v}^{[3]}$ with
$k_j^{[3]}=k_j^{[2]}$ and
\begin{gather*}
\big(f_1^{[3]}\big)^{(k_1^{[3]})}=t^{l_1+l_2},\qquad
\big(f_2^{[3]}\big)^{(k_2^{[3]})}=t^{l_2},
\\
\big(f_3^{[3]}\big)^{(k_3^{[3]})}=t^{l_3}\cdot u_3,\qquad
\big(f_4^{[3]}\big)^{(k_4^{[3]})}=t^{l_4}\cdot u_4
\end{gather*}
for some units $u_3,u_4\in\C\{t\}^*$.
We still have $0>k_1^{[3]}<k_2^{[3]}$ and
$\min\big(k_3^{[3]},k_4^{[3]}\big)<0$. Consider
\begin{gather*}
z\nnn_{\paa_t}\big(v_1^{[3]}\big)=z\paa_t f_1^{[3]}\cdot s_1^0 +
z\paa_t f_3^{[3]}\cdot s_2^0 \in \OO(H)_{(0,0)}
=\C\{t,z\}v_1^{[3]} \oplus \C\{t,z\}v_2^{[3]}.
\end{gather*}
The leading nonvanishing monomial in $z\paa_t f_1^{[3]}$
is $z^{k_1^{[3]}+1}t^{l_1+l_2-1}$.
This implies $k_2^{[3]}=k_1^{[3]}+1\leq 0$.
Therefore $k_1^{[3]}+k_4^{[3]}<0$ or
$k_2^{[3]}+k_3^{[3]}<0$. Each part
$\big(\det F^{[3]}\big)^{(k)}$ for $k<0$ vanishes. This shows
\begin{gather*}
k_1^{[3]}+k_4^{[3]}=k_2^{[3]}+k_3^{[3]}<0, \qquad \text{so}\quad
k_4^{[3]}=k_3^{[3]}+1\leq 0,\quad k_3^{[3]}<0,
\\
0=\big(f_1^{[3]}\big)^{(k_1^{[3]})} \big(f_4^{[3]}\big)^{(k_3^{[3]}+1)}
-\big(f_2^{[3]}\big)^{(k_1^{[3]}+1)} \big(f_3^{[3]}\big)^{(k_3^{[3]})}
= t^{l_2}\bigl(t^{l_1+l_4}u_4 - t^{l_3}u_3\bigr),
\\
\text{so}\quad l_3=l_1+l_4, \quad u_3=u_4.
\end{gather*}
We can write
\begin{gather*}
\uuuu{v}^{[3]}= \big(s_1^0,s_2^0\big)\begin{pmatrix}
(t^{l_1+l_2}+zg_1)z^{k_1^{[3]}} & (t^{l_2}+zg_2)z^{k_1^{[3]}+1} \\
(t^{l_1+l_4}u_3 + zg_3)z^{k_3^{[3]}} & (t^{l_4}u_3 + zg_4)z^{k_3^{[3]}+1}
\end{pmatrix}
\end{gather*}
with some suitable $g_1,g_2,g_3,g_4\in\C\{t,z\}$.
This shows
\begin{gather}
 \OO(H)_{(0,0)}\cap\bigl( z^{k_1^{[3]}+2}\C\{t,z\}s_1^0 +
\C\{t,z\}\big[z^{-1}\big]s_2^0\bigr)\nonumber
\\ \qquad
{}= z^2\C\{t,z\}v_1^{[3]} + z\C\{t,z\}v_2^{[3]} +
\C\{t,z\}\big(zv_1^{[3]}-t^{l_1}v_2^{[3]}\big)\nonumber
\\ \qquad
{}\subset \OO(H)_{(0,0)}\cap \bigl(
\C\{t,z\}\big[z^{-1}\big]s_1^0 + z^{k_3^{[3]}+2}\C\{t,z\}s_2^0\bigr).\label{6.66}
\end{gather}
Now consider the element
\begin{gather*}
z\big(\nnn_{z\paa_z}-\big(\alpha_1^0+k_1^{[3]}\big)\big)\big(v_1^{[3]}\big)
\\ \qquad
{}= z^2\paa_z(zg_1)z^{k_1^{[3]}}s_1^0
+ \big(t^{l_1+l_2}+zg_1\big)z^{k_1^{[3]}+1+\alpha_1^0-\alpha_2^0}s_2^0 + z^2\paa_z(zg_3)z^{k_3^{[3]}}s_2^0
\\ \qquad \hphantom{=}
{}
+ \big(t^{l_1+l_4}u_3+zg_3\big)\big(k_3^{[3]}+\alpha_2^0
-\alpha_1^0-k_1^{[3]}\big)z^{k_3^{[3]}+1}s_2^0
\end{gather*}
of $\OO(H)_{(0,0)}$. It~is contained in the first line of~\eqref{6.66},
and therefore also in the third line of~\eqref{6.66}.
But this leads to a contradiction, when we compare the
coefficient of $s_2^0$. Here observe
\begin{gather*}
k_3^{[3]}+\alpha_2^0-\alpha_1^0-k_1^{[3]}
\left\{\!\!\begin{array}{l}<\\=\\>\end{array}\!\!\right\} 0
\iff
k_1^{[3]}+1+\alpha_1^0-\alpha_2^0
\left\{\!\!\begin{array}{l}<\\=\\>\end{array}\!\!\right\} k_3^{[3]}+1.
\end{gather*}
This contradiction shows that case $\www{\rm (IV)}$
is impossible.
\end{proof}

\section[Marked regular singular rank 2 $(TE)$-structures]{Marked regular singular rank 2 $\boldsymbol{(TE)}$-structures}\label{c7}

The regular singular rank $2$ $(TE)$-structures over points
were subject of Sections~\ref{c4.5} and~\ref{c4.6},
those over $(\C,0)$ were subject of Theorem~\ref{t6.3}
and Remark~\ref{t6.4}$(iv)$ and of Theorem~\ref{t6.7}
and~Remarks~\ref{t6.8}.

First we will consider in Remarks~\ref{t7.1}$(i){+}(ii)$
regular singular rank $2$ $(TE)$-structures over~$\P^1$,
which arise naturally
from Theorems~\ref{t4.17} and~\ref{t4.20}.
The $(TE)$-structures over the germs
$\big(\P^1,0\big)$ and $\big(\P^1,\infty\big)$ appeared already
in Remark~\ref{t6.4}$(iv)$ and in Theorem~\ref{t6.7}.

With the construction in Lemma~\ref{t3.10}$(d)$,
each of these $(TE)$-structures over $\P^1$
extends to a rank $2$ $(TE)$-structure
of generic type (Reg) or (Log)
over $\C\times\P^1$ with primitive Higgs field.
With Theorem~\ref{t3.14}, the base manifold
$\C\times\P^1$ obtains a canonical structure
as $F$-manifold with Euler field.
For each $t^0\in\C\times\C^*$, the $(TE)$-structure
over the germ $\big(\C\times\P^1,t^0\big)$ is a universal
unfolding of its restriction over $t^0$.
For each $t^0\in\C\times\{0,\infty\}$, the
$(TE)$-structure over the germ
$\big(\C\times \P^1,t^0\big)$ will reappear in Theorems~\ref{t8.1},~\ref{t8.5} and~\ref{t8.6}.
See Remarks~\ref{t7.2}$(i){+}(ii)$.

Then we will observe in Corollary~\ref{t7.3} that
any marked regular singular $(TE)$-structure
is a~{\it good} family of marked regular singular
$(TE)$-structures (over points) in the sense of Definition~\ref{t3.26}$(b)$.

In Theorem~\ref{t7.4} we will determine the moduli spaces
$M^{(H^{{\rm ref},\infty},M^{\rm ref}),{\rm reg}}$
for {\it marked} regular singular
rank $2$ $(TE)$-structures, which were subject of
Theorem~\ref{t3.29}. The parameter space~$\P^1$ of each
$(TE)$-structure over $\P^1$ in Remarks~\ref{t7.1}$(i){+}(ii)$
embeds into one of these moduli spaces, after the choice
of a marking. Also these embeddings will be described
in Theorem~\ref{t7.4}.

Because of Corollary~\ref{t7.3}, any marked
regular singular rank $2$ $(TE)$-structure
over a mani\-fold~$M$ is induced by a holomorphic map
$M\to M^{(H^{{\rm ref},\infty},M^{\rm ref}),{\rm reg}}$,
where $\big(H^{{\rm ref},\infty},M^{\rm ref}\big)$ is the reference
pair used in the marking of the $(TE)$-structure.
Remark~\ref{t7.5} says something about the
horizontal direction(s) in the moduli spaces.

\begin{Remarks}\label{t7.1}\qquad

\begin{enumerate}\itemsep=0pt
\item[$(i)$] Consider the manifold $M^{(3)}:=\P^1$ with coordinate
$t_2$ on $\C\subset\P^1$ and coordinate
$\www t_2:=t_2^{-1}$ on $\P^1\setminus\{0\}\subset\P^1$.

With the projection $M^{(3)}\to\{0\}$,
we pull back the flat bundle $H'\to\C^*$ in Theorem~\ref{t4.17} to a flat bundle ${{H^{(3)}}'}$ on
$\C^*\times M$. Recall the notations~\eqref{4.14}.

Now we read $\uuuu{v}:=(s_1+t_2s_2,zs_2)$
in~\eqref{4.57} and~\eqref{4.59}
as a basis of sections on ${{H^{(3)}}'}|_{\C^*\times\C}$,
and $\uuuu{\www v}:=\big(s_2+\www t_2s_1,zs_1\big)$
in~\eqref{4.58} and~\eqref{4.60}
as a basis of sections on ${{H^{(3)}}'}|_{\C^*\times(\P^1\setminus\{0\})}$.
One sees immediately
\begin{gather}\label{7.1}
z\nnn_{\paa_2}\uuuu{v}=\uuuu{v}\, C_2,\qquad
z\nnn_{\www \paa_2}\uuuu{\www v}=\uuuu{\www v}\, C_2.
\end{gather}
and again~\eqref{4.57} resp.~\eqref{4.59}
and~\eqref{4.58} resp.~\eqref{4.60}.
Therefore $\uuuu{v}$ and $\uuuu{\www v}$ are in any case bases
of a $(TE)$-structure $\big(H^{(3)}\to\C\times M^{(3)},\nnn^{(3)}\big)$
on $\C\times \C\subset\C\times M^{(3)}$ respectively
$\C\times \big(\P^1\setminus\{0\}\big)\subset\C\times M^{(3)}$.
The restricted $(TE)$-structures over $t_2\in\C^*$ are
those in~Theo\-rem~\ref{t4.17}.
They are regular singular, but not logarithmic.
Their leadings exponents $\alpha_1$ and~$\alpha_2$ are
independent of $t_2\in\C^*$.
The $(TE)$-structures over $t_2=0$ and over $\www t_2=0$
(so $t_2=\infty$) are logarithmic except for the case
($N^{\rm mon}\neq 0$ and~$\alpha_1=\alpha_2$), in which case
the one over $t_2=0$ is regular singular, but not logarithmic.
Their leading exponents are
called~$\alpha_1^0$ and~$\alpha_2^0$ and
$\alpha_1^\infty$ and~$\alpha_2^\infty$. Then
\[
\def\arraystretch{1.3}
\begin{tabular}{l|l}
\hline
\quad\ over $0$ & \quad\ over $\infty$
\\ \hline
$\alpha_1^0=\alpha_1$ & $\alpha_1^\infty=\alpha_1+1$
\\
$\alpha_2^0=\alpha_2+1$ & $\alpha_2^\infty=\alpha_2$
\\
\hline
\end{tabular}
\]
except that in the case
($N^{\rm mon}\neq 0$ and~$\alpha_1=\alpha_2$)
we have $\alpha_1^0=\alpha_1$, $\alpha_2^0=\alpha_2$.
For use in Theorem~\ref{t7.3}, we write the base space
for the $(TE)$-structure over $\P^1$ with leading exponents
$\alpha_1$ and~$\alpha_2$ as
$M^{(3),0,\alpha_1,\alpha_2}\cong\P^1$ in the case
$N^{\rm mon}=0$ and as $M^{(3),\neq 0,\alpha_1,\alpha_2}\cong\P^1$
in the case $N^{\rm mon}\neq 0$.

\item[$(ii)$] We extend the case $N^{\rm mon}=0$ from Theorem~\ref{t4.17}$(a)$ to the case $\alpha_1=\alpha_2$.
\eqref{4.57} and~\eqref{4.58} still hold,
but now the restricted $(TE)$-structures over
points in $M^{(3)}=\P^1$ are all logarithmic,
though the $(TE)$-structure over $M^{(3)}$ is not
logarithmic, but only regular singular.
\eqref{7.1} still holds. In~this case, the leading exponents are constant
and are $\alpha_1$ and $\alpha_1+1$
(so, not $\alpha_1$ and $ \alpha_2=\alpha_1$).
Similarly to $(i)$, the base space is called
$M^{(3),0,\alpha_1,{\rm log}}\cong\P^1$.
\end{enumerate}
\end{Remarks}

\begin{Remarks}\label{t7.2}\quad

\begin{enumerate}\itemsep=0pt
\item[$(i)$] The construction in Lemma~\ref{t3.10}$(d)$
extends a $(TE)$-structure
$\big(H^{(3)}\to\C\times M^{(3)},\nnn^{(3)}\big)$ in~Remark~\ref{t7.1}$(i)$ or $(ii)$
with $M^{(3)}=\P^1$ to a $(TE)$-structure
$\big(H^{(4)}\to\C\times M^{(4)},\nnn^{(4)}\big)$ with
$M^{(4)}=\C\times M^{(3)}=\C\times \P^1$, via
$\big(\OO\big(H^{(4)}\big),\nnn^{(4)}\big)=\big(\varphi^{(4)}\big)^*
\big(\OO\big(H^{(3)}\big),\nnn^{(3)}\big)\otimes \EE^{t_1/z}$,
where $t_1$ is the coordinate on the first factor $\C$
in $\C\times\P^1$, and where $\varphi^{(4)}\colon M^{(4)}\to
M^{(3)}$, $(t_1,t_2)\mapsto t_2,$ is the projection.
Define $\uuuu{v^{(4)}}:=\big(\varphi^{(4)}\big)^*
(\uuuu{v}\text{ in Remark~\ref{t7.1}$(i)$ or $(ii)$})$.

Then the matrices $A_2$ and $B$ with
$z\nnn_{\paa_i}\uuuu{v}^{(4)}=\uuuu{v}^{(4)}A_i$ and
$z^2\nnn_{\paa_z}\uuuu{v}^{(4)}=\uuuu{v}^{(4)}B$ are unchanged,
and $A_1=C_1$, so
\begin{gather*}%\label{7.3}
A_1=C_1,\quad A_2=C_2\quad \text{(as in~\eqref{7.1})},\qquad
B\quad\text{is as in~\eqref{4.57} or~\eqref{4.59}}.
\end{gather*}
The Higgs field is everywhere on $M^{(4)}$ primitive.
By Theorem~\ref{t3.14},
$M^{(4)}=\C\times\P^1$ is an~$F$-manifold with Euler field.
The unit field is $\paa_1$, the multiplication is given
by $\paa_2\circ\paa_2=0$ and $\www\paa_2\circ\www\paa_2=0$.
So, each germ $\big(M^{(4)},t^0\big)$ is the germ $\NN_2$.
The Euler field is
\begin{gather*}
\begin{split}%\label{7.4}
&E= t_1\paa_1 + (\alpha_1-\alpha_2)t_2\paa_2
=t_1\paa_1 + (\alpha_2-\alpha_1)\www t_2\www\paa_2
\\
&\qquad\begin{cases}
\text{in the case~\eqref{4.57} and~\eqref{4.58}
and in $(ii)$ above,}
\\
\text{and in the case~\eqref{4.59} and~\eqref{4.60}
 with }\alpha_1-\alpha_2\in\N,\end{cases}
 \\
&E= t_1\paa_1 -\paa_2
=t_1\paa_1 + \www t_2^2\www\paa_2\qquad
\text{in the case~\eqref{4.59} and~\eqref{4.60}
with }\alpha_1=\alpha_2.%\label{7.5}
\end{split}
\end{gather*}

\item[$(ii)$] For $t^{(4)}\in \C\times\C^*\subset M^{(4)}$,
the $(TE)$-structure
$\big(H^{(4)}\to \C\times \big(M^{(4)},t^{(4)}\big),\nnn\big)$
is a universal unfolding of the one over $t^0$,
because that one is of type (Reg) and because the
Higgs field is primitive. See Corollary~\ref{t5.1}.

\item[$(iii)$] Let $\big(H\to\C\times \big(M,t^0\big),\nnn\big)$ be a regular
singular unfolding of a regular singular, but not
logarithmic rank $2$ $(TE)$-structure over $t^0$.
Because of part $(ii)$, it is induced by the
$(TE)$-structure $\big(H^{(4)},\nnn^{(4)}\big)$ via a map
$\big(M,t^0\big)\to \big(M^{(4)},t^{(4)}\big)$ for some
$t^{(4)}\in\{0\}\times \C^*$.
Because it is regular singular, the image of the map
is in $\{0\}\times \C^*\subset \{0\}\times M^{(3)}$.
As there the leading exponents are constant,
they are also constant on the unfolding $(H,\nnn)$.
\end{enumerate}
\end{Remarks}

\begin{Theorem}\label{t7.3}
Any marked regular singular rank $2$ $(TE)$-structure
$($see Definition~$\ref{t3.15}(b)$, espe\-cially,
$M$ is simply connected$)$
is a {\it good} family of marked regular singular
$(TE)$-structures $($over points$)$ in the sense of Definition~$\ref{t3.26}(b)$.
\end{Theorem}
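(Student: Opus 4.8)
The plan is to verify the two defining conditions \eqref{3.39} and \eqref{3.40} of a good family for a single global pair $(r,N)$, by controlling the position of the lattice $\OO(H|_{\C\times\{x\}})_0$ inside the fixed $V$-filtration as $x$ ranges over the connected base $M$. First I would record two preliminary reductions. Since the structure is regular singular, its restriction over each point $x$ is a regular singular rank $2$ $(TE)$-structure over a point; a nonzero $\rho^{(0)}(x)$ would split off an irregular factor $\EE^{-\rho^{(0)}/z}$ in the sense of Lemma~\ref{t3.10}, so $\rho^{(0)}\equiv 0$ and the pole part is everywhere nilpotent, hence trace free. Because $M$ is simply connected the monodromy $M^{\rm mon}$ is a fixed automorphism, so $\Eig(M^{\rm mon})$, the spaces $C^\alpha$ and the lattices $V^r$ are, via the marking $\psi$ and the identifications of Definition~\ref{t3.26}$(b)$, identified for all $x$ with the reference data. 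Thus for each $x$ both $\OO(H|_{\C\times\{x\}})_0$ and $V^r(x)$ are rank $2$ lattices over the discrete valuation ring $\C\{z\}$ inside one and the same $2$-dimensional $\C\{z\}\big[z^{-1}\big]$-vector space.

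The engine of the proof is a determinant computation. The line bundle $\det H=\Lambda^2 H$ carries an induced rank $1$ $(TE)$-structure whose pole part is $\tr\UU=0$; since the pole part is trace free its $A_i^{(0)}$ also vanish by \eqref{3.18}, so $\det H$ is logarithmic, and by Theorem~\ref{t3.23}$(a)$ it is the pull back of a logarithmic structure over a point. Hence the lattice $\det\OO(H|_{\C\times\{x\}})_0=\OO(\det H|_{\C\times\{x\}})_0$ is independent of $x$, and likewise $\det V^r(x)$ is the fixed $V$-lattice attached to the constant monodromy $\det M^{\rm mon}$. For two rank $2$ lattices $L_2\subset L_1$ over $\C\{z\}$ one has $\dim_\C(L_1/L_2)=\dim_\C(\det L_1/\det L_2)$ via elementary divisors. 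Consequently $\dim_\C\OO(H|_{\C\times\{x\}})_0/V^r(x)$ equals the $x$-independent quantity $\dim_\C\det\OO(H|_{\C\times\{x\}})_0/\det V^r(x)$, which will be the constant $N$ of \eqref{3.40}, once the containment \eqref{3.39} is in force.

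It remains to produce a single $r$ with $V^r(x)\subset\OO(H|_{\C\times\{x\}})_0$ for all $x$. I would encode $\OO(H|_{\C\times\{x\}})_0$ by its two spectral numbers $\mu_1(x)\leq\mu_2(x)$ relative to the reference $V$-filtration. Regular singularity, $\OO(H)\subset\VV^{>-\infty}$, together with coherence of $\OO(H)$ (so that finitely many generators bound the singularity order uniformly) bounds $\mu_1$ from below, while the determinant computation fixes $\mu_1(x)+\mu_2(x)$. Hence both spectral numbers lie in a fixed bounded interval, so any sufficiently large real $r$ satisfies $V^r(x)\subset\OO(H|_{\C\times\{x\}})_0$ for every $x$, giving \eqref{3.39}, and then the formula of the previous paragraph gives the constant $N$ and \eqref{3.40}.

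The main obstacle, and the reason the classification of Section~\ref{c6} is invoked, is that the leading exponents may jump between a special point and nearby generic points: this is exactly the phenomenon visible in Theorem~\ref{t6.7}, where $\alpha_j^0=\alpha_j+1$, so one cannot argue by naive continuity of $\OO(H|_{\C\times\{x\}})_0$, and it is precisely what makes the pathological family after Definition~\ref{t3.26} fail to be good. For $\dim M=1$ the explicit normal forms of Theorems~\ref{t6.3} and~\ref{t6.7} let one read off $V^r(x)\subset\OO(H|_{\C\times\{x\}})_0$ and the constant quotient dimension directly in each case. The general simply connected case reduces to these, since \eqref{3.39} and \eqref{3.40} are conditions at individual points that may be tested along germs of curves through each point, whereas the global constancy of $N$ is already guaranteed by the rigidity of an honest $(TE)$-structure over $M$ — so that $z^2\nnn_{\paa_z}$ and $z\nnn_{\paa_i}$ genuinely preserve $\OO(H)$ — encoded in the constancy of the determinant lattice.
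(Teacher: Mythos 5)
Your argument for \eqref{3.40} is correct and genuinely different from the paper's. Regular singularity of each fiber forces $\rho^{(0)}\equiv 0$, so by \eqref{3.18} the induced connection on $\det H$ has a logarithmic pole, and Theorem~\ref{t3.23}$(a)$ makes the determinant lattice $\det\OO(H|_{\C\times\{x\}})_0$ constant in $x$ under the flat identification coming from the marking; the elementary-divisor identity $\dim_\C(L_1/L_2)=\dim_\C(\det L_1/\det L_2)$ for lattices $L_2\subset L_1$ then converts constancy of the determinant into constancy of $N$, once \eqref{3.39} is known. The paper obtains \eqref{3.40} instead by exhibiting explicit generators (linear combinations of elementary sections from Remark~\ref{t6.4}$(iv)$ and Theorem~\ref{t6.7}); your route is cleaner for this half of the statement.

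The genuine gap is in \eqref{3.39}: you must produce one $r$ that works for every $x\in M$ simultaneously, and your argument only produces such an $r$ locally. Coherence gives finitely many generators of $\OO(H)$ only on small open subsets, so the lower bound $b$ on the exponents (equivalently the inclusion $\OO(H|_{\C\times\{x\}})_0\subset V^b(x)$) is locally uniform, not uniform over the whole of the in general noncompact manifold $M$ (e.g., $M=\C$). This is exactly the point at issue: by your own determinant relation, a single $r$ as in \eqref{3.39} exists if and only if the exponents are globally bounded, since $\mu_1$ unbounded below forces $\mu_2$ unbounded above and then no fixed $r$ satisfies $V^r(x)\subset\OO(H|_{\C\times\{x\}})_0$ everywhere. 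The closing appeal to curve germs does not repair this: Theorems~\ref{t6.3} and~\ref{t6.7} control the lattice along one disk at a time, so they again yield only curve-dependent pairs $(r,N)$. What the paper actually proves, and what is absent from your proposal, is the global statement behind the uniform bound: the leading exponents are constant on the connected set $M\setminus M^{\rm [log]}$ (Remark~\ref{t7.2}$(iii)$, which rests on Corollary~\ref{t5.1}, i.e., on Malgrange's universal unfolding at the non-logarithmic points), and at points of $M^{\rm [log]}$ they differ from the generic ones by $+1$ in exactly one entry (5th case of Theorem~\ref{t6.3} and cases (I)/(II) of Theorem~\ref{t6.7}, applied to a disk transverse to $M^{\rm [log]}$); this is what makes the single choice $r=\max_j\Ree\big(\alpha_j^{\rm gen}\big)+1$ admissible for all of $M$. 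Alternatively you could stay within your framework and upgrade local to global by a Baire-category argument: the sets $\big\{x\in M\,|\,\OO(H|_{\C\times\{x\}})_0\subset V^{-n}(x)\big\}$ are closed analytic (the elementary-section coefficients of local generators are holomorphic in $x$, and locally only finitely many exponents intervene), they exhaust the connected Baire space $M$, so one of them equals $M$. But some such globalization step has to be made explicit, and your proposal contains none.
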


\begin{proof} Let $((H\to\C\times M,\nnn),\psi)$
be a regular singular rank $2$
$(TE)$-structure with a marking~$\psi$,
i.e., an isomorphism $\psi$ from $(H^\infty,M^{\rm mon})$
to a reference pair $\big(H^{{\rm ref},\infty},M^{\rm ref}\big)$.
We have to show the conditions~\eqref{3.39} and~\eqref{3.40}
for a good family of marked regular singular $(TE)$-structures.

By definition of a marking, $M$ is simply connected,
so especially it is connected. The subset
\begin{gather*}%\label{7.6}
M^{\rm [log]}:=\{t\in M\,|\, \UU|_t=0\}
=\{t\in M\,|\, \text{the }(TE)\text{-structure over }
t\text{ is logarithmic}\}
\end{gather*}
is a priori a subvariety (in fact, it is either $\varnothing$
or a hypersurface or equal to $M$).

First consider the case $M^{\rm [log]}=M$.
Choose any point $t^0\in M$ and any disk $\Delta\subset M$
through $t^0$. The restriction of the $(TE)$-structure
over the germ $\big(\Delta,t^0\big)$ is in the case $N^{\rm mon}=0$
isomorphic to one in the 7th or 8th or 9th case in Theorem~\ref{t6.3}. In~the case $N^{\rm mon}\neq 0$, it is
isomorphic to one in case (III) in Theorem~\ref{t6.7}. In~either case the leading exponents are constant on $\Delta$,
because of table~\eqref{6.27}
in Remark~\ref{t6.4}$(iv)$ and because of the definition
of case (III) in Theorem~\ref{t6.7}.
Therefore they are constant on $M$.
We call them $\alpha_1^{\rm gen}$ and $\alpha_2^{\rm gen}$.

Now consider the case $M^{\rm [log]}\subsetneqq M$.
For each $t^0\in M\setminus M^{\rm [log]}$, the $(TE)$-structure
over the germ $\big(M,t^0\big)$ has constant leading
exponents because of Remark~\ref{t7.2}$(iii)$.
Therefore the leading exponents are constant on
$M\setminus M^{\rm [log]}$. We call these generic leading exponents
$\alpha_1^{\rm gen}$ and $\alpha_2^{\rm gen}$.

For $t^0\in M^{\rm [log]}$ choose a generic small disk
$\Delta\subset M$ through $t^0$. Then
$\Delta\setminus \big\{t^0\big\}\subset M\setminus M^{\rm [log]}$. The restriction of the
$(TE)$-structure over the germ $\big(\Delta,t^0\big)$ is
in the case $N^{\rm mon}=0$ isomorphic to one in the 5th case
in Theorem~\ref{t6.3}. In~the case $N^{\rm mon}\neq 0$,
it is isomorphic to one in case~(I) or case (II)
in Theorem~\ref{t6.7}. In~either case, the leading exponents
$\big(\alpha_1\big(t^0\big),\alpha_2\big(t^0\big)\big)$ of the $(TE)$-structure
over $t^0$ are either $\big(\alpha_1^{\rm gen}+1,\alpha_2^{\rm gen}\big)$
or $\big(\alpha_1^{\rm gen},\alpha_2^{\rm gen}+1\big)$,
because of table~\eqref{6.27} in~Remark~\ref{t6.4}$(iv)$ and
because of the definition of the cases~(I) and~(II)
in Theorem~\ref{t6.7}.

Remark~\ref{t6.4}$(iv)$ and Theorem~\ref{t6.7}
provide generators of $\OO\big(H|_{\C\times(\Delta,t^0)}\big)_{(0,t^0)}$
which are certain linear combinations of elementary sections.
The shape of these generators and the almost constancy
of the leading exponents imply the two conditions,
\begin{gather*}%\label{7.7}
\OO\big(H|_{\C\times\{t\}}\big)_{(0,t)}\supset V^r\quad
\text{for any}\ t\in M,
\ \text{where} \
r:=\max\big(\Ree\big(\alpha_1^{\rm gen}\big)+1,\,\Ree\big(\alpha_2^{\rm gen}\big)+1\big),\\
\dim_\C\OO\big(H|_{\C\times\{t\}}\big)_{(0,t)}/V^r \quad
\text{is independent of $t\in M$}, %\label{7.8}
\end{gather*}
which are the conditions~\eqref{3.39} and~\eqref{3.40}
for a good family of marked regular singular
$(TE)$-structures.
\end{proof}

The following theorem describes the moduli space
$M^{(H^{{\rm ref},\infty},M^{\rm ref}),{\rm reg}}$ from Theorem~\ref{t3.29}
for the marked regular singular rank $2$ $(TE)$-structures as infinite
unions of curves isomorphic to~$\P^1$ such that the families of
$(TE)$-structures over these curves are the $(TE)$-structures
in Remarks~\ref{t7.1}$(i){+}(ii)$.
Part $(a)$ treats the cases with $N^{\rm mon}=0$,
part $(b)$ treats the cases with $N^{\rm mon}\neq 0$.
Recall the definitions of $M^{(3),0,\alpha_1,\alpha_2}$,
$M^{(3),\neq 0,\alpha_1,\alpha_2}$ and
$M^{(3),0,\alpha_1,{\rm log}}$ in Remarks~\ref{t7.1}$(i)$ and~$(ii)$.

\begin{Theorem}\label{t7.4}
Let $\big(H^{{\rm ref},\infty},M^{\rm ref}\big)$ be a reference pair
with $\dim H^{{\rm ref},\infty}=2$.
Let $\Eig\big(M^{\rm ref}\big)=\{\lambda_1,\lambda_2\}$
be the set of eigenvalues of $M^{\rm ref}$.
Let $\beta_1,\beta_2\in\C$ be the unique numbers
with ${\rm e}^{-2\pi {\rm i}\beta_j}=\lambda_j$ and
$-1<\Ree\beta_j\leq 0$.

\begin{enumerate}\itemsep=0pt
\item[$(a)$] The case $N^{\rm mon}=0$.

\begin{enumerate}\itemsep=0pt
\item[$(i)$] The cases with $\lambda_1\neq\lambda_2$. Then
\begin{gather*}%\label{7.9}
M^{(H^{{\rm ref},\infty},M^{\rm ref}),{\rm reg}}=
\dot{\bigcup\limits_{l_1\in\Z}}\bigg(\bigcup_{l_2\in\Z}M^{(3),0,\beta_1+l_1+l_2,\beta_2-l_2}\bigg).
\end{gather*}
Its topological components are the unions in brackets, so
$\bigcup_{l_2\in\Z}M^{(3),0,\beta_1+l_1+l_2,\beta_2-l_2}$.
Each component is a chain of $\P^1$'s,
the point $\infty$ of $M^{(3),0,\alpha_1,\alpha_2}$ is
identified with the point $0$ of
$M^{(3),0,\alpha_1+1,\alpha_2-1}$.

\setlength{\unitlength}{1mm}

\begin{figure}[h]
\centering
\begin{picture}(120,41)% (a)(i)
%\put(0,0){\framebox(120,45){}}
%\thicklines
%\linethickness{1pt}

\multiput(10,15)(30,0){3}{\bezier{300}(0,0),(20,12),(40,0)}
\put(0,15){\bezier{20}(0,6),(10,6),(20,0)}
\put(100,15){\bezier{20}(0,0),(10,6),(20,6)}

\multiput(15,17.5)(30,0){4}{\circle*{2}}

\put(15,35){\makebox(0,0){\small$\begin{pmatrix}\alpha_1-1\\ \alpha_2+2\end{pmatrix}$}}
\put(30,30){\makebox(0,0){\small$\begin{pmatrix}\alpha_1-1\\ \alpha_2+1\end{pmatrix}$}}
\put(45,35){\makebox(0,0){\small$\begin{pmatrix}\alpha_1\\ \alpha_2+1\end{pmatrix}$}}
\put(60,30){\makebox(0,0){\small$\begin{pmatrix}\alpha_1\\ \alpha_2\end{pmatrix}$}}
\put(75,35){\makebox(0,0){\small$\begin{pmatrix}\alpha_1+1\\ \alpha_2\end{pmatrix}$}}
\put(90,30){\makebox(0,0){\small$\begin{pmatrix}\alpha_1+1\\ \alpha_2-1\end{pmatrix}$}}
\put(105,35){\makebox(0,0){\small$\begin{pmatrix}\alpha_1+2\\ \alpha_2-1\end{pmatrix}$}}

\multiput(15,7)(30,0){4}{\makebox(0,0){\small(Log)}}
\multiput(30,13)(30,0){3}{\makebox(0,0){\small(Reg)}}

\thinlines
%\multiput(15,10)(30,0){4}{\dashbox{3}(0,25){}}
\multiput(15,10)(30,0){4}{\vector(0,1){5}}
\multiput(15,30)(30,0){4}{\vector(0,-1){10}}
\end{picture}
\caption{One topological component in part $(a)$ $(i)$.}\label{figure1}
\end{figure}

\item[$(ii)$] The cases with $\lambda_1=\lambda_2$ $($so
$\beta_1=\beta_2)$. Then
\begin{gather}
M^{(H^{{\rm ref},\infty},M^{\rm ref}),{\rm reg}}
=\dot{\bigcup\limits_{l_1\in\Z}}\bigg(\bigcup_{l_2\in\N}
\F_2^{\beta_1+l_1+l_2,\beta_1+l_1-l_2}\bigg)\nonumber
\\ \hphantom{M^{(H^{{\rm ref},\infty},M^{\rm ref}),{\rm reg}}=}
\cup \dot{\bigcup\limits_{l_1\in\Z}}
\bigg(\www\F_2^{\beta_1+l_1+1,\beta_1+l_1}\cup
\bigcup_{l_2\in\N}
\F_2^{\beta_1+l_1+1+l_2,\beta_1+l_1-l_2}\bigg).
\label{7.10}
\end{gather}
Here $\F_2^{\alpha_1,\alpha_2}$ is for all possible
$\alpha_1$, $\alpha_2$ the Hirzebruch surface $\F_2$,
and $\www\F_2^{\alpha_1,\alpha_1-1}$ is the surface~$\www\F_2$, which is obtained from $\F_2$
by blowing down the unique $(-2)$-curve in $\F_2$.
The unions in brackets are the topological components.
They are chains of Hirzebruch surfaces.
A $(+2)$-curve of $\F_2^{\alpha_1,\alpha_2}$
is identified with the $(-2)$-curve of
$\F_2^{\alpha_1+1,\alpha_2-1}$
$($and a $(+2)$-curve of $\www\F_2^{\alpha_1,\alpha_1-1}$
is identified with the $(-2)$ curve in
$\F_2^{\alpha_1+1,\alpha_1-2})$. The $(TE)$-structures over
the points in the $(-2)$-curves are logarithmic,
and also the $(TE)$-structure over the singular point of
$\www \F_2^{\alpha_1,\alpha_1-1}$ is logarithmic.
The $(TE)$-structures over all other points of
$\F_2^{\alpha_1,\alpha_2}$ and $\www\F_2^{\alpha_1,\alpha_2}$
are regular singular, but not logarithmic, and have
leading exponents $\alpha_1$ and~$\alpha_2$.
For each $\F_2^{\alpha_1,\alpha_2}$, and also for
$\www\F_2^{\alpha_1,\alpha_2}$ after blowing up the singular
point to a $(-2)$-curve, the fibers of it as
a $\P^1$-fiber bundle over $\P^1$ are isomorphic
to $M^{(3),0,\alpha_1,\alpha_2}$.
The $(-2)$-curve in $\F_2^{\alpha_1,\alpha_1-2}$
$($the $\F_2$ with $l_2=1$ in each topological component in the first
line of~\eqref{7.10}$)$ is isomorphic to
$M^{(3),0,\alpha_1-1,{\rm log}}$, and the $(TE)$-structures
over its points are logarithmic with leading exponents
$\alpha_1$, $\alpha_1-1$.
\end{enumerate}

\setlength{\unitlength}{1mm}

\begin{figure}[h]
\centering
\begin{picture}(120,79) %(a)(ii) $\alpha_1-\alpha_2$ gerade

\multiput(10,35)(30,0){3}{\bezier{300}(0,0),(20,12),(40,0)}
\multiput(10,45)(30,0){3}{\bezier{300}(0,0),(20,12),(40,0)}
\multiput(10,55)(30,0){3}{\bezier{300}(0,0),(20,12),(40,0)}
%\put(0,20){\bezier{20}(0,6),(10,6),(20,0)}
\put(100,35){\bezier{20}(0,0),(10,6),(20,6)}
\put(100,45){\bezier{20}(0,0),(10,6),(20,6)}
\put(100,55){\bezier{20}(0,0),(10,6),(20,6)}

\multiput(15,37.5)(30,0){4}{\circle*{2}}
\multiput(15,47.5)(30,0){4}{\circle*{2}}
\multiput(15,57.5)(30,0){4}{\circle*{2}}

\multiput(15,32.5)(30,0){4}{\line(0,1){30}}

\put(15,75){\makebox(0,0){\small$\begin{pmatrix}\alpha_1\\ \alpha_1-1\end{pmatrix}$}}
\put(30,70){\makebox(0,0){\small$\begin{pmatrix}\alpha_1\\ \alpha_1-2\end{pmatrix}$}}
\put(45,75){\makebox(0,0){\small$\begin{pmatrix}\alpha_1+1\\ \alpha_1-2\end{pmatrix}$}}
\put(60,70){\makebox(0,0){\small$\begin{pmatrix}\alpha_1+1\\ \alpha_1-3\end{pmatrix}$}}
\put(75,75){\makebox(0,0){\small$\begin{pmatrix}\alpha_1+2\\ \alpha_1-3\end{pmatrix}$}}
\put(90,70){\makebox(0,0){\small$\begin{pmatrix}\alpha_1+2\\ \alpha_1-4\end{pmatrix}$}}
\put(105,75){\makebox(0,0){\small$\begin{pmatrix}\alpha_1+3\\ \alpha_1-4\end{pmatrix}$}}

\multiput(45,20)(30,0){3}{\makebox(0,0){\small(Log)}}
\multiput(30,33)(30,0){3}{\makebox(0,0){\small(Reg)}}
\put(10,20){\makebox{$M^{(3),0,\alpha_1-1,{\rm log}}$}}
\put(15,17){\makebox(0,0){(Log)}}

\thinlines
%\multiput(15,10)(30,0){4}{\dashbox{3}(0,25){}}
\multiput(15,25)(30,0){4}{\vector(0,1){5}}
\multiput(15,70)(30,0){4}{\vector(0,-1){5}}

\multiput(10,15)(30,0){3}{\bezier{50}(0,0)(0,-5)(20,-5)}
\multiput(30,15)(30,0){3}{\bezier{50}(0,-5)(20,-5)(20,0)}

\put(25,5){\makebox{$\F_2^{\alpha_1,\alpha_1-2}$}}
\put(55,5){\makebox{$\F_2^{\alpha_1+1,\alpha_1-3}$}}
\put(85,5){\makebox{$\F_2^{\alpha_1+2,\alpha_1-4}$}}
\end{picture}\vspace{-2ex}
\caption{One topological component in part $(a)$ $(ii)$.}\label{figure2}
\end{figure}
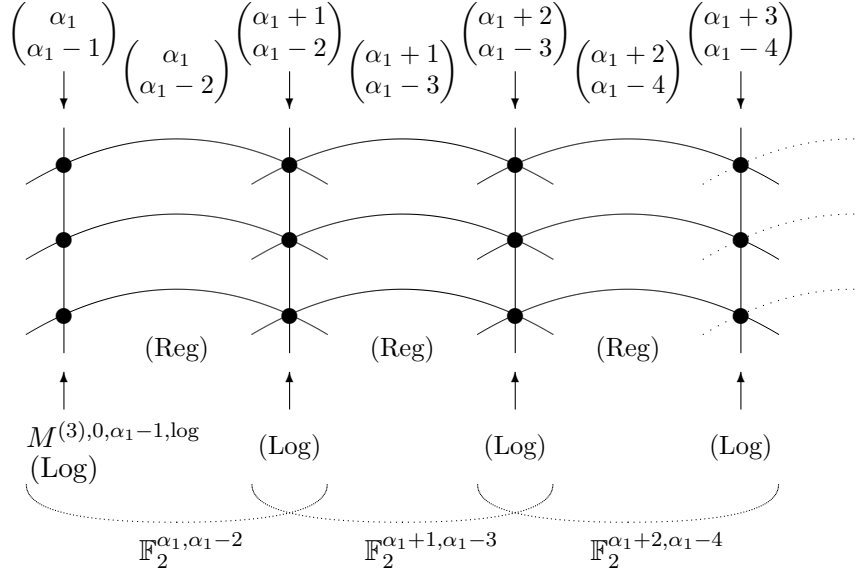

\begin{figure}[h]
\centering
\begin{picture}(120,82) %(a)(ii) $\alpha_1-\alpha_2$ ungerade

\multiput(40,35)(30,0){2}{\bezier{300}(0,0),(20,12),(40,0)}
\multiput(40,45)(30,0){2}{\bezier{300}(0,0),(20,12),(40,0)}
\multiput(40,55)(30,0){2}{\bezier{300}(0,0),(20,12),(40,0)}

\put(10,41){\bezier{300}(0,0),(20,30),(40,13)}
\put(10,45){\bezier{300}(0,0),(20,12),(40,0)}
\put(10,48){\bezier{300}(0,0),(20,0),(40,-13.5)}

\put(100,35){\bezier{20}(0,0),(10,6),(20,6)}
\put(100,45){\bezier{20}(0,0),(10,6),(20,6)}
\put(100,55){\bezier{20}(0,0),(10,6),(20,6)}

\multiput(45,37.5)(30,0){3}{\circle*{2}}
\multiput(15,47.5)(30,0){4}{\circle*{2}}
\multiput(45,57.5)(30,0){3}{\circle*{2}}

\multiput(45,32.5)(30,0){3}{\line(0,1){30}}

\put(15,75){\makebox(0,0){\small$\begin{pmatrix}\alpha_1\\ \alpha_1\end{pmatrix}$}}
\put(30,70){\makebox(0,0){\small$\begin{pmatrix}\alpha_1\\ \alpha_1-1\end{pmatrix}$}}
\put(45,75){\makebox(0,0){\small$\begin{pmatrix}\alpha_1+1\\ \alpha_1-1\end{pmatrix}$}}
\put(60,70){\makebox(0,0){\small$\begin{pmatrix}\alpha_1+1\\ \alpha_1-2\end{pmatrix}$}}
\put(75,75){\makebox(0,0){\small$\begin{pmatrix}\alpha_1+2\\ \alpha_1-2\end{pmatrix}$}}
\put(90,70){\makebox(0,0){\small$\begin{pmatrix}\alpha_1+2\\ \alpha_1-3\end{pmatrix}$}}
\put(105,75){\makebox(0,0){\small$\begin{pmatrix}\alpha_1+3\\ \alpha_1-3\end{pmatrix}$}}

\multiput(15,20)(30,0){4}{\makebox(0,0){\small(Log)}}
\multiput(30,33)(30,0){3}{\makebox(0,0){\small(Reg)}}
%\put(10,20){\makebox{$M^{(3),0,\alpha_1-1,{\rm log}}$}}
%\put(15,17){\makebox(0,0){(Log)}}

\thinlines
%\multiput(15,10)(30,0){4}{\dashbox{3}(0,25){}}
\multiput(45,25)(30,0){3}{\vector(0,1){5}}
\multiput(45,70)(30,0){3}{\vector(0,-1){5}}
\put(15,25){\vector(0,1){15}}
\put(15,70){\vector(0,-1){15}}

\multiput(10,15)(30,0){3}{\bezier{50}(0,0)(0,-5)(20,-5)}
\multiput(30,15)(30,0){3}{\bezier{50}(0,-5)(20,-5)(20,0)}

\put(25,5){\makebox{$\www\F_2^{\alpha_1,\alpha_1-1}$}}
\put(55,5){\makebox{$\F_2^{\alpha_1+1,\alpha_1-2}$}}
\put(85,5){\makebox{$\F_2^{\alpha_1+2,\alpha_1-3}$}}
\end{picture}\vspace{-2ex}
\caption{Another topological component in part $(a)$ $(ii)$.}\label{figure3}
\end{figure}
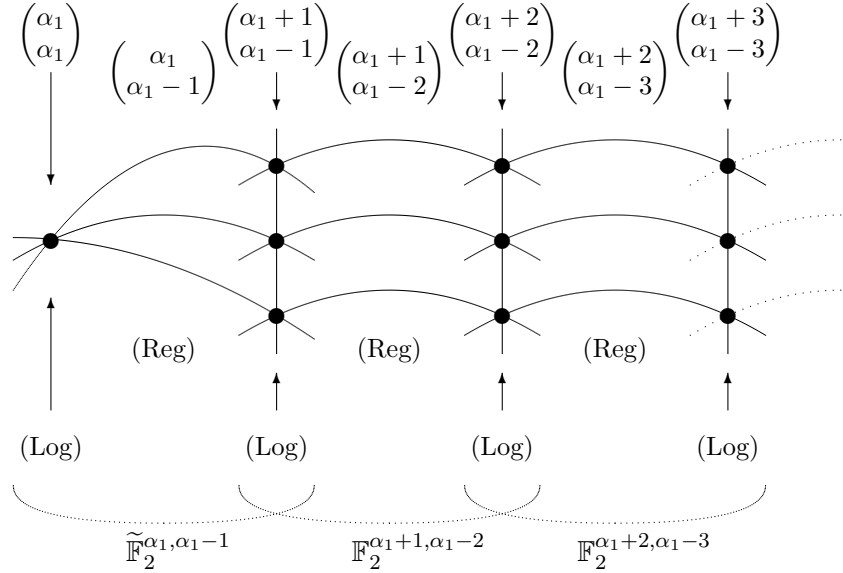

\item[$(b)$] The cases with $N^{\rm mon}\neq 0$ $($and thus
$\lambda_1=\lambda_2$, $\beta_1=\beta_2)$. Then
\begin{gather*}
M^{(H^{{\rm ref},\infty},M^{\rm ref}),{\rm reg}}=
\dot{\bigcup\limits_{l_1\in\Z}}\bigg(\bigcup_{l_2\in\N_0}
M^{(3),\neq 0,\beta_1+l_1+l_2,\beta_1+l_1-l_2}\bigg) \nonumber
\\ \hphantom{M^{(H^{{\rm ref},\infty},M^{\rm ref}),{\rm reg}}=}
\cup \dot{\bigcup\limits_{l_1\in\Z}}\bigg(\bigcup_{l_2\in\N_0}
M^{(3),\neq 0, \beta_1+l_1+1+l_2,\beta_1+l_1-l_2}\bigg).
%\label{7.11}
\end{gather*}
Its topological components are the unions in brackets.
Each component is a chain of $\P^1$'s,
the point $\infty$ of $M^{(3),\neq0,\alpha_1,\alpha_2}$ is
identified with the point $0$ of
$M^{(3),\neq0,\alpha_1+1,\alpha_2-1}$.
\end{enumerate}

\setlength{\unitlength}{1mm}

\begin{figure}[h]
\centering
\begin{picture}(120,35)

\multiput(10,15)(30,0){3}{\bezier{300}(0,0),(20,12),(40,0)}
%\put(0,20){\bezier{20}(0,6),(10,6),(20,0)}
\put(100,15){\bezier{20}(0,0),(10,6),(20,6)}

\multiput(15,17.5)(30,0){4}{\circle*{2}}

%\put(15,35){\makebox(0,0){\small$\begin{pmatrix}\alpha_1\\ \alpha_1\end{pmatrix}$}}
\put(22,30){\makebox(0,0){\small$\begin{pmatrix}\alpha_1\\ \alpha_1\end{pmatrix}$}}
\put(45,35){\makebox(0,0){\small$\begin{pmatrix}\alpha_1+1\\ \alpha_1\end{pmatrix}$}}
\put(60,30){\makebox(0,0){\small$\begin{pmatrix}\alpha_1+1\\ \alpha_1-1\end{pmatrix}$}}
\put(75,35){\makebox(0,0){\small$\begin{pmatrix}\alpha_1+2\\ \alpha_1-1\end{pmatrix}$}}
\put(90,30){\makebox(0,0){\small$\begin{pmatrix}\alpha_1+2\\ \alpha_1-2\end{pmatrix}$}}
\put(105,35){\makebox(0,0){\small$\begin{pmatrix}\alpha_1+3\\ \alpha_1-2\end{pmatrix}$}}

\multiput(45,7)(30,0){3}{\makebox(0,0){\small(Log)}}
\put(22,13){\makebox(0,0){\small(Reg)}}
\multiput(60,13)(30,0){2}{\makebox(0,0){\small(Reg)}}

\thinlines
%\multiput(15,10)(30,0){4}{\dashbox{3}(0,25){}}
\multiput(45,10)(30,0){3}{\vector(0,1){5}}
\multiput(45,30)(30,0){3}{\vector(0,-1){10}}
\end{picture}\vspace{-2ex}
\caption{One topological component in part $(b)$.}\label{figure4}
\end{figure}
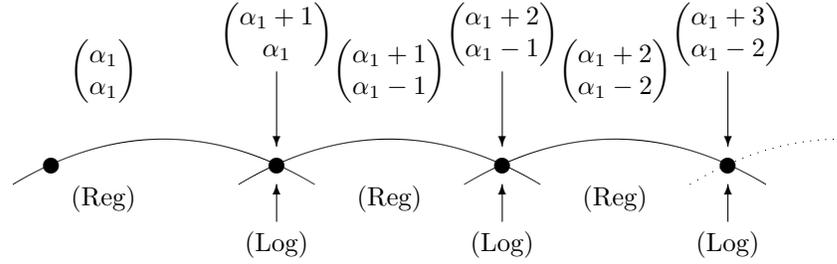

\begin{figure}[h]
\centering
\begin{picture}(120,35)

\multiput(10,15)(30,0){3}{\bezier{300}(0,0),(20,12),(40,0)}
%\put(0,20){\bezier{20}(0,6),(10,6),(20,0)}
\put(100,15){\bezier{20}(0,0),(10,6),(20,6)}

\multiput(15,17.5)(30,0){4}{\circle*{2}}

\put(15,35){\makebox(0,0){\small$\begin{pmatrix}\alpha_1\\ \alpha_1\end{pmatrix}$}}
\put(30,30){\makebox(0,0){\small$\begin{pmatrix}\alpha_1\\ \alpha_1-1\end{pmatrix}$}}
\put(45,35){\makebox(0,0){\small$\begin{pmatrix}\alpha_1+1\\ \alpha_1-1\end{pmatrix}$}}
\put(60,30){\makebox(0,0){\small$\begin{pmatrix}\alpha_1+1\\ \alpha_1-2\end{pmatrix}$}}
\put(75,35){\makebox(0,0){\small$\begin{pmatrix}\alpha_1+2\\ \alpha_1-2\end{pmatrix}$}}
\put(90,30){\makebox(0,0){\small$\begin{pmatrix}\alpha_1+2\\ \alpha_1-3\end{pmatrix}$}}
\put(105,35){\makebox(0,0){\small$\begin{pmatrix}\alpha_1+3\\ \alpha_1-3\end{pmatrix}$}}

\multiput(15,7)(30,0){4}{\makebox(0,0){\small(Log)}}
\multiput(30,13)(30,0){3}{\makebox(0,0){\small(Reg)}}

\thinlines
%\multiput(15,10)(30,0){4}{\dashbox{3}(0,25){}}
\multiput(15,10)(30,0){4}{\vector(0,1){5}}
\multiput(15,30)(30,0){4}{\vector(0,-1){10}}
\end{picture}\vspace{-2ex}
\caption{Another topological component in part $(b)$.}\label{figure5}
\end{figure}
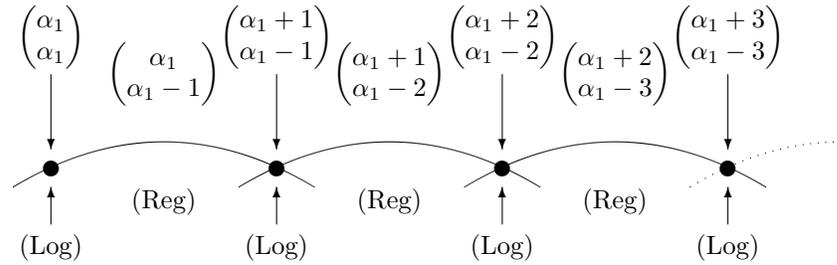

\end{Theorem}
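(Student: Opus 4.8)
The plan is to exploit Theorem~\ref{t3.29}, which identifies the underlying set of $M^{(H^{{\rm ref},\infty},M^{\rm ref}),{\rm reg}}$ with ${\rm Set}^{(H^{{\rm ref},\infty},M^{\rm ref}),{\rm reg}}$, the set of marked regular singular rank~$2$ $(TE)$-structures over a point, and equips it with a complex structure representing the functor $\MM^{(H^{{\rm ref},\infty},M^{\rm ref}),{\rm reg}}$. So first I would enumerate this set explicitly from the point classifications of Theorems~\ref{t4.17} and~\ref{t4.20}, now tracking the marking, and then read off the complex structure from the explicit families of Remarks~\ref{t7.1}, which by representability induce morphisms from $\P^1$ into the moduli space.

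Fixing the reference pair, a marking pins down the flat bundle on $\C^*$ together with all spaces $C^\alpha$ and the filtration $V^\bullet$, so a marked regular singular structure over a point is nothing but a rank~$2$ lattice $L=\OO(H)_0\subset V^{>-\infty}$ with $z^2\nnn_{\paa_z}L\subset L$. In case $(a)(i)$ ($N^{\rm mon}=0$, $\lambda_1\neq\lambda_2$) each $C^{\beta_j+k}$ is one-dimensional, and by Theorem~\ref{t4.17}(a) every nonlogarithmic $L$ is, for a unique $(\alpha_1,\alpha_2)$ with ${\rm e}^{-2\pi {\rm i}\alpha_j}=\lambda_j$, of the form $\C\{z\}(s_1+t_2s_2)\oplus\C\{z\}(zs_2)$; since the marking fixes a basis of $H^\infty$ and removes the rescaling freedom in $s_1,s_2$, the parameter $t_2\in\C^*$ becomes a genuine modulus, and adjoining the two logarithmic lattices of Theorem~\ref{t4.20}(a) at $t_2=0,\infty$ completes it to the $\P^1$ called $M^{(3),0,\alpha_1,\alpha_2}$. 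Varying the integer shift of the $\alpha_j$ then produces the chain. Case $(b)$ ($N^{\rm mon}\neq 0$) is identical with Theorems~\ref{t4.17}(b) and~\ref{t4.20}(b). In case $(a)(ii)$ ($\lambda_1=\lambda_2$, semisimple) the eigenspace $H^\infty_{\lambda_1}$ is two-dimensional, so the top elementary section may be taken along any line of a $\P(H^\infty_{\lambda_1})\cong\P^1$; this extra projective parameter, fibered over the $M^{(3),0,\alpha_1,\alpha_2}$-parameter, is what turns each chain link into a surface.

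Next I would organize the gluing. Each family over $\P^1$ in Remarks~\ref{t7.1} is a good family by Theorem~\ref{t7.3}, hence by the universal property of Theorem~\ref{t3.29} defines a morphism $\P^1\to M^{(H^{{\rm ref},\infty},M^{\rm ref}),{\rm reg}}$; injectivity on points follows from completeness of the invariants in Theorems~\ref{t4.17} and~\ref{t4.20} together with the marking, and the embedding property from representability and the projectivity in Theorem~\ref{t3.29}. The endpoints are logarithmic, and matching their leading exponents (Remark~\ref{t7.1}(i): over $\infty$ they are $\alpha_1+1,\alpha_2$, while over $0$ of the next curve they are $\alpha_1+1,(\alpha_2-1)+1$) identifies $\infty$ of $M^{(3),0,\alpha_1,\alpha_2}$ with $0$ of $M^{(3),0,\alpha_1+1,\alpha_2-1}$, yielding the chains in $(a)(i)$ and $(b)$. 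For $(a)(ii)$ I would recognize each link as the $\P^1$-bundle over $\P^1$ whose fibers are copies of $M^{(3),0,\alpha_1,\alpha_2}$, compute the self-intersections of the two sections cut out by the logarithmic loci to be $\pm 2$, and so identify it as $\F_2$; the degenerate link $\www\F_2^{\alpha_1,\alpha_1-1}$ arises where the family of logarithmic structures along the $(-2)$-curve becomes a single marked structure, forcing that curve to contract to the singular point, which I would verify from the normal forms in Remark~\ref{t6.4}(iv) and Theorem~\ref{t6.7}.

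The main obstacle is case $(a)(ii)$: establishing the global biholomorphism type of each two-dimensional link and the precise inter-link identifications. Showing that the fibered parameter space is genuinely the Hirzebruch surface $\F_2$, computing the intersection numbers of the logarithmic curves, and pinpointing the single contraction that produces $\www\F_2$, all require a careful analysis of how the lattice position in $V^{>-\infty}$ jumps across the logarithmic locus, i.e.\ of the transition between the two charts given by the bases $\uuuu{v}$ and $\uuuu{\www v}$ of Remark~\ref{t7.1}. The curve cases $(a)(i)$ and $(b)$ are comparatively routine once the lattice enumeration and the endpoint matching are in place.
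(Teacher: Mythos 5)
Your route is the paper's own: use the marking to identify the spaces $C^\alpha$ once and for all, so that marked isomorphism classes over a point are literally lattices $\OO(H)_0\subset V^{>-\infty}$; enumerate these via Theorems~\ref{t4.17} and~\ref{t4.20}; organize them into the $\P^1$-families of Remarks~\ref{t7.1}; and glue along the logarithmic lattices, which occur in two adjacent families (with the single exception $N^{\rm mon}\neq 0$, $\alpha_1=\alpha_2$). For parts $(a)$ $(i)$ and $(b)$ this is exactly the paper's argument, and your observation that the marking turns $t_2$ into a genuine modulus is the right justification for injectivity -- with the small precision that the marking only fixes the lines $\C s_1$, $\C s_2$, so one must additionally fix generators once and for all, as the paper does; your use of the representability in Theorem~\ref{t3.29} to upgrade the set-theoretic description to one of complex spaces is legitimate (the paper is brisker about this point).

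The genuine gap is exactly where you locate it, in $(a)$ $(ii)$, and it is a gap for two reasons. First, computing the self-intersections of the logarithmic sections to be $\pm 2$ presupposes the global gluing data of the $\P^1$-bundle; self-intersection numbers cannot be computed fiberwise, so your plan is circular as long as the transition data are deferred. Second, the place you point to for those data -- the transition between the bases $\uuuu{v}$ and $\uuuu{\www v}$ of Remark~\ref{t7.1} -- is the wrong place: those are the two affine charts $t_2$ and $\www t_2=t_2^{-1}$ inside a \emph{single} fiber $M^{(3),0,\alpha_1,\alpha_2}$, and no analysis of them can detect the twist, which lives in the base direction, i.e., in how the description changes as the line $[s_2]\in\P\big(C^{\alpha_2}\big)$ varies. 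The paper's proof supplies precisely this: fix a basis $\sigma_1,\sigma_2$ of $C^{\alpha_1}$, set $\sigma_3:=z^{\alpha_2-\alpha_1}\sigma_1$, $\sigma_4:=z^{\alpha_2-\alpha_1}\sigma_2$; by Theorem~\ref{t4.17}$(a)$ every non-logarithmic marked lattice with exponents $(\alpha_1,\alpha_2)$ is of one of the forms
\begin{gather*}
\C\{z\}(\sigma_1+t_2(\sigma_4+r_1\sigma_3))\oplus\C\{z\}(z(\sigma_4+r_1\sigma_3))\qquad\text{or}
\\
\C\{z\}(\sigma_2+t_3(\sigma_3+r_2\sigma_4))\oplus\C\{z\}(z(\sigma_3+r_2\sigma_4)),
\end{gather*}
and equating the two lattices on the overlap forces $r_2=r_1^{-1}$ and $t_3=-t_2r_1^2$, which is the defining cocycle of $\F_2$. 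The same charts then give the remaining claims at no extra cost: the loci $t_2=0$ and $t_2=\infty$ are the $(-2)$- and $(+2)$-curves carrying the logarithmic lattices with exponents $(\alpha_1,\alpha_2+1)$ and $(\alpha_1+1,\alpha_2)$, whence the chain identifications; and if $\alpha_1-\alpha_2=1$, the lattice at $t_2=0$ equals $\C\{z\}\sigma_1\oplus\C\{z\}(\sigma_2+r_1\sigma_1)=V^{\alpha_1}$ independently of $r_1$, so the whole $(-2)$-curve parametrizes a single marked structure and must be blown down -- that is the $\www\F_2^{\alpha_1,\alpha_1-1}$ statement. Your proposed verification of the contraction via Remark~\ref{t6.4}$(iv)$ and Theorem~\ref{t6.7} is also off target: those results concern one-parameter families over $(\C,0)$, whereas the contraction is a statement about the point classification.
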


\begin{proof}
We consider only marked $(TE)$-structures with a
fixed reference pair $\big(H^{{\rm ref},\infty},M^{\rm ref}\big)$.
Because of the markings, we can identify for each such
$(TE)$-structure its pair $(H^\infty,M^{\rm mon})$
with the reference pair $\big(H^{{\rm ref},\infty},M^{\rm ref}\big)$.
Thus also the spaces $C^\alpha$ can be identified
for all marked $(TE)$-structures.

$(a)$ $(i)$ and $(b)$ In both parts, there is no harm in fixing
elementary sections $s_1\in C^{\alpha_1}$ and
$s_2\in C^{\alpha_2}$ as in Theorem~\ref{t4.17}.
Then Theorem~\ref{t4.17} lists all marked $(TE)$-structures
with the given reference pair.
Remarks~\ref{t7.1}$(i)$ just put these marked
$(TE)$-structures into families parametrized by the spaces
$M^{(3),0,\alpha_1,\alpha_2}$ resp.~$M^{(3),\neq 0,\alpha_1,\alpha_2}$.
Most logarithmic $(TE)$-structures (which are classified
in Theorem~\ref{t4.20}) turn up in two such
families. This leads to the identification of the point
$\infty$ in $M^{(3),0/\neq 0,\alpha_1,\alpha_2}$
with the point $0$ in
$M^{(3),0/\neq 0,\alpha_1+1,\alpha_2-1}$.
Only each of the logarithmic $(TE)$-structures
with $N^{\rm mon}\neq 0$ and leading exponents
$\alpha_1=\alpha_2$ turns up in only one $\P^1$,
in the space $M^{(3),\neq 0,\alpha_1,\alpha_1-1}$.
There it is over the point $0$.

$(a)$ $(ii)$ Here the leading exponents satisfy
$\alpha_1-\alpha_2\in\Z\setminus\{0\}$, and we index them
such that $\alpha_1-\alpha_2\in\N$. We fix a basis
$\sigma_1$, $\sigma_2$ of $C^{\alpha_1}$
and define $\sigma_3:=z^{\alpha_2-\alpha_1}\sigma_1\in
C^{\alpha_2}$,
$\sigma_4\:=z^{\alpha_2-\alpha_1}\sigma_2\in C^{\alpha_2}$.
Then because of Theorem~\ref{t4.17}$(a)$,
we can write all marked regular singular, but not
logarithmic $(TE)$-structures with leading exponents
$\alpha_1$ and~$\alpha_2$ in two charts $\C\times\C^*$
with coordinates $(r_1,t_2)$ and $(r_2,t_3)$,
\begin{gather*}%\label{7.12}
\OO(H)_0= \C\{z\}(\sigma_1+t_2(\sigma_4+r_1\sigma_3))\oplus \C\{z\}(z(\sigma_4+r_1\sigma_3)),
\\
\OO(H)_0= \C\{z\}(\sigma_2+t_3(\sigma_3+r_2\sigma_4))\oplus \C\{z\}(z(\sigma_3+r_2\sigma_4)).\nonumber
\end{gather*}
The charts overlap where $r_1,r_2\in\C^*$, with
\begin{gather*}%\label{7.13}
r_2=r_1^{-1},\qquad
t_3=-t_2r_1^2.
\end{gather*}
Compactification to $t_2= 0$ and $t_2=\infty$
(and $t_3=0$ and $t_3=\infty$) gives
the Hirzebruch surface $\F_2=\F_2^{\alpha_1,\alpha_2}$.
The curve with $t_2=0$ (and $t_3=0$) is the $(-2)$-curve.
Over this curve, we have the family of marked logarithmic
$(TE)$-structures (see Theorem~\ref{t4.20}) with
leading exponents~$\alpha_1$ and~$\alpha_2+1$,
\begin{gather*}
\OO(H)_0= \C\{z\}(\sigma_1)\oplus \C\{z\}(z(\sigma_4+r_1\sigma_3)),
\\
\OO(H)_0= \C\{z\}(\sigma_2)\oplus \C\{z\}(z(\sigma_3+r_2\sigma_4)).
\end{gather*}

The curve with $t_2=\infty$ (and $t_3=\infty$)
is a $(+2)$-curve.
Over this curve, we have the family of marked logarithmic
$(TE)$-structures with
leading exponents $\alpha_1+1$ and $\alpha_2$.
Therefore the $(+2)$-curve in $\F_2^{\alpha_1,\alpha_2}$
must be identified with the $(-2)$-curve in
$\F_2^{\alpha_1+1,\alpha_2-1}$.

In the case $\alpha_1-\alpha_2=2$, the $(-2)$-curve
in $\F_2^{\alpha_1,\alpha_2}$ is the moduli space
$M^{(3),0,\alpha_1-1,{\rm log}}$ from Remark~\ref{t7.1}$(i)$.

In the case $\alpha_1-\alpha_2=1$, the $(-2)$-curve
in $\F_2^{\alpha_1,\alpha_2}$ has to be blown down,
as then for $t_2=0$
\begin{gather*}
\OO(H)_0=\C\{z\}(\sigma_1)\oplus\C\{z\}(z(\sigma_4+r_1\sigma_3))
=\C\{z\}C^{\alpha_1}=V^{\alpha_1}
\end{gather*}
is independent of the parameter $r_1$.

The projection $(r_1,t_2)\mapsto r_1$ extends
to the $\P^1$-fibration of $\F_2^{\alpha_1,\alpha_2}$
over $\P^1$.
The fibers are isomorphic to $M^{(3),0,\alpha_1,\alpha_2}$.
Affine coordinates on these fibers are $t_2$
and $\www t_2=t_2^{-1}$ or $t_3$ and $\www t_3=t_3^{-1}$.
\end{proof}

\begin{Remarks}\label{t7.5}\quad

\begin{enumerate}\itemsep=-1pt
\item[$(i)$] Consider a marked regular singular rank $2$ $(TE)$-structure
$((H\to\C\times M,\nnn),\psi)$. There is a unique
map $\varphi\colon M\to M^{(H^{{\rm ref},\infty},M^{\rm ref}),{\rm reg}}$,
which maps $t\in M$ to the unique point in~$M^{(H^{{\rm ref},\infty},M^{\rm ref}),{\rm reg}}$ over which one has up to
marked isomorphism the same marked $(TE)$-structure
as over $t$. Corollary~\ref{t7.3} and the fact that the
moduli space represents the moduli functor
$\MM^{(H^{{\rm ref},\infty},M^{\rm ref}),{\rm reg}}$, imply that
$\varphi$ is holomorphic.

Because $M$ is (simply) connected, the map $\varphi$ goes to
one irreducible component of the moduli space,
so to one $M^{(3),0/\neq 0,\alpha_1,\alpha_2}\cong\P^1$
in the parts $(a)$ $(i)$ and $(b)$ in Theorem~\ref{t7.4}
and to one $\F_2^{\alpha_1,\alpha_2}$ or to
$\www\F_2^{\alpha_1,\alpha_1-1}$ in part $(a)$ $(ii)$.

\item[$(ii)$] In fact, in part $(a)$ $(ii)$ the map $\varphi$ goes even
to a projective curve which is isomorphic to one curve
$M^{(3),0,\alpha_1,\alpha_2}$ or to the curve
$M^{(3),0,\alpha_1,{\rm log}}$.
This holds for the $(TE)$-structure over any manifold,
as it holds by Remark~\ref{t6.4}$(iv)$ and Theorem~\ref{t6.7} for the $(TE)$-structures over the
1-dimensional germ $(\C,0)$.

The curves isomorphic to $M^{(3),0,\alpha_1,\alpha_2}$
are the $(0)$-curves in the $\P^1$ fibration of
$\F_2^{\alpha_1,\alpha_2}$ over~$\P^1$ \big(in the case of
$\www\F_2^{\alpha_2+1,\alpha_2}$ each fiber of
$\F_2^{\alpha_2+1,\alpha_2}$ over $\P^1$ embeds also into
the blown down surface $\www\F_2^{\alpha_2+1,\alpha_2}$\big).
The curve isomorphic to $M^{(3),0,\alpha_1,{\rm log}}$
is the $(-2)$-curve in $\F_2^{\alpha_1,\alpha_1-2}$.

\item[$(iii)$] We have here a notion of horizontal directions which
is similar to that for classifying spaces of
Hodge structures. There it comes from Griffiths
transversality. Here it comes from the part of the
pole of Poincar\'e rank 1, which says that the covariant
derivatives $\nnn_{\paa_j}$ along vector fields on the
base space see only a pole of order 1.

In the cases of the $\F_2^{\alpha_1,\alpha_2}$
with $\alpha_1-\alpha_2\in\N\setminus \{1,2\}$, the horizontal
directions are the tangent spaces to the fibers of the
$\P^1$ fibration. In~the cases of $\F_2^{\alpha_1,\alpha_1-2}$ and
$\www \F_2^{\alpha_1,\alpha_1-1}$, the horizontal directions
contain these tangent spaces.
However, on points in the $(-2)$-curve in
$\F_2^{\alpha_1,\alpha_1-2}$
and on the singular point in
$\www\F_2^{\alpha_1,\alpha_1-1}$,
any direction is horizontal.
\end{enumerate}
\end{Remarks}

{\sloppy\begin{Remark}%\label{t7.6}
If we forget the markings of the $(TE)$-structures in one
moduli space $M^{(H^{{\rm ref},\infty},M^{\rm ref}),{\rm reg}}$
and consider the unmarked $(TE)$-structures up to isomorphism,
we obtain in the cases $N^{\rm mon}=0$ countably many
points, one for each intersection point or intersection
curve of two irreducible components,
and one for each irreducible component.
On the contrary, in the cases $N^{\rm mon}\neq 0$,
the unmarked and the marked $(TE)$-structures almost coincide,
as the choice of an elementary section $s_1$ in
Theorem~\ref{t4.17}$(b)$ fixes uniquely the elementary
section~$s_2$ with~\eqref{4.50}.
The set of unmarked $(TE)$-structures up to isomorphism
is still almost in~bijec\-tion with the moduli space
$M^{(H^{{\rm ref},\infty},M^{\rm ref}),{\rm reg}}$ in the case $N^{\rm mon}\neq0$.
Only the components $M^{(3),\neq 0,\alpha_1,\alpha_1}-\{\infty\}$
boil down to single points.
\end{Remark}

}

\section[Unfoldings of rank 2 $(TE)$-structures of type (Log) over a point]
{Unfoldings of rank 2 $\boldsymbol{(TE)}$-structures of type (Log) \\over a point}\label{c8}

Sections~\ref{c5} and~\ref{c8} together
treat all rank $2$ $(TE)$-structures over germs $\big(M,t^0\big)$ of
manifolds. Section~\ref{c5} treated the unfoldings
of $(TE)$-structures of types (Sem) or (Bra) or (Reg)
over $t^0$.
Section~\ref{c8} will treat the unfoldings of
$(TE)$-structures of type (Log) over $t^0$.

It builds on Section~\ref{c6}, which classified the
unfoldings with trace free pole parts over
$\big(M,t^0\big)=(\C,0)$ of a logarithmic rank $2$ $(TE)$-structure
over $t^0$ and on Section~\ref{c7},
which treated arbitrary regular singular rank $2$
$(TE)$-structures.
Here Lemmata~\ref{t3.10} and~\ref{t3.11} are helpful.
They allow to go from arbitrary $(TE)$-structures
to $(TE)$-structures with trace free pole parts
and vice versa.

Section~\ref{c8.1} gives the classification results.
Section~\ref{c8.2} extracts from them a
characterization of the space of all $(TE)$-structures
with generically primitive Higgs fields over a given
germ of a~2-dimensional $F$-manifold with Euler field.
Section~\ref{c8.3} gives the proof of Theorem~\ref{t8.5}.

First we characterize in Theorem~\ref{t8.1}
the 2-parameter unfoldings of rank $2$
$(TE)$-structures of type (Log) over a point such that
the Higgs field is generically primitive
and induces an $F$-manifold structure on the underlying
germ $\big(M,t^0\big)$ of a manifold.
Theorem~\ref{t8.1} is a rather imme\-diate
implication of Theorem~\ref{t6.3} and Theorem~\ref{t6.7}
together with Lemmata~\ref{t3.10} and~\ref{t3.11}.
Part $(d)$ gives an explicit classification.
The other results in this section will all refer
to this classification.

Corollary~\ref{t8.3} lists for any logarithmic rank $2$
$(TE)$-structure over a point $t^0$ all unfoldings within
the set of $(TE)$-structures in Theorem~\ref{t8.1}$(a)$.
The proof consists of inspection of the explicit
classification in Theorem~\ref{t8.1}$(d)$.

Theorem~\ref{t8.5} is the main result of this section. It~lists a finite subset of the unfoldings in~Theo\-rem~\ref{t8.1}$(d)$ with the following property:
Any unfolding of a rank $2$ $(TE)$-structure of type (Log)
over a point is induced by a $(TE)$-structure in this list.
The $(TE)$-structures in the list turn out to be
universal unfoldings of themselves.

The proof of Theorem~\ref{t8.5} is long. It~is deferred
to Section~\ref{c8.3}. The results of Section~\ref{c6}
are crucial, especially Theorem~\ref{t6.3} and
Theorem~\ref{t6.7}.

Finally, Theorem~\ref{t8.6} lists the rank $2$
$(TE)$-structures over a germ $\big(M,t^0\big)$ of a manifold
such that the Higgs field is primitive (so that $\big(M,t^0\big)$
becomes a germ of an $F$-manifold with Euler field)
and the restriction over $t^0$ is of type (Log).
This list turns out to be a sublist of the one in
Theorem~\ref{t8.5}.
Theorem~\ref{t8.6} follows easily from Theorem~\ref{t8.1}.

Theorem~\ref{t8.6} is also contained in the papers~\cite{DH20-3}
 and~\cite{DH20-2}, the generic types (Bra), (Reg) and (Log)
are in~\cite{DH20-3}, the generic type (Sem)
is in~\cite{DH20-2}.
The proofs there are completely different. They build on the
formal classification of $(T)$-structures in~\cite{DH20-1}.

\subsection{Classification results}\label{c8.1}

\begin{Theorem}\label{t8.1}\quad
\begin{enumerate}\itemsep=0pt
\item[$(a)$]
Consider a rank $2$ $(TE)$-structure $\big(H\to\C\times \big(M,t^0\big),\nnn\big)$
over a $2$-dimen\-sio\-nal germ $\big(M,t^0\big)$
with restriction over $t^0$ of type $($Log$)$,
with generically primitive Higgs field,
and such that the induced $F$-manifold structure on generic
points of $M$ extends to all of $M$.

There is a unique rank $2$ $(TE)$-structure
$\big(H^{[3]}\to\C\times(\C,0),\nnn^{[3]}\big)$ over $(\C,0)$
(with coordinate $t_2$) with trace free pole part,
with nonvanishing Higgs field
and with logarithmic restriction over $t_2=0$ such that
$(\OO(H),\nnn)$ arises from $\big(\OO(H^{[3]}),\nnn^{[3]}\big)$
as follows. There are coordinates $t=(t_1,t_2)$
on $\big(M,t^0\big)$ such that $\big(M,t^0\big)=\big(\C^2,0\big)$
and a constant $c_1\in\C$ such that
\begin{gather}\label{8.1}
(\OO(H),\nnn)\cong\pr_2^*\big(\OO\big(H^{[3]}\big),\nnn^{[3]}\big)\otimes
\EE^{(t_1+c_1)/z},
\end{gather}
where $\pr_2\colon \big(M,t^0\big)\to(\C,0)$, $(t_1,t_2)\mapsto t_2$
$\big($see Lemma~$\ref{t3.10}(a)$ for $\EE^{(t_1+c_1)/z}\big)$.

The $(TE)$-structure $(H,\nnn)$ is of type $($Log$)$
over $(\C\times\{0\},0)$ and of one generic type
$($Sem$)$ or $($Bra$)$ or $($Reg$)$ or $($Log$)$ over $(\C\times\C^*,0)$.

\item[$(b)$] Vice versa, if $\big(H^{[3]},\nnn^{[3]}\big)$ is as in (a)
and $c_1\in\C$, then the $(TE)$-structure
$(\OO(H),\nnn):=\pr_2^*\big(\OO\big(H^{[3]}\big),\nnn^{[3]}\big)\otimes
\EE^{(t_1+c)/z}$ over $\big(M,t^0\big)=\big(\C^2,0\big)$ satisfies
the properties in $(a)$.

\item[$(c)$] The rank $2$ $(TE)$-structures $\big(H^{[3]},\nnn^{[3]}\big)$
over $(\C,0)$ with trace free pole part, nonvanishing Higgs field
and logarithmic restriction over $0$
are classified in Theorems~$\ref{t6.3}$ and~$\ref{t6.7}$.
They are in suitable coordinates the first $7$ of the $9$
cases in the list in Theorem~$\ref{t6.3}$
and the cases~\eqref{6.49} and~\eqref{6.50}
with $f=\frac{1}{k_1}t^{k_1}$ for some $k_1\in\N$
in Theorem~$\ref{t6.7}$. $($Though here the 6th case
in Theorem~$\ref{t6.3}$ is part of the cases~\eqref{6.49}
and~\eqref{6.50} in Theorem~$\ref{t6.7}.)$

\item[$(d)$] The explicit classification of the $(TE)$-structures
$(H,\nnn)$ in $(a)$ is as follows.
There are coordinates $(t_1,t_2)$ such that $\big(M,t^0\big)=\big(\C^2,0\big)$,
and there is a $\C\{t,z\}$-basis $\uuuu{v}$ of $\OO(H)_0$
whose matrices $A_1,A_2,B\in M_{2\times 2}(\C\{t,z\})$ with
$z\nnn_{\paa_i}\uuuu{v}=\uuuu{v}A_i$,
$z^2\nnn_{\paa_z}\uuuu{v}=\uuuu{v}B$ are in the following
list of normal forms. The normal form is unique.
We always have
\begin{gather*}%\label{8.2}
A_1=C_1.
\end{gather*}
Always $M$ is an $F$-manifold with Euler field
in one of the normal forms in Theorems~$\ref{t2.2}$
and~$\ref{t2.3}$ $($in the case $(i)$ the product
$\paa_2\circ\paa_2$ is only almost in the normal form
in Theorem~$\ref{t2.2}$;
in the case $(iii)$ with $\alpha_4=-1$
the Euler field is only almost in the normal form
in Theorem~$\ref{t2.3})$.

\begin{enumerate}\itemsep=0pt
\item[$(i)$] Generic type $($Sem$){:}$
invariants $k_1,k_2\in\N$ with $k_2\geq k_1$,
$c_1,\rho^{(1)}\in\C$, $\zeta\in\C$ if $k_2-k_1\in 2\N$,
$\alpha_3\in\R_{\geq 0}\cup\H$ if $k_1=k_2$,
\begin{gather*}%\label{8.3}
\gamma := \frac{2}{k_1+k_2},
\\
A_2=\begin{cases} -\gamma^{-1}
\big(t_2^{k_1-1}C_2 + t_2^{k_2-1}E\big)
\quad\text{if}\ k_2-k_1>0\ \text{is odd,}
\\[.5ex]
-\gamma^{-1}\big(t_2^{k_1-1}C_2 +\zeta t^{(k_1+k_2)/2-1}D+\big(1-\zeta^2\big)t_2^{k_2-1}E\big)
\quad \text{if}\ k_2-k_1\in 2\N,\!\!\!
\\[.5ex]
-\gamma^{-1}t_2^{k_1-1}D \quad\text{if}\ k_2=k_1,
\end{cases}
\\
B= \big({-}t_1-c_1+z\rho^{(1)}\big)C_1+(-\gamma t_2)A_2
+\begin{cases}
z\frac{k_1-k_2}{2(k_1+k_2)}D&\text{if}\ k_2>k_1,
\\
z\alpha_3 D&\text{if}\ k_2=k_1,\end{cases}
\\ \hphantom{B= }
\text{$F$-manifold }I_2(k_1+k_2)\ \big(\text{with }I_2(2)=A_1^2\big),\
\text{with}\ \paa_2\circ\paa_2
=\gamma^{-2}t_2^{k_1+k_2}\cdot \paa_1,
\\
E= (t_1+c_1)\paa_1 +\gamma t_2\paa_2\quad\text{Euler field}.
%\nonumber
\end{gather*}

\item[$(ii)$] Generic type $($Bra$){:}$ invariants $k_1,k_2\in\N$,
$c_1,\rho^{(1)}\in\C$,
\begin{gather*}%\label{8.4}
\gamma := \frac{1}{k_1+k_2},
\\
A_2=-\gamma^{-1}\big(t_2^{k_1-1}C_2+t_2^{k_1+k_2-1}D-t_2^{k_1+2k_2-1}E\big),
\\
B= \big({-}t_1-c_1+z\rho^{(1)}\big)C_1+(-\gamma t_2)A_2+ z\frac{-k_2}{2(k_1+k_2)}D,
\\ \hphantom{B= }
\text{$F$-manifold }\NN_2,
\text{ with }\paa_2\circ\paa_2=0,
\\
E= (t_1+c_1)\paa_1 +\gamma t_2\paa_2\quad\text{Euler field}.
%\nonumber
\end{gather*}

\item[$(iii)$] Generic type $($Reg$){:}$
invariants $c_1,\rho^{(1)}\in\C$,
$\alpha_4\in\C\setminus\{-1\}$ if~$N^{\rm mon}=0$,
$\alpha_4\in\Z$ if~$N^{\rm mon}\neq 0$,
$k_1\in\N$ if~$N^{\rm mon}=0$,
$\www k_1\in\N$ if~$N^{\rm mon}\neq 0$
$($with $k_1=\www k_1$ if~$\alpha_4\neq -1$,
and $k_1=2\www k_1$ if~$\alpha_4=-1)$,
\begin{gather*}%\label{8.5}
\gamma := \frac{1+\alpha_4}{k_1},
\\
A_2=\begin{cases}
-\gamma^{-1}t_2^{k_1-1}C_2&\text{if}\quad N^{\rm mon}=0,
\\[.5ex]
\www k_1t_2^{\www k_1-1}C_2&\text{if}\quad N^{\rm mon}\neq 0,
\end{cases}
\\
B= \big({-}t_1-c_1+z\rho^{(1)}\big)C_1+(-\gamma t_2)A_2+z\frac{1}{2}\alpha_4 D
\\ \hphantom{B= }
{}+ \begin{cases}
0 \qquad\text{if}\quad N^{\rm mon}=0,
\\
z^{\alpha_4+1}C_2\qquad\text{if}\quad N^{\rm mon}\neq 0,\quad
\alpha_4\in\N_0,
\\
-z^{-\alpha_4-1}t_2^{2\www k_1}C_2 +z^{-\alpha_4}t_2^{\www k_1}D+
z^{-\alpha_4+1}E\quad\text{if}\quad N^{\rm mon}\neq 0,\quad
\alpha_4\in\Z_{<0},\!\!\!\!\end{cases}
\\ \hphantom{B= }
\text{$F$-manifold }\NN_2,\
\text{with}\ \paa_2\circ\paa_2=0,
\\
E= \left\{\!\!\begin{array}{ll}
(t_1+c_1)\paa_1 +\gamma t_2\paa_2&\text{if}\quad\alpha_4\neq -1,\\[.5ex]
(t_1+c_1)\paa_1 +\frac{1}{\www k_1}t_2^{\www k_1+1} \paa_2
&\text{if}\quad\alpha_4=-1\end{array}\!\!\right\}
\quad\text{Euler field}.
%\nonumber
\end{gather*}

\item[$(iv)$] Generic type $($Log$)$:
invariants $k_1\in\N$,
$c_1,\rho^{(1)}\in\C$,
\begin{gather*}%\label{8.6}
A_2=k_1t_2^{k_1-1}C_2,
\\
B= \big({-}t_1-c_1+z\rho^{(1)}\big)C_1-z\frac{1}{2} D,
\quad
\text{$F$-manifold }\NN_2,
\text{ with }\paa_2\circ\paa_2=0,\\
E= (t_1+c_1)\paa_1 \quad\text{Euler field}.
\end{gather*}
\end{enumerate}
\end{enumerate}
\end{Theorem}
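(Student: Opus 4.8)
The plan is to deduce Theorem~\ref{t8.1} from the one-dimensional classification in Theorems~\ref{t6.3} and~\ref{t6.7} by systematically splitting off the behaviour along the first coordinate direction $t_1$. The key structural observation is that a $(TE)$-structure of the kind in part $(a)$ always factors, up to the rank-one twist $\EE^{(t_1+c_1)/z}$, through a $(TE)$-structure over $(\C,0)$ with trace free pole part. So the first step is to prove parts $(a)$ and $(b)$, which set up this factorization, and only then grind out the explicit normal forms in part $(d)$.

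For part $(a)$, I would first pass from $(H,\nnn)$ to its associated $(TE)$-structure $(H^{[2]},\nnn^{[2]})$ with trace free pole part via Lemma~\ref{t3.10}$(c)$, so that $\nnn^{[2]}=\nnn+\ddd(\rho^{(0)}/z)\cdot\id$. Since the restriction over $t^0$ is logarithmic, $\UU|_{t^0}=\rho^{(0)}(t^0)\cdot\id$, hence $\www\UU|_{t^0}=0$; the generically primitive Higgs field and Theorem~\ref{t3.14} then make $(M,t^0)$ an $F$-manifold with Euler field $E=-C_E=\www\UU$ (after the twist). The crucial claim is that the trace free pole part $\www\UU$ depends, in suitable coordinates, only on $t_2$: the injectivity of $C_\bullet$ together with $[C,\www\UU]=0$ (Lemma~\ref{t3.12}) forces the $F$-manifold to be one of the germs in Theorem~\ref{t2.2}, and on each of them one can choose coordinates so that $e=\paa_1$ is the unit field and the primitive/cyclic vector witnesses that $\paa_1$ acts as identity on $K$. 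Then $C_{\paa_1}=\id$ gives $A_1^{(0)}=\mathbf 1_2$, and I would argue that $z\nnn_{\paa_1}=\id$ exactly, so that the bundle and connection are pulled back along $\pr_2$ after twisting by $\EE^{t_1/z}$. This is where I expect the main obstacle: showing rigorously that the $t_1$-dependence is \emph{purely} the exponential twist and nothing more, i.e.\ that $A_1=C_1$ on the nose and that $A_2,B$ are independent of $t_1$. The cleanest route is to use the flatness equations \eqref{3.9}--\eqref{3.10} together with $A_1^{(0)}=\mathbf 1_2$ to show inductively that $\paa_1$ annihilates all higher coefficients, which pins down the factorization \eqref{8.1}. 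Part $(b)$ is then the straightforward converse: one checks that $\pr_2^*(H^{[3]},\nnn^{[3]})\otimes\EE^{(t_1+c_1)/z}$ has the stated restriction types and generically primitive Higgs field, reading off $C_{\paa_1}=\id$ and $C_{\paa_2}$ from the pulled-back Higgs field and $\UU$ from the twist.

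Part $(c)$ is essentially a bookkeeping statement: the $(TE)$-structures $(H^{[3]},\nnn^{[3]})$ over $(\C,0)$ with trace free pole part, nonvanishing Higgs field and logarithmic restriction are precisely those classified in Sections~\ref{c6.2} and~\ref{c6.3}. I would note that Theorem~\ref{t6.3} handles all with semisimple monodromy (generic types (Sem), (Bra), and the split parts of (Reg)/(Log)) and Theorem~\ref{t6.7} handles those whose monodromy has a $2\times2$ Jordan block, and that these together exhaust the possibilities by Theorem~\ref{t6.2} and Theorem~\ref{t3.20}$(a)$; the eighth and ninth cases of Theorem~\ref{t6.3} are excluded because they have vanishing Higgs field. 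The overlap flagged in the parenthetical remark (the 6th case of Theorem~\ref{t6.3} sitting inside \eqref{6.49}--\eqref{6.50}) is exactly Remark~\ref{t6.8}$(vi)$, so no case is double-counted in the final list.

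Finally, for part $(d)$ I would take each normal form $B^{[3]}$ for $(H^{[3]},\nnn^{[3]})$ from the lists in Theorems~\ref{t6.3} and~\ref{t6.7}, apply the factorization \eqref{8.1}, and compute the matrices $A_1,A_2,B$ for the resulting $(TE)$-structure on $(\C^2,0)$. Concretely, pulling back along $\pr_2$ leaves $A_2$ unchanged and sets $A_1=C_1$, while the twist by $\EE^{(t_1+c_1)/z}$ replaces $B^{[3]}$ by $B^{[3]}-(t_1+c_1)C_1$ (using Lemma~\ref{t3.10}$(c)$, $\Omega=\Omega^{[2]}-\ddd((t_1+c_1)/z)\cdot\mathbf 1_2$); since $B^{[3]}=z\rho^{(1)}C_1-\gamma t_2 A_2+\cdots$ already records $-\gamma t_2 A_2$, the listed shape $B=(-t_1-c_1+z\rho^{(1)})C_1+(-\gamma t_2)A_2+\cdots$ drops out immediately. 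The induced multiplication follows from $C_{\paa_2}^2=C_{\paa_2\circ\paa_2}$, i.e.\ from $\big(A_2^{(0)}\big)^2=\delta^{(0)}/t_2^{?}\cdot\id$, giving $\paa_2\circ\paa_2=\gamma^{-2}t_2^{k_1+k_2}\paa_1$ in the (Sem) case and $\paa_2\circ\paa_2=0$ otherwise; the Euler field is $E=-\UU=(t_1+c_1)\paa_1+\gamma t_2\paa_2$ read off from $B^{(0)}$, matching the $F$-manifolds $I_2(k_1+k_2)$ and $\NN_2$ of Theorems~\ref{t2.2} and~\ref{t2.3}. Uniqueness of the normal form is inherited from the uniqueness in Theorems~\ref{t6.3} and~\ref{t6.7} together with the uniqueness of $c_1$ and the coordinates, the latter being controlled by the (small) automorphism groups $\Aut(\NN_2,E)$ and the $I_2(m)$ coordinate freedom. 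The parenthetical caveats about the product and Euler field being ``only almost'' in normal form in the cases $(i)$ and $\alpha_4=-1$ of $(iii)$ record precisely the residual rescaling that $\gamma$ introduces, which I would note but not eliminate.
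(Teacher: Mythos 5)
Your architecture coincides with the paper's: reduce to the one-parameter classification of Theorems~\ref{t6.3} and~\ref{t6.7} via the trace free twist and the splitting off of $t_1$, then read off the normal forms. (The paper proves the parts in the order $(c)$, $(d)$, $(b)$, $(a)$, but the content of your $(b)$, $(c)$, $(d)$ matches: exclusion of the vanishing-Higgs-field cases from the lists, and the observation that pull back by $\pr_2$ plus the twist by $\EE^{(t_1+c_1)/z}$ sets $A_1=C_1$ and replaces $B^{[3]}$ by $B^{[3]}-(t_1+c_1)C_1$.) The one genuine problem is the step you yourself flag as the main obstacle in part $(a)$: your proposed resolution of it would fail.

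You claim that the flatness equations \eqref{3.9}--\eqref{3.10} together with $A_1^{(0)}={\bf 1}_2$ show ``inductively'' that $A_1=C_1$ on the nose and that $\paa_1$ annihilates $A_2$ and acts on $B$ only through the twist. This cannot work, because ``$A_1=C_1$'' is a property of a particular basis, not of the $(TE)$-structure, whereas flatness and $A_1^{(0)}=C_1$ hold in \emph{every} basis (the latter since $C_e=\id$). Indeed, starting from a basis with $A_1=C_1$ and gauging by any $T(z,t_1,t_2)$ with genuine $t_1$-dependence gives $\www A_1=C_1+zT^{-1}\paa_1T\neq C_1$, so no argument using only gauge-invariant input can force $A_1=C_1$. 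Concretely, with only $A_1^{(0)}=C_1$, equation \eqref{3.9} for $i=1$, $j=2$, $k=1$ reads $0=\paa_1A_2^{(0)}+\big[A_1^{(1)},A_2^{(0)}\big]$, which yields neither $\paa_1A_2^{(0)}=0$ nor $A_1^{(1)}=0$. The missing idea, which is exactly how the paper proves part $(a)$, is to construct a base change first: take $T\in {\rm GL}_2(\C\{t,z\})$ solving $\paa_1T=-\big(\sum_{k\geq 1}A_1^{(k)}z^{k-1}\big)T$ with $T(z,0,t_2)=C_1$. In the new basis $\uuuu{v}T$ one has $\www A_1=C_1$ exactly, and only then do \eqref{3.7} and \eqref{3.8} give $\paa_1\www A_2=0$ and $\paa_1\www B=-C_1$, i.e., \eqref{8.10}; combined with the twist by $\EE^{\rho^{(0)}/z}$, $\rho^{(0)}=-t_1-c_1$, this yields the factorization \eqref{8.1}. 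So the $t_1$-normalization is \emph{arranged} by integrating away the $z^{\geq 1}$-part of $A_1$ in the $t_1$-direction, not \emph{deduced} from flatness; with this replacement your proof goes through and agrees with the paper's.
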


Theorem~\ref{t8.1} is proved after Remark~\ref{t8.2}.

\begin{Remark}\label{t8.2}
The other normal forms in Remark~\ref{t6.6} for
the generic type (Sem) with $k_2-k_1\in 2\N$ and for the
generic type (Bra) give the following other normal
forms. In~both cases, the formulas for $A_1=C_1$,
$\gamma$, $B$, the $F$-manifold and $E$ are unchanged,
only the matrix $A_2$ changes.
For the generic type (Sem) with $k_2-k_1\in 2\N$, $A_2$ becomes
\begin{gather*}%\label{8.7}
A_2=-\gamma^{-1}\big(t_2^{k_1-1}C_2+t_2^{k_2-1}E\big) +
z\frac{k_2-k_1}{2}\zeta t_2^{(k_2-k_1-2)/2}E.
\end{gather*}
For the generic type (Bra), $A_2$ becomes
\begin{gather*}%\label{8.8}
A_2=-\gamma^{-1}t_2^{k_1-1}C_2+zk_2t_2^{k_2-1}E .
\end{gather*}
\end{Remark}

\begin{proof}[Proof of Theorem~\ref{t8.1}]
We prove the parts of Theorem~\ref{t8.1}
in the order $(c)$, $(d)$, $(b)$, $(a)$.

$(c)$ Consider a rank $2$ $(TE)$-structure
$\big(H^{[3]}\to\C\times(\C,0),\nnn^{[3]}\big)$
(with coordinate $t_2$ on $(\C,0)$)
with trace free pole part and with logarithmic
restriction over $t_2=0$. If it admits an
extension to a pure $(TLE)$-structure, it is contained in Theo\-rem~\ref{t6.3}. If not, then it is contained in~Theorem~\ref{t6.7}. The condition that the Higgs field
is not vanishing, excludes the 8th and 9th cases in Theo\-rem~\ref{t6.3} and the case~\eqref{6.51} = case (III)
in Theorem~\ref{t6.7}, see Remarks~\ref{t6.8}$(ii)$ and~$(iii)$.

$(d)$ Part $(d)$ makes for such a $(TE)$-structure
$\big(H^{[3]},\nnn^{[3]}\big)$ the $(TE)$-structure
$(\OO(H),\nnn)=\pr_2^*\big(\OO\big(H^{[3]}\big),\nnn^{[3]}\big)\otimes
\EE^{(t_1+c_1)/z}$ explicit.
The cooordinate $t$ and the matrix $A$ in Theorem~\ref{t6.3}
and in Remark~\ref{t6.8}$(iv)$ become now $t_2$ and $A_2$.
Here the matrices in the 6th case in Theorem~\ref{t6.3}
are not used, but the matrices in Remark~\ref{t6.8}$(iv)$.
The function $f$ in Remark~\ref{t6.8}$(iv)$ is now
specialized to $f=t_2^{k_1/2}$ if $\alpha_1=\alpha_2$
($\Rightarrow$ case (II) and~\eqref{6.54})
and to $f=t_2^{k_1}$ if $\alpha_1-\alpha_2\in\N$
(case~(I) and~\eqref{6.53} or case (II) and~\eqref{6.54}).
The new matrix $B$ is $(-t_1-c_1)C_1$ plus the matrix~$B$ in Theorem~\ref{t6.3} and in Remark~\ref{t6.8}$(iv)$.

In the normal forms in Remark~\ref{t6.8}$(iv)$ we replaced
$\alpha_1$ and $\alpha_2$ by $\rho^{(1)}$ and $\alpha_4$
as follows
\begin{gather*}%\label{8.9}
\rho^{(1)}:=\frac{\alpha_1+\alpha_2+1}{2},\qquad
\alpha_4:=\begin{cases}
\alpha_1-\alpha_2-1\in\N_0&\text{in}~\eqref{6.53},
\\
\alpha_2-\alpha_1-1\in\Z_{<0}&\text{in}~\eqref{6.54}.
\end{cases}
\end{gather*}

$(b)$ Now part $(b)$ follows from inspection of the normal forms
in part $(d)$.

$(a)$ Consider a $(TE)$-structure
as in $(a)$. Choose coordinates $t=(t_1,t_2)$ on $\big(M,t^0\big)$
such that $\big(M,t^0\big)=\big(\C^2,0\big)$ and the germ of the $F$-manifold
is in a normal form in Theorem~\ref{t2.2}
(especially $e=\paa_1$)
and the Euler field has the form $E=(t_1+c_1)\paa_1
+g(t_2)\paa_2$ for some $c_1\in\C$ and some
$g(t_2)\in\C\{t_2\}$.

Choose any $\C\{t,z\}$-basis $\uuuu{v}$ of
$\OO(H)_0$ and consider its matrices
$A_1$, $A_2 $, $B$ with $z\nnn_{\paa_i}\uuuu{v}=\uuuu{v}A_i$,
$z^2\nnn_{\paa_z}\uuuu{v}=\uuuu{v}B$.
Now $\paa_1=e$ implies $A_1^{(0)}=C_1$.
We make a base change with the matrix $T\in {\rm GL}_2(\C\{t,z\})$
which is the unique solution of the differential equation
\begin{gather*}
\paa_1 T=-\bigg(\sum_{k\geq 1}A_1^{(k)}z^{k-1}\bigg)T,\qquad
T(z,0,t_2)=C_1.
\end{gather*}
Then the matrices $\www A_1$, $\www A_2$, $\www B$
of the new basis $\uuuu{\www v}=\uuuu{v}T$ satisfy
\begin{gather}\label{8.10}
\www A_1=C_1,\qquad \paa_1\www A_2=0,\qquad \paa_1\www B=-C_1,
\end{gather}
because~\eqref{3.12} for $i=1$ and~\eqref{3.7}
and~\eqref{3.8} give
\begin{gather*}
0= z\paa_1T+A_1T-T\www A_1=C_1T-T\www A_1=T\big(C_1-\www A_1\big),
\\
0= z\paa_1\www A_2-z\paa_2\www A_1+\big[\www A_1,\www A_2\big]= z\paa_1 \www A_2,
\\
0= z\paa_1B-z^2\paa_z\www A_1+z\www A_1+\big[\www A_1,\www B\big]= z(\paa_1B+C_1).
\end{gather*}

In Lemmata~\ref{t3.10}$(c)$ and~\ref{t3.11}
we considered the $(TE)$-structure
$\big(\OO\big(H^{[2]}\big),\nnn^{[2]}\big)=(\OO(H),\nnn)\allowbreak\otimes \EE^{\rho^{(0)}/z}$
with trace free pole part. Here $\rho^{(0)}=-t_1-c_1$.

\eqref{8.10} shows that $\big(H^{[2]},\nnn^{[2]}\big)$ is the pull
back of its restriction $\big(H^{[3]},\nnn^{[3]}\big)$ to
$(\{0\}\times \C,0)\subset \big(\C^2,0\big)$. This and
$(\OO(H),\nnn)\cong \big(\OO\big(H^{[2]}\big),\nnn^{[2]}\big)\otimes
\EE^{-\rho^{(0)}/z}$ in Lemma~\ref{t3.10}$(c)$ imply~\eqref{8.1}.
\end{proof}

\begin{Corollary}\label{t8.3}
The following table gives for each logarithmic rank $2$
$(TE)$-structure over a point~$t^0$ its unfoldings within
the set of $(TE)$-structures in Theorem~$\ref{t8.1}(d)$.
Here the set $\big\{\alpha_1^0,\alpha_2^0\big\}\subset\C$ is the
set of leading exponents in Theorem~$\ref{t4.20}$ of the
logarithmic $(TE)$-structure over~$t^0$.
So, in the case $N^{\rm mon}=0$, $\alpha_1^0$ and $\alpha_2^0\in\C$
are arbitrary. In~the case $N^{\rm mon}\neq 0$, they satisfy
$\alpha_1^0-\alpha_2^0\in\N_0$. Two conditions are
$c_1=0$ and $\rho^{(1)}=\frac{\alpha_1^0+\alpha_2^0}{2}$.
The other conditions and the other invariants
$($though without their definition domains$)$
are given in the table.
All invariants in Theorem~$\ref{t8.1}(d)$, which are not
mentioned here, are $($intended to be$)$ arbitrary:
\begin{gather}\label{8.11}
\def\arraystretch{1.25}
\begin{tabular}{l|l|l|l}
\hline
\multicolumn{1}{c|}{Generic type}&\multicolumn{1}{c|}{Invariants} & \multicolumn{1}{c|}{$N^{\rm mon}$} &\multicolumn{1}{c}{Condition}
\\
\hline
$(Sem)\colon\ k_2>k_1$ & $k_1,k_2,\zeta$ & $=0$&
$\alpha_1^0-\alpha_2^0=\pm \frac{k_1-k_2}{k_1+k_2}$
\\
$(Sem)\colon\ k_2=k_1$ & $k_1,k_2,\alpha_3$ & $=0$&
$\alpha_1^0-\alpha_2^0=\pm 2\alpha_3$
\\
\hline
$(Bra)$ & $k_1,k_2$ &$ =0$&$\alpha_1^0-\alpha_2^0=\pm \frac{-k_2}{k_1+k_2} $
\\
\hline
$(Reg)$ & $k_1,\alpha_4$ & $=0$ &$\alpha_1^0-\alpha_2^0=\pm \alpha_4$
\\
\hline
$(Reg)$ & $\www k_1,\alpha_4$ & $\neq 0$ &$\alpha_1^0-\alpha_2^0=|\alpha_4| $
\\
\hline
$(Log)$ &$ k_1$ & $=0$ & $\alpha_1^0-\alpha_2^0=\pm 1$
\\
\hline
\end{tabular}
\end{gather}
\end{Corollary}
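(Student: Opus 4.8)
The plan is to read off the answer directly from the explicit classification in Theorem~\ref{t8.1}$(d)$, matching each generic type against the data of the logarithmic $(TE)$-structure over $t^0$ recorded in Theorem~\ref{t4.20}. The essential bridge is that, by Theorem~\ref{t8.1}$(a)$--$(b)$, every $(TE)$-structure in the list is, up to the factor $\EE^{(t_1+c_1)/z}$, the pullback along $\pr_2$ of a rank~$2$ $(TE)$-structure over $(\C,0)$ with trace free pole part whose restriction over $t_2=0$ is logarithmic. Hence the restriction of such a listed $(TE)$-structure over the origin $t^0=0$ is exactly $\big(\OO\big(H^{[3]}\big),\nnn^{[3]}\big)|_{t_2=0}\otimes\EE^{c_1/z}$, a logarithmic rank~$2$ $(TE)$-structure over a point. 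An unfolding of a given logarithmic $(TE)$-structure over $t^0$ is therefore precisely a listed $(TE)$-structure whose restriction over the origin is isomorphic to the given one.

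First I would normalize the invariants that are common to all branches. The tensor factor $\EE^{\rho^{(0)}/z}$ with $\rho^{(0)}=-t_1-c_1$ means the restriction over the origin has pole part $\UU$ with eigenvalue $\rho^{(0)}(0)=-c_1$; since the prescribed restriction is logarithmic (of type (Log)), by definition $\UU=\rho^{(0)}\cdot\id$ and, matching to the trace free model which we require to be genuinely logarithmic over a point, this forces $c_1=0$. Next, $\rho^{(1)}$ is a formal invariant by Lemma~\ref{t3.9}$(b)$, and in every normal form of Theorem~\ref{t8.1}$(d)$ the eigenvalues of the residue endomorphism $B^{(1)}(0)$ at the origin are $\rho^{(1)}\pm b_3^{(1)}$, which are the two leading exponents $\alpha_1^0,\alpha_2^0$ of the restriction (see Remark~\ref{t6.4}$(i)$). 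Their half-sum gives the universal condition $\rho^{(1)}=\tfrac{\alpha_1^0+\alpha_2^0}{2}$, while their difference $\alpha_1^0-\alpha_2^0=2b_3^{(1)}$ produces the type-dependent constraints in the table.

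The bulk of the argument is then the case-by-case reading of $b_3^{(1)}$ from Theorem~\ref{t8.1}$(d)$ against the value $\tfrac12(\alpha_1^0-\alpha_2^0)$, together with the $N^{\rm mon}$ bookkeeping from Remark~\ref{t6.4}$(i)$ and~$(iv)$ and from Theorem~\ref{t6.7}. Concretely: in the generic type (Sem) with $k_2>k_1$ one has $b_3^{(1)}=\tfrac{k_1-k_2}{2(k_1+k_2)}$, and the monodromy over $t^0$ is semisimple ($N^{\rm mon}=0$) by Theorem~\ref{t6.2}$(a)$, giving $\alpha_1^0-\alpha_2^0=\pm\tfrac{k_1-k_2}{k_1+k_2}$; in (Sem) with $k_2=k_1$ one has $b_3^{(1)}=\alpha_3$, again $N^{\rm mon}=0$ by Theorem~\ref{t6.2}$(a)(iii)$, so $\alpha_1^0-\alpha_2^0=\pm2\alpha_3$; in (Bra) one has $b_3^{(1)}=\tfrac{-k_2}{2(k_1+k_2)}$ with $N^{\rm mon}=0$ by Theorem~\ref{t6.2}$(b)$, giving $\alpha_1^0-\alpha_2^0=\pm\tfrac{-k_2}{k_1+k_2}$; in (Reg) with $N^{\rm mon}=0$ one has $b_3^{(1)}=\tfrac12\alpha_4$ so $\alpha_1^0-\alpha_2^0=\pm\alpha_4$; in (Reg) with $N^{\rm mon}\neq0$ the leading exponents satisfy $\alpha_1^0-\alpha_2^0\in\N_0$ and equal $|\alpha_4|$ by the normal forms~\eqref{6.53}--\eqref{6.54}; and in (Log) the $6$th-case data force $\alpha_1^0-\alpha_2^0=\pm1$. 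The $\pm$ signs account for the freedom of interchanging the two leading exponents (equivalently exchanging $v_1,v_2$), and the remaining invariants not pinned down ($k_1$ or $\www k_1$, $c_1$ already fixed to $0$, and any free continuous parameters like $\zeta$) are genuinely arbitrary because they do not affect the restriction over the origin.

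The main obstacle I anticipate is the delicate $N^{\rm mon}$ and sign bookkeeping in the (Reg) row with $N^{\rm mon}\neq0$, where the two sub-rows of Theorem~\ref{t8.1}$(d)(iii)$ ($\alpha_4\in\N_0$ versus $\alpha_4\in\Z_{<0}$) and the relation $k_1=\www k_1$ or $k_1=2\www k_1$ must be reconciled so that the resulting restriction over the origin is exactly the prescribed logarithmic structure with a $2\times2$ Jordan block; here the correct statement is $\alpha_1^0-\alpha_2^0=|\alpha_4|$ rather than $\pm\alpha_4$, precisely because the Jordan block over $t^0$ selects the ordering $\alpha_1^0-\alpha_2^0\in\N_0$ uniquely and no interchange is permitted. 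Apart from this, the proof is indeed inspection of the table; the verification that each listed $(TE)$-structure actually is an unfolding (not merely has matching invariants) is immediate from Theorem~\ref{t8.1}$(a)$ together with the completeness of the invariant set for logarithmic $(TE)$-structures over a point established in Theorem~\ref{t4.20}.
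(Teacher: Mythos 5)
Your proposal is correct and follows exactly the paper's route: the paper's own proof is the one-line ``inspection of the cases in Theorem~\ref{t8.1}$(d)$'', and your argument is precisely that inspection carried out in detail (restricting each normal form to the origin, forcing $c_1=0$ and $\rho^{(1)}=\tfrac12\big(\alpha_1^0+\alpha_2^0\big)$ from the residue data, and matching $b_3^{(1)}$ against $\tfrac12\big(\alpha_1^0-\alpha_2^0\big)$ case by case, with completeness of the invariants supplied by Theorem~\ref{t4.20}). The sign and $N^{\rm mon}$ bookkeeping, including the $|\alpha_4|$ subtlety in the (Reg), $N^{\rm mon}\neq 0$ row, is handled correctly.
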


\begin{proof} This follows from inspection of the cases
in Theorem~\ref{t8.1}$(d)$.
\end{proof}

\begin{Remark}%\label{t8.4}
Beware of the following:

\begin{enumerate}\itemsep=0pt
\item[$(i)$] In the generic case (Sem) with $k_1=k_2$ we have
$\alpha_3\in\R_{\geq 0}\cup\H$. Here $\www\alpha_3=-\alpha_3$
is excluded, as it gives an isomorphic unfolding.

\item[$(ii)$] In the generic cases (Reg) with
$\alpha_1^0-\alpha_2^0\in\C\setminus\{0\}$ almost always
$\alpha_4=\alpha_1^0-\alpha_2^0$ and $\www\alpha_4=-\alpha_4$
give (for the same $k_1\in\N$ respectively $\www k_1\in\N$)
two different unfoldings.
The~only exception is the case $N^{\rm mon}=0$ and
$\alpha_1^0-\alpha_2^0=\pm 1$, as then $\alpha_4=-1$
is not allowed.

\item[$(iii)$] In the generic case (Log), one has
one unfolding (and not two unfoldings) for each $k_1\in\N$.

\item[$(iv)$] Unfoldings of generic type (Sem) with $k_2>k_1$ and of
generic type (Bra) exist only if
$\alpha_1^0-\alpha_2^0\in (-1,1)\cap\Q^*$ and $N^{\rm mon}=0$.
\end{enumerate}
\end{Remark}

\begin{Theorem}\label{t8.5}\quad
\begin{enumerate}\itemsep=0pt
\item[$(a)$] Any unfolding of a rank $2$ $(TE)$-structure of type $($Log$)$
over a point is induced by one in the
following subset of $(TE)$-structures in Theorem~$\ref{t8.1}(d)$:
\begin{gather}\label{8.12}
\def\arraystretch{1.25}
\begin{tabular}{l|l}
\hline
\multicolumn{1}{c|}{Generic type and invariants} & \multicolumn{1}{c}{Condition}
\\ \hline
$($Sem$)\colon\ k_2-k_1>0$ odd & $\gcd(k_1,k_2)=1$
\\
$($Sem$)\colon\ k_2-k_1\in 2\N$, $\zeta= 0$ & $\gcd(k_1,k_2)=1$
\\
$($Sem$)\colon\ k_2-k_1\in 2\N$, $\zeta\neq 0$ &$\gcd\big(k_1,\frac{k_1+k_2}{2}\big)=1$
\\
$($Sem$)\colon\ k_2=k_1$ & $k_2=k_1=1 $
\\ \hline
$($Bra$)$ & $\gcd(k_1,k_2)=1$
\\ \hline
$($Reg$)\colon\ N^{\rm mon}=0$ & $k_1=1$
\\ \hline
$($Reg$)\colon\ N^{\rm mon}\neq 0$ & $\www k_1=1$
\\ \hline
$($Log$)\colon\ N^{\rm mon}=0$ &$ k_1=1$
\\
\hline
\end{tabular}
\end{gather}

\item[$(b)$] The inducing $(TE)$-structure is not unique only if
the original $(TE)$-structure has the form
$\varphi^*\big(\OO\big(H^{[5]}\big),\nnn^{[5]}\big)\otimes \EE^{-\rho^{(0)}/z}$,
where $\big(H^{[5]},\nnn^{[5]}\big)$ is a logarithmic
$(TE)$-structure over a point~$t^{[5]}$ and
$\varphi\colon \big(M,t^0\big)\to\big\{t^{[5]}\big\}$ is the projection,
and $\big(H^{[5]},\nnn^{[5]}\big)$ is not one with $N^{\rm mon}\neq 0$
and equal leading exponents $\alpha_1=\alpha_2$.
Then the original $(TE)$-structure is of type $($Log$)$
everywhere with Higgs field endomorphisms
$C_X\in\OO_{(M,t^0)}\cdot\id$ for any $X\in \TT_{(M,t^0)}$.

\item[$(c)$] The $(TE)$-structures in the list in $(a)$ are universal
unfoldings of themselves.
\end{enumerate}
\end{Theorem}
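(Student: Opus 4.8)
<br>

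The plan is to prove the three parts of Theorem~\ref{t8.5} in turn, with part $(a)$ being the heart of the matter and the one whose proof is deferred to Section~\ref{c8.3}. The overall strategy rests on the reduction philosophy established in Lemmata~\ref{t3.10} and~\ref{t3.11}: every unfolding of a logarithmic rank $2$ $(TE)$-structure over a point can be tensored with $\EE^{\rho^{(0)}/z}$ to produce one with trace free pole part, and Lemma~\ref{t3.10}$(e)$ tells us how to transfer an inducing statement back to the general case (at the cost of one extra parameter). So the first move is to reduce to the case of trace free pole part and then to use Theorem~\ref{t8.1} to classify all such unfoldings over a $2$-dimensional base; the table~\eqref{8.12} is precisely the sublist of Theorem~\ref{t8.1}$(d)$ consisting of those $(TE)$-structures which are \emph{not} induced by any other one in the Theorem~\ref{t8.1}$(d)$ list.

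For part $(a)$, first I would observe that by Lemma~\ref{t3.6} and the transitivity of ``inducing'', it suffices to show that every member of the Theorem~\ref{t8.1}$(d)$ list is induced by one member of table~\eqref{8.12}; every unfolding over an arbitrary germ $(M',s^0)$ is then handled by first completing it to a $2$-parameter unfolding with generically primitive Higgs field over a $2$-dimensional germ (using Malgrange's unfolding result over the generic locus, Theorem~\ref{t3.16}, combined with the extension of the $F$-manifold structure), landing in Theorem~\ref{t8.1}$(a)$, and then pulling back. The key computation is the pull-back behaviour under the maps $\varphi(s)=s^n$ recorded in Remark~\ref{t6.5}$(ii)$ and Remark~\ref{t6.8}$(v)$: pulling back by $s\mapsto s^n$ multiplies the invariants $(k_1,k_2)$ or $k_1$ by $n$ while fixing all other invariants. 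Together with the ``not induced if'' tables in Remark~\ref{t6.5}$(iii)$ (whose conditions $\gcd(k_1,k_2)=1$, $k_1=1$, etc.\ reappear verbatim in~\eqref{8.12}), this shows that each Theorem~\ref{t8.1}$(d)$ structure with invariants $(k_1,k_2)$ is the pull back by an appropriate power map of the unique one with reduced invariants, which is exactly the member of~\eqref{8.12}. The logarithmic generic type requires separate attention since there $A_2 = k_1 t_2^{k_1-1}C_2$ is a pull back by $t_2\mapsto t_2^{k_1}$ of the $k_1=1$ case.

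For part $(b)$, I would analyse when the inducing map $\varphi$ in part $(a)$ fails to be unique. Uniqueness of the inducing map is guaranteed whenever the target is a universal unfolding in the sense of Definition~\ref{t3.15}$(c)$; by part $(c)$ (proved next, or in parallel) the members of~\eqref{8.12} are universal unfoldings of themselves, so non-uniqueness can arise only where the unfolded $(TE)$-structure degenerates to the trivial, ``everywhere logarithmic'' situation in which the Higgs field acts by scalars. Concretely this is the generic type (Log) with $N^{\rm mon}=0$, whose base is $\NN_2$ with Euler field $E=(t_1+c_1)\paa_1$ and whose structure is a pull back of a logarithmic $(TE)$-structure over a point via the projection, as in Theorem~\ref{t6.3} cases 8 and 9 and Remark~\ref{t6.5}$(iv)$. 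The hypothesis $C_X\in\OO_{(M,t^0)}\cdot\id$ characterises exactly these, and I would verify the excluded sub-case ($N^{\rm mon}\neq 0$, $\alpha_1=\alpha_2$) does yield a unique inducing map, matching the remark in the proof of Theorem~\ref{t7.4} that such structures appear in only one $\P^1$.

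For part $(c)$, I would apply the injectivity and generation criterion of Theorem~\ref{t3.16}$(b)$. For the cases with primitive Higgs field (generic types (Reg) and (Log) with the reduced invariants) this is immediate from Theorem~\ref{t3.16}$(a)$. The genuinely new content is the cases with only \emph{generically} primitive Higgs field (generic types (Sem) and (Bra) in~\eqref{8.12}), where at the special point $t^0$ the Higgs field is not an isomorphism; here I would exhibit a vector $\zeta_{t^0}\in K_{t^0}$ satisfying conditions $(IC)$ and $(GC)$ using the explicit normal forms in Theorem~\ref{t8.1}$(d)$, checking that the images of $\zeta_{t^0}$ under iterated $\UU|_{t^0}$ and $C_X$ span $K_{t^0}$ even when $C_\bullet\zeta_{t^0}$ alone does not. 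The main obstacle throughout is the case distinction bookkeeping in part $(a)$: one must systematically match \emph{every} row of the Theorem~\ref{t8.1}$(d)$ classification (including the subtle $N^{\rm mon}\neq0$, $\alpha_4<0$ cases of generic type (Reg) where the matrix $B$ has several $z$-power terms) against a reduced representative, and verify that the reduction to $\gcd=1$ or $k_1=1$ really is achieved by a single power map and never requires a more general coordinate change that would leave the table~\eqref{8.12}.
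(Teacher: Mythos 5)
The decisive gap is at the very first step of your part $(a)$: you assert that an arbitrary unfolding over a germ $(M',s^0)$ (of any dimension) can be ``completed'' to, and is then induced by, a $2$-parameter unfolding satisfying the hypotheses of Theorem~\ref{t8.1}$(a)$. This is not a reduction but essentially the statement to be proven. Malgrange's result (Theorem~\ref{t3.16}$(c)$) applies only where $\UU$ is regular, i.e., off the locus where the restricted structure is of type (Log); producing a classifying map \emph{across} that locus is the whole difficulty. Moreover, an arbitrary unfolding need not have generically primitive Higgs field, nor need the induced $F$-manifold structure extend to all of $M$ (both are hypotheses of Theorem~\ref{t8.1}$(a)$) --- the everywhere-logarithmic pull-backs appearing in part $(b)$ are counterexamples. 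The paper's proof spends essentially all of its effort exactly here: for generic types (Reg) and (Log) it bypasses Theorem~\ref{t8.1} and uses the moduli spaces of marked regular singular $(TE)$-structures of Section~\ref{c7}, whose representability yields a holomorphic classifying map from $(M,t^0)$ into a projective curve carrying a table-\eqref{8.12} structure (Remark~\ref{t7.5}); for generic types (Sem) and (Bra) it constructs a global function $\varphi\in\OO_M$ directly from the formal invariants via $\delta^{(0)}=-\varphi^{k_1^0+k_2^0}$ resp.\ $\delta^{(1)}=\frac{k_2^0}{k_1^0+k_2^0}\varphi^{k_1^0+k_2^0}$ (Claim~\ref{cl8.9}), which requires proving that the invariants $\big(k_1^{[1]},k_2^{[1]}\big)$ at \emph{every} point of the logarithmic hypersurface lie in $\N\cdot\big(k_1^0,k_2^0\big)$ (an argument using the monodromy eigenvalues and the (non)triviality of Stokes structures), and then identifies $(H,\nnn)$ with $\varphi^*\big(H^{[3]},\nnn^{[3]}\big)$ by a Birkhoff-normal-form comparison and analytic continuation. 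Your power-map observation (Remark~\ref{t6.5}) concerns only one-dimensional bases and the bookkeeping of reduced invariants; it does not touch this global problem.

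Your plan for part $(c)$ would fail on exactly the cases that make the theorem interesting. For table entries of generic type (Sem) or (Bra) with $k_1>1$ (e.g.\ $(k_1,k_2)=(2,3)$, which is allowed since $\gcd(k_1,k_2)=1$), the normal forms of Theorem~\ref{t8.1}$(d)$ give at $t^0$: $C_{\paa_1}=\id$, $C_{\paa_2}|_{t^0}=0$ (because $A_2^{(0)}$ vanishes at $t_2=0$ when $k_1>1$), and $\UU|_{t^0}\in\C\cdot\id$. Hence for \emph{every} vector $\zeta_{t^0}\in K_{t^0}$ the iterated images under $\UU|_{t^0}$ and the $C_X$ stay inside $\C\cdot\zeta_{t^0}$, so $(GC)$ (and also $(IC)$) fails for every choice; Theorem~\ref{t3.16}$(b)$ is simply not applicable, which is why the paper instead deduces $(c)$ from the uniqueness statement $(b)$, itself a by-product of the rigidity established in the proof of $(a)$. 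This also undermines your part $(b)$, which invokes $(c)$; and independently, that argument conflates uniqueness of the inducing \emph{map} to a fixed target (which universality of the target would give) with uniqueness of the inducing \emph{$(TE)$-structure} in table~\eqref{8.12} (which is what $(b)$ asserts, and which in the paper follows from the canonical construction of $\big(H^{[3]},\nnn^{[3]}\big)$ and of $\varphi$ in the proof of $(a)$).
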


The proof of Theorem~\ref{t8.5} will be given in
Section~\ref{c8.3}.

\begin{Theorem}\label{t8.6}
The set of rank $2$ $(TE)$-structures
with primitive $($not just generically primitive$)$
Higgs field over a germ $\big(M,t^0\big)$
of an $F$-manifold and with restriction of type $($Log$)$ over $t^0$
is $($after the choice of suitable coordinates$)$
the proper subset of those in the list~\eqref{8.12}
in Theorem~$\ref{t8.5}$ which satisfy $k_1=1$
respectively $\www k_1=1$. In~the cases $($Reg$)$ and $($Log$)$, it coincides with
the list~\eqref{8.12}. In~the cases $($Sem$)$ and $($Bra$)$,
it is a proper subset.
\end{Theorem}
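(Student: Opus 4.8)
The plan is to deduce Theorem~\ref{t8.6} as a direct consequence of Theorem~\ref{t8.5} together with the characterization of primitivity in the explicit normal forms of Theorem~\ref{t8.1}$(d)$. The central observation is that the $(TE)$-structures in the list~\eqref{8.12} all have generically primitive Higgs field and are universal unfoldings of themselves (Theorem~\ref{t8.5}$(c)$), so that the $(TE)$-structures with \emph{everywhere} primitive Higgs field which restrict over $t^0$ to type (Log) must already appear in this list. Hence the task reduces to identifying, among the cases in~\eqref{8.12}, precisely those whose Higgs field is primitive at the special point $t^0$ (equivalently, on all of $M$, since primitivity on the open dense set is automatic for these structures).

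First I would recall that for a rank $2$ $(TE)$-structure, the Higgs field is primitive at a point exactly when the map $\TT_{(M,t^0)}\to\OO(K)$, $X\mapsto C_X\zeta$, is an isomorphism for a local section $\zeta$, which in the normal forms of Theorem~\ref{t8.1}$(d)$ translates into the condition that $A_1^{(0)}=C_1$ and $A_2^{(0)}$ generate (via their action on a cyclic vector) the full fibre $K_{t^0}=\C^2$. Since $A_1^{(0)}=C_1=\mathbf{1}_2$ always, primitivity at $t^0$ holds if and only if $A_2^{(0)}(t^0)=A_2^{(0)}(0)$ is not a scalar multiple of the identity, i.e.\ $A_2^{(0)}(0)\notin\C\cdot C_1$. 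I would then inspect $A_2$ in each of the four generic types in Theorem~\ref{t8.1}$(d)$ and read off the vanishing order in $t_2$ of its entries: in every case the leading coefficient of $A_2$ is a nonzero multiple of $t_2^{k_1-1}$ (respectively $t_2^{\www k_1-1}$ in the case $N^{\rm mon}\neq 0$ of type (Reg)), so $A_2^{(0)}(0)\neq 0$ precisely when $k_1=1$ (respectively $\www k_1=1$), and in that case it is indeed not a scalar matrix because the relevant entry sits in the $C_2$, $D$ or mixed part, never in $C_1$.

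Carrying this out type by type: for generic type (Sem) the matrix $A_2$ has leading term $-\gamma^{-1}t_2^{k_1-1}C_2$ (or $-\gamma^{-1}t_2^{k_1-1}D$ when $k_2=k_1$), which is nonzero and nonscalar at $t_2=0$ iff $k_1=1$; for type (Bra) the leading term is $-\gamma^{-1}t_2^{k_1-1}C_2$, nonscalar at $0$ iff $k_1=1$; for type (Reg) with $N^{\rm mon}=0$ it is $-\gamma^{-1}t_2^{k_1-1}C_2$, nonscalar iff $k_1=1$, and with $N^{\rm mon}\neq 0$ it is $\www k_1 t_2^{\www k_1-1}C_2$, nonscalar iff $\www k_1=1$; for type (Log) it is $k_1 t_2^{k_1-1}C_2$, nonscalar iff $k_1=1$. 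Thus the primitive ones are exactly the sublist of~\eqref{8.12} with $k_1=1$ (resp.\ $\www k_1=1$). Finally I would observe that in the cases (Reg) and (Log) the list~\eqref{8.12} \emph{already} imposes $k_1=1$ (resp.\ $\www k_1=1$), so the primitive subset coincides with the full list there; whereas in the cases (Sem) and (Bra) the list~\eqref{8.12} allows $k_1>1$ (the condition is only a coprimality constraint on $(k_1,k_2)$), so demanding $k_1=1$ carves out a proper subset.

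The only genuinely delicate point is the equivalence between primitivity \emph{at the special point} $t^0$ and the failure of $A_2^{(0)}(0)$ to be scalar; I would justify this by noting that $C_{\paa_1}=[\id]$ and $C_{\paa_2}=[A_2^{(0)}]$ as endomorphisms of $K=H|_{\{0\}\times M}$ (from Lemma~\ref{t3.12} and the matrix conventions of Remark~\ref{t3.2}), so a cyclic generator $\zeta$ for the pair $(C_{\paa_1},C_{\paa_2})$ exists iff $\{C_{\paa_1}\zeta,C_{\paa_2}\zeta\}=\{\zeta,A_2^{(0)}(0)\zeta\}$ is a basis of $K_{t^0}$ for some $\zeta$, which happens iff $A_2^{(0)}(0)\notin\C\cdot C_1$. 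This is elementary once the normal forms are in hand, and no further computation beyond reading off leading $t_2$-orders is required. I therefore expect the proof to be short, essentially a table lookup inside Theorem~\ref{t8.1}$(d)$ guided by Theorem~\ref{t8.5}, with the main (but still routine) obstacle being the careful bookkeeping that $A_2^{(0)}(0)$ is genuinely nonscalar in each $k_1=1$ case rather than merely nonzero.
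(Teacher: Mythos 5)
Your core computation coincides with the paper's proof: the paper likewise reduces to the normal forms of Theorem~\ref{t8.1}$(d)$, characterizes primitivity at $t^0$ by $A_2(t_2=0)\notin\C\cdot C_1$ (with $A_1=C_1$, $C_{\paa_2}=\big[A_2^{(0)}\big]$), reads off that this holds iff $k_1=1$ respectively $\www k_1=1$ (in which case $A_2(t_2=0)\in\{-\gamma C_2,-\gamma D,C_2\}$), and then compares with table~\eqref{8.12} exactly as you do: coincidence in the cases (Reg) and (Log), a proper subset in the cases (Sem) and (Bra).

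However, your reduction step contains a genuine non sequitur. You claim that every $(TE)$-structure with everywhere primitive Higgs field and restriction of type (Log) over $t^0$ ``must already appear in the list~\eqref{8.12}'' \emph{because} the structures in that list are universal unfoldings of themselves (Theorem~\ref{t8.5}$(c)$). That property of the list members implies nothing about where an arbitrary primitive structure sits. What Theorem~\ref{t8.5}$(a)$ actually gives is only that such a structure is \emph{induced} from a list member via some map $\varphi$; to upgrade ``induced by'' to ``is (up to coordinates) equal to'' you still need the argument that primitivity of a pulled-back Higgs field forces $d\varphi$ to be injective, hence, by the dimension count $\dim M=\rk H=2$, an isomorphism of germs. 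You never make this step, so as written your proof only establishes the characterization of primitivity \emph{within} table~\eqref{8.12}, i.e.\ one inclusion of the theorem. The paper sidesteps this entirely: a primitive Higgs field is in particular generically primitive, and by Theorem~\ref{t3.14} it induces the $F$-manifold structure on all of $M$, so Theorem~\ref{t8.1}$(a)$ applies directly and puts the structure into the classified list of Theorem~\ref{t8.1}$(d)$; the condition $k_1=1$ (resp.\ $\www k_1=1$) extracted from the matrix check then automatically places it in table~\eqref{8.12}, since for instance $\gcd(1,k_2)=1$. Either repair --- replacing your appeal to Theorem~\ref{t8.5}$(c)$ by an appeal to Theorem~\ref{t8.1}$(a)$, or supplying the local-isomorphism argument for $\varphi$ --- closes the gap, and with it your proof becomes the paper's.
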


\begin{proof} The set of rank $2$ $(TE)$-structures
with primitive Higgs field over a germ $\big(M,t^0\big)$
of an~$F$-manifold and with restriction of type (Log) over $t^0$
consists by Theorem~\ref{t8.1}$(a){+}(d)$ of those
$(TE)$-structures in Theorem~\ref{t8.1}$(d)$
which satisfy $A_2(t_2=0)\notin\C\cdot C_1$.
This holds if and only if
$k_1=1$ respectively $\www k_1=1$
$\big(\www k_1=1$ if the generic type is (Reg) and $N^{\rm mon}\neq 0\big)$,
and then $A_2(t_2=0)\in\{-\gamma C_2,-\gamma D,C_2\}$.
Obviously, this is a proper
subset of those in table~\eqref{8.12} in the generic
cases (Sem) and (Bra), and it coincides with those
in table~\eqref{8.12} in the generic cases
(Reg) and (Log).
\end{proof}

\subsection[$(TE)$-structures over given $F$-manifolds with Euler fields]{$\boldsymbol{(TE)}$-structures over given $\boldsymbol{F}$-manifolds with Euler fields}\label{c8.2}

\begin{Remarks}%\label{t8.7}
For a given germ $\big(\big(M,t^0\big),\circ,e,E\big)$ of an $F$-manifold
with Euler field, define
\begin{gather*}
%\label{8.13}
B_1\big(\big(M,t^0\big),\circ,e,E\big):=
\big\{(TE)\text{-structures over }\big(M,t^0\big)
\text{ with generically primitive Higgs}
\\ \hphantom{B_1(\big(M,t^0\big),\circ,e,E):=\big\{}
\text{field, inducing the given $F$-manifold structure with Euler field}\big\},
\\
B_2\big(\big(M,t^0\big),\circ,e,E\big):=\{(TE)\text{-structures in }B_1
\text{ which are in table~\eqref{8.12}}\},
%\label{8.14}
\\
B_3\big(\big(M,t^0\big),\circ,e,E\big):=
\{(TE)\text{-structures in }B_1\text{ with primitive Higgs fields}\}.
%\label{8.15}
\end{gather*}
Now we can answer the questions, how big these sets are.
Often we write $B_j$ instead of $B_j\big(\big(M,t^0\big),\circ,e,E\big)$,
when the germ $\big(\big(M,t^0\big),\circ,e,E\big)$ is fixed.

\begin{enumerate}\itemsep=0pt
\item[$(i)$] First we consider the cases when
the germ $\big(\big(M,t^0\big),\circ, e,E\big)$ is regular.
Compare Remark~\ref{t2.6}$(ii)$ and Remark~\ref{t3.17}$(iii)$.
By Malgrange's unfolding result Theorem~\ref{t3.16}$(c)$,
any $(TE)$-structure over $\big(M,t^0\big)$ is the universal
unfolding of its restriction over $t^0$,
and it is its own universal unfolding.
So then $B_1=B_2=B_3$, and the classification of
the $(TE)$-structures over points in Section~\ref{c4}
determines this space $B_1$.

In the case of $A_1^2$ with $E=(u_1+c_1)e_1+(u_2+c_2)e_2$
with $c_1\neq c_2$, any $(TE)$-structure over~$t^0$
is of type (Sem). Theorem~\ref{t4.5}
tells that then $B_1$ is connected and 4-dimensional.
The parameters are the two regular singular exponents
and two Stokes parameters.

In the case of $\NN_2$ with $E=(t_1+c_1)\paa_1+\paa_2$,
any $(TE)$-structure over $t^0$ is either of type (Bra)
or of type (Reg). Then $B_1$ has one component
for type (Bra) and countably many components for type (Reg).

The component for type (Bra) is connected and 3-dimensional.
The parameters are given in Theorem~\ref{t4.11},
they are $\rho^{(1)}$, $\delta^{(1)}$ and
$\Eig(M^{\rm mon})$ \big(here $\rho^{(0)}\big(t^0\big)=-c_1$ is fixed, and
one eigenvalue and $\rho^{(1)}$ determine the other eigenvalue\big).

Corollary~\ref{t4.18} gives the countably many components
for type (Reg). One is 1-dimensional, the others are
2-dimensional.

\item[$(ii)$] Now we consider the cases when the germ
$\big(\big(M,t^0\big),\circ,e,E\big)$ is not regular.
Then $E|_{t^0}=c_1\paa_1$ for some $c_1\in\C$.
If $(\OO(H),\nnn)$ is a $(TE)$-structure in
$B_j\big(\big(M,t^0\big),\circ,e,E\big)$, then
$(\OO(H),\nnn)\otimes \EE^{-c_1/z}$ is a $(TE)$-structure in
$B_j\big(\big(M,t^0\big),\circ,e,E-c_1\paa_1\big)$.
Therefore we can and will restrict to the cases with
$E|_{t^0}=0$.

Theorem~\ref{t8.1}$(d)$ gives the $(TE)$-structures in $B_1$,
Theorem~\ref{t8.5}$(a)$ gives the $(TE)$-structures in $B_2$,
and Theorem~\ref{t8.6} gives the $(TE)$-structures in $B_3$.
For each germ $\big(\big(M,t^0\big),\circ,e,E\big)$ with $E|_{t^0}=0$
\begin{gather*}%\label{8.16}
B_1\supset B_2\supset B_3.
\end{gather*}
In the cases $A_1^2$ and $I_2(m)$, the Euler field with
$E|_{t^0}=0$ is unique on $\big(M,t^0\big)$, therefore we do not
write it down.

In {\it the case of $I_2(m)$ with $m\in 2\N$} (this includes
the case $A_1^2=I_2(2)$)
\begin{gather}
B_1(I_2(m))\cong \dot{\bigcup\limits_{(k_1,k_2)\in\N^2\colon
k_1+k_2=m,k_2\geq k_1}}\C^2,\nonumber
\\
B_2(I_2(m))\cong \dot{\bigcup\limits_{(k_1,k_2)\in\N^2\colon
k_1+k_2=m,k_2\geq k_1,\gcd(k_1,m/2)=1}}\C^2, \nonumber
\\
B_3(I_2(m))\cong \C^2,\quad\text{here}\quad(k_1,k_2)=(1,m-1).
\label{8.17}
\end{gather}
The 2 continuous parameters are the regular singular exponents of the
$(TE)$-structures at generic points in $M$.

In {\it the case of $I_2(m)$ with $m\geq 3$ odd},
\begin{gather}
B_1(I_2(m))\cong \dot{\bigcup\limits_{(k_1,k_2)\in\N^2\colon
k_1+k_2=m,k_2>k_1}}\C,\nonumber
\\
B_2(I_2(m))\cong \dot{\bigcup\limits_{(k_1,k_2)\in\N^2\colon
k_1+k_2=m,k_2>k_1,\gcd(k_1,k_2)=1}}\C, \nonumber
\\
B_3(I_2(m))\cong\C, \qquad\text{here}\quad (k_1,k_2)=(1,m-1).
\label{8.18}
\end{gather}
For odd $m\geq 3$, the regular singular exponents of the
$(TE)$-structures at generic points in~$M$ coincide and
give the continuous parameter.

Especially, for $m\in\{2,3\}$
\begin{gather*}%\label{8.19}
B_1(I_2(m))=B_2(I_2(m))=B_3(I_2(m))\cong
\begin{cases} \C^2&\text{if}\ m=2,\\
\C&\text{if}\ m=3.\end{cases}
\end{gather*}

The $F$-manifold $\NN_2$ allows by Theorem~\ref{t2.3}
many nonisomorphic Euler fields with $E|_{t^0}=0$, the
cases~\eqref{2.10}--\eqref{2.12} with $c_1=0$.

{\it The case~\eqref{2.10}}, $E=t_1\paa_1$:
Here each $(TE)$-structure has generic type (Log)
and semisimple monodromy. Here
\begin{gather*}%\label{8.20}
B_1(\NN_2,E)\cong \dot{\bigcup\limits_{k_1\in\N}}\C,
\\
B_2(\NN_2,E)= B_3(\NN_2,E) \cong \C,\qquad
\text{here}\quad k_1=1.\nonumber
\end{gather*}
The continuous parameter is $\rho^{(1)}$ in Theorem~\ref{t8.1}$(d)$ (iv) or, equivalently, one of the
two residue eigenvalues \big(which are $\rho^{(1)}\pm\frac{1}{2}$\big).

{\it The case~\eqref{2.12}},
$E=t_1\paa_1+t_2^r\big(1+c_3t_2^{r-1}\big)$
for some $r\in\Z_{\geq 2}$ and some $c_3\in\C$:
Here each $(TE)$-structure has generic type (Reg) and
satisfies $N^{\rm mon}\neq 0$. Here
\begin{gather*}%\label{8.21}
B_1(\NN_2,E) \begin{cases}
=\varnothing,& \text{if}\quad c_3\in\C^*,\\
\cong \C&\text{if}\quad c_3=0,\end{cases}
\\
B_2(\NN_2,E)=B_3(\NN_2,E) \begin{cases}
=\varnothing,& \text{if}\quad c_3\in\C^*\quad \text{or}\quad r\geq 3,\\
=B_1(\NN_2,E)&\text{if}\quad c_3=0\quad \text{and}\quad r=2.
\end{cases}
\end{gather*}
So, $(\NN_2,E)$ with $c_3\in\C^*$ does not allow
$(TE)$-structures over it, and $(\NN_2,E)$ with
$c_3=0$ and $r\geq 3$ does not allow $(TE)$-structures
over it with primitive Higgs field.
If $B_j(\NN_2,E)\allowbreak\neq\varnothing$ then $B_j(\NN_2,E)\cong\C$
and the continuous parameter is $\rho^{(1)}$ in
Theorem~\ref{t8.1}$(d)$ $(iii)$.

{\it The case~\eqref{2.11}}, $E=t_1\paa_1+c_2t_2\paa_2$
for some $c_2\in\C^*$: This is a rich case. Here we
decompose $B_j=B_j(\NN_2,E)$ as
\begin{gather*}%\label{8.22}
B_j=B_j^{({\rm Reg}),0}
\ \dot\cup\ B_j^{({\rm Reg}),\neq 0}
\ \dot\cup\ B_j^{({\rm Bra})},
\end{gather*}
where the first set contains $(TE)$-structures of generic
type (Reg) with $N^{\rm mon}=0$, the second set contains
$(TE)$-structures of generic type (Reg) with $N^{\rm mon}\neq 0$,
and the third set contains $(TE)$-structures of
generic type (Bra). Then
\begin{gather*}%\label{8.23}
B_1^{({\rm Reg}),0}\cong \dot{\bigcup\limits_{k_1\in\N}}\C,\qquad
B_2^{({\rm Reg}),0}=B_3^{({\rm Reg}),0}\cong\C,
\\
B_1^{({\rm Reg}),\neq 0}
\begin{cases}
=\varnothing&\text{if}\quad c_2\in\C^*\setminus\Q^*,
\\
\cong\dot\bigcup_{(k_1,\alpha_4)\in\N\times\Z\colon k_1c_2=1+\alpha_4}\C
&\text{if}\quad c_2\in\Q^*,
\end{cases} %\label{8.24}
\\
B_2^{({\rm Reg}),\neq 0}=B_3^{({\rm Reg}),\neq 0}
\begin{cases}
=\varnothing&\text{if}\quad c_2\in\C\setminus\Z,
\\
\cong \C&\text{if}\quad c_2\in\Z\setminus\{0\},
\end{cases}
\\%\label{8.25}
B_1^{({\rm Bra})}=B_2^{({\rm Bra})}=B_3^{({\rm Bra})}=\varnothing
\qquad\text{if}\quad c_2^{-1}\in\C^*\setminus \Z_{\geq 2},
\\
\left.\!\!\! \begin{array}{l}
B_1^{({\rm Bra})} \cong\dot
\bigcup_{(k_1,k_2)\in\N^2\colon k_1+k_2=c_2^{-1}}\C,\\
B_2^{({\rm Bra})} \cong\dot
\bigcup_{(k_1,k_2)\in\N^2\colon k_1+k_2=c_2^{-1},\gcd(k_1,k_2)=1}
\C,\\
B_3^{({\rm Bra})} \cong\C,\quad\text{here }(k_1,k_2)=\big(1,c_2^{-1}-1\big)
\end{array}\!\!\right\}\qquad\text{if}\quad c_2^{-1}\in \Z_{\geq 2}.
\nonumber
\end{gather*}
\end{enumerate}
\end{Remarks}

\begin{Remarks}\label{t8.8}\quad

\begin{enumerate}\itemsep=0pt
\item[$(i)$] Theorem~\ref{t8.1}$(d)$ $(i)$ tells how many $(TE)$-structures
exist over the $F$-manifold with \mbox{Euler} field $I_2(m)$,
such that the Higgs bundle is generically primitive
and induces this $F$-manifold structure.
There are $\big[\frac{m}{2}\big]$ many holomorphic families
from the different choices of $(k_1,k_2)\allowbreak\in\N^2$ with
$k_2\geq k_1$ and $k_1+k_2=m$.
They have 2 parameters if $m$ is even and 1 parameter
if $m$ is odd, compare~\eqref{8.17} and~\eqref{8.18}.
For each $I_2(m)$, only one of these families consists
of $(TE)$-structures with primitive Higgs fields.

\item[$(ii)$] Consider $m\geq 3$.
Write $M=\C^2$ for the $F$-manifold $I_2(m)$
in Theorem~\ref{t2.2}, and $M^{\rm [log]}=\C\times\{0\}$
for the subset of points where the multiplication
is not semisimple. Over these points the restricted
$(TE)$-structures are of type (Log).
We checked that there are $\big[\frac{m}{2}\big]$ many Stokes
structures which give $(TE)$-structures on $M\setminus M^{\rm [log]}$.
Because of $(i)$, all these $(TE)$-structures extend
holomorphically over $M^{\rm [log]}$, and they give the
$\big[\frac{m}{2}\big]$ holomorphic families of $(TE)$-structures
on $I_2(m)$ in $(i)$.

\item[$(iii)$] Especially remarkable is the case $A_1^2=I_2(2)$.
There Theorem~\ref{t8.1}$(a){+}(d)$ $(i)$ implies directly that
each holomorphic $(TE)$-structure over $A_1^2$
with generically primitive Higgs field has primitive
Higgs field and is an elementary model (Definition~\ref{t4.4}),
so it has trivial Stokes structure.

\item[$(iv)$] This result is related to much more general work
in~\cite{CDG17} and~\cite{Sa19}
on meromorphic connections over the $F$-manifold
$A_1^n$ near points where some of the canonical
coordinates coincide.
Let us restrict to the special case of a neighborhood
of a point where all canonical coordinates coincide.
This generalizes the germ at 0 of $A_1^2$ to the germ at 0
of $A_1^n$.

\cite[Theorem~1.1]{CDG17} and~\cite[Theorem~3]{Sa19}
both give the triviality of the Stokes structure.
Though their starting points are slightly restrictive.
\cite{CDG17} starts in our notation from pure
$(TLE)$-structures with primitive Higgs fields.
The step before in the case of $A_1^2$,
passing from a $(TE)$-structure over $A_1^2$ to a pure
$(TLE)$-structure, is done essentially in our Theorem~\ref{t6.2} $(a)$ $(iii)$.
Our argument for the triviality of the Stokes structure
is then contained in the proof of Theorem~\ref{t6.3}.

\cite{Sa19} starts in our notation from $(TE)$-structures
which are already formally isomorphic to sums
$\bigoplus_{i=1}^n\EE^{u_i/z}z^{\alpha_i}$.
Then it is shown that they are also holomorphically
isomorphic to such sums. In~this special case, Corollary 5.7 in~\cite{DH20-2}
give this implication, too.

\item[$(v)$] In $(ii)$ we stated that in the case of $I_2(m)$
with $m\geq 3$,
each $(TE)$-structure on $M\setminus M^{\rm [log]}$ with primitive
Higgs field extends holomorphically to $M$. In~the case of $\NN_2$ this does not hold in general.
For example, start with the flat rank $2$ bundle
$H'\to\C^*\times M$, where $M=\C^2$ (with coordinates
$t=(t_1,t_2)$) with semisimple monodromy with two
different eigenvalues $\lambda_1$ and $\lambda_2$.
Choose $\alpha_1,\alpha_2\in\C$ with
${\rm e}^{-2\pi {\rm i}\alpha_j}=\lambda_j$. Let $s_j\in C^{\alpha_j}$
be generating elementary sections.
Define the new basis
\begin{gather*}%\label{8.26}
\uuuu{v}=(v_1,v_2)=\big({\rm e}^{t_1/z}\big(s_1+{\rm e}^{-1/t_2}s_2\big),{\rm e}^{t_1/z}(zs_2)\big)
\end{gather*}
on $H'|_{M'}$, where $M':=M\setminus \C\times\{0\}$. Then
\begin{gather*}%\label{8.27}
z\nnn_{\paa_1}\uuuu{v}=\uuuu{v}\cdot C_1,
\\
z\nnn_{\paa_2}\uuuu{v}=\uuuu{v}\cdot t_2^{-2}{\rm e}^{-1/t_2}C_2,
\\
z^2\nnn_{\paa_z}\uuuu{v}=\uuuu{v}\cdot
\bigg({-}t_1C_1+(\alpha_2-\alpha_1){\rm e}^{-1/t_2}C_2 +
z\begin{pmatrix}\alpha_1&0\\0&\alpha_2+1\end{pmatrix}\bigg).
\end{gather*}
So, we obtain a regular singular $(TE)$-structure on
$M'$ with primitive Higgs field.
The $F$-manifold structure on $M'$ is given by
$e=\paa_1$ and $\paa_2\circ\paa_2=0$, so it is $\NN_2$,
and the Euler field is
$E=t_1\paa_1 + (\alpha_1-\alpha_2)t_2^2\paa_2$.
$F$-manifold and Euler field extend from
$M'$ to $M$, but not the $(TE)$-structure.
\end{enumerate}
\end{Remarks}

\subsection{Proof of Theorem~\ref{t8.5}}\label{c8.3}

$(a)$ Let $\big(H\to\C\times \big(M,t^0\big),\nnn\big)$ be an unfolding of
a $(TE)$-structure of type (Log) over $t^0$.
The $(TE)$-structure $\big(H^{[2]}\to\C\times \big(M,t^0\big),\nnn^{[2]}\big)$
in Lemma~\ref{t3.10}$(c)$ with
$\big(\OO\big(H^{[2]}\big),\nnn^{[2]}\big)=(\OO(H),\nnn)\otimes
\EE^{-\rho^{(0)}/z}$ has trace free pole part.
Lemma~\ref{t3.10}$(d)$ and~$(e)$ apply.
Because of them, it is sufficient to prove that the
$(TE)$-structure $\big(H^{[2]},\nnn^{[2]}\big)$ is induced by
a~$(TE)$-structure
$\big(H^{[3]}\to\C\times \big(M^{[3]},t^{[3]}\big),\nnn^{[3]}\big)$
over $\big(M^{[3]},t^{[3]}\big)=(\C,0)$
via a map $\varphi\colon \big(M,t^0\big)\to\big(M^{[3]},t^{[3]}\big) $,
where the $(TE)$-structure $\big(H^{[3]},\nnn^{[3]}\big)$
is one of the $(TE)$-structures in the 1st to 7th cases
in Theorem~\ref{t6.3} or one of the $(TE)$-structures
in the cases (I) or (II) in Theorem~\ref{t6.7} with invariants as in table~\eqref{8.12}.
Then the $(TE)$-structure $\big(H^{[4]},\nnn^{[4]}\big)$ which is
constructed in Lemma~\ref{t3.10}$(d)$ from $\big(H^{[3]},\nnn^{[3]}\big)$
is one of the $(TE)$-structures in Theorem~\ref{t8.1}$(d)$
with invariants as in table~\eqref{8.12},
and it induces by Lemma~\ref{t3.10}$(e)$ the $(TE)$-structure
$(H,\nnn)$.

From now on we suppose $\rho^{(0)}=0$, so
$(H,\nnn)=\big(H^{[2]},\nnn^{[2]}\big)$.
We consider the invariants $\delta^{(0)},\delta^{(1)}\in
\OO_{M,t^0}$ and $\UU$ and the four possible {\it generic types}
(Sem), (Bra), (Reg) and (Log),
which are defined by the following table,
analogously to Definition~\ref{t6.1},{\samepage
\[
\def\arraystretch{1.3}
\begin{tabular}{c|c|c|c}
\hline
(Sem) & (Bra) & (Reg)& (Log)
\\ \hline
$\delta^{(0)}\neq 0$ & $\delta^{(0)}=0$, $\delta^{(1)}\neq 0$ &
$\delta^{(0)}=\delta^{(1)}=0$, $\UU\neq 0$ & $\UU=0$
\\
\hline
\end{tabular}
\]
First we treat the generic types (Reg) and (Log),
then the generic type (Sem) and (Bra).}

\medskip\noindent
{\it Generic types $($Reg$)$ and $($Log$)$.}
Then the $(TE)$-structure $(H,\nnn)$ is regular singular.
We can use the results in Section~\ref{c7}
(which built on Theorems~\ref{t6.3} and~\ref{t6.7}).
Choose a marking for the $(TE)$-structure $(H,\nnn)$.
Then by Remark~\ref{t7.5}$(i)$, there is a unique map
$\varphi\colon \big(M,t^0\big)\to M^{(H^{{\rm ref},\infty},M^{\rm ref}),{\rm reg}}$
which maps $t\in M$ to the point in the moduli space
over which one has up to isomorphism the same
marked $(TE)$-structure as over $t$. The map
$\varphi$ is holomorphic. By~Re\-mark~\ref{t7.5}$(i){+}(ii)$
it maps $\big(M,t^0\big)$ to one projective curve
which is isomorphic to
$M^{(3),0,\alpha_1,\alpha_2}$ or
$M^{(3),\neq 0,\alpha_1,\alpha_2}$ or
$M^{(3),0,\alpha_1,{\rm log}}$.
The $(TE)$-structure $(H,\nnn)$ is induced by the
$(TE)$-structure over this curve via the map $\varphi$.
The point $t^0$ is mapped to $0$ or $\infty$ in the
cases $M^{(3),0,\alpha_1,\alpha_2}$ or~$M^{(3),\neq 0,\alpha_1,\alpha_2}$
\big(not 0 in the case $M^{(3),\neq 0,\alpha_1,\alpha_1}$\big)
as the $(TE)$-structure over $t^0$ is logarithmic.
The germs at $0$ and $\infty$ in
$M^{(3),0,\alpha_1,\alpha_2}$ and
$M^{(3),\neq 0,\alpha_1,\alpha_2}$
\big(not 0 in the case $M^{(3),\neq 0,\alpha_1,\alpha_1}$\big)
and the germ at any point $t_2^{(3)}$ in
$M^{(3),0,\alpha_1,{\rm log}}$ are contained in table~\eqref{8.12}.
This shows Theorem~\ref{t8.5} for the generic
cases (Reg) and (Log).

\medskip\noindent
{\it Generic types $($Sem$)$ and $($Bra$)$.}
We choose a (connected and sufficiently small)
representative~$M$ of the germ
$\big(M,t^0\big)$, and we choose on it coordinates
$t=(t_1,\dots ,t_m)$ (with $m=\dim M$) with $t^0=0$.
We denote by $M^{\rm [log]}$ the analytic hypersurface
\begin{gather*}%\label{8.28}
M^{\rm [log]}:=\begin{cases}
\big(\delta^{(0)}\big)^{-1}(0),&\text{if the generic type is (Sem)},
\\[1ex]
\big(\delta^{(1)}\big)^{-1}(0),&\text{if the generic type is (Bra)}.
\end{cases}
\end{gather*}
It contains $t^0$.
Choose a disk $\Delta\subset M$ through $t^0$ with
$\Delta\setminus \{t^0\}\subset M\setminus M^{\rm [log]}$.
The restricted $(TE)$-structure
$(H,\nnn)|_{\C\times(\Delta,t^0)}$ has the same
generic type as the $(TE)$-structure $(H,\nnn)$.
The restricted $(TE)$-structure
$(H,\nnn)|_{\C\times(\Delta,t^0)}$
is isomorphic to a $(TE)$-structure in the cases
1, 2, 3 or 4 in Theorem~\ref{t6.3}.

The parameters of the restricted $(TE)$-structure
$(H,\nnn)|_{\C\times(\Delta,t^0)}$ are given in the
following table:
\[
\def\arraystretch{1.4}
\begin{tabular}{c|l}
\hline
\multicolumn{1}{c|}{Generic type} & \multicolumn{1}{c}{Parameters}
\\ \hline
(Sem)& $k_1,k_2\in\N$ with $k_2\geq k_1$, $\rho^{(1)}\in\C$,
\\
& $\begin{cases}
 \zeta\in\C & \text{if}\ k_2-k_1\in 2\N,\\
 \alpha_3\in \R_{\geq 0}\cup\H & \text{if}\ k_1=k_2
 \end{cases}$
 \\
(Bra) & $k_1,k_2\in\N$, $\rho^{(1)}\in\C$
\\
\hline
\end{tabular}
\]

There is a unique pair $\big(k_1^0,k_2^0\big)\in\N^2$ with
$(k_1,k_2)\in\Q_{>0}\cdot \big(k_1^0,k_2^0\big)$ and with
the conditions in table~\eqref{8.30},
\begin{gather}\label{8.30}
\def\arraystretch{1.4}
\begin{tabular}{l|l}
\hline
\multicolumn{1}{c|}{Generic type and invariants} & \multicolumn{1}{c}{Conditions}
\\ \hline
(Sem$)\colon\ k_2-k_1>0$ odd& $\gcd\big(k^0_1,k^0_2\big)=1$
\\
(Sem$)\colon\ k_2-k_1\in 2\N$, $\zeta= 0$ & $\gcd\big(k^0_1,k^0_2\big)=1$
\\
(Sem$)\colon\ k_2-k_1\in 2\N$, $\zeta\neq 0$ &$k^0_2-k^0_1\in 2\N$, $\gcd\left(k^0_1,\frac{k_1^0+k^0_2}{2}\right)=1$
\\
(Sem$)\colon k_2=k_1$ & $k^0_2=k^0_1=1$
\\
\hline
(Bra) & $\gcd\big(k^0_1,k^0_2\big)=1$
\\
\hline
\end{tabular}
\end{gather}
In fact, it is the pair $\big(k_1^0,k_2^0\big)\in\N^2$
of minimal numbers which satisfies{\samepage
\begin{gather*}%\label{8.31}
(k_1,k_2)\in \N\cdot \big(k_1^0,k_2^0\big)
\end{gather*}
and which satisfies in the case (Sem) with $k_2-k_1\in 2\N$
and $\zeta\neq 0$ additionally $k_2^0-k_1^0\in 2\N$.}

We denote by $\big(H^{[3]}\to\C\times \big(M^{[3]},t^{[3]}\big),\nnn^{[3]}\big)$
the $(TE)$-structure over $\big(M^{[3]},t^{[3]}\big)=(\C,0)$
which has $\big(k_1^0,k_2^0\big)$ instead of $(k_1,k_2)$,
but which has the same other parameters as the restricted
$(TE)$-structure $(H,\nnn)|_{\C\times(\Delta,t^0)}$.

We have seen in Remarks~\ref{t6.5}$(ii)$ and $(iii)$
that the restricted $(TE)$-structure
$(H,\nnn)|_{\C\times(\Delta,t^0)}$
is induced by the $(TE)$-structure $\big(H^{[3]},\nnn^{[3]}\big)$
via the branched covering
$\varphi^\Delta\colon (\Delta,t^0)\to\big(M^{[3]},t^{[3]}\big)$
with $\varphi^\Delta(\tau)= \tau^{k_1/k_1^0}$.
Here $\tau$ denotes {\it that} coordinate on $\Delta$
with which $(H,\nnn)|_{\C\times (\Delta,t^0)}$ can be
brought to a normal form in the cases 1, 2, 3 and 4 in
Theorem~\ref{t6.3}.

It rests to extend $\varphi^\Delta$ to a map
$\varphi\colon M\to M^{[3]}$ such that $(H,\nnn)$ is induced
by $\big(H^{[3]},\nnn^{[3]}\big)$ via this map $\varphi$.

\begin{claim}\label{cl8.9}
There exists a unique holomorphic function
$\varphi\in\OO_M$ with
\begin{gather}\label{8.32}
\varphi|_\Delta= \varphi^\Delta,
\\[.5ex]
\delta^{(0)}= -\varphi^{k_1^0+k_2^0}\qquad
\text{if the generic type is $($Sem$)$},\label{8.33}
\\
\delta^{(1)}= \frac{k_2^0}{k_1^0+k_2^0}\cdot\varphi^{k_1^0+k_2^0}
\qquad\text{if the generic type is $($Bra$)$}.\label{8.34}
\end{gather}
\end{claim}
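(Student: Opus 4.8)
The plan is to prove Claim~\ref{cl8.9} by exploiting the formal invariants $\delta^{(0)}$ and $\delta^{(1)}$ from Lemma~\ref{t3.9}, which are genuine holomorphic functions on all of $M$, not merely on $\Delta$. The key observation is that the right-hand sides of~\eqref{8.33} and~\eqref{8.34} define $\varphi$ \emph{up to} a root of unity: since the generic type is (Sem) or (Bra), $\delta^{(0)}$ respectively $\delta^{(1)}$ is a nonvanishing holomorphic function on $M\setminus M^{\rm [log]}$ whose vanishing locus is exactly $M^{\rm [log]}$. First I would verify that the defining equation has a holomorphic solution. On the disk $\Delta$, the normal form in Theorem~\ref{t6.3} together with~\eqref{6.22} (for (Sem)) and~\eqref{6.26} (for (Bra)) shows that, in the coordinate $\tau$ with $\varphi^\Delta(\tau)=\tau^{k_1/k_1^0}$, one has $\delta^{(0)}=-\tau^{k_1+k_2}=-\big(\tau^{k_1/k_1^0}\big)^{k_1^0+k_2^0}$ respectively $\delta^{(1)}=\frac{k_2}{k_1+k_2}\tau^{k_1+k_2}$, so~\eqref{8.33} and~\eqref{8.34} hold along $\Delta$ with $\varphi^\Delta$. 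Thus the claimed function restricts correctly to $\Delta$, giving~\eqref{8.32}.

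The heart of the argument is an extension/uniqueness statement. I would argue as follows. The function $\delta^{(0)}$ (resp.\ $\delta^{(1)}$) is holomorphic on $M$ and vanishes precisely on the hypersurface $M^{\rm [log]}$. I want a holomorphic $\varphi\in\OO_M$ with $\varphi^{k_1^0+k_2^0}$ equal to a fixed unit multiple of $\delta^{(0)}$ (resp.\ $\delta^{(1)}$). Such a $(k_1^0+k_2^0)$-th root exists holomorphically if and only if the order of vanishing of $\delta^{(0)}$ (resp.\ $\delta^{(1)}$) along each local branch of $M^{\rm [log]}$ is divisible by $k_1^0+k_2^0$, and the resulting root is then unique once its value is pinned on $\Delta$ (where $\varphi|_\Delta=\varphi^\Delta$ already fixes the branch of the root). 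The divisibility is exactly what the minimality of $\big(k_1^0,k_2^0\big)$ and the conditions in table~\eqref{8.30} are designed to guarantee: near a generic point of $M^{\rm [log]}$ the $(TE)$-structure restricts to a one-parameter family of the type in Theorem~\ref{t6.3}, whose invariants $(k_1,k_2)$ are a positive integer multiple of $\big(k_1^0,k_2^0\big)$, so $\deg_t\delta^{(0)}=k_1+k_2\in(k_1^0+k_2^0)\N$ (and likewise $\deg_t\delta^{(1)}=k_1+k_2$ in the (Bra) case). Hence along every branch the vanishing order is a multiple of $k_1^0+k_2^0$, the holomorphic root exists, and matching it with $\varphi^\Delta$ on $\Delta$ determines it uniquely on the connected $M$.

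I would carry out the steps in this order: (1)~record that $\delta^{(0)},\delta^{(1)}\in\OO_M$ are formal invariants, so they are well defined globally and cut out $M^{\rm [log]}$; (2)~compute, on $\Delta$ in the coordinate $\tau$, the explicit expressions for $\delta^{(0)}$ and $\delta^{(1)}$ from the normal forms, verifying~\eqref{8.33} and~\eqref{8.34} along $\Delta$; (3)~for the (Sem) and (Bra) subcase with $\zeta\neq0$ where table~\eqref{8.30} demands $k_2^0-k_1^0\in 2\N$, note that the parity constraint is forced because otherwise the intermediate invariant $\frac{k_1+k_2}{2}$ would not be an integer multiple of $\frac{k_1^0+k_2^0}{2}$; (4)~establish the divisibility of the vanishing order of $\delta^{(0)}$ (resp.\ $\delta^{(1)}$) along each branch of $M^{\rm [log]}$ by $k_1^0+k_2^0$ using the generic restriction to transverse disks and the computation in step~(2); (5)~invoke existence and uniqueness of the holomorphic $(k_1^0+k_2^0)$-th root with the prescribed restriction to $\Delta$, yielding $\varphi$.

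The main obstacle I expect is step~(4): proving that the vanishing order is divisible by $k_1^0+k_2^0$ along \emph{every} branch and at \emph{every} point of $M^{\rm [log]}$, including possible singular or higher-codimension strata, rather than just at generic smooth points. At a generic smooth point the transverse disk $\Delta'$ gives a one-parameter family whose invariant pair must again lie in $\N\cdot\big(k_1^0,k_2^0\big)$ by Theorem~\ref{t6.3} and Remarks~\ref{t6.5}, which forces the local divisibility; but one must check this is consistent across the whole hypersurface so that the global root is single-valued. The cleanest way is to work with the normalization of $M^{\rm [log]}$ or simply to observe that $-\delta^{(0)}$ (resp.\ $\frac{k_1^0+k_2^0}{k_2^0}\delta^{(1)}$) is, by the pointwise computation, everywhere locally a $(k_1^0+k_2^0)$-th power of a unit times $\varphi^\Delta$-compatible coordinate, so the local roots glue; uniqueness then follows from connectedness of $M$ and agreement on the open set $\Delta\setminus\{t^0\}$.
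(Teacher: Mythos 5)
Your overall strategy coincides with the paper's: reduce the claim to showing that the vanishing order of $\delta^{(0)}$ (resp.\ $\delta^{(1)}$) along $M^{\rm [log]}$ is divisible by $k_1^0+k_2^0$, then take a holomorphic root and pin it down on $\Delta$. However, your step~(4) contains a genuine gap. You assert that for a transverse disk $\Delta^{[1]}$ through an arbitrary point $t^{[1]}\in M^{\rm [log]}$, the invariant pair $\big(k_1^{[1]},k_2^{[1]}\big)$ of the restricted family ``must again lie in $\N\cdot\big(k_1^0,k_2^0\big)$ by Theorem~\ref{t6.3} and Remarks~\ref{t6.5}''. Neither result says this: Theorem~\ref{t6.3} classifies individual one-parameter families, and Remarks~\ref{t6.5} describe pull backs of a \emph{single} such family under $s\mapsto s^n$; they give no relation between the restrictions of one global $(TE)$-structure to disks through \emph{different} points of $M^{\rm [log]}$. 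A priori $\big(k_1^{[1]},k_2^{[1]}\big)$ could be any pair of positive integers, and establishing~\eqref{8.35} is precisely the hard core of the paper's proof. It needs three inputs you do not supply: (i)~the monodromy eigenvalues are constant over the connected $M$ and determine $b_3^{(1)}\in\Q\cap\big({-}\tfrac12,0\big)$, which yields only $\big(k_1^{[1]},k_2^{[1]}\big)\in\Q_{>0}\cdot\big(k_1^0,k_2^0\big)$ when $k_1^{[1]}\neq k_2^{[1]}$; (ii)~the invariance of the (triviality of the) Stokes structure under isomonodromic deformation, which is indispensable to show that $k_1^{[1]}=k_2^{[1]}$ holds at one boundary point if and only if it holds at all of them — the eigenvalue argument alone cannot distinguish the (Sem) case $k_1=k_2$ with $\alpha_3\in\Q$ from a case $k_2>k_1$; (iii)~in the case $\gcd\big(k_1^0,k_2^0\big)\neq 1$ (i.e., (Sem) with $k_2-k_1\in 2\N$ and $\zeta\neq 0$), the invariance of $\zeta$ itself, deduced from the regular singular exponents via~\eqref{6.25}, which upgrades the rational proportionality of~(i) to the integrality $k_2^{[1]}-k_1^{[1]}\in 2\N$ and hence~\eqref{8.35}.

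Your step~(3) does not substitute for~(iii): it concerns the relation between $(k_1,k_2)$ on $\Delta$ and the defined pair $\big(k_1^0,k_2^0\big)$, which holds by construction, whereas the issue is comparing $\big(k_1^{[1]},k_2^{[1]}\big)$ at \emph{other} points of $M^{\rm [log]}$ with $\big(k_1^0,k_2^0\big)$. Without (i)--(iii) the divisibility of the vanishing order along every branch of $M^{\rm [log]}$ is unproved, and the global root $\varphi$ need not exist. The remaining parts of your proposal (the computation on $\Delta$ via~\eqref{6.22} and~\eqref{6.26}, the reduction of root existence to divisor divisibility, and the uniqueness from connectedness plus agreement on $\Delta$) are correct and match the paper.
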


\begin{proof}
Choose any point $t^{[1]}\in M^{\rm [log]}$ and any disk
$\Delta^{[1]}$ through $t^{[1]}$ with
$\Delta^{[1]}\setminus \big\{t^{[1]}\big\}\subset M\setminus M^{\rm [log]}$. In~order to show the existence of a function $\varphi\in\OO_M$
with~\eqref{8.33} respectively~\eqref{8.34},
it is sufficient to show that $\delta^{(0)}|_{\Delta^{[1]}}$
respectively $\delta^{(1)}|_{\Delta^{[1]}}$
has at $t^{[1]}$ a zero of an order which is a~multiple of
$k_1^0+k_2^0$.

The restricted $(TE)$-structure
$(H,\nnn)|_{\C\times(\Delta^{[1]},t^{[1]})}$ has the same generic
type as $(H,\nnn)$ and is isomorphic to a $(TE)$-structure
in the cases 1, 2, 3 or 4 in Theorem~\ref{t6.3}.
Its invariants $k_1$ and~$k_2$ are here called
$k_1^{[1]}$ and~$k_2^{[1]}$, in order to distinguish them
from the invariants of $(H,\nnn)|_{(\Delta,t^0)}$.
We~want to show
\begin{gather}\label{8.35}
\big(k_1^{[1]},k_2^{[2]}\big)\in \N\cdot \big(k_1^0,k_2^0\big).
\end{gather}

We did not say much about the Stokes structure.
Here we need the following properties of it, if the
generic type is (Sem):
\begin{gather*}%\label{8.36}
 k_2=k_1
 \\
\quad \stackrel{(1)}{\iff} (H,\nnn)|_{\C\times\{t^{[2]}\}}
\text{ has trivial Stokes structure for }
t^{[2]}\in\Delta\setminus \big\{t^0\big\}
 \\
\quad \stackrel{(2)}{\iff} (H,\nnn)|_{\C\times\{t^{[2]}\}}
\text{ has trivial Stokes structure for }
t^{[2]}\in\Delta^{[1]}\setminus \big\{t^{[1]}\big\}
 \\
\quad \stackrel{(3)}{\iff} k_2^{[1]}=k_1^{[1]}.
\end{gather*}
$\stackrel{(1)}{\Longrightarrow}$ and
$\stackrel{(3)}{\Longleftarrow}$ are obvious from the
normal form in the 3rd case in Theorem~\ref{t6.3}. It~is not hard to see that the normal forms for fixed
$t\in\C^*$ in the 1st and 2nd case in Theorem~\ref{t6.3}
are not holomorphically isomorphic to an
elementary model in Definition~\ref{t4.4}
(see also Remark~\ref{t8.8}$(ii)$).
This shows $\stackrel{(1)}{\Longleftarrow}$ and
$\stackrel{(3)}{\Longrightarrow}$.
The equivalence $\stackrel{(2)}{\iff}$ is a consequence
of the invariance of the Stokes structure within
isomonodromic deformations.

In the generic type (Sem) with $k_1=k_2$ we have also
$k_2^{[1]}=k_1^{[1]}$ and $k_2^0=k_1^0=1$, and thus
especially~\eqref{8.35}.

Now consider the cases with $k_2>k_1$.
This comprises the generic type (Bra) and
gives in the generic type (Sem) also $k_2^0>k_1^0$
and $k_2^{[1]}>k_1^{[1]}$.
So $(H,\nnn)|_{\C\times(\Delta^{[1]},t^{[1]})}$ is
in the 1st, 2nd or 4th case in Theorem~\ref{t6.3}.
The number $b_3^{(1)}$ in Theorem~\ref{t6.3} is uniquely
determined by the properties
$b_3^{(1)}\in\Q\,\cap \big]\frac{-1}{2},0\big[$ and
$\Eig(M^{\rm mon})=\big\{\exp\big({-}2\pi {\rm i} (\rho^{(1)}\pm b_3^{(1)}\big)\big)\big\}$
(see Remark~\ref{t6.4}$(i)$ for the second property).
Therefore
\begin{gather}
\frac{k_1^0-k_2^0}{2\big(k_1^0+k_2^0\big)}
=\frac{k_1-k_2}{2(k_1+k_2)} =b_3^{(1)}
=\frac{k_1^{[1]}-k_2^{[1]}}{2\big(k_1^{[1]}+k_2^{[1]}\big)}\qquad
\text{in the case (Sem)},\nonumber
\\
\frac{-k_2^0}{2\big(k_1^0+k_2^0\big)}
=\frac{-k_2}{2(k_1+k_2)} =b_3^{(1)}
=\frac{-k_2^{[1]}}{2\big(k_1^{[1]}+k_2^{[1]}\big)}\qquad
\text{in the case (Bra)}.\label{8.37}
\end{gather}
This implies $\big(k_1^{[1]},k_2^{[1]}\big)\in
\Q_{>0}\cdot\big(k_1^0,k_2^0\big)$. In~the cases with $\gcd\big(k_1^0,k_2^0\big)=1$~\eqref{8.35} follows.

If $\gcd\big(k_1^0,k_2^0\big)\neq 1$, then the generic type
is (Sem), $k_2-k_1\in 2\N$, $k_2^0-k_1^0\in 2\N$,
and the invariant $\zeta$ of
$(H,\nnn)|_{\C\times(\Delta,t^0)}$ is $\zeta\neq 0$.
However, then the regular singular
exponents $\alpha_1$ and $\alpha_2$ of the restriction
of the $(TE)$-structure $(H,\nnn)$ over points in
$M\setminus M^{\rm [log]}$ are invariants of the $(TE)$-structure
$(H,\nnn)$. By~\eqref{6.25} and~\eqref{8.37}
also $\zeta$ is an invariant of the $(TE)$-structure
$(H,\nnn)$. Now $\zeta\neq 0$ implies
$k_2^{[1]}-k_1^{[1]}\in 2\N$. Again~\eqref{8.35} follows.

Equations~\eqref{6.22} and~\eqref{6.26} imply that
$\delta^{(0)}|_{\Delta^{[1]}}$
respectively $\delta^{(1)}|_{\Delta^{[1]}}$
has at $t^{[1]}$ a zero of an order which is a multiple of
$k_1^0+k_2^0$. Therefore a function $\varphi\in\OO_M$
with~\eqref{8.33} respectively~\eqref{8.34} exists.

Equations~\eqref{6.22} and~\eqref{6.26} for $(H,\nnn)|_{\C\times(\Delta,t^0)}$
tell
\begin{gather*}
\delta^{(0)}|_\Delta = -\tau^{k_1+k_2}
=-\big(\tau^{k_1/k_1^0}\big)^{k_1^0+k_2^0}
=-\big(\varphi^\Delta\big)^{k_1^0+k_2^0}\qquad
\text{in the case (Sem),}
\\
\delta^{(1)}|_\Delta = \frac{k_2}{k_1+k_2}\tau^{k_1+k_2}
=\frac{k_2^0}{k_1^0+k_2^0}\big(\tau^{k_1/k_1^0}\big)^{k_1^0+k_2^0}
=\frac{k_2^0}{k_1^0+k_2^0}\big(\varphi^\Delta\big)^{k_1^0+k_2^0}\quad\
\text{in the case (Bra).}
\end{gather*}
Therefore a function $\varphi$ as in the claim exists
and is unique.
\end{proof}

Now compare the $(TE)$-structures $(H,\nnn)$ and
$\varphi^*\big(H^{[3]},\nnn^{[3]}\big)$ over $M$.
Both extend to pure $(TLE)$-structures.
For $\varphi^*\big(H^{[3]},\nnn^{[3]}\big)$, one uses the
pull back $\varphi^*\big(\uuuu{v}^{[3]}\big)$ of the basis
$\uuuu{v}^{[3]}$ which gives for $\big(H^{[3]},\nnn^{[3]}\big)$
the Birkhoff normal form in Theorem~\ref{t6.3}.
For $(H,\nnn)$ one starts with the analogous basis
$\uuuu{v}^\Delta$ for $H|_{\C\times\Delta}$ which gives
for $(H,\nnn)|_{\C\times\Delta}$ the Birkhoff normal form
in Theorem~\ref{t6.3}. It~has a unique extension $\uuuu{v}$ to
$\C\times M$ which still yields a Birkhoff normal form.
Compare Remark~\ref{t3.19}$(ii)$ for this.

Remarks~\ref{t6.5}$(ii)$ and $(iii)$ (or simply
the Birkhoff normal forms in Theorem~\ref{t6.3}) tell that
the map $\big(\varphi^*\uuuu{v}^{[3]}\big)|_{\C\times\Delta}
\mapsto \uuuu{v}^\Delta=\uuuu{v}|_{\C\times\Delta}$ is an
isomorphism of pure $(TLE)$-structures.

Now consider a point $t^{[2]}\in \Delta\setminus \big\{t^0\big\}$ and
its image $t^{[4]}:=\varphi\big(t^{[2]}\big)\in
M^{[3]}\setminus \big\{t^{[3]}\big\} =\C\setminus\{0\}$. Over the germ
$\big(M^{[3]},t^{[4]}\big)$, the $(TE)$-structure $\big(H^{[3]},\nnn^{[3]}\big)$
is the part with trace free pole part of a universal
unfolding of $\big(H^{[3]},\nnn^{[3]}\big)|_{\C\times\{t^{[4]}\}}$.
Therefore in a neighborhood $U\subset M$ of $t^{[2]}$,
the $(TE)$-structure $(H,\nnn)|_{\C\times U}$
is induced by
$\big(H^{[3]},\nnn^{[3]}\big)|_{\C\times(M^{[3]},t^{[4]})}$
via a map $\www\varphi\colon U\to M^{[3]}$.
We~can choose it such that
\begin{gather}\label{8.38}
\www\varphi|_\Delta=\varphi^\Delta.
\end{gather}
Equations~\eqref{6.22} and~\eqref{6.26} tell
\begin{gather}\label{8.39}
\delta^{(0)}|_U = -(\www\varphi)^{k_1^0+k_2^0}
\qquad\text{in the case (Sem)},
\\
\delta^{(1)}|_U =
\frac{k_2^0}{k_1^0+k_2^0}(\www\varphi)^{k_1^0+k_2^0}
\qquad\text{in the case (Bra)}.\label{8.40}
\end{gather}
Equations~\eqref{8.38}--\eqref{8.40} and Claim~\ref{cl8.9} imply
$\www\varphi=\varphi|_U$.
Therefore the matrices in Birkhoff normal form
for the basis $\uuuu{v}$ of $(H,\nnn)$ coincide
on $\C\times U$ with the matrices in Birkhoff normal form
for the basis $\varphi^*\big(\uuuu{v}^{[3]}\big)$ of
$\varphi^*\big(H^{[3]},\nnn^{[3]}\big)$.
As all matrices are holomorphic on $\C\times M$,
they coincide pairwise on $\C\times M$.
Therefore the pure $(TLE)$-structure $(H,\nnn)$
with basis $\uuuu{v}$ is isomorphic to the pure
$(TLE)$-structure $\varphi^*\big(H^{[3]},\nnn^{[3]}\big)$
with basis $\varphi^*\big(\uuuu{v}^{[3]}\big)$.
This finishes the proof of part~$(a)$ of Theorem~\ref{t8.5}.

\medskip
$(b)$ If the original $(TE)$-structure $(H,\nnn)$ has
the form
$\varphi^*\big(\OO(H^{[5]}),\nnn^{[5]}\big)\otimes\EE^{-\rho^{(0)}/z}$
then the $(TE)$-structure $\big(H^{[2]},\nnn^{[2]}\big)$
with trace free pole part which was associated to
$(H,\nnn)$ at the beginning of the proof of part~$(a)$,
has the form $\varphi^*\big(\OO(H^{[5]}),\nnn^{[5]}\big)$.

Then any $(TE)$-structure $\big(H^{[3]},\nnn^{[3]}\big)$
over $\big(M^{[3]},t^{[3]}\big)=(\C,0)$ works, whose restriction
over~$t^{[3]}$ is the given logarithmic $(TE)$-structure
$\big(H^{[5]},\nnn^{[5]}\big)$.

In the cases with $N^{\rm mon}=0$, table~\eqref{8.12} offers
one of generic type (Sem) with $k_1=k_2=1$
(and some with $k_2>k_1$ if $\alpha_1-\alpha_2\in\Q\cap (-1,1)$)
and one or two of generic type (Reg), see table~\eqref{8.11}. In~the cases with $N^{\rm mon}\neq 0$, table~\eqref{8.12}
offers two of generic type (Reg) if the leading exponents
$\alpha_1$ and $\alpha_2$ satisfy $\alpha_1-\alpha_2\in\N$,
and one if they satisfy $\alpha_1=\alpha_2$,
compare also Figures~\ref{figure4} and~\ref{figure5} in Theorem~\ref{t7.4}$(b)$.
Therefore the inducing $(TE)$-structure in table~\eqref{8.12}
is not unique except for the case $N^{\rm mon}\neq 0$ and
$\alpha_1=\alpha_2$,
if the original $(TE)$-structure has the form
$\varphi^*\big(\OO(H^{[5]}),\nnn^{[5]}\big)\otimes\EE^{-\rho^{(0)}/z}$.

{\sloppy
In the other cases, the proof of part~$(a)$ shows the
uniqueness of the $(TE)$-structure $\big(H^{[3]},\nnn^{[3]}\big)$.
The uniqueness of $\big(H^{[3]},\nnn^{[3]}\big)$
gives also the uniqueness of $\big(H^{[4]},\nnn^{[4]}\big)$
in the first paragraph of the proof of part~$(a)$.

}

\medskip
$(c)$ This follows from the uniqueness in part $(b)$.
\hfill$\qed$

\section[A family of rank 3 $(TE)$-structures with a functional parameter]
{A family of rank 3 $\boldsymbol{(TE)}$-structures\\ with a functional parameter}\label{c9}

M.~Saito presents in~\cite{SaM17} a family of
Gauss--Manin connections with a functional parameter. In~the arXiv paper~\cite{SaM17}, the bundle has rank
$n$, but in a preliminary version
it has rank 3 and is more transparent.

Here we translate the rank 3 example
by a Fourier--Laplace transformation
into a family of $(TE)$-structures
with primitive Higgs fields over a fixed
3-dimensional globally irreducible
$F$-manifold with an Euler field, such that the
$F$-manifold with Euler field is nowhere regular.
The family of $(TE)$-structures has a functional parameter
$h(t_2)\in\C\{t_2\}$.

In the following, we write down a $(TE)$-structure of rank 3
on a manifold $M=\C^3$ with coordinates $t_1$, $t_2$, $t_3$.
The restriction to
$\big\{t\in\C^3\,|\, t_1=0\big\}=\{0\}\times \C^2$
is a FL-transformation of~Saito's example.
The parameter $t_1$ and this $F$-manifold are
not considered in~\cite{SaM17}. There the base
space has only the two parameters $t_2$ and $t_3$.
Choose an arbitrary function $h(t_2)\in\C\{t_2\}$
with $h''(0)\neq 0$.

Let $H'\to\C^*\times M$ be a holomorphic vector bundle
with flat connection with trivial monodromy and basis of
global flat sections $s_1$, $s_2$, $s_3$. Define an extension
to a vector bundle $H\to\C\times M$ using the following
holomorphic sections of $H'$, which also form a basis
of sections of $H'$:
\begin{gather*}%\label{9.1}
v_1:= {\rm e}^{t_1/z}\cdot \big(zs_1+t_2\cdot zs_2+
h(t_2)\cdot zs_3+t_3\cdot z^2s_3\big),\\
v_2:= {\rm e}^{t_1/z}\cdot \big(z^2s_2+h'(t_2)\cdot z^2s_3\big),\qquad
v_3:= {\rm e}^{t_1/z}\cdot z^3s_3.\nonumber
\end{gather*}
Denote $\uuuu{v}:=(v_1,v_2,v_3)$.
Denote $\paa_{t_j}:=\paa_j$. Then
\begin{gather*}%\label{9.2}
z\nnn_{\paa_1}\uuuu{v}= \uuuu{v}\cdot {\bf 1}_3,\\
z\nnn_{\paa_2}\uuuu{v}= \uuuu{v}\cdot
\begin{pmatrix}0&0&0\\1&0&0\\0&h''(t_2)&0\end{pmatrix}\!,
\nonumber\\
z\nnn_{\paa_3}\uuuu{v}= \uuuu{v}\cdot
\begin{pmatrix}0&0&0\\0&0&0\\1&0&0\end{pmatrix}\!,\nonumber\\
z^2\paa_z\uuuu{v}= \uuuu{v}\cdot
\left({-}t_1\cdot {\bf 1}_3
+\begin{pmatrix}0&0&0\\0&0&0\\t_3&0&0\end{pmatrix}\right)
+z\cdot\uuuu{v}\cdot
\begin{pmatrix}1&0&0\\0&2&0\\0&0&3\end{pmatrix}\!.\nonumber
\end{gather*}

Write $\uuuu\paa:=(\paa_1,\paa_2,\paa_3)$.
The pole parts give the multiplication $\circ$ on the
$F$-manifold and the Euler field $E$ by
\begin{gather*}%\label{9.3}
\paa_1\circ\uuuu{\paa}=\uuuu{\paa}\cdot{\bf 1}_3,\\
\paa_2\circ\uuuu{\paa}= \uuuu{\paa}\cdot
\begin{pmatrix}0&0&0\\1&0&0\\0&h''(t_2)&0\end{pmatrix}\!,\nonumber\\
\paa_3\circ\uuuu{\paa}= \uuuu{\paa}\cdot
\begin{pmatrix}0&0&0\\0&0&0\\1&0&0\end{pmatrix}\!,\nonumber\\
E\circ\uuuu{\paa}= -\uuuu{\paa}\cdot
\left(-t_1\cdot {\bf 1}_3
+\begin{pmatrix}0&0&0\\0&0&0\\t_3&0&0\end{pmatrix}\right)\!,\qquad
\text{so}\quad E= t_1\cdot\paa_1-t_3\cdot\paa_3.\nonumber
\end{gather*}
One can introduce a new coordinate system
$\big(\www t_1,\www t_2,\www t_3\big)=\big(t_1,\www t_2,t_3\big)$ on the germ
$(M,0)$ with
\begin{gather*}%\label{9.4}
\paa_{\www t_2}=\frac{1}{\sqrt{h''(t_2)}}\cdot\paa_2.
\end{gather*}
Denote $\www\paa_j:=\paa_{\www t_j}$
and $\www{\uuuu\paa}:=\big(\www\paa_1,\www\paa_2,
\www\paa_3\big)=\big(\paa_1,\www\paa_2,\paa_3\big)$.
Introduce also the new section
\begin{gather*}%\label{9.5}
\www v_2:=\frac{1}{\sqrt{h''(t_2)}}\cdot v_2,
\end{gather*}
and the new basis
$\www{\uuuu v}=\big(\www v_1,\www v_2,\www v_3\big)
=\big(v_1,\www v_2,v_3\big)$
of the given $(TE)$-structure. Then
\begin{gather*}%\label{9.6}
z\nnn_{\www\paa_1}\uuuu{\www v}=\uuuu{\www v}\cdot {\bf 1}_3,
\\
z\nnn_{\www\paa_2}\uuuu{\www v}=\uuuu{\www v}\cdot
\begin{pmatrix}0&0&0\\1&0&0\\0&1&0\end{pmatrix}
+\uuuu{\www v}\cdot
\begin{pmatrix}0&0&0\\0&\paa_2\frac{1}{\sqrt{h''(t_2)}}&0
\\
0&0&0&\end{pmatrix}\!,
\\
z\nnn_{\www\paa_3}\uuuu{\www v}=\uuuu{\www v}\cdot
\begin{pmatrix}0&0&0\\0&0&0\\1&0&0\end{pmatrix}\!,
\\
z^2\paa_z\uuuu{\www v}=\uuuu{\www v}\cdot
\left(-t_1\cdot {\bf 1}_3
+\begin{pmatrix}0&0&0\\0&0&0\\t_3&0&0\end{pmatrix}\right)
+z\cdot\uuuu{\www v}\cdot
\begin{pmatrix}1&0&0\\0&2&0\\0&0&3\end{pmatrix}\!.\nonumber
\end{gather*}
In the new coordinates the multiplication becomes simpler
and independent of the choice of~$h(t_2)$ (as long as
$h''(t_2)\neq 0$):
\begin{gather*}%\label{9.7}
\www\paa_1\circ\uuuu{\www\paa}=\uuuu{\www\paa}\cdot {\bf 1}_3,
\\
\www\paa_2\circ\uuuu{\www\paa}=\uuuu{\www\paa}\cdot
\begin{pmatrix}0&0&0\\1&0&0\\0&1&0\end{pmatrix}\!,
\\
\www\paa_3\circ\uuuu{\www\paa}= \uuuu{\www\paa}\cdot
\begin{pmatrix}0&0&0\\0&0&0\\1&0&0\end{pmatrix}\!,
\\
E\circ\uuuu{\www\paa}= -\uuuu{\www\paa}\cdot\left({-}t_1\cdot {\bf 1}_3
+\begin{pmatrix}0&0&0\\0&0&0\\t_3&0&0\end{pmatrix}\right)\!,
\\
\text{so}\quad E= t_1\cdot\www\paa_1-t_3\cdot\www\paa_3.
\end{gather*}
This is the nilpotent $F$-manifold for $n=3$ in
\cite[Theorem~3]{DH17}.
However, the Euler field here is different from the one in
\cite[Theorem~3]{DH17}. The endomorphism $E\circ$ here is not
regular, but has only the one eigenvalue $t_1$ and
has for $t_3\neq 0$ one Jordan block of size
$2\times 2$ and one Jordan block of size $1\times 1$
and is semisimple for $t_3=0$.

The sections $v_1$, $v_2$, $v_3$ define also an
extension $\wwh{H}\to\P^1$ such that the $(TE)$-structure
extends to a pure $(TLE)$-structure.

Furthermore $v$ satisfies all properties of the
section $\zeta$ in Theorem~6.6(b) in~\cite{DH20-2}.
Thus the $F$-manifold with Euler field is enriched to a
flat $F$-manifold with Euler field (Definition~3.1(b) in~\cite{DH20-2}).

If we try to introduce a pairing which would make it
into a pure $(TLEP)$-structure, we get a constraint
$h''(t_2)={\rm const}$. However, probably similar higher dimensional
examples allow also an extension to pure $(TLEP)$-structures
while keeping the functional freedom.
This would give families of Frobenius manifolds with
Euler fields with functional freedom on a fixed $F$-manifold
with Euler field.

In the example above, $t_1$, $t_2$, $t_3$ are flat coordinates
and $\www t_1=t_1$, $\www t_2$, $\www t_3=t_3$ are
{\it generalized canonical coordinates}
(in which the multiplication has simple formulas).

\subsection*{Acknowledgements}

This work was funded by the Deutsche Forschungsgemeinschaft (DFG, German Research Foundation)~-- 242588615. I would like to thank Liana David for a lot of joint work on $(TE)$-structures.

\addcontentsline{toc}{section}{References}
\LastPageEnding

\end{document}